\newtheorem{defn}{Definition}[section]
\newtheorem{lemma}[defn]{Lemma}
\newtheorem{prop}[defn]{Proposition}
\newtheorem{theo}[defn]{Theorem}
\newtheorem{coro}[defn]{Corollary}
\newtheorem{claim}{Claim}
\newtheorem{rk}[defn]{Remark}
\newtheorem{note}[defn]{Note}
\newtheorem{exmp}[defn]{Example}
\def\RR{\mathbb{R}}
\def\Ric{\mathop{\rm Ric}\nolimits}
\def\Rm{\mathop{\rm Rm}\nolimits}
\def\tr{\mathop{\rm tr}\nolimits}
\def\vol{\mathop{\rm vol}\nolimits}
\def\vol{\mathop{\rm Vol}\nolimits}
\def\div{\mathop{\rm div}\nolimits}
\def\ima{\mathop{\rm Im}\nolimits}
\def\AVR{\mathop{\rm AVR}\nolimits}
\def\Li{\mathop{\rm \mathscr{L}}\nolimits}
\def\Ric{\mathop{\rm Ric}\nolimits}
\def\Rm{\mathop{\rm Rm}\nolimits}
\def\tr{\mathop{\rm tr}\nolimits}
\def\vol{\mathop{\rm vol}\nolimits}
\def\vol{\mathop{\rm Vol}\nolimits}
\def\div{\mathop{\rm div}\nolimits}
\def\ima{\mathop{\rm Im}\nolimits}
\def\AVR{\mathop{\rm AVR}\nolimits}
\def\Li{\mathop{\rm \mathscr{L}}\nolimits}
\def\R{\mathop{\rm R}\nolimits}
\def\ima{\mathop{\rm im}\nolimits}
\newsavebox\CBox
\newcommand\hcancel[2][0.5pt]{%
	\ifmmode\sbox\CBox{$#2$}\else\sbox\CBox{#2}\fi%
	\makebox[0pt][l]{\usebox\CBox}%  
	\rule[0.5\ht\CBox-#1/2]{\wd\CBox}{#1}}
\numberwithin{equation}{section}
\begin{document}
	\title{A \L{}ojasiewicz inequality for ALE metrics}
	\date{today}
\author{Alix Deruelle}
\address{Institut de math\'ematiques de Jussieu, 4, place Jussieu, Boite Courrier 247 - 75252 Paris}
\email{alix.deruelle@imj-prg.fr}
\author{Tristan Ozuch}
\address{École Normale Supérieure, PSL Université, 45 rue d'Ulm - 75005 Paris}
\email{Tristan.Ozuch-Meersseman@ens.fr}
	\maketitle

	\begin{abstract}
		We introduce a new functional inspired by Perelman's $\lambda$-functional adapted to the asymptotically locally Euclidean (ALE) setting and denoted $\lambda_{\operatorname{ALE}}$. Its expression includes a boundary term which turns out to be the ADM-mass. We prove that $\lambda_{\operatorname{ALE}}$ is defined and analytic on convenient neighborhoods of Ricci-flat ALE metrics and we show that it is monotonic along the Ricci flow. This for example lets us establish that small perturbations of integrable and stable Ricci-flat ALE metrics with nonnegative scalar curvature have nonnegative mass. We then introduce a general scheme of proof for a \L{}ojasiewicz-Simon inequality on non-compact manifolds and prove that it applies to $\lambda_{\operatorname{ALE}}$ around Ricci-flat metrics. We moreover obtain an optimal weighted \L{}ojasiewicz exponent for metrics with integrable Ricci-flat deformations. 
		%In a second paper \cite{***}, this \L{}ojasiewicz inequality lets us characterize the stability of Ricci-flat ALE metrics along the Ricci flow by its linear stability.
	\end{abstract}
	\markboth{Alix Deruelle and Tristan Ozuch}{A \L{}ojasiewicz inequality for ALE metrics}
	\tableofcontents
	
	\section*{Introduction}
	
	The understanding of Ricci-flat metrics is a classical issue in Riemannian geometry. When they are non-compact, these metrics have at most Euclidean volume growth by Bishop-Gromov inequality and those which exactly have Euclidean volume growth are asymptotically conical. In dimension $4$, this implies that they are Asymptotically Locally Euclidean (ALE), that is they are asymptotic to some quotient of Euclidean space, see Definition \ref{defn-ALE}. 
	
	The class of ALE Ricci-flat metrics models the formation of singularities in various noncollapsed situations, in particular for spaces with Ricci curvature bounds \cite{and,Ban-Kas-Nak}. Moreover, these metrics model potential singularities of a $4$-dimensional Ricci flow with bounded scalar curvature \cite{bz}. Even more recently however, it has been shown that Ricci flows on a closed Riemannian manifold of dimension less than $8$ with bounded scalar curvature exist for all time: \cite{Buz-DiM}. Finally, ALE Ricci flat metrics actually appear as finite-time blow-up limits of some Ricci flows \cite{app-EH}. Their stability therefore becomes a crucial question for the Ricci flow.
	\subsection*{Adapting Perelman's $\lambda$-functional to the ALE setting}
	In \cite{Per-Ent}, Perelman introduced three functionals denoted $\lambda$, $\mu$ and $\nu$ of which the Ricci flow is the gradient flow. These functionals were the core of his spectacular proofs and revolutionized the understanding of the formation of singularities of the Ricci flow. 
		In particular, recall that if $(M^n,g)$ is a closed smooth Riemannian manifold, Perelman's energy, denoted by $\lambda(g)$, is defined as follows:
\begin{equation}
\lambda(g):=\inf_{\|\varphi\|_{L^2}=1}\int_M4|\nabla^g\varphi|^2_g+\R_g\varphi^2\,d\mu_g.\label{Per-Def-Lam}
\end{equation}
In other terms, $\lambda(g)$ is the bottom of the spectrum of the Schr\"odinger operator $-4\Delta_g+\R_g$. Then Perelman showed that $\lambda$ is monotone non-decreasing along the Ricci flow and it is constant on Ricci-flat metrics only. 

Let us notice some major inconvenients to use the same definition (\ref{Per-Def-Lam}) in a non-compact setting : if $(M^n,g)$ is an ALE metric in a neighborhood (with respect to some natural topology designed on polynomially decaying tensors at a certain rate) of a given ALE Ricci-flat metric then it can be shown that $\lambda(g)=0$. In other words, the $\lambda$-functional in its usual $L^2$-constrained form is not well-suited because of the lack of non trivial minimizers. 
	
	Our goal here is twofold. On the one hand, we aim at defining a functional on suitable neighborhoods of any ALE Ricci-flat metrics which detects Ricci-flat metrics only. On the other hand, we wish to define an adequate notion of linear stability for an ALE Ricci-flat metric tied to our functional in order to study the relation between linear stability and dynamical stability along the Ricci flow. The second goal will be addressed in a forthcoming paper.
	
	Our work relies on that of Haslhofer \cite{Has-Per-Fct} who introduced a functional which we denote by $\lambda_{\operatorname{ALE}}^0$ and where the minimization in (\ref{Per-Def-Lam}) takes place among test functions $\varphi$ such that $\varphi-1$ is compactly supported. It is a convenient functional when the scalar curvature is nonnegative and integrable: in \cite{Has-Per-Fct}, $\lambda_{\operatorname{ALE}}^0(g)$ is compared to the ADM mass $m_{\operatorname{ADM}}(g)$ of $g$ in order to give a quantitative version of the positive mass theorem on asymptotically flat manifolds (satisfying additional assumptions). 
	Despite its good properties with respect to the Ricci flow or its link to the ADM mass, the functional $\lambda_{\operatorname{ALE}}^0$ is only defined on a suitable neighborhood of metrics of a given ALE Ricci-flat metric $(N^n,g_b)$ whose scalar curvatures are either integrable or decay sufficiently fast at infinity. Such sets of metrics are not closed with respect to the topology induced by H\"older spaces modeled on polynomially decaying tensors with rate $\tau>\frac{n-2}{2}$. This classical restriction on the rate ensures that any deformation of the metric has its gradient lying in $L^2$. This observation is a major drawback to establish finer properties of the functional $\lambda_{\operatorname{ALE}}^0$ such as a \L ojasiewicz inequality we discuss below. 
	
		To remedy these issues, we refine the definition of $\lambda_{\operatorname{ALE}}^0$ by substracting the ADM mass. It gives in turn a new functional called $\lambda_{\operatorname{ALE}}$ and formally defined by:
		\begin{equation}
	\lambda_{\operatorname{ALE}}(g):= \lambda_{\operatorname{ALE}}^0(g)-m_{\operatorname{ADM}}(g).\label{formal-intro-lambda}
	\end{equation}
	At first sight, the functional $\lambda_{\operatorname{ALE}}$ still seems to require the integrability of the scalar curvature to make sense of each term: it turns out that (\ref{formal-intro-lambda}) can be reinterpreted as the limit of the difference of two a priori divergent integrals: see Section \ref{extension tilde lambda} for a precise statement. Again, in case a metric $g$ has integrable scalar curvature and lies in a neighborhood of an ALE Ricci-flat metric, the formula (\ref{formal-intro-lambda}) makes sense and lets one to compute $\lambda_{\operatorname{ALE}}$ more explicitly. We show that this functional fulfills our specifications: it is defined on a whole neighborhood of an ALE Ricci-flat metric whether its ADM mass is finite or not. Furthermore, it is analytic, its gradient vanishes on Ricci-flat ALE metrics only and the Ricci flow is moreover its gradient flow: see Section \ref{extension tilde lambda} for rigorous statements and proofs. 
	
	The second variation of $\lambda_{\operatorname{ALE}}$ at an ALE Ricci-flat metric $(N^n,g_b)$ along divergence-free variations is half the Lichnerowicz operator $L_{g_b}:=\Delta_{g_b}+2\Rm(g_b)\ast$, i.e. if $h$ is a smooth compactly supported $2$-tensor on $N$,
	\begin{equation}
	\delta^2_{g_b}\lambda(h,h)=\frac{1}{2}\left<L_{g_b}h,h\right>_{L^2},\quad \div_{g_b}h=0.
	\end{equation}
	This fact alone strongly suggests that the linear stability of an ALE Ricci-flat metric $(N^n,g_b)$ should be defined in terms of the non-positivity of the associated Lichnerowicz operator $L_{g_b}$ restricted to divergence-free variations: this guess is not new and was investigated in the setting of Ricci-flat cones by Haslhofer-Hall-Siepmann \cite{Hal-Has-Sie}. Linear stability actually gives some non trivial local information in the integrable case, i.e. in the case where the space of ALE Ricci-flat metrics in the neighborhood of a fixed ALE Ricci-flat metric is a smooth finite-dimensional manifold: see Proposition \ref{local maximum stable integrable} for a formal statement.
\begin{prop}\label{prop-sta-int}
Any ALE Ricci-flat metric $(N^n,g_b)$ which is locally stable and integrable is a local maximum for the functional $\lambda_{\operatorname{ALE}}$.
\end{prop}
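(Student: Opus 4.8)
The plan is to run a variational finite-dimensional (Lyapunov--Schmidt) reduction adapted to the weighted ALE framework, much as one analyses deformations of Einstein or Ricci-flat metrics, but keeping careful track of the decay rates.

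First I would fix the gauge: since $\lambda_{\operatorname{ALE}}$ is invariant under diffeomorphisms asymptotic to the identity, an ALE slice theorem lets one assume that a metric $g$ near $g_b$ lies in a fixed Bianchi slice $\mathcal{S}=g_b+\{h\in C^{2,\alpha}_\tau(S^2T^*N)\colon \div_{g_b}h=0\}$, with $\tau>\frac{n-2}{2}$ the rate on which $\lambda_{\operatorname{ALE}}$ is defined and analytic. On $\mathcal{S}$ the first variation of $\lambda_{\operatorname{ALE}}$ at $g_b$ vanishes, its second variation is $\tfrac12\langle L_{g_b}\cdot,\cdot\rangle_{L^2}$, and the critical set of $\lambda_{\operatorname{ALE}}$ in $\mathcal{S}$ near $g_b$ is the moduli space $\mathcal{M}$ of Ricci-flat ALE metrics near $g_b$; integrability guarantees that $\mathcal{M}$ is a smooth finite-dimensional submanifold with $T_{g_b}\mathcal{M}=K:=\ker L_{g_b}$, the transverse-traceless tensors in $C^{2,\alpha}_\tau$ annihilated by $L_{g_b}$.

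Next I would write $h=g-g_b=v+w$ with $v\in K$ and $w$ in an $L^2$-orthogonal complement $K^\perp$ of $K$ inside the slice, and solve the transverse part of the equation $\nabla\lambda_{\operatorname{ALE}}=0$: as the linearization of $\nabla\lambda_{\operatorname{ALE}}$ at $g_b$ is a multiple of the elliptic operator $L_{g_b}$, which is Fredholm on the relevant weighted Hölder spaces, the analytic implicit function theorem produces $w=w^\ast(v)$ with $w^\ast(0)=0$, $Dw^\ast(0)=0$. The decisive input is that linear stability together with integrability makes this transverse critical point a \emph{strict} local maximum in the $w$-directions: upgrading $\langle L_{g_b}w,w\rangle_{L^2}\le 0$ (with equality only on $K$) to a weighted coercivity estimate $\langle L_{g_b}w,w\rangle_{L^2}\le -c\,\|w\|_\ast^2$ for $w\in K^\perp$, and controlling the cubic remainder in the Taylor expansion of $\lambda_{\operatorname{ALE}}$ at $g_b$ in the \emph{same} weighted norm $\|\cdot\|_\ast$, one obtains $\lambda_{\operatorname{ALE}}(g_b+v+w)\le\lambda_{\operatorname{ALE}}(g_b+v+w^\ast(v))=:\widetilde\lambda(v)$ for $(v,w)$ small, with equality only when $w=w^\ast(v)$. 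Integrability then forces the reduced obstruction map $v\mapsto\Pi_K\nabla\lambda_{\operatorname{ALE}}(g_b+v+w^\ast(v))$ to vanish identically near $0$, so that $\{g_b+v+w^\ast(v)\}$ is exactly $\mathcal{M}$; since $\nabla\lambda_{\operatorname{ALE}}$ vanishes on the connected manifold $\mathcal{M}$, the functional is constant on it, equal to $\lambda_{\operatorname{ALE}}(g_b)$. Hence $\widetilde\lambda\equiv\lambda_{\operatorname{ALE}}(g_b)$ and $\lambda_{\operatorname{ALE}}(g)\le\lambda_{\operatorname{ALE}}(g_b)$ on a neighborhood of $g_b$, which after undoing the gauge fixing is the claim.

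\textbf{The main obstacle} is the transverse coercivity. On an ALE manifold $0$ lies at the edge of the essential spectrum of $L_{g_b}$ (the curvature term $2\Rm(g_b)\ast$ is a decaying perturbation of the flat rough Laplacian), so there is no spectral gap: $\langle L_{g_b}w,w\rangle_{L^2}$ is negative but not uniformly negative in the $L^2$ topology on $K^\perp$. Recovering a usable estimate forces one to work in weighted norms — this is, in essence, the optimal weighted \L{}ojasiewicz exponent in the integrable case — and the real difficulty is to find a single weighted topology in which the quadratic form is coercive and the nonlinear remainder is simultaneously small. A secondary and more routine point is to verify that linear stability and the equality $\dim\ker L_{g_c}=\dim K$ persist for $g_c\in\mathcal{M}$ near $g_b$, which follows from upper semicontinuity of $\dim\ker$ together with $T_{g_c}\mathcal{M}\subseteq\ker L_{g_c}$; doing the reduction once relative to $g_b$, as above, sidesteps this. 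Finally, I note that the monotonicity of $\lambda_{\operatorname{ALE}}$ along the Ricci flow suggests a shorter route — a nearby metric should flow to a Ricci-flat ALE metric, on which $\lambda_{\operatorname{ALE}}=\lambda_{\operatorname{ALE}}(g_b)$ — but making that rigorous requires the dynamical stability theory, which lies outside the present paper.
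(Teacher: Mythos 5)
Your overall architecture — gauge-fix to a divergence-free slice, split off the kernel directions using integrability, and beat the cubic remainder with transverse coercivity — matches the paper's strategy (the paper implements the splitting not by Lyapunov--Schmidt but by moving the reference Ricci-flat metric along the moduli space, Proposition~\ref{gauge fixing ALE integrable}, so that $h=g-g_b'$ is divergence-free and $L^2$-orthogonal to $\ker_{L^2}L_{g_b'}$, and then Taylor-expands $\lambda_{\operatorname{ALE}}$ at $g_b'$ to third order). You also correctly diagnose the central difficulty: $0$ sits at the bottom of the essential spectrum of $L_{g_b}$, so there is no $L^2$ spectral gap on $K^\perp$. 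But the proposal stops exactly there: both the weighted coercivity and the matching control of the cubic remainder are \emph{asserted}, and these two estimates are essentially the entire content of the proof. The coercivity is Theorem~\ref{theo-der-kro}: for $h\perp_{L^2}\ker_{L^2}L_{g_b}$ one has $\left<-L_{g_b}h,h\right>\geq c\|\nabla^{g_b}h\|_{L^2}^2$, i.e.\ the correct "weighted topology" is the homogeneous $\dot H^1$ energy norm, which via Minerbe's Hardy inequality also controls $\|\rho_{g_b}^{-1}h\|_{L^2}$. The remainder bound is Claim~\ref{third-der-est}, $|\delta^3_{g_b+th}\lambda_{\operatorname{ALE}}(h,h,h)|\leq C\|h\|_{C^{2,\alpha}_{\tau}}\|\nabla^{g_b}h\|_{L^2}^2$, whose proof is a substantial computation requiring the a priori energy estimates on the potential function $f_g$ and its variations (Proposition~\ref{prop-ene-pot-fct}) together with repeated use of Hardy's inequality; none of this is sketched in your argument.

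There is also a concrete error in your claim that "doing the reduction once relative to $g_b$ sidesteps" the persistence of stability along the moduli space. If you expand at $g_b$ with $h=v+w$, $v\in K$, the quadratic term only penalizes $w$ (since $L_{g_b}v=0$), but the cubic remainder is bounded by $\|h\|_{C^{2,\alpha}_\tau}\|\nabla^{g_b}h\|_{L^2}^2$ and therefore contains $\|\nabla^{g_b}v\|_{L^2}^2$, which cannot be absorbed by $-c\|\nabla^{g_b}w\|_{L^2}^2$. One genuinely must re-center the expansion at the nearby Ricci-flat metric $\bar g_v$, and then coercivity is needed for $L_{\bar g_v}$ on $\bigl(\ker_{L^2(\bar g_v)}L_{\bar g_v}\bigr)^{\perp}$, not for $L_{g_b}$. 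This is why the paper proves Proposition~\ref{prop-int-lin-sta-loc-sta} (linear stability plus integrability implies local stability), by linearizing $L_{\bar g_v}$ against $L_{g_b}$ and showing the coercivity constant survives for small $v$; the hypothesis "locally stable" in the statement is precisely what packages this. Your appeal to upper semicontinuity of $\dim\ker$ does not by itself yield the uniform quantitative coercivity constant that the absorption argument requires.
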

The statement of Proposition \ref{prop-sta-int} echoes \cite{Has-Sta}: the proof is however technically different.

We also obtain in Section \ref{sec-covid-mass} new quantitative positive mass theorems in the same spirit of \cite{Has-Per-Fct} for small metric perturbations of \emph{stable} Ricci-flat ALE metrics: see Corollary \ref{local positive mass}. Note that the positive mass theorem generally does not hold on ALE manifolds \cite{LeB-Counter-Mass}. 

The positive mass theorem however holds for spin ALE manifolds, see \cite{nak}. Pushing Nakajima's estimate further similarly to \cite{Has-Per-Fct} in the asymptotically Euclidean setting, we prove the following \emph{global} property satisfied by the functional $\lambda_{\operatorname{ALE}}$ on spin manifolds.
	\begin{prop}[Proposition \ref{prop-spin-def-local}]
		Let $(N^4,g)$ be an ALE metric of order $\tau>1 = \frac{4}{2}-1$ on a spin manifold asymptotic to $\mathbb{R}^4\slash\Gamma$ for $\Gamma\subset SU(2)$.
		Assume the scalar curvature $\R_g$ is integrable and non-negative. Then, we have $$\lambda_{\operatorname{ALE}}(g)\leq 0,$$
		that is
		$$m_{\operatorname{ADM}}(g)\geqslant\lambda_{\operatorname{ALE}}^0(g)\geqslant 0,$$
		with equality if and only if $(N^4,g)$ is one of Kronheimer's gravitational instantons \cite{kro}.
	\end{prop}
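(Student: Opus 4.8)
The plan is to combine two ingredients: the comparison $m_{\operatorname{ADM}}(g)\geqslant \lambda_{\operatorname{ALE}}^0(g)\geqslant 0$ coming from Haslhofer's argument adapted to ALE manifolds with integrable nonnegative scalar curvature, and a sharp lower bound $m_{\operatorname{ADM}}(g)\geqslant 0$ coming from the spin positive mass theorem \cite{nak}. Since by definition $\lambda_{\operatorname{ALE}}(g)=\lambda_{\operatorname{ALE}}^0(g)-m_{\operatorname{ADM}}(g)$, the inequality $\lambda_{\operatorname{ALE}}(g)\leqslant 0$ is \emph{equivalent} to $m_{\operatorname{ADM}}(g)\geqslant \lambda_{\operatorname{ALE}}^0(g)$, so the whole content is in establishing this last comparison and analyzing its equality case. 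Thus the proof naturally splits into (i) showing $\lambda_{\operatorname{ALE}}^0(g)\geqslant 0$, (ii) showing $m_{\operatorname{ADM}}(g)\geqslant \lambda_{\operatorname{ALE}}^0(g)$, and (iii) characterizing equality.

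For step (i): when $\R_g\geqslant 0$, the quadratic form $\int_N 4|\nabla\varphi|^2+\R_g\varphi^2$ is nonnegative on all test functions with $\varphi-1$ compactly supported, hence $\lambda_{\operatorname{ALE}}^0(g)\geqslant 0$ immediately. For step (ii), I would follow Haslhofer's strategy: the infimum defining $\lambda_{\operatorname{ALE}}^0(g)$ is attained (or approximately attained) by a minimizer $\varphi$ solving $-4\Delta_g\varphi+\R_g\varphi=0$ with $\varphi\to 1$ at infinity, and $\varphi = 1 + O(r^{2-n})$ with an expansion whose leading coefficient is controlled by $\int_N \R_g$. Integrating the Euler–Lagrange equation against $\varphi$ and using the divergence theorem on large spheres, $\lambda_{\operatorname{ALE}}^0(g) = \int_N(4|\nabla\varphi|^2+\R_g\varphi^2)$ gets related to a boundary flux that, after conformally changing the metric by $\varphi^{4/(n-2)}$ (the Yamabe-type substitution), becomes a difference of ADM masses: the mass of $\tilde g = \varphi^{4/(n-2)}g$ is $m_{\operatorname{ADM}}(g)$ minus a nonnegative multiple of $\int_N \R_g \varphi$, and $\tilde g$ has $\R_{\tilde g}\geqslant 0$; then applying the spin positive mass theorem to $\tilde g$ (spinness is preserved, and the rate $\tau>1$ is exactly what makes the mass well-defined) gives $m_{\operatorname{ADM}}(\tilde g)\geqslant 0$, i.e. $m_{\operatorname{ADM}}(g)\geqslant \lambda_{\operatorname{ALE}}^0(g)$. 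The technical care here is that Nakajima's estimate \cite{nak} is stated for asymptotically Euclidean metrics; one must check it extends to the asymptotically $\mathbb{R}^4/\Gamma$ case with $\Gamma\subset SU(2)$ (so that the ALE end is spin and the Dirac operator argument goes through), which I expect to be the main obstacle of the proof — one pushes the Lichnerowicz–Weitzenböck / harmonic-spinor argument on the ALE end just as Haslhofer did in the AE setting.

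For step (iii), the equality case: $\lambda_{\operatorname{ALE}}(g)=0$ forces $m_{\operatorname{ADM}}(g)=\lambda_{\operatorname{ALE}}^0(g)$, which by the conformal computation forces both $\int_N\R_g\varphi=0$ (hence $\R_g\equiv 0$ since $\R_g\geqslant 0$ and $\varphi>0$) and $m_{\operatorname{ADM}}(\tilde g)=0$ with $\tilde g = g$; the rigidity part of the spin positive mass theorem then yields a parallel spinor, so $(N^4,g)$ is Ricci-flat, Kähler (indeed hyperkähler), ALE with group $\Gamma\subset SU(2)$ — which by Kronheimer's classification \cite{kro} means $(N^4,g)$ is one of Kronheimer's gravitational instantons. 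Conversely, each Kronheimer instanton is Ricci-flat hence scalar-flat with vanishing mass, and one checks directly $\lambda_{\operatorname{ALE}}^0=0$ there (the constant function $\varphi\equiv 1$ is the minimizer), so $\lambda_{\operatorname{ALE}}(g)=0$. I would also record that the finiteness and well-definedness of all quantities is guaranteed by the order hypothesis $\tau>1=\frac{4}{2}-1$ together with the integrability of $\R_g$, as set up in Section \ref{extension tilde lambda}.
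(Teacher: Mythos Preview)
Your step (i) is fine, but step (ii) contains a genuine error that breaks the argument. The minimizer $\varphi = w_g$ of $\lambda_{\operatorname{ALE}}^0(g)$ solves $-4\Delta_g\varphi + \R_g\varphi = 0$, which is \emph{not} the conformal Laplacian $-\frac{4(n-1)}{n-2}\Delta_g + \R_g$ (in dimension $4$ the leading coefficients are $4$ versus $6$). Consequently the conformal metric $\tilde g = \varphi^{4/(n-2)}g$ has scalar curvature
\[
\R_{\tilde g} = \varphi^{-\frac{n+2}{n-2}}\Bigl(-\tfrac{4(n-1)}{n-2}\Delta_g\varphi + \R_g\varphi\Bigr) = -\tfrac{1}{n-2}\,\R_g\,\varphi^{-\frac{4}{n-2}} \leq 0,
\]
the opposite sign from what you assert; the spin positive mass theorem therefore does not apply to $\tilde g$ and the chain of inequalities collapses. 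Replacing $\varphi$ by the solution $u$ of the genuine conformal Laplacian equation would make $\tilde g$ scalar-flat, but the resulting bound $m_{\operatorname{ADM}}(g)\geq \int_N \R_g\,u\,d\mu_g$ no longer compares to $\lambda_{\operatorname{ALE}}^0(g)=\int_N\R_g\,w_g\,d\mu_g$, so this route does not reach the stated inequality either.

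The paper bypasses conformal changes entirely. It uses Witten's spinor formula for the mass, extended by Nakajima \cite{nak} to spin ALE manifolds with $\Gamma\subset SU(2)$: there is a harmonic Dirac spinor $\psi$ asymptotic to a unit-norm constant spinor with
\[
m_{\operatorname{ADM}}(g)=\int_N\bigl(4|\nabla^g\psi|_g^2+\R_g|\psi|_g^2\bigr)\,d\mu_g.
\]
Since $|\psi|_g\to 1$ at infinity, $|\psi|_g$ is an admissible test function for $\lambda_{\operatorname{ALE}}^0$, and the pointwise Kato inequality $|\nabla^g\psi|_g\geq |\nabla^g|\psi|_g|_g$ gives $m_{\operatorname{ADM}}(g)\geq \mathcal{F}_{\operatorname{ALE}}(|\psi|_g,g)\geq \lambda_{\operatorname{ALE}}^0(g)$ directly. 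For rigidity one invokes the \emph{improved} Kato inequality for Dirac spinors, so that equality forces $\nabla^g\psi=0$; a parallel spinor on such an ALE $4$-manifold makes $(N^4,g)$ hyperk\"ahler \cite{nak}, hence one of Kronheimer's instantons \cite{kro}. Your equality analysis in (iii) is morally in the right spirit but rests on the broken step (ii).
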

	These well-known gravitational instantons are therefore the (only) maximizers of $\lambda_{ALE}$ with non negative and integrable scalar curvature.

	\subsection*{A \L{}ojasiewicz inequality for $\lambda_{\operatorname{ALE}}$}
	A certain number of difficulties arises when it comes to study the dynamical stability of ALE Ricci-flat metrics along the Ricci flow.
	A first obstacle is the presence of a non-trivial kernel of the Lichnerowicz operator. This issue already occurs in the case of a closed Ricci-flat metric. The non-compactness of the underlying space is an additional source of trouble since $0$ is not isolated in the spectrum of the linearized operator. This fact explains a polynomial-in-time convergence instead of an exponential rate in the case of an integrable Ricci-flat metric on a closed manifold, as it was demonstrated in \cite{Has-Sta}. 
	
	One tool that has been quite popular to study the stability of fixed points of  geometric evolution equations is the notion of \L{}ojasiewicz-Simon inequalities. Its name comes from both the classical work of \L{}ojasiewicz \cite{loj} on finite dimensional dynamical systems of gradient type and that of L. Simon \cite{sim} who extended systematically these inequalities to functionals defined on infinite dimensional spaces. The main geometric applications obtained in \cite{sim} concern the uniqueness of tangent cones of isolated singularities of minimal surfaces in Euclidean space together with the uniqueness of tangent maps of minimizing harmonic maps with values into an analytic closed Riemannian manifold. These geometric equations have the advantage to be strongly elliptic. Notice that all these results do not hold true if one drops the assumption on the analyticity of the data under consideration.

	 \L{}ojasiewicz-Simon inequalities have been extensively used these last years in the context of mean curvature flow, especially to prove the uniqueness of blow-ups \cite{Col-Min-Uni-MCF}. 
	
	In the compact setting, \L{}ojasiewicz inequalities have been proved for Perelman's $\lambda$-functional in the neighborhood of \emph{compact} Ricci-flat metrics in \cite{Has-Sta} in the integrable case, and in \cite{Has-Mul} in the general case, see also \cite{Kro-Sta-Ins} for Ricci solitons. They have been applied to characterize the stability and instability of the Ricci flow at compact Ricci-flat metrics.

	Our main application is to prove a similar result for Ricci-flat ALE metrics. We provide a general scheme of proof for \L{}ojasiewicz inequalities on non-compact manifolds based on the theory of elliptic operators between weighted Hölder spaces.
	
	We first show how such an inequality can be proved "by hand" in the integrable and stable situation, but we realize that this only leads to an actual \L{}ojasiewicz inequality in dimensions greater than or equal to $5$. We introduce a general scheme of proof, based on that of \cite{Col-Min-Ein-Tan-Con}, for weighted \L{}ojasiewicz inequalities for ALE metrics which holds without integrability or stability assumption in dimensions greater than or equal to $4$ and we provide an optimal exponent in the integrable case. By weighted \L{}ojasiewicz inequalities, we mean that the norm of the gradient of the corresponding functional is a weighted $L^2$-norm. More specifically here, we consider the space $L^2_{\frac{n}{2}+1}$ which, roughly speaking, is the space of tensors $T$ such that $r\cdot T$ belongs to $L^2$, $r$ being the distance from a fixed point: see Definition \ref{def-weighted-sobolev-norms} for a formal definition. The necessity of using such weighted norms rather than using the usual $L^2$ norm is explained below. 
	
Our main result is that the functional $\lambda_{\operatorname{ALE}}$ satisfies an $L^2_{\frac{n}{2}+1}$-\L ojasiewicz inequality in a neighborhood of any ALE Ricci-flat metric with respect to the topology of weighted H\"older spaces $C^{2,\alpha}_{\tau}$, $\alpha\in(0,1)$, with polynomial decay of rate $\tau\in (\frac{n-2}{2},n-2)$. 
	\begin{theo}\label{dream-thm-loja-intro}
		Let $(N^n,g_b)$ be an ALE Ricci-flat manifold of dimension $n\geq 4$. Let $\alpha\in(0,1)$ and $\tau\in(\frac{n-2}{2},n-2)$. Then there exist a neighborhood $B_{C^{2,\alpha}_{\tau}}(g_b,\varepsilon)$ of $g_b$, a constant $C>0$ and $\theta\in (0,1)$ such that for any metric $g\in B_{C^{2,\alpha}_{\tau}}(g_b,\varepsilon)$, we have the following $L^2_{\frac{n}{2}+1}$-\L ojasiewicz inequality,
		\begin{equation}
		|\lambda_{\operatorname{ALE}}(g)|^{2-\theta}\leq C\|\nabla \lambda_{\operatorname{ALE}}(g)\|_{L^2_{\frac{n}{2}+1}}^{2}.\label{loj-ineq-lambda-ALE}
		\end{equation}
		Moreover, if $(N^n,g_b)$ has integrable infinitesimal Ricci-flat deformations, then $\theta=1$.
		
		In particular, if $n\geq 5$, one has the following $L^2$-\L ojasiewicz inequality for integrable Ricci-flat ALE metrics: if $\tau\in(\frac{n}{2},n-2)$ then for any $0<\delta<\frac{2\tau-(n-2)}{2\tau-(n-4)}$, there exists $C>0$ such that for all $g\in B_{C^{2,\alpha}_\tau}(g_b,\epsilon)$, 
				$$ |\lambda_{\operatorname{ALE}}(g)|^{2-\theta_{L^2}}\leq C \|\nabla \lambda_{\operatorname{ALE}}(g)\|_{L^2}^{2}, \quad\theta_{L^2}:=2-\frac{1}{\delta}.$$

	\end{theo}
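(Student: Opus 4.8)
The plan is to run a Lyapunov--Schmidt reduction in the spirit of \cite{sim, Col-Min-Ein-Tan-Con}, with the function spaces chosen so that the relevant linearized operator becomes Fredholm: weighted Hölder spaces $C^{k,\alpha}_{\tau}$ for the metrics and the weighted $L^2_{\frac n2+1}$ of Definition~\ref{def-weighted-sobolev-norms} for the gradient. I use freely, as established earlier in the paper, that $\lambda_{\operatorname{ALE}}$ is analytic on $B_{C^{2,\alpha}_\tau}(g_b,\varepsilon)$ with $\lambda_{\operatorname{ALE}}(g_b)=0$; that its $L^2$-gradient has the form $\nabla\lambda_{\operatorname{ALE}}(g)=-\Ric_g-\Hess_g w_g$ for a potential $w_g\to 0$ at infinity, so that $\nabla\lambda_{\operatorname{ALE}}(g)\in C^{0,\alpha}_{\tau+2}$, which embeds in $L^2_{\frac n2+1}$ precisely because $\tau>\frac{n-2}{2}$; that $\nabla\lambda_{\operatorname{ALE}}(g)=0$ if and only if $g$ is Ricci-flat; and that the second variation along divergence-free tensors is $\tfrac12\langle L_{g_b}\cdot,\cdot\rangle_{L^2}$ with $L_{g_b}=\Delta_{g_b}+2\Rm(g_b)\ast$. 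Since $\lambda_{\operatorname{ALE}}$ is invariant under diffeomorphisms asymptotic to the identity and the $L^2_{\frac n2+1}$-norm of $\nabla\lambda_{\operatorname{ALE}}$ is invariant up to a fixed constant under these, it suffices to prove \eqref{loj-ineq-lambda-ALE} for metrics in a divergence-free (Bianchi) slice $g_b+\mathcal S$; equivalently one replaces $\nabla\lambda_{\operatorname{ALE}}$ by a DeTurck-type modified gradient $\mathcal N_{g_b}$, equal to $\nabla\lambda_{\operatorname{ALE}}$ on the slice, whose linearization at $g_b$ is the full elliptic operator $\tfrac12 L_{g_b}$ rather than its divergence-free restriction. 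Producing the slice near $g_b$ and checking these equivalences on an ALE manifold is routine once decay rates are tracked.

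By the weighted elliptic theory on ALE manifolds, $D\mathcal N_{g_b}=\tfrac12 L_{g_b}\colon C^{2,\alpha}_\tau\to C^{0,\alpha}_{\tau+2}$ is Fredholm for $\tau\in(\frac{n-2}{2},n-2)$, with finite-dimensional kernel $K$ equal to the space of infinitesimal Ricci-flat ALE deformations; its cokernel, a priori slightly larger than $K$ because of slowly decaying ($O(r^{2-n})$-type) Lichnerowicz fields, is cut down to $K$ by using that the mass-subtracted gradient $\nabla\lambda_{\operatorname{ALE}}$ avoids exactly those directions — this is the structural role of the ADM-mass term in the definition of $\lambda_{\operatorname{ALE}}$. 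Let $P$ be the $L^2$-orthogonal projection onto $K$, a bounded finite-rank operator on $C^{0,\alpha}_{\tau+2}$ since $K$ consists of fast-decaying smooth tensors; then $D\mathcal N_{g_b}$ maps $\ker P$ isomorphically onto $\ker P$, and the analytic implicit function theorem yields an analytic map $v\mapsto w(v)$ from a ball of $K$ into $\ker P$ with $w(0)=0$, $Dw(0)=0$ and $(I-P)\mathcal N_{g_b}(g_b+v+w(v))=0$. The reduced functional $\Lambda(v):=\lambda_{\operatorname{ALE}}(g_b+v+w(v))$ is real-analytic on a finite-dimensional ball, and the Lyapunov--Schmidt identity gives $\nabla\Lambda(v)=P\,\mathcal N_{g_b}(g_b+v+w(v))$; the finite-dimensional \L ojasiewicz inequality \cite{loj} then provides $C>0$ and $\theta\in(0,1)$ with $|\Lambda(v)-\Lambda(0)|^{2-\theta}\le C|\nabla\Lambda(v)|^2$. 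If $g_b$ has integrable infinitesimal Ricci-flat deformations, each $v\in K$ is tangent to a genuine family of Ricci-flat ALE metrics, so $w(v)$ may be chosen with $g_b+v+w(v)$ Ricci-flat; then $\Lambda\equiv\lambda_{\operatorname{ALE}}(g_b)=0$, which is the case $\theta=1$ (cf. Proposition~\ref{prop-sta-int}).

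To pass back to a general $g=g_b+h$ in the slice, write $h=v+\eta$ with $v\in K$, $\eta\in\ker P$ and set $\bar g:=g_b+v+w(v)$. Coercivity of $(I-P)D\mathcal N_{g_b}$ on $\ker P$, together with $(I-P)\mathcal N_{g_b}(\bar g)=0$ and a Taylor estimate, gives $\|\eta-w(v)\|_{C^{2,\alpha}_\tau}\lesssim\|(I-P)\mathcal N_{g_b}(g)\|_{C^{0,\alpha}_{\tau+2}}\lesssim\|\nabla\lambda_{\operatorname{ALE}}(g)\|_{L^2_{\frac n2+1}}$. Integrating $t\mapsto\lambda_{\operatorname{ALE}}(\bar g+t(\eta-w(v)))$ over $[0,1]$, whose derivative vanishes to first order at $t=0$ because $(I-P)\mathcal N_{g_b}(\bar g)=0$ and $\eta-w(v)\in\ker P$, yields $|\lambda_{\operatorname{ALE}}(g)-\Lambda(v)|\lesssim\|\eta-w(v)\|^2\lesssim\|\nabla\lambda_{\operatorname{ALE}}(g)\|^2_{L^2_{\frac n2+1}}$, while $|\nabla\Lambda(v)|\lesssim|P\,\mathcal N_{g_b}(g)|+\|\eta-w(v)\|\lesssim\|\nabla\lambda_{\operatorname{ALE}}(g)\|_{L^2_{\frac n2+1}}$ since $P$ has finite rank. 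Combining, $|\lambda_{\operatorname{ALE}}(g)|=|\lambda_{\operatorname{ALE}}(g)-\Lambda(0)|\le|\lambda_{\operatorname{ALE}}(g)-\Lambda(v)|+|\Lambda(v)-\Lambda(0)|\lesssim\|\nabla\lambda_{\operatorname{ALE}}(g)\|^2_{L^2_{\frac n2+1}}+\|\nabla\lambda_{\operatorname{ALE}}(g)\|^{\frac{2}{2-\theta}}_{L^2_{\frac n2+1}}$; since $\tfrac{2}{2-\theta}<2$ the second term dominates for $\varepsilon$ small, which is \eqref{loj-ineq-lambda-ALE}, with $\theta=1$ in the integrable case.

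For the $L^2$ statement, assume $n\ge5$ (so that the interval $(\frac n2,n-2)$ is nonempty, unlike for $n=4$) and $\tau\in(\frac n2,n-2)$. From $|\nabla\lambda_{\operatorname{ALE}}(g)|\le A\,r^{-(\tau+2)}$ with $A\lesssim\varepsilon$, splitting into $\{r<\rho\}$ and $\{r>\rho\}$ and optimizing over $\rho$ gives $\|\nabla\lambda_{\operatorname{ALE}}(g)\|^2_{L^2_{\frac n2+1}}\lesssim A^{2(1-\sigma)}\,\|\nabla\lambda_{\operatorname{ALE}}(g)\|_{L^2}^{2\sigma}$ with $\sigma=\frac{2\tau-(n-2)}{2\tau-(n-4)}\in(\tfrac12,1)$; inserting this into the integrable ($\theta=1$) inequality above and replacing $\sigma$ by any $\delta<\sigma$ to absorb the bounded factor $A$ yields $|\lambda_{\operatorname{ALE}}(g)|\lesssim\|\nabla\lambda_{\operatorname{ALE}}(g)\|_{L^2}^{2\delta}$, i.e. the claimed inequality with $\theta_{L^2}=2-\tfrac1\delta$. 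I expect the main obstacle to be the Fredholm step: establishing that the Lichnerowicz-type operator underlying $\mathcal N_{g_b}$ is Fredholm on $C^{2,\alpha}_\tau$ over the whole range $\tau\in(\frac{n-2}{2},n-2)$ with kernel exactly $K$, correctly identifying its cokernel (the point where the mass renormalization is essential, and where possible indicial roots inside the interval must be dealt with), and reconciling the Schauder estimates, which live in weighted Hölder spaces, with the requirement that the gradient be controlled in the weighted norm $L^2_{\frac n2+1}$. This last mismatch is also why the plain $L^2$-norm cannot be used: $0$ is not isolated in the $L^2$-spectrum of $L_{g_b}$, so the coercivity gap on $\ker P$ only reappears after passing to these weighted spaces — which is exactly why the \L ojasiewicz inequality is naturally stated with the weighted gradient norm.
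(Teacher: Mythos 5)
Your overall strategy is the one the paper follows: gauge to a divergence-free slice, run the Colding--Minicozzi style Lyapunov--Schmidt reduction for the gradient $-(\Ric(g)+\nabla^{g,2}f_g)$ in weighted spaces, apply the finite-dimensional \L ojasiewicz inequality to the reduced functional, obtain $\theta=1$ in the integrable case because the reduced functional vanishes on the Ricci-flat family, and finally interpolate $L^2_{\frac n2+1}$ against $L^2$ to get the $L^2$ statement for $n\geq 5$ (your exponent $\sigma=\frac{2\tau-(n-2)}{2\tau-(n-4)}$ matches the paper's Lemma \ref{lemma-interpol-delig}).

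There is, however, a genuine gap in the quantitative step. You write
$\|\eta-w(v)\|_{C^{2,\alpha}_\tau}\lesssim\|(I-P)\mathcal N_{g_b}(g)\|_{C^{0,\alpha}_{\tau+2}}\lesssim\|\nabla \lambda_{\operatorname{ALE}}(g)\|_{L^2_{\frac n2+1}}$,
and then feed $\|\eta-w(v)\|^2$ into a second-order Taylor expansion to conclude $|\lambda_{\operatorname{ALE}}(g)-\Lambda(v)|\lesssim\|\nabla\lambda_{\operatorname{ALE}}(g)\|^2_{L^2_{\frac n2+1}}$. The second inequality in that chain is false: a weighted H\"older norm of the gradient cannot be dominated by its weighted $L^2$ norm (the sup-type norm is strictly stronger; no elliptic estimate reverses this). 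This is exactly the mismatch you flag at the end as "the main obstacle," but the proof as written does not resolve it — it silently uses the wrong direction of comparison. The paper's resolution is to run the reduction simultaneously in two scales of spaces: the H\"older spaces $C^{2,\alpha}_\tau\to C^{0,\alpha}_{\tau+2}$ for analyticity and the implicit function theorem, and the Sobolev pair $H^2_{\frac n2-1}\to L^2_{\frac n2+1}$ for all quantitative bounds. Concretely, one must prove that $\nabla\lambda_{\operatorname{ALE}}$ is Lipschitz from $H^2_{\frac n2-1}$ to $L^2_{\frac n2+1}$ (Proposition \ref{prop-energy-est}, which rests on the nontrivial energy estimates for the potential function in Proposition \ref{prop-ene-pot-fct}: Hardy's inequality, Bochner formulas for $\nabla^{g,2}f_g$, etc.) and that $L_{g_b}+\Pi_{\mathbf K}:H^2_{\frac n2-1}\cap E\to L^2_{\frac n2+1}\cap F$ is an isomorphism (Proposition \ref{prop-lic-fred}); then $\Phi$ is Lipschitz from $L^2_{\frac n2+1}$ to $H^2_{\frac n2-1}$ and the Taylor estimate closes in the correct norm. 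Without this two-norm bookkeeping the inequality \eqref{loj-ineq-lambda-ALE} does not follow. A secondary inaccuracy: you attribute the identification of the cokernel with $\mathbf K$ to the ADM-mass subtraction. The mass term's role is to make $\lambda_{\operatorname{ALE}}$ well-defined and analytic on the whole $C^{2,\alpha}_\tau$-ball (cf.\ Example \ref{exemple masse infinie}); the cokernel identification is a gauge statement (the image of $L_{g_b}$ on the divergence-free slice lands in $\div_{g_b}^{\ast}(C^\infty_c(TN))^{\perp}$, Lemma \ref{definition E et F pour loja}), and for $\tau<n-2$ the slowly decaying kernel elements are already excluded by the weight, so no cancellation from the mass is involved.
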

	Here $\nabla \lambda_{\operatorname{ALE}}$ denotes the gradient of $\lambda_{\operatorname{ALE}}$ in the $L^2$ sense.
	
	The fact that our spaces are non-compact induces quite a lot of new difficulties. In particular, the spectrum of the Lichnerowicz operator is not discrete anymore and $0$ belongs to the essential spectrum. This explains the need of considering weighted Sobolev spaces different from $L^2$ for which the differential of $\nabla\lambda_{\operatorname{ALE}}$ at a Ricci-flat ALE metric is Fredholm. We underline the fact that while Theorem \ref{dream-thm-loja-intro} gives an optimal $L^2_{\frac{n}{2}+1}$-\L ojasiewicz inequality, we cannot reach the usual optimal $L^2$-\L{}ojasiewicz exponent $\theta_{L^2} = 1$ (note that we approach it as $\tau$ is close to $n-2$ and the dimension tends to $+\infty$) in the integrable case: see also \cite[Theorem $2.1$]{Har-jen-Loja-Hil} for a proof of this fact in a general linear setting. This is consistent with the known fact that the DeTurck-Ricci flow only converges polynomially fast for perturbations of the Euclidean space: see for instance \cite{Sch-Sch-Sim} and \cite{app-scal}. Indeed, an exponent $\theta_{L^2} =1$ implies that the convergence is exponential.
	
	%\todo{a verifier}
	%\begin{rk}
	%	We will see in a following article \cite{Der-Ozu-II} that as $\tau\to n-2$ this \L{}ojasiewicz exponent recovers the optimal convergence rate for the DeTurck-Ricci flow on Euclidean space \cite{app-scal}. This will hold for the Ricci flow, and for perturbations of other Ricci-flat ALE metrics.
	%\end{rk}
	
	\begin{rk}
		Most of the analysis of this article should apply to the Ricci-flat asymptotically conical case. However, the other classical asymptotics in dimension $4$, namely ALF, ALG and ALH should require more involved arguments.
	\end{rk}
	
	In a forthcoming article, we use the $L^2_{\frac{n}{2}+1}$-\L{}ojasiewicz inequality \eqref{loj-ineq-lambda-ALE} to investigate the dynamical stability of Ricci-flat ALE metrics. 
\\

	% In the general case, the stability of a Ricci-flat ALE metric is characterized by the sign of $\lambda_{\operatorname{ALE}}$ for nearby metrics. 

	\subsection*{Outline of paper}
	
	In Section \ref{sec-rel-ene-ALE}, we begin by recalling the basics of ALE Ricci-flat metrics including the definitions of polynomially weighted H\"older and Sobolev function spaces. Next, Section \ref{sec-mass-def} recalls the definition of the $\operatorname{ADM}$-mass of an ALE Ricci-flat metric and discusses various topologies on the set of nearby metrics which appear in the literature on the study of the mass on ALE metrics. Section \ref{sec-def-fun-lambda-0} introduces the $\lambda_{\operatorname{ALE}}^0$-functional and studies its basic properties in the previously mentioned topology: this is the content of Proposition \ref{existence propriete-wg}. Section \ref{sec-first-sec-var} is devoted to compute the first and second variations of $\lambda_{\operatorname{ALE}}^0$ which are summarized in Propositions \ref{first-var-prop} and  \ref{second-var-prop}. 
		
	In Section \ref{extension tilde lambda}, Proposition \ref{lambdaALE analytic} proves that substracting the ADM-mass to $\lambda_{\operatorname{ALE}}^0$ yields a much better-behaved and analytic functional that we denote $\lambda_{\operatorname{ALE}}$. The second variation of $\lambda_{\operatorname{ALE}}$ is computed in Proposition \ref{snd-var-gal-lambda}.  Moreover, the monotonicity of $\lambda_{\operatorname{ALE}}$ along the Ricci flow is established in Proposition \ref{prop-mono-lambda}. We also take the opportunity to define the linear stability of an ALE Ricci-flat metric: Lemma \ref{lemma-equiv-def-stable} gives two other  equivalent ways of defining the notion of linear stability for such a class of metrics.

	In Sections \ref{sec-ene-est-pot-fct} and \ref{fred-sec-prop}, we prove some technical results which are crucial for the rest of the paper. We first prove energy bounds for the potential function appearing in the definition of $\lambda_{\operatorname{ALE}}$ in Proposition \ref{prop-ene-pot-fct}. We then give the Fredholm properties of the Hessian of $\lambda_{\operatorname{ALE}}$, the Lichnerowicz Laplacian, in weighted H\"older and Sobolev spaces: this is the content of Proposition \ref{prop-lic-fred}.

	In Sections \ref{loj-sim-sec-int-case} and \ref{sec-loja-ineq-gal-case}, we prove \L{}ojasiewicz inequalities satisfied by the functional $\lambda_{\operatorname{ALE}}$. We first consider a na\"ive proof "by hand" in the integrable and stable case in dimension greater than or equal to $5$ in Section \ref{loj-sim-sec-int-case}. More specifically, Section \ref{sec-int-ric-fla} starts with a precise description of neighborhoods of integrable ALE Ricci-flat metrics: see Proposition \ref{gauge fixing ALE integrable}. Using it, we prove Proposition \ref{prop-sta-int} reformulated more accurately in Proposition \ref{local maximum stable integrable}. In Section \ref{naive-loja-sec}, a first attempt to prove a \L ojasiewicz inequality for integrable ALE Ricci-flat metrics is given by following an idea due to Haslhofer \cite{Has-Sta}: Proposition \ref{prop-baby-loja} yields an infinitesimal version of the \L ojasiewicz inequality for $\lambda_{\operatorname{ALE}}$ with a non-trivial \L ojasiewicz exponent $\theta$ in dimension greater than or equal to $5$. This can be interpreted as a preliminary step to the general case developed in Section \ref{sec-loja-ineq-gal-case}. 
	
	We then prove a general \L{}ojasiewicz inequality in Section \ref{sec-loja-ineq-gal-case} by extending the classical reduction to the finite-dimensional situation of \cite{sim}: more precisely, we adapt the concise version of \cite{Col-Min-Ein-Tan-Con} to this non-compact setting in Sections \ref{sec-gal-loja} and \ref{sec-prop-loja-ineq}. This requires the study of the Fredholm properties of the Lichnerowicz operator between weighted H\"older and Sobolev spaces. The proof of a general \L ojasiewicz inequality (\ref{loj-ineq-lambda-ALE}) needs a priori energy estimates which are taken care by Proposition \ref{prop-energy-est} in Section \ref{sec-proof-gal-loja} as required by Proposition \ref{Lojasiewicz ineq weighted}. Section \ref{sec-proof-gal-loja} ends with the proof of Theorem \ref{dream-thm-loja-intro} restated as Theorem \ref{theo-loja-ALE} (the general case) and Theorem \ref{theo-loja-int-opt} in the integrable case.
	
	In Section \ref{sec-covid-mass}, we give some connections between the mass $m_{\operatorname{ADM}}$ and the functional $\lambda_{\operatorname{ALE}}$. We deduce that the positive mass theorem holds in neighborhoods of Ricci-flat ALE metrics which are local maximizers of $\lambda_{\operatorname{ALE}}$. In particular, any compactly supported (or sufficiently decaying) deformation with nonnegative scalar curvature of a given integrable and locally stable Ricci-flat ALE metric is Ricci-flat: see Corollary \ref{local positive mass}. Proposition \ref{prop-spin-def-local} shows that ALE Ricci-flat metrics on spin manifolds are global maximizers of $\lambda_{\operatorname{ALE}}$.
	
	We then recall some basic results about real analytic maps between Banach spaces in Appendix \ref{app-A}, we prove a divergence-free gauge-fixing for ALE metrics in Appendix \ref{app-B}. Finally, we also recall the variations of some geometric quantities in Appendix \ref{app-C}.
	
	\subsection{Acknowledgements}
The first author is supported by grant ANR-17-CE40-0034 of the French National Research Agency ANR (Project CCEM).	
	\section{A relative energy for ALE Ricci-flat metrics}\label{sec-rel-ene-ALE}
	
	Let us introduce a non-compact version of Perelman's $\lambda$-functional and study its properties on ALE metrics.
	
	\subsection{Main definitions}~~\\
	
	We start by defining the class of metrics as well as the function spaces we will be interested in.
	
	%\paragraph{ALE metrics}
	\begin{defn}[Asymptotically locally Euclidean (ALE) manifolds]\label{defn-ALE}
		We will call a Riemannian manifold $(N^n,g)$ \emph{asymptotically locally Euclidean} (ALE) of order $\tau>0$ if the following holds: there exists a compact set $K\subset N$, a radius $R>0$, $\Gamma$ a subgroup of $SO(n)$ acting freely on $\mathbb{S}^{n-1}$ and a diffeomorphism $\Phi : (\RR^n\slash\Gamma )\backslash B_e(0,R)\mapsto N\backslash K$ such that, if we denote $g_e$ the Euclidean metric on $\mathbb{R}^n\slash\Gamma$, we have, for all $k\in \mathbb{N}$,
		$$ \rho^k\big|\nabla^{g_e,k}(\Phi^*g-g_e)\big|_e = O(\rho^{-\tau}),$$
		on $\big(\RR^n\slash\Gamma\big) \backslash B_e(0,R)$, where $\rho = d_e(.,0)$.
	\end{defn}
	
	If $g$ is an ALE metric on $N$, then we denote by $\rho_g$ any smooth positive extension of (the push-forward of) the radial distance on $\RR^n$, $\Phi_{\ast}\rho$. In particular, we will use the fact that the level sets $\{\rho_g=R\}$ of $\rho_g$ are smooth closed connected hypersurfaces for sufficiently large height $R$ constantly.
	
	We will study ALE metrics in a neighborhood of a Ricci-flat ALE metric. Let us start by defining this neighborhood thanks to weighted norms :
	
	\begin{defn}[Weighted Hölder norms for ALE metrics]\label{def-weighted-norms}
		Let $(N,g,p)$ be an ALE manifold of dimension $n$, $\beta>0$. For any tensor $s$, we define the following weighted $C^{k,\alpha}_\beta$-norm :
		$$\| s \|^g_{C^{k,\alpha}_\beta} := \sup_{N}\rho_g^\beta\Big( \sum_{i=0}^k\rho_g^{i}|\nabla^{g,i} s|_{g} + \rho_g^{k+\alpha}[\nabla^{g,k}s]_{C^{0,\alpha}}\Big).$$
	\end{defn}
	
	\begin{defn}[Weighted Sobolev norms for ALE metrics]\label{def-weighted-sobolev-norms}
		Let $\beta>0$, and $(N,g,p)$ an ALE manifold of dimension $n$. For any tensor $s$, we define the following weighted $L_\beta^2$-norm :
		$$\| s \|_{L^{2}_{\beta}} ^g:= \Big(\int_N |s|^2 \rho_{g}^{2\beta-n}d\mu_{g}\Big)^\frac{1}{2}.$$
		We moreover define the $H^k_{\beta}$-norm of $s$ as 
		$$\|s\|_{H^k_\beta}^g:= \sum_{i= 0}^k {\|\nabla^i s \|^g_{L^{2}_{\beta+i}}}.$$	
	\end{defn}
	
	\begin{rk}
		The usual $L^2$ space equals $L^2_\frac{n}{2}$ with the above definition. The intuition behind these norms is that $ \rho_{g}^{-\beta}\in C^{k,\alpha}_\beta$ and for all $\beta'>\beta$, $\rho_{g}^{-\beta'}\in H^k_\beta$. 
	\end{rk}
	
	\begin{rk}\label{sobolev embeddings}
		Moreover, we have the following embeddings: for $k$ large enough depending on the dimension: $$H^k_\beta\subset C^{0,\alpha}_\beta,$$ and for $\beta<\beta'$, we have $$C^{k,\alpha}_{\beta'}\subset H^k_\beta,$$ see \cite[Theorem $1.2$]{Bart-Mass}. Notice that in \cite{Bart-Mass}, $W^{k,2}_{\beta}$ coincides with our space $H^k_{-\beta}$ for $\beta\in\RR$.
	\end{rk}
	
	\begin{note}
		In the rest of this article, we will almost always work in a neighborhood of a fixed Ricci-flat metric with a given topology. Since the above definitions do not formally depend on the type of tensor, we will often abusively omit to mention these informations and simply denote these spaces $C^{k,\alpha}_\tau$ for instance.
	\end{note}
	Finally, we state a version of Hardy's inequality proved by Minerbe \cite{Min-Har-Ine} for Riemannian metrics $(N^n,g)$ with nonnegative Ricci curvature and maximal volume growth, i.e. $\AVR(g):=\lim_{r\rightarrow+\infty}r^{-n}\vol_gB_g(p,r)>0$ for some point $p$ and hence for all points by Bishop-Gromov Theorem:
	
	\begin{theo}[Minerbe's Hardy's inequality]\label{thm-min-har-inequ}
		Let $(N^n,g)$ be a complete Riemannian manifold such that $\Ric(g)\geq 0$ and $\AVR(g)>0$. Then for some point $p\in N$,
		\begin{equation*}
		\int_Nr_p^{-2}\varphi^2\,d\mu_g\leq C(n,\AVR(g))\int_N|\nabla^{g}\varphi|_{g}^2\,d\mu_{g},\quad \forall \varphi\in C_c^{\infty}(N),
		\end{equation*}
		where $r_p(x):=d_g(p,x)$ if $x\in N$.
	\end{theo}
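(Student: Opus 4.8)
The plan is to deduce the inequality from a \emph{ground state substitution} built on the positive Green's function of $(N^n,g)$. For any $C^2$ function $f>0$ on $N\setminus\{p\}$ and any $\varphi\in C_c^\infty(N)$, expanding $|f\nabla^g(\varphi/f)|_g^2$ and integrating by parts yields the identity
\begin{equation*}
\int_N|\nabla^g\varphi|_g^2\,d\mu_g=\int_N f^2\,\big|\nabla^g(\varphi/f)\big|_g^2\,d\mu_g-\int_N\frac{\Delta_g f}{f}\,\varphi^2\,d\mu_g ,
\end{equation*}
valid for our (singular) $f$ by first integrating over $N\setminus B_g(p,\varepsilon)$ and letting $\varepsilon\to 0$, provided the boundary integrals near $p$ vanish and the right-hand side is finite. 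The first term on the right being nonnegative, it suffices to produce $f>0$ satisfying
\begin{equation*}
-\Delta_g f\ \geq\ \frac{c(n,\AVR(g))}{r_p^{2}}\,f\qquad\text{on }N\setminus\{p\},
\end{equation*}
since then $\int_N r_p^{-2}\varphi^2\,d\mu_g\leq c^{-1}\int_N|\nabla^g\varphi|_g^2\,d\mu_g$, which is the claim.

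The natural candidate is $f:=G^{1/2}$, where $G=G(p,\cdot)$ denotes the minimal positive Green's function. Since $\Ric(g)\geq 0$ and $\AVR(g)>0$, the manifold has maximal volume growth, so $\int_1^\infty r\,\big(\vol_gB_g(p,r)\big)^{-1}dr<\infty$ (using $\vol_gB_g(p,r)\geq\AVR(g)\,r^{n}$ and that $n\geq 3$, which is implicit in the statement); hence $(N,g)$ is non-parabolic and $G$ exists, is positive and harmonic on $N\setminus\{p\}$, and tends to $0$ at infinity. Using $\Delta_g G=0$ away from $p$, a direct computation gives
\begin{equation*}
\Delta_g\big(G^{1/2}\big)=-\tfrac14\,G^{1/2}\,\big|\nabla^g\log G\big|_g^{2}\ \leq\ 0 ,
\end{equation*}
so that $-\Delta_g f/f=\tfrac14\,|\nabla^g\log G|_g^{2}$. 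Near $p$ the standard parametrix gives $G\asymp r_p^{2-n}$ and $|\nabla^g\log G|_g=(n-2+o(1))r_p^{-1}$, which makes the singularity of $f$ and all the integrals above admissible and forces the boundary integrals to vanish. Hence the whole statement reduces to the pointwise lower bound
\begin{equation*}
\big|\nabla^g\log G\big|_g(x)\ \geq\ \frac{c(n,\AVR(g))}{r_p(x)},\qquad x\in N\setminus\{p\}.
\end{equation*}

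This last estimate is the heart of the matter, and the step I expect to be the main obstacle. Its counterpart $|\nabla^g\log G|_g\lesssim_n r_p^{-1}$ is the Cheng--Yau gradient estimate applied to the positive harmonic function $G$ on the balls $B_g(x,r_p(x)/2)$, which avoid $p$, and uses only $\Ric(g)\geq 0$. For the lower bound one first records the sharp two-sided estimate $G(p,x)\asymp_{n,\AVR(g)}r_p(x)^{2-n}$ on $N\setminus\{p\}$, obtained by integrating in time the Li--Yau heat kernel bounds (valid under $\Ric(g)\geq 0$) and using $\AVR(g)\,s^{n}\leq\vol_gB_g(p,s)\leq\omega_n s^{n}$; the remaining point, namely $|\nabla^g G|_g\gtrsim_{n,\AVR(g)}r_p^{1-n}$, i.e.\ a quantitative absence of near-critical points of $G$, is exactly the sharp gradient estimate for Green's functions on manifolds of nonnegative Ricci curvature and maximal volume growth, and it is here that the hypothesis $\AVR(g)>0$ is essential --- for instance it excludes parabolic-type ends along which Hardy's inequality fails. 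Squaring this bound and feeding it into the ground state identity completes the proof. An alternative, Green's-function-free route is a Whitney-type decomposition of $N\setminus\{p\}$ into the dyadic annuli $\{2^{k}\leq r_p<2^{k+1}\}$, $k\in\mathbb{Z}$: one estimates $\int r_p^{-2}\varphi^2$ over each annulus by the scale-invariant $L^2$-Poincar\'e inequality (Buser's inequality, valid when $\Ric(g)\geq 0$) together with volume doubling and non-collapsing from the Bishop--Gromov inequality, and sums over $k$ by a telescoping argument; the technical core there is to establish connectedness of, and a Poincar\'e inequality on, these annuli with constants depending only on $n$ and $\AVR(g)$.
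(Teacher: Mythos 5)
The paper does not prove this statement: it is quoted verbatim from Minerbe's work (\cite{Min-Har-Ine}), so your argument has to stand on its own, and its primary route has a genuine gap exactly at the step you flag as the main obstacle. The ground state identity and the reduction to finding $f>0$ with $-\Delta_gf\geq c\,r_p^{-2}f$ are correct, and with $f=G^{1/2}$ this becomes the pointwise bound $|\nabla^g\log G|_g\geq c\,r_p^{-1}$. But that bound is false in the generality of the theorem, and in particular on the manifolds this paper is actually about. A minimal positive Green's function can, and on topologically nontrivial examples must, have interior critical points: if $\nabla^gG$ never vanished on $N\setminus\{p\}$, then since $G$ is proper there, Morse theory would make all level sets of $G$ diffeomorphic to one another; on an ALE space with $\Gamma\neq\{\mathrm{Id}\}$ (e.g.\ Eguchi--Hanson, which is Ricci-flat with $\AVR>0$) the level sets near the pole are spheres $\mathbb{S}^{n-1}$ while those near infinity are $\mathbb{S}^{n-1}/\Gamma$, so $G$ has a critical point, where $|\nabla^g\log G|_g\,r_p=0$. (The same conclusion follows from $H_2(T^*S^2\setminus\{p\})\neq 0$.) The two-sided bound $G\asymp r_p^{2-n}$ from Li--Yau and the Cheng--Yau upper gradient bound are fine, but nothing in the hypotheses $\Ric(g)\geq0$, $\AVR(g)>0$ excludes critical points of $G$; what is true in this setting is only an integral or average lower bound on $|\nabla^gG|_g$ over level sets (Colding's monotonicity for $b=G^{1/(2-n)}$), which is not enough to run the ground state substitution pointwise. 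So the reduction collapses.

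Your alternative sketch --- dyadic annuli $\{2^k\leq r_p<2^{k+1}\}$, a scale-invariant Neumann--Poincar\'e inequality on each with constants depending only on $n$ and $\AVR(g)$, doubling and non-collapsing from Bishop--Gromov, and a telescoping summation --- is the right strategy and is essentially how Minerbe proceeds (a Grigor'yan--Saloff-Coste type discretization yielding weighted Sobolev, hence Hardy, inequalities). As written, though, it is a program rather than a proof: the two points you defer, namely that $\AVR(g)>0$ forces one end and relatively connected annuli, and that the Poincar\'e constant on annuli is uniform over all scales, are precisely the substance of Minerbe's argument. Until those are supplied (or the result is simply cited, as the paper does), the proof is incomplete.
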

	
	\subsection{The mass of an ALE metric}\label{sec-mass-def}~~\\
	
	Next, we define the class of ALE metrics (with a finite amount of regularity at infinity)  we will focus on at the beginning of this article:	
	let $(N^n,g_b)$ be an ALE Ricci-flat metric and let us consider for $\tau>\frac{n-2}{2}$ and $\alpha\in(0,1)$, the following space of metrics 
	\begin{equation}
	\mathcal{M}^{2,\alpha}_{\tau}(g_b):= \left\{\text{$g$ is a metric on $N$}\,|\,g-g_b\in C^{2,\alpha}_{\tau}(S^2T^*N)\,,\, \R_{g} = O(\rho_{g_b}^{-\tau'})\,\text{for some $\tau'>n$}\right\}.
	\end{equation}
	It turns out that this space is convex:
	\begin{lemma}\label{lemm-charac-M}
		We have the following characterization of the space $\mathcal{M}^{2,\alpha}_{\tau}(g_b)$: a metric $g\in \mathcal{M}^{2,\alpha}_{\tau}(g_b)$ if and only if $g-g_b\in C^{2,\alpha}_{\tau}(S^2T^*N)$ and the function $\div_{g_b}\div_{g_b}(g-g_b)-\Delta_{g_b}\tr_{g_b}(g-g_b)$ is in $C^0_{\tau'}(N)$ for some $\tau'>n$. 
		
		In particular, the space $\mathcal{M}^{2,\alpha}_{\tau}(g_b)$ is convex.
	\end{lemma}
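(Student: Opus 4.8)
The plan is to derive the stated characterization by analyzing the scalar curvature expansion of $g = g_b + h$ where $h := g - g_b \in C^{2,\alpha}_{\tau}(S^2T^*N)$, and then to deduce convexity from the fact that the leading-order term in this expansion is \emph{linear} in $h$ while the remaining nonlinear terms are automatically as decaying as required. First I would recall the standard formula for the linearization of scalar curvature at $g_b$: for a symmetric $2$-tensor $h$,
\begin{equation*}
D_{g_b}\R(h) = -\Delta_{g_b}\tr_{g_b}h + \div_{g_b}\div_{g_b}h - \langle \Ric(g_b), h\rangle_{g_b},
\end{equation*}
and since $g_b$ is Ricci-flat the last term vanishes, leaving $D_{g_b}\R(h) = \div_{g_b}\div_{g_b}h - \Delta_{g_b}\tr_{g_b}h$, which is exactly the function appearing in the statement. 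So the whole point is that $\R_g$ equals this linear expression plus an error that is quadratic in the $2$-jet of $h$.

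The key step is then a decay bookkeeping argument. Writing $\R_g = D_{g_b}\R(h) + Q_{g_b}(h)$, where $Q_{g_b}(h)$ is the quadratic remainder, I would check that $Q_{g_b}(h)$ is built from terms each of which is at least quadratic in $h, \nabla^{g_b} h, \nabla^{g_b,2} h$ (schematically $h * \nabla^2 h$, $\nabla h * \nabla h$, with coefficients bounded in terms of $g_b$ and its curvature), so that $h \in C^{2,\alpha}_\tau$ forces $Q_{g_b}(h) = O(\rho_{g_b}^{-2\tau})$ pointwise, hence $Q_{g_b}(h) \in C^0_{2\tau}(N)$. Since $\tau > \frac{n-2}{2}$ gives $2\tau > n-2$ — which is not quite $>n$ — I need to be slightly more careful: in fact $Q_{g_b}(h)$ is built from contractions involving at least one undifferentiated or singly-differentiated factor of $h$ against a factor carrying two derivatives, but the cross terms actually decay like $\rho_{g_b}^{-(2\tau+2)}$ in the worst case because each derivative on $h$ improves decay by one power of $\rho_{g_b}$, so a term like $h * \nabla^{g_b,2} h$ decays like $\rho_{g_b}^{-\tau}\cdot\rho_{g_b}^{-\tau-2} = \rho_{g_b}^{-(2\tau+2)}$, and $2\tau + 2 > n$ precisely when $\tau > \frac{n-2}{2}$. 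Thus $Q_{g_b}(h) = O(\rho_{g_b}^{-\tau'})$ with $\tau' = 2\tau+2 > n$. Consequently $\R_g = O(\rho_{g_b}^{-\tau''})$ for some $\tau'' > n$ if and only if $D_{g_b}\R(h) = \div_{g_b}\div_{g_b}h - \Delta_{g_b}\tr_{g_b}h \in C^0_{\tilde\tau}(N)$ for some $\tilde\tau > n$ (taking $\tilde\tau = \min(\tau'', 2\tau+2)$ in one direction and conversely absorbing $Q_{g_b}(h)$ into the error), which is the asserted characterization.

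Finally, convexity follows immediately: given $g_0, g_1 \in \mathcal{M}^{2,\alpha}_\tau(g_b)$ and $t \in [0,1]$, set $g_t := (1-t)g_0 + t g_1 = g_b + h_t$ with $h_t = (1-t)h_0 + t h_1 \in C^{2,\alpha}_\tau(S^2T^*N)$ (this space is a linear space, so it is trivially convex). The linear operator $h \mapsto \div_{g_b}\div_{g_b} h - \Delta_{g_b}\tr_{g_b} h$ maps $h_t$ to $(1-t)\big(\div_{g_b}\div_{g_b}h_0 - \Delta_{g_b}\tr_{g_b}h_0\big) + t\big(\div_{g_b}\div_{g_b}h_1 - \Delta_{g_b}\tr_{g_b}h_1\big)$, which lies in $C^0_{\tau'}(N)$ for some $\tau'>n$ since $C^0_{\tau'}(N)$ is a linear space and both summands lie in such a space (take the smaller of the two rates). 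By the characterization just established, $g_t \in \mathcal{M}^{2,\alpha}_\tau(g_b)$, proving convexity. The main obstacle is the decay accounting in the second step — one must verify that \emph{every} term of the nonlinear remainder $Q_{g_b}(h)$ genuinely gains enough decay, in particular that no term is merely quadratic in $h$ with no compensating derivatives (which would only give $\rho_{g_b}^{-2\tau}$, insufficient when $n-2 < 2\tau \le n$); here one uses that the undifferentiated quadratic terms in the scalar curvature expansion come paired with the curvature of $g_b$, which itself decays, or cancel, so the effective rate is always at least $2\tau+2$. This is a routine but slightly delicate computation with the scalar curvature expansion, and it is exactly the place where the threshold $\tau > \frac{n-2}{2}$ enters.
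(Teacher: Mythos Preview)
Your proof is correct and follows essentially the same approach as the paper: expand $\R_{g_b+h}$ as the linearized scalar curvature $\div_{g_b}\div_{g_b}h-\Delta_{g_b}\tr_{g_b}h$ plus a quadratic remainder of order $O(\rho_{g_b}^{-2\tau-2})$, observe that $2\tau+2>n$ precisely because $\tau>\frac{n-2}{2}$, and then use linearity of the leading term to get convexity. Your treatment is in fact slightly more explicit than the paper's, which proves convexity directly and leaves the ``if and only if'' characterization implicit in the computation; your remark that the undifferentiated $|h|^2$-terms in the remainder carry a factor of $\Rm(g_b)$ (hence an extra $\rho_{g_b}^{-2}$) is exactly the point needed and matches the estimate in Lemma~\ref{Ric-lin-lemma-app}.
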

	\begin{proof}
		If $g_1$ and $g_2$ belong to $\mathcal{M}^{2,\alpha}_{\tau}(g_b)$, it is straightforward that any convex combination $\lambda_1g_1+\lambda_2g_2$, $\lambda_i\geq 0$, $i=1,2$, $\lambda_1+\lambda_2=1$, is again a metric on $N$ such that $\lambda_1g_1+\lambda_2g_2-g_b\in C_{\tau}^{2,\alpha}(S^2T^*N)$. Now, by linearizing the scalar curvatures of the metrics $g_i=:g_b+h_i$, $i=1,2$ with respect to the metric $g_b$ thanks to [(\ref{lem-lin-equ-scal-first-var}, Lemma \ref{lem-lin-equ-Ric-first-var}], one gets:
		\begin{equation*}
		\R_{g_b+h_i}=\R_{g_b}+\div_{g_b}\div_{g_b}h_i-\Delta_{g_b}\tr_{g_b}h_i+O(\rho_{g_b}^{-2\tau-2}).
		\end{equation*}
		Since $\R_{g_b+h_i}=O(\rho_{g_b}^{-\tau_i'})$, for some $\tau'_i>n$, $i=1,2$, one concludes that 
		\begin{equation}
		\div_{g_b}\div_{g_b}h_i-\Delta_{g_b}\tr_{g_b}h_i=O\left(\rho_{g_b}^{-\min\{2\tau+2,\tau_i'\}}\right).\label{decay-lin-scal-cur}
		\end{equation}
		In particular, by summing the previous Taylor expansions of the scalar curvatures of $g_b+h_i$, $i=1,2$ together with (\ref{decay-lin-scal-cur}), one observes that:
		\begin{equation*}
		\begin{split}
		\R_{g_b+\lambda_1h_1+\lambda_2h_2}&=\sum_{i=1}^2\lambda_i\left(\div_{g_b}\div_{g_b}h_i-\Delta_{g_b}\tr_{g_b}h_i\right)+O(\rho_{g_b}^{-2\tau-2})\\
		&=O\left(\rho_{g_b}^{-\min\{2\tau+2,\tau_1',\tau'_2\}}\right).
		\end{split}
		\end{equation*}
		This proves the convexity of the space under consideration since $2\tau+2>n$.

	\end{proof}
	
\begin{rk}	
	We already see the importance of the assumption $\tau>\frac{n-2}{2}$ which will be crucial all along this paper: it ensures that the nonlinear terms in the expansion of the Ricci or scalar curvature around a given ALE Ricci-flat metric decay faster than the linear ones and are integrable. When dealing with improper integrals for instance, 
	only the linear terms have to be taken care of.
	\end{rk}
	We endow the space $\mathcal{M}^{2,\alpha}_{\tau}(g_b)$ with the distance induced by the norm $\|\cdot\|_{C^{2,\alpha}_{\tau}}$ as a convex subspace of $C^{2,\alpha}_{\tau}(S^2T^*N)$ and write $\mathcal{M}^{2,\alpha}_{\tau}(g_b,\varepsilon)$ for $\mathcal{M}^{2,\alpha}_{\tau}(g_b)\,\cap B_{C^{2,\alpha}_{\tau}}(0_{S^2T^*N},\varepsilon).$ 
	Our choice of notations follow that of Dai-Ma \cite{Dai-Ma-Mass}, Lee-Parker \cite{Lee-Parker} and Bartnik \cite{Bart-Mass} where they consider the more classical space of metrics 
	\begin{equation}
	\mathcal{M}_\tau:= \left\{\text{$g$ is a metric on $N$}\,|\, g-g_b\in C^{1,\alpha}_{\tau}(S^2T^*N)\,|\,\R_g \in L^1\right\},
	\end{equation}
	on which the mass of an ALE metric is well-defined:
	\begin{equation}
	m_{\operatorname{ADM}}(g):=\lim_{R\rightarrow+\infty}\int_{\{\rho_{g_b}=R\}}\left<\div_{g_b}(g-g_b)-\nabla^{g_b}\tr_{g_b}(g-g_b),\mathbf{n}\right>_{g_b}\,d\sigma_{g_b},\label{def-mass}
	\end{equation}
	where $\mathbf{n}$ denotes the outward unit normal of the closed smooth hypersurfaces $\{\rho_{g_b}=R\}$ for $R$ large.

	%\todo[inline]{Il faurait bien différencier deux choses : les métriques ALE d'ordre $\tau$ et de courbure scalaire en $\tau'$ d'une part et les perturbations de métriques Ricci-plates qui sont d'ordre $\tau$ et avec une courbure scalaire en $\tau'$ - ce qui est équivalent à $g_b+h$ avec $h\in C^{2,\alpha}_{\tau}$ telle que $\partial_j\partial_i h_{ij} - \partial_j\partial_j h_{ii} \in C^{0}_{\tau'}$. En notant que la courbure scalaire est alors $  \R_{g_b+h} = \partial_j\partial_i h_{ij} - \partial_j\partial_j h_{ii} + O(\rho_{g_b}^{-2\tau-2}) = O(\rho_{g_b}^{\max(-\tau',-2\tau-2})$ on a bien la même chose, et on note que la masse c'est l'intégrale sur les grandes sphères de $\partial_i h_{ij} - \partial_j h_{ii}$}	
	
	%\todo[inline]{Le $\mathcal{M}_\tau:= \{g_b+h, h\in C^{1,\alpha}_{\tau}, \partial_j\partial_i h_{ij} - \partial_j\partial_j h_{ii} \in L^1\}$ "classique" est naturel car la masse y est différentiable etc}	
	
	%\todo[inline]{La condition sur $\tau'$ n'est pas fondamentale - il suffit qu'il en existe un. Ça simplifierait les notations.}	
	\subsection{Definition of the functional $\lambda_{\operatorname{ALE}}^0$ and its main properties}\label{sec-def-fun-lambda-0}~~\\

	We will use the renormalized Perelman's functional introduced by Haslhofer in \cite{Has-Per-Fct} to study ALE metrics that are close to Ricci-flat metrics. It can be defined in the following way for ALE metrics.
	
	\begin{defn}[$\lambda_{\operatorname{ALE}}^0$, a first renormalized Perelman's functional]
		Let $(N^n,g_b)$ be an ALE Ricci-flat metric and let $g\in \mathcal{M}^{2,\alpha}_{\tau}(g_b,\varepsilon)$. Define the $\mathcal{F}_{\operatorname{ALE}}$-energy by:
		\begin{eqnarray}
		\mathcal{F}_{\operatorname{ALE}}(w,g):=\int_N\big(4|\nabla^g w|_g^2 +\R_g w^2 \big)\,d\mu_g, 
		\end{eqnarray}
		where $w-1\in C^{\infty}_c(N)$, where $C^{\infty}_c(N)$ is the space of compactly supported smooth functions.
		The $\lambda_{\operatorname{ALE}}^0$-functional associated to the $\mathcal{F}_{\operatorname{ALE}}$-energy is:
		$$\lambda_{\operatorname{ALE}}^0(g) := \inf_w \mathcal{F}_{\operatorname{ALE}}(w,g),$$
		where the infimum is taken over functions $w:N\rightarrow \RR$ such that $w-1\in C_c^\infty(N)$.
		% such that for all $k\in \mathbb{N}_c$, $\|w-1\|_{C^k_{n-2}(M)} <\infty$.
	\end{defn}
	
	\begin{rk}
		By testing the infimum condition with $w \equiv 1$, we get the upper bound 
		\begin{equation}
		\lambda_{\operatorname{ALE}}^0(g)\leq \int_N\R_g\,d\mu_g.\label{borne sup lambda ALE}
		\end{equation}
		
		An assumption on the convergence rate $\tau$ and $\tau'$ in the definition of the space $\mathcal{M}^{2,\alpha}_{\tau}(g_b)$ is crucial to make sense of the functional $\lambda_{\operatorname{ALE}}^0(g)$: in particular, it ensures the integrability of the scalar curvature $\R_g$ of such an ALE metric $g$. The fact that $\lambda_{\operatorname{ALE}}^0(g)>-\infty$ is not trivial will be established in the following section: it depends on Hardy's inequality (Theorem \ref{thm-min-har-inequ}). See the proof of Proposition \ref{existence propriete-wg}.
	\end{rk}
	%In the sequel, we will essentially study the functional $\lambda_{\operatorname{ALE}}^0$ on a neighborhood of a Ricci-flat ALE metric in the space $\mathcal{M}_{\tau}^{2,\alpha}(g_b)$.
	%\todo[inline]{Parler de la supposition sur la divergence et la trace qui décroissent plus vite que prévu qui est équivalente à l'intégrabilité de la courbure scalaire. 1) c'est essentiellement équivalent à dire que la masse est nulle ce qui est naturel car sinon on ne converge clairement pas vers un truc Ricci-plat et il n'y a pas de question 2) c'est une classe habituelle des physiciens si l'on impose que la masse soit nulle}

	Let us now prove that the functional $\lambda_{\operatorname{ALE}}^0$ has nice properties in sufficiently small neighborhoods of Ricci-flat ALE metrics. Let us mention that according to \cite{Ban-Kas-Nak,Che-Tian-Ric-Fla}, any $n$-dimensional Ricci-flat ALE metrics is ALE of order $n$.

	\begin{prop}\label{existence propriete-wg} 
		Let $(N^n,g_b)$ be an ALE Ricci-flat metric asymptotic to $\RR^n\slash\Gamma$, for some finite subgroup $\Gamma$ of $SO(n)$ acting freely on $\mathbb{S}^{n-1}$. Let $\tau\in(\frac{n-2}{2},n-2)$ and $\alpha\in (0,1)$.
		
		Then, there exists some positive $\varepsilon$ such that for any metric $g$ in a neighborhood $\mathcal{M}^{2,\alpha}_{\tau}(g_b,\varepsilon)= \mathcal{M}^{2,\alpha}_{\tau}(g_b)\cap B_{C^{2,\alpha}_{\tau}}(g_b,\varepsilon)$ of $g_b$, the infimum defining the functional $\lambda_{\operatorname{ALE}}^0(g)$ is attained by the unique solution $w_g$ to the following equation,
		\begin{equation}
		\left\{
		\begin{aligned}
		-4\Delta_g w_g + \R_g w_g =0, \label{equ-criti-lambda}\\
		w_g-1 \in C^{2,\alpha}_{\tau}(N)\cap C^{1,\alpha}_{n-2}(N).
		\end{aligned}
		\right.
		\end{equation}
		
		Moreover, $w_g$ is positive on $N$ and we have the following expansion of $w_g$ at infinity :
		\begin{equation}
		w_g = 1 - \frac{\lambda_{\operatorname{ALE}}^0(g) |\Gamma|}{4(n-2)\vol{\mathbb{S}^{n-1}}}\frac{1}{\rho_{g_b}^{n-2}} +O(\rho_{g_b}^{-n+2-\gamma}),  \label{developpement w}
		\end{equation}
		for some positive $\gamma$ and where $|\Gamma|$ is the cardinal of $\Gamma$. 
		
		Next, we have the equalities :
		\begin{equation}
		\lambda_{\operatorname{ALE}}^0(g)=\int_N \big(4|\nabla^g w_g|_g^2 +\R_g w_g^2 \big)\,d\mu_g= \int_N \R_g w_g \,d\mu_g,\label{egalite lambda 1}
		\end{equation}
		and,
		\begin{equation}
		\lambda_{\operatorname{ALE}}^0(g) = \lim_{R\to \infty} 4\int_{\{\rho_{g_b} = R\}} \langle\nabla^gw_g,\mathbf{n}_{g_b}\rangle\, d\sigma_g,\label{egalite lambda 2}
		\end{equation}
		 where $\mathbf{n}_{g_b}$ denotes the outward unit normal of $\{\rho_{g_b}=R\}$.
		
		Finally, the map $g\in B_{C^{2,\alpha}_{\tau}}(g_b,\varepsilon)\rightarrow w_g-1\in C^{2,\alpha}_{\tau}(N)$ is analytic in the sense of Definition \ref{def-analytic}. \end{prop}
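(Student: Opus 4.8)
The plan is to treat $\mathcal{F}_{\operatorname{ALE}}(\cdot,g)$ as a coercive quadratic functional on an affine space modelled on a homogeneous Sobolev space, produce the minimizer by Lax--Milgram, and then upgrade its regularity and asymptotics by weighted elliptic theory on the ALE end. Writing $w=1+u$ and expanding, $\mathcal{F}_{\operatorname{ALE}}(1+u,g)=4\int_N|\nabla^g u|_g^2\,d\mu_g+\int_N\R_g u^2\,d\mu_g+2\int_N\R_g u\,d\mu_g+\int_N\R_g\,d\mu_g$, the last integral being finite since $\R_g\in L^1$. Let $\dot{H}^1_g(N)$ be the completion of $C_c^\infty(N)$ for $\|\nabla^g\cdot\|_{L^2}$. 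By Theorem \ref{thm-min-har-inequ} applied to $g_b$ and the uniform equivalence $g\simeq g_b$ (valid for $\varepsilon$ small), $\int_N\rho_{g_b}^{-2}\varphi^2\,d\mu_g\le C\int_N|\nabla^g\varphi|_g^2\,d\mu_g$ on $\dot{H}^1_g(N)$, and by Lemma \ref{lemm-charac-M}, $\R_g=\div_{g_b}\div_{g_b}(g-g_b)-\Delta_{g_b}\tr_{g_b}(g-g_b)+O(\rho_{g_b}^{-2\tau-2})$, so $\sup_N\rho_{g_b}^2|\R_g|\le C\|g-g_b\|_{C^{2,\alpha}_\tau}\le C\varepsilon$. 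Consequently, for $\varepsilon$ small, $u\mapsto 4\int_N|\nabla^g u|_g^2+\int_N\R_g u^2$ is a continuous quadratic form on $\dot{H}^1_g(N)$ equivalent to $\|\nabla^g\cdot\|_{L^2}^2$, while $u\mapsto 2\int_N\R_g u$ is a bounded linear functional on it (using $\R_g\in L^1$). Lax--Milgram yields a unique minimizer $w_g=1+u_g$, $u_g\in\dot{H}^1_g(N)$, weakly solving $-4\Delta_g w_g+\R_g w_g=0$; since $C_c^\infty(N)$ is dense in $\dot{H}^1_g(N)$ and $\mathcal{F}_{\operatorname{ALE}}(1+\cdot,g)$ is continuous there, $\mathcal{F}_{\operatorname{ALE}}(w_g,g)=\lambda_{\operatorname{ALE}}^0(g)$.

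\emph{Regularity, decay, positivity.} Interior Schauder estimates give $u_g\in C^{2,\alpha}_{\mathrm{loc}}$. Rewriting the equation as $\Delta_g u_g=\tfrac14\R_g w_g=:f_g$ with $f_g=O(\rho_{g_b}^{-\sigma})$, $\sigma:=\min\{2\tau+2,\tau'\}>n$, and using that $u_g\in\dot{H}^1_g(N)$ solving this equation tends to $0$ at infinity, the standard weighted elliptic theory of the Laplacian on ALE manifolds — for $\beta\in(0,n-2)$ the map $\Delta_{g_b}:C^{2,\alpha}_\beta\to C^{0,\alpha}_{\beta+2}$ is an isomorphism, the indicial roots of $\Delta_{g_b}$ on $\RR^n\slash\Gamma$ being $\{0,-1,\dots\}\cup\{n-2,n-1,\dots\}$, see \cite{Bart-Mass} — together with a bootstrap in weighted Schauder spaces produces $u_g=c_g\,\rho_{g_b}^{2-n}+O(\rho_{g_b}^{2-n-\gamma})$ for some constant $c_g$ and some $\gamma>0$; since $\tau<n-2$, this yields $u_g\in C^{2,\alpha}_\tau\cap C^{1,\alpha}_{n-2}$, so $w_g$ solves \eqref{equ-criti-lambda}. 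Uniqueness in that class follows by testing the homogeneous linear equation against the difference of two solutions: the boundary terms vanish since $\tau>\tfrac{n-2}{2}$, and the coercivity above forces the difference to be constant, hence $0$. For positivity, $|w_g|$ is again admissible with the same $\mathcal{F}_{\operatorname{ALE}}$-value, so $|w_g|=w_g$ by uniqueness; rewriting $\Delta_g w_g-\tfrac14\R_g^{+}w_g=-\tfrac14\R_g^{-}w_g\le 0$ with $w_g\ge 0$ and $\R_g$ bounded, the strong maximum principle excludes an interior zero, so $w_g>0$ on $N$.

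\emph{Flux identities and the constant $c_g$.} Integrating $-4\Delta_g w_g+\R_g w_g=0$ over $\{\rho_{g_b}\le R\}$ and applying the divergence theorem: testing against $1$ gives $\int_{\{\rho_{g_b}\le R\}}\R_g w_g\,d\mu_g=4\int_{\{\rho_{g_b}=R\}}\langle\nabla^g w_g,\mathbf{n}_{g_b}\rangle\,d\sigma_g$, and testing against $w_g$ gives $4\int_{\{\rho_{g_b}\le R\}}|\nabla^g w_g|_g^2\,d\mu_g+\int_{\{\rho_{g_b}\le R\}}\R_g w_g^2\,d\mu_g=4\int_{\{\rho_{g_b}=R\}}w_g\langle\nabla^g w_g,\mathbf{n}_{g_b}\rangle\,d\sigma_g$. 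Letting $R\to\infty$ and using $\R_g\in L^1$, $\nabla^g w_g\in L^2$, $w_g\to 1$ and $|\nabla^g w_g|=O(\rho_{g_b}^{1-n})$, all three quantities converge to the common value $\mathcal{F}_{\operatorname{ALE}}(w_g,g)=\lambda_{\operatorname{ALE}}^0(g)$, which is exactly \eqref{egalite lambda 1}--\eqref{egalite lambda 2}. Feeding $u_g=c_g\rho_{g_b}^{2-n}+O(\rho_{g_b}^{2-n-\gamma})$ into the first identity gives $\lambda_{\operatorname{ALE}}^0(g)=\lim_{R\to\infty}4\int_{\{\rho_{g_b}=R\}}\partial_{\rho_{g_b}}u_g\,d\sigma_g=4(2-n)c_g\,\vol\mathbb{S}^{n-1}/|\Gamma|$, whence $c_g=-\tfrac{\lambda_{\operatorname{ALE}}^0(g)\,|\Gamma|}{4(n-2)\vol\mathbb{S}^{n-1}}$, which is \eqref{developpement w}. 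It is precisely in the convergence of these improper boundary integrals that the hypothesis $\tau>\tfrac{n-2}{2}$ is essential.

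\emph{Analyticity.} Consider $\Phi(g,u):=-4\Delta_g(1+u)+\R_g(1+u)$ as a map $B_{C^{2,\alpha}_\tau}(g_b,\varepsilon)\times C^{2,\alpha}_\tau(N)\to C^{0,\alpha}_{\tau+2}(N)$: it is analytic because $g\mapsto\Delta_g$ and $g\mapsto\R_g$ depend analytically (indeed rationally, resp. polynomially) on $g$ and its first two derivatives, and $\R_{g_b}=0$ together with Lemma \ref{lemm-charac-M} places the image in this weighted space. One has $\Phi(g_b,0)=0$ and $D_u\Phi(g_b,0)=-4\Delta_{g_b}:C^{2,\alpha}_\tau\to C^{0,\alpha}_{\tau+2}$, which is an isomorphism since $\tau\in(0,n-2)$. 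The analytic implicit function theorem (Appendix \ref{app-A}) then gives an analytic map $g\mapsto u(g)\in C^{2,\alpha}_\tau$ with $\Phi(g,u(g))=0$ near $g_b$, and by local uniqueness of solutions of $\Phi(g,\cdot)=0$ in $C^{2,\alpha}_\tau$ (again from invertibility of the linearization), $u(g)=w_g-1$ after shrinking $\varepsilon$; this is the final assertion. The most delicate point of the whole argument is the weighted analysis: extracting the sharp expansion $u_g=c_g\rho_{g_b}^{2-n}+O(\rho_{g_b}^{2-n-\gamma})$ from the fast decay of $f_g$ requires carefully using the indicial structure of $\Delta_{g_b}$, i.e. landing strictly between the indicial roots $0$ and $n-2$, and this is where the restriction $\tau\in(\tfrac{n-2}{2},n-2)$ is used in full.
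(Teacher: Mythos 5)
Your proof is correct and its overall architecture coincides with the paper's: Hardy's inequality supplies the coercivity of $-4\Delta_g+\R_g$, weighted elliptic theory for $\Delta_{g_b}$ between $C^{k,\alpha}_\beta$ spaces supplies the decay and the $\rho_{g_b}^{2-n}$-expansion, the flux identities come from integration by parts on sublevel sets, and analyticity comes from the analytic implicit function theorem. The differences are local but worth recording. First, you produce the minimizer variationally (Lax--Milgram on the homogeneous space $\dot H^1_g$) and then upgrade it to a $C^{2,\alpha}_\tau$-solution, whereas the paper goes the other way: it first solves $-4\Delta_g v_g+\R_g v_g=-\R_g$ in $C^{2,\alpha}_\tau$ by showing that $-4\Delta_g+\R_g$ is a Fredholm perturbation of the isomorphism $\Delta_g:C^{2,\alpha}_\tau\to C^{0,\alpha}_{\tau+2}$ made injective by coercivity, and only afterwards checks that $w_g=1+v_g$ minimizes $\mathcal{F}_{\operatorname{ALE}}$; both proofs end up needing both ingredients (the variational inequality and the weighted isomorphism), just in opposite order, and your identification of the $\dot H^1$-minimizer with the weighted-space solution via a harmonic, decaying difference is exactly the paper's $v_g=u_g$ step. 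Second, your positivity argument (elliptic strong maximum principle applied to the supersolution equation $\Delta_gw_g-\tfrac14\R_g^+w_g\le 0$ after the Kato/uniqueness step gives $w_g\ge 0$) is more elementary than the paper's, which converts $w_g$ into a supersolution of the heat equation and invokes positivity of the heat kernel. One small imprecision: the boundedness of $u\mapsto 2\int_N\R_g u\,d\mu_g$ on $\dot H^1_g$ does not follow from $\R_g\in L^1$ alone, since elements of $\dot H^1_g$ need not be bounded; you need the pointwise decay $\R_g=O(\rho_{g_b}^{-\tau'})$ with $\tau'>n$ (so that $\rho_{g_b}\R_g\in L^2$) combined with Cauchy--Schwarz and Hardy's inequality --- which is precisely how the paper controls the analogous term $\int_N\R_g(w-1)\,d\mu_g$ --- so the claim is true but the stated justification is incomplete.
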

	
	\begin{proof}
		First of all, let us show that $\lambda_{\operatorname{ALE}}^0(g)$ is finite, i.e. $\lambda_{\operatorname{ALE}}^0(g)>-\infty$.
		
		Since $(N^n,g_b)$ is a Ricci-flat ALE metric $(N^n,g_b)$, Theorem \ref{thm-min-har-inequ} ensures that the following Hardy inequality holds true:
		\begin{equation}
		C_H\int_N\frac{\varphi^2}{\rho_{g_b}^2}d\mu_{g_b}\leq \int_N|\nabla^{g_b}\varphi|_{g_b}^2\,d\mu_{g_b},\quad \varphi\in C_c^{\infty}(N),\label{har-inequ}
		\end{equation}
		for some positive constant $C_H$ depending on $g_b$, the dimension $n$ and the base point $p\in N$ used in Definition \ref{def-weighted-norms} of $\rho_g$.
		Since the metrics $g$ and $g_b$ are equivalent, i.e. $C^{-1}g_b\leq g\leq Cg_b$ for some positive constant depending on the neighborhood $B_{C^{2,\alpha}_{\tau}}(g_b,\varepsilon)$, the same Hardy inequality holds with a positive constant $C_H(g_b)/2$ if $\varepsilon$ is chosen small enough. Moreover, (\ref{har-inequ}) is valid for functions $w$ on $N$ such that $w-1\in C_{\tau}^{2,\alpha}(N)$. This implies that:
		\begin{equation*}
		\begin{split}
		\int_N4|\nabla^gw|^2_g+\R_gw^2d\mu_g&=\int_N4|\nabla^g(w-1)|^2_g+\R_g(w-1+1)^2d\mu_g\\
		&\geq2C_H\int_N\frac{(w-1)^2}{\rho_g^2}d\mu_g-\varepsilon\int_N\frac{(w-1)^2}{\rho_g^2}d\mu_g-c\int_N|\R_g|d\mu_g\\
		&\geq-c\int_N|\R_g|d\mu_g,
		\end{split}
		\end{equation*}
		
		if $\varepsilon$ is chosen not greater than $2C_H$ and where $c$ is a universal positive constant that may vary from line to line. 
		This proves the finiteness of $\lambda_{\operatorname{ALE}}^0(g)$ together with the fact that the operator $-4\Delta_g+\R_g$ is non-negative and dominates $-\Delta_g$ in the $L^2$ sense, i.e. if $g\in B_{C^{2,\alpha}_{\tau}}(g_b,\varepsilon)$ then
		\begin{equation}
		\langle-4\Delta_g\varphi+\R_g\varphi,\varphi\rangle\geq c\|\nabla^g\varphi\|^2_{L^2},\quad \forall \varphi\in C_c^{\infty}(N).\label{dom-lap-per-ope}
		\end{equation}
		In particular, by density, inequality (\ref{dom-lap-per-ope}) holds for functions in $C^2_{\tau}(N)$ with $2\tau>n-2$. 
		
		\begin{claim}\label{claim-iso}
			The operator $-4\Delta_g+\R_g: C^{2,\alpha}_{\tau}(N)\rightarrow C^{0,\alpha}_{\tau+2}(N)$ is an isomorphism of Banach spaces for all $\alpha\in(0,1)$. Moreover, the map $g\in B_{C^{2,\alpha}_{\tau}}(g_b,\varepsilon)\mapsto (-4\Delta_g+\R_g)^{-1}\R_g \in C^{2,\alpha}_{\tau}(N) $ is analytic.
			%\textcolor{red}{Autre version possible: 
			%The operator $-4\Delta_g+\R_g: C^{k+2,\alpha}_{n-2}(N)\rightarrow C^{k,\alpha}_{n+\gamma}(N)$ is an isomorphism of Banach spaces for all $k\in\mathbb{N}$, $\alpha\in(0,1)$ and $\gamma\in(0,1)$ small enough.}
			
		\end{claim}
		\begin{proof}[Proof of Claim \ref{claim-iso}]
			Consider the map $\Psi:B_{C^{2,\alpha}_{\tau}}(g_b,\varepsilon)\times C^{2,\alpha}_{\tau}(N)\rightarrow C^{0,\alpha}_{\tau+2}(N)$ defined by $\Psi(g,v):=-4\Delta_gv+\R_gv$. The map $\Psi$ is analytic in the sense of Definition \ref{def-analytic}.
			
			According to [Theorem $8.3.6$ $(a)$, \cite{Joy-Book}], $\Delta_g: C^{2,\alpha}_{\tau}(N)\rightarrow C^{0,\alpha}_{\tau+2}(N)$ is an isomorphism of Banach spaces for all $\alpha\in(0,1)$. Fix $\alpha\in(0,1)$. Since $\R_g: C^{2,\alpha}_{\tau}(N)\rightarrow C^{0,\alpha}_{\tau+2}(N)$ is a compact operator, the operator $-4\Delta_g+\R_g: C^{2,\alpha}_{\tau}(N)\rightarrow C^{0,\alpha}_{\tau+2}(N)$ is a Fredholm operator of index $0$. In particular, it is an isomorphism if (and only if) it is injective. This in turn is ensured by (\ref{dom-lap-per-ope}) since $(N,g)$ has infinite volume.
			Therefore, the analytic version of the implicit function Theorem given by Lemma \ref{th fcts implicites} applied to the map $\Psi$ gives us the expected result.
		\end{proof}
		Now, let $\alpha\in(0,1)$ such that Claim \ref{claim-iso} holds: since $\R_g\in C^{0,\alpha}_{\tau+2}(N)$, there exists a unique solution $v_g\in C^{2,\alpha}_{\tau}(N)$ to $-4\Delta_gv_g+\R_gv_g=-\R_g$ and the map $g\in B_{C^{2,\alpha}_{\tau}}(g_b,\varepsilon)\rightarrow v_g\in C^{2,\alpha}_{\tau}(N)$ is analytic. In particular, if $w_g:=1+v_g$ then $-4\Delta_gw_g+\R_gw_g=0$. Let us show that this implies (\ref{egalite lambda 1}) and (\ref{egalite lambda 2}) by integrating by parts over sublevel sets $\{\rho_{g_b}\leq R\}$ of large radii $R$ whose boundary is $\{\rho_{g_b}= R\}$:
		\begin{equation*}
		\begin{split}
		\int_{\{\rho_{g_b}\leq R\}}4|\nabla^gw_g|^2+\R_gw_g^2\,d\mu_g=\,&\int_{\{\rho_{g_b}\leq R\}}-4\Delta_gw_g\cdot w_g+\R_gw_g\cdot w_g\,d\mu_g\\
		&+4\int_{\{\rho_{g_b}= R\}}\langle\nabla^gw_g,\mathbf{n}_{g_b}\rangle \cdot w_g\,d\sigma_g\\
		=\,&0+4\int_{\{\rho_{g_b}= R\}}\langle\nabla^gw_g,\mathbf{n}_{g_b}\rangle\, d\sigma_g +4\int_{\{\rho_{g_b}= R\}}\langle\nabla^gw_g,\mathbf{n}_{g_b}\rangle \cdot v_g\,d\sigma_g\\
		=\,&4\int_{\{\rho_{g_b}= R\}}\langle\nabla^gw_g,\mathbf{n}_{g_b}\rangle\,d\sigma_g+\textit{o}(1),
		\end{split}
		\end{equation*}
		as $R$ tends to $+\infty$. Similarly, by using (\ref{equ-criti-lambda}):
		\begin{equation*}
		\begin{split}
		\int_{\{\rho_{g_b}\leq R\}}\R_gw_g\,d\mu_g&=4\int_{\{\rho_{g_b}\leq R\}}\Delta_gw_g\,d\mu_g=4\int_{\{\rho_{g_b}= R\}}\langle\nabla^gw_g,\mathbf{n}_{g_b}\rangle \,d\sigma_g.
		\end{split}
		\end{equation*}
		Since $\R_g$ is integrable, taking a limit in the previous identity as $R$ tends to $+\infty$ is meaningful. To sum it up:
		\begin{equation}
		\int_{N}\R_gw_g\,d\mu_g=\lim_{R\rightarrow+\infty}4\int_{\{\rho_{g_b}= R\}}\langle\nabla^gw_g,\mathbf{n}_{g_b}\rangle \,d\sigma_g=\int_{N}4|\nabla^gw_g|^2+\R_gw_g^2\,d\mu_g.\label{equ-diff-form-lambda}
		\end{equation}
		
		Finally, to end the proof of (\ref{egalite lambda 1}) and (\ref{egalite lambda 2}), it suffices to show that:
		\begin{equation}
		\int_N4|\nabla^g(w_g+\varphi)|^2_g+\R_g(w_g+\varphi)^2\,d\mu_g\geq \int_N4|\nabla^gw_g|^2_g+\R_gw_g^2\,d\mu_g, \quad \forall \varphi\in C^{\infty}_{c}(N).\label{w_g-min-F-ALE}
		\end{equation}
		This amounts to proving that:
		\begin{equation}
		\int_N4|\nabla^g\varphi|^2_g+\R_g\varphi^2\,d\mu_g+2\int_N4\langle\nabla^gw_g,\nabla^g\varphi\rangle+\R_gw_g\varphi \,d\mu_g\geq 0,\quad\forall \varphi\in C^{\infty}_{c}(N).\label{w_g-min-F-ALE-bis}
		\end{equation}
		This is implied by (\ref{dom-lap-per-ope}) together with (\ref{equ-criti-lambda}) after an integration by parts on the second integral of the lefthand side of the previous inequality (\ref{w_g-min-F-ALE-bis}).
		
		We are left with proving the positivity of $w_g$ and the asymptotic expansion (\ref{developpement w}). 
		
		By Kato inequality, one can check that 
		\begin{eqnarray}
		\lambda_{\operatorname{ALE}}^0(g)=\mathcal{F}_{\operatorname{ALE}}(w_g,g)\geq \mathcal{F}_{\operatorname{ALE}}(|w_g|,g).\label{min-F-bis}
		\end{eqnarray}
		Notice that (\ref{w_g-min-F-ALE}) still holds for functions $\varphi\in H_c^1(N)$ (the completion of compactly supported functions for the $H^1$-norm), by taking the completion with respect to the norm $\varphi\rightarrow\|\nabla^g\varphi\|_{L^2}$. In particular, the function $|w_g|$ is a test function, i.e. $|w_g|-1\in H_c^1(N)$ and it is a minimizer of $\lambda_{\operatorname{ALE}}^0(g)$ by (\ref{min-F-bis}). As such, $|w_g|$ is a continuous weak solution to (\ref{equ-criti-lambda}). Elliptic regularity together with elliptic Schauder estimates imply that $|w_g|$ is a $C^{2,\alpha}_{loc}$-solution, so that $w_g$ has a sign and tends to $1$ at infinity, i.e. $w_g$ is nonnegative. Now, $w_g$ is positive by the strong maximum principle for parabolic equations. Indeed, if $$W_g(x,t):=\exp\left(\frac{\sup_N\R_g}{4} t\right)w_g(x),$$ for $x\in N$ and $t\in\RR$, then $W_g$ is a super-solution to the heat equation: 
		\begin{equation*}
		\left(\partial_t-\Delta_g\right)W_g\geq 0.
		\end{equation*}
		Therefore, the maximum principle leads to $W_g(x,t)\geq \int_NK(x,y,t)w_g(y)d\mu_g(y)>0$ for $t\geq 0$ where $K(x,y,t)$ denotes the positive heat kernel associated to $\Delta_g$.\\
		
		Let us prove the asymptotic estimate (\ref{developpement w}).\\
		
		Observe that if $w_g=1-c\rho_{g_b}^{2-n}+\textit{O}(\rho_{g_b}^{2-n-\gamma})$ for some $\gamma>0$ up to first order for some constant $c$ then (\ref{egalite lambda 2}) implies necessarily that 
		\begin{eqnarray}
		4c(n-2)\vol\left(\mathbb{S}^{n-1}/\Gamma\right)=\lim_{R\rightarrow+\infty}4\int_{\{\rho_{g_b}= R\}}\langle\nabla^gw_g,\mathbf{n}_{g_b}\rangle\,d\sigma_g=\lambda_{\operatorname{ALE}}^0(g).\label{identification-lambda-sol-inf}
		\end{eqnarray}
		
		Let us observe that $-4\Delta_gv_g=-\R_gw_g=\textit{O}(\rho_{g_b}^{-\tau'})$ with $\tau'>n$. Therefore, by [Theorem $8.3.6$ $(b)$, \cite{Joy-Book}], there exists a unique solution $u_g\in C^{1,\alpha}_{n-2}(N)$ to $-4\Delta_gu_g=-\R_gw_g$: here $\R_g$ is only assumed to lie in $C^0_{\tau'}$ for some $\tau'>n$ which does not ensure higher regularity on $u_g$ in $C^{2,\alpha}_{n-2}$. Moreover, it is shown that:
		\begin{equation*}
		\begin{split}
		u_g&=-\left(\frac{|\Gamma]}{4(n-2)\vol\mathbb{S}^{n-1}}\int_N\R_gw_g\,d\mu_g\right)\rho_{g_b}^{2-n}+\textit{O}(\rho_{g_b}^{2-n-\gamma})\\
		&=-\frac{\lambda_{\operatorname{ALE}}^0(g)|\Gamma]}{4(n-2)\vol\mathbb{S}^{n-1}}\rho_{g_b}^{2-n}+\textit{O}(\rho_{g_b}^{2-n-\gamma}),
		\end{split}
		\end{equation*}
		for some positive $\gamma$ where we used (\ref{identification-lambda-sol-inf}) together with the first equality of (\ref{equ-diff-form-lambda}) in the second line. To conclude, one gets that $v_g-u_g$ is a harmonic function on $N$ that converges to $0$ at infinity. These facts imply that $v_g=u_g$ by the maximum principle.
	\end{proof}
	\begin{rk}
		Under the assumptions of Proposition \ref{existence propriete-wg}, the usual $L^2$-constrained $\lambda$-functional, i.e. the bottom of the $L^2$-spectrum of the operator $-4\Delta_g+\R_g$ is vanishing because the mass can escape to infinity, and therefore this functional does not give any information: indeed, by (\ref{dom-lap-per-ope}), $\lambda_1(-4\Delta_g+\R_g)\geq c\lambda_1(-\Delta_g)=0$. One can then argue by contradiction as in \cite{Cheng-Yau} to show that if $\lambda_1(-4\Delta_g+\R_g)$ were positive then geodesic balls of large radii would grow faster than any polynomial. 
	\end{rk}
	
	The functional $\lambda_{\operatorname{ALE}}^0$ is moreover invariant by diffeomorphisms decaying at infinity.
	\begin{prop}\label{inv-diff-lambda}
		Let $(N^n,g_b)$ be an ALE Ricci-flat metric asymptotic to $\RR^n\slash\Gamma$, for some finite subgroup $\Gamma$ of $SO(n)$ acting freely on $\mathbb{S}^{n-1}$. Let $\alpha\in (0,1)$ and $\tau\in(\frac{n-2}{2},n-2)$.
		
		Let $g$ be a metric in $ \mathcal{M}^{2,\alpha}_{\tau}(g_b,\varepsilon)$ and let $w_g$ be the minimizer of $\lambda_{\operatorname{ALE}}^0(g)$ whose existence is ensured by Proposition \ref{existence propriete-wg}. Consider $\phi$ a diffeomorphism close to the identity in the $C^{3,\alpha}_{\tau-1}(TN)$ topology. Then, we have $\lambda_{\operatorname{ALE}}^0(\phi^*g) = \lambda_{\operatorname{ALE}}^0(g)$ and $w_{\phi^*g} = \phi^*w_g$.
	\end{prop}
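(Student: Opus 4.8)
The plan is to exploit the manifest diffeomorphism invariance of the energy $\mathcal{F}_{\operatorname{ALE}}$ together with the uniqueness part of Proposition \ref{existence propriete-wg}.

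\emph{Step 1: $\phi^*g$ stays in the neighborhood.} Up to shrinking $\varepsilon$ and the $C^{3,\alpha}_{\tau-1}$-neighborhood of the identity in which $\phi$ is taken, I would first check that $\phi^*g\in\mathcal{M}^{2,\alpha}_{\tau}(g_b,\varepsilon)$, so that Proposition \ref{existence propriete-wg} applies to $\phi^*g$ as well as to $g$. Writing $\phi^*g-g_b=\phi^*(g-g_b)+(\phi^*g_b-g_b)$, the first term lies in $C^{2,\alpha}_{\tau}(S^2T^*N)$ with small norm because $\phi$ is a diffeomorphism $C^{3,\alpha}_{\tau-1}$-close to the identity and $g-g_b$ is small in $C^{2,\alpha}_{\tau}$; the second term, to leading order in the ALE chart, is $\mathcal{L}_{X}g_b$ with $X$ the displacement field of $\phi$, hence decays one power faster than $X$ and so lies in $C^{2,\alpha}_{\tau}$ with small norm. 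Moreover $\R_{\phi^*g}=\R_g\circ\phi=O(\rho_{g_b}^{-\tau'})$, since $\rho_{g_b}\circ\phi$ is comparable to $\rho_{g_b}$ when $\phi$ is close to the identity.

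\emph{Step 2: invariance of the functional.} For any function $w$ with $w-1\in C^{\infty}_c(N)$, the $1$-density $\big(4|\nabla^g w|^2_g+\R_g w^2\big)\,d\mu_g$ is natural under pullback, so the change-of-variables formula gives $\mathcal{F}_{\operatorname{ALE}}(w\circ\phi,\phi^*g)=\mathcal{F}_{\operatorname{ALE}}(w,g)$. Since $\phi$ is a proper diffeomorphism of $N$, the map $w\mapsto w\circ\phi$ is a bijection of the set $\{w\,:\,w-1\in C^{\infty}_c(N)\}$ onto itself (compactness of the support is preserved both ways). Taking infima over this set yields $\lambda_{\operatorname{ALE}}^0(\phi^*g)=\lambda_{\operatorname{ALE}}^0(g)$.

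\emph{Step 3: the minimizer, via uniqueness.} The function $\phi^*w_g-1=\phi^*(w_g-1)$ still belongs to $C^{2,\alpha}_{\tau}(N)\cap C^{1,\alpha}_{n-2}(N)$, because $\phi$ is smooth enough and its displacement field decays strictly faster than the rates $\tau$ and $n-2$, so composition with $\phi$ only adds faster-decaying corrections; and by naturality of the Laplace–Beltrami operator, $-4\Delta_{\phi^*g}(\phi^*w_g)+\R_{\phi^*g}\,\phi^*w_g=\phi^*\big(-4\Delta_g w_g+\R_g w_g\big)=0$. Thus $\phi^*w_g$ solves problem \eqref{equ-criti-lambda} for the metric $\phi^*g$, and the uniqueness statement in Proposition \ref{existence propriete-wg} forces $w_{\phi^*g}=\phi^*w_g$.

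I expect the only genuinely technical point to be Step 1 — the quantitative bookkeeping of weighted Hölder norms under composition, and the one-power gain in decay coming from differentiating the displacement field, which is what allows $\phi\in C^{3,\alpha}_{\tau-1}$ to produce $\phi^*g_b-g_b\in C^{2,\alpha}_{\tau}$ with small norm. Steps 2 and 3 are formal once Step 1 places us in the setting of Proposition \ref{existence propriete-wg}.
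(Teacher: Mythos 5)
Your proposal is correct and follows essentially the same route as the paper: pull back, use the change-of-variables invariance of $\mathcal{F}_{\operatorname{ALE}}$ to get $\lambda_{\operatorname{ALE}}^0(\phi^*g)=\lambda_{\operatorname{ALE}}^0(g)$, and then invoke the uniqueness part of Proposition \ref{existence propriete-wg} to identify $w_{\phi^*g}=\phi^*w_g$. Your Step 1 and Step 3 simply spell out the weighted-norm bookkeeping that the paper's proof leaves implicit.
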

	\begin{proof}
		Let $g=:g_b+h$ be a metric such that $g_b+h\in \mathcal{M}^{2,\alpha}_{\tau}(g_b)$, and consider $w_g$ the minimizer of $\lambda_{\operatorname{ALE}}^0(g)$. Let $\phi : N\to N$ be a diffeomorphism defined as $\phi :x \mapsto \exp_x^{g}(X(x))$ for a vector field $X\in C^{3,\alpha}_{\tau-1}(TN)$ close to $0_{TN}$. Consider $\phi^*g$ which is also a  metric on $N$ of order $\tau$ lying in a small neighborhood of $g_b$ in the $C^{2,\alpha}_{\tau}$ topology. Moreover, since $\R_{\phi^*g}= \phi^*\R_g$, we still have $\R_{\phi^*g}\in C^0_{\tau'}(N)$ for some $\tau'>n$. Then, for any $w$ such that $w-1$ is compactly supported, $\phi^*w-1$ is also compactly supported and we clearly have $\mathcal{F}_{\operatorname{ALE}}(\phi^*w,\phi^*g) = \mathcal{F}_{\operatorname{ALE}}(w,g)$ by the change of variables Theorem. Therefore we have $\lambda_{\operatorname{ALE}}^0(\phi^*g) = \lambda_{\operatorname{ALE}}^0(g)$, and finally, Proposition \ref{existence propriete-wg} ensures that $w_{\phi^*g} = \phi^*w_g$.
	\end{proof}
	We end this section by giving another sufficient condition to ensure the finiteness of $\lambda_{\operatorname{ALE}}^0$ together with its behavior under scalings of the metrics: 	
	\begin{lemma}\label{scaling lambdaALE}
		Let $(N^n,g)$ be a complete metric with $\R_g\in L^1(N)$ and non-negative scalar curvature. Then, $$0\leq \lambda_{\operatorname{ALE}}^0(g)\leq \|\R_g\|_{L^1}.$$
		
		Moreover, in case $(N^n,g)$ is a complete metric such that $\lambda_{\operatorname{ALE}}^0(g)$ is finite then for any $s>0$, we have $$\lambda_{\operatorname{ALE}}^0(sg) = s^{\frac{n-2}{2}}\lambda_{\operatorname{ALE}}^0(g).$$
	\end{lemma}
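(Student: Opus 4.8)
The plan is to treat the two assertions separately; both follow directly from the definition of $\lambda_{\operatorname{ALE}}^0$ as an infimum of $\mathcal{F}_{\operatorname{ALE}}$ over the metric-independent class of test functions $\{w:\ w-1\in C_c^\infty(N)\}$.

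For the first assertion, the upper bound is exactly the trivial bound (\ref{borne sup lambda ALE}): testing the infimum with $w\equiv 1$ gives $\lambda_{\operatorname{ALE}}^0(g)\le \int_N\R_g\,d\mu_g$, which equals $\|\R_g\|_{L^1}$ once $\R_g\ge 0$ and $\R_g\in L^1(N)$. For the lower bound, I would observe that for every admissible test function $w$ both terms of
$$\mathcal{F}_{\operatorname{ALE}}(w,g)=\int_N\big(4|\nabla^gw|_g^2+\R_gw^2\big)\,d\mu_g$$
are pointwise nonnegative, using $\R_g\ge 0$; hence $\mathcal{F}_{\operatorname{ALE}}(w,g)\ge 0$ for all such $w$, and therefore $\lambda_{\operatorname{ALE}}^0(g)=\inf_w\mathcal{F}_{\operatorname{ALE}}(w,g)\ge 0$. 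In particular $\lambda_{\operatorname{ALE}}^0(g)$ is finite under these hypotheses.

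For the scaling identity, fix $s>0$ and write $\tilde g=sg$. I would first record the elementary scaling behaviour of the three ingredients of $\mathcal{F}_{\operatorname{ALE}}$: for any function $w$,
$$|\nabla^{\tilde g}w|^2_{\tilde g}=s^{-1}|\nabla^gw|^2_g,\qquad \R_{\tilde g}=s^{-1}\R_g,\qquad d\mu_{\tilde g}=s^{\frac n2}\,d\mu_g.$$
Substituting into the definition of $\mathcal{F}_{\operatorname{ALE}}$ gives, for every $w$ with $w-1\in C_c^\infty(N)$,
$$\mathcal{F}_{\operatorname{ALE}}(w,\tilde g)=\int_N\big(4s^{-1}|\nabla^gw|_g^2+s^{-1}\R_gw^2\big)\,s^{\frac n2}\,d\mu_g=s^{\frac{n-2}{2}}\,\mathcal{F}_{\operatorname{ALE}}(w,g).$$
Since the admissible class of test functions does not depend on the metric and $s^{\frac{n-2}{2}}>0$, taking the infimum over $w$ commutes with multiplication by this positive constant, so $\lambda_{\operatorname{ALE}}^0(sg)=s^{\frac{n-2}{2}}\lambda_{\operatorname{ALE}}^0(g)$; in particular the right-hand side is finite precisely when $\lambda_{\operatorname{ALE}}^0(g)$ is, matching the hypothesis.

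There is essentially no serious obstacle here; the only point deserving care is the bookkeeping of the scaling exponents — in particular noticing that the Dirichlet term $4|\nabla w|^2$ and the curvature term $\R w^2$ both scale with the power $s^{\frac n2-1}$, which is what makes $\mathcal{F}_{\operatorname{ALE}}(\cdot,sg)$ homogeneous of degree $s^{\frac{n-2}{2}}$ in $s$. The nonnegativity hypothesis in the first part is exactly what trivializes the otherwise delicate question — handled via Hardy's inequality in Proposition \ref{existence propriete-wg} — of whether the infimum can be $-\infty$.
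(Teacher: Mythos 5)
Your proof is correct and follows essentially the same route as the paper: the upper bound by testing with $w\equiv 1$, the lower bound from the pointwise nonnegativity of both terms of $\mathcal{F}_{\operatorname{ALE}}$ when $\R_g\geq 0$, and the scaling identity from the homogeneities $|\nabla^{sg}f|_{sg}^2=s^{-1}|\nabla^gf|_g^2$, $\R_{sg}=s^{-1}\R_g$, $d\mu_{sg}=s^{\frac n2}d\mu_g$. Nothing is missing.
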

	\begin{proof}
		By testing the function $w\equiv1$ in the definition of $\lambda_{\operatorname{ALE}}^0(g)$, one gets the upper bound. Moreover, one has the straightforward inequality for any function $w$ such that $w-1\in C_c^{\infty}(N)$: $$0\leq \int_N|\nabla^g w|^2_g\,d\mu_g\leq \int_N|\nabla^g w|^2_g+\R_gw^2\,d\mu_g.$$ By considering the infimum over such functions $w$, one gets the expected lower bound on $\lambda_{\operatorname{ALE}}^0(g)$.
		
		For any smooth $w$ such that outside a compact set we have $w\equiv 1$,
		$\mathcal{F}_{\operatorname{ALE}}(w,sg) = s^{\frac{n}{2}- 1}\mathcal{F}_{\operatorname{ALE}}(w,g)$, because of the scaling behavior of the different operations: $|\nabla^{sg}f|_{sg}^2=s^{-1}|\nabla^{g}f|_{g}^2$, $\R_{sg}= s^{-1}\R_g$ and $d\mu_{sg} = s^{\frac{n}{2}}d\mu_g$.
		We therefore have $ \lambda_{\operatorname{ALE}}^0(sg) = s^{\frac{n}{2}- 1}\lambda_{\operatorname{ALE}}^0(g)$.
		
	\end{proof}

	\section{First and second variations of $\lambda_{\operatorname{ALE}}^0$}\label{sec-first-sec-var}
	In this section, we compute the first and second variations of the functional $\lambda_{\operatorname{ALE}}^0$ introduced in Section \ref{sec-rel-ene-ALE}. Before doing so, we define the notion of a potential function associated to a metric $g$ lying in a $C^{2,\alpha}_{\tau}$-neighborhood of an ALE Ricci-flat metric $g_b$.
	\begin{defn}
		For a metric $g$ in $B_{C^{2,\alpha}_{\tau}}(g_b,\varepsilon)$, with $\tau\in(\frac{n-2}{2},n-2)$, let us define the potential function associated to $g$ by $$f_g:=-2\ln w_g,$$
		where $w_g$ is defined as in Proposition \ref{existence propriete-wg} .
	\end{defn}
	Notice that $f_g$ is well-defined by the positivity of $w_g$ ensured by Proposition \ref{existence propriete-wg}. Moreover, we sum up the properties shared by $f_g$ in the next proposition which follow in a straightforward way from Proposition \ref{existence propriete-wg}:
	
	\begin{prop}\label{prop-pot-fct}
		Let $(N^n,g_b)$ be an ALE Ricci-flat metric asymptotic to $\RR^n\slash\Gamma$, for some finite subgroup $\Gamma$ of $SO(n)$ acting freely on $\mathbb{S}^{n-1}$. Let $\tau\in(\frac{n-2}{2},n-2)$ and $\alpha\in(0,1)$. 
		
		Then there exists some positive $\varepsilon$ such that $g\in B_{C^{2,\alpha}_{\tau}}(g_b,\varepsilon)\rightarrow f_g\in C^{2,\alpha}_{\tau}(N)$ is analytic and satisfies on $N$,
		\begin{equation}
		2\Delta_gf_g-|\nabla^gf_g|^2_g+\R_g =0. \label{equ-criti-lambda-pot}
				\end{equation}
		Moreover, the asymptotic expansion holds true for $g\in\mathcal{M}^{2,\alpha}_{\tau}(g_b,\varepsilon)$,
		\begin{equation}
		f_g=\frac{\lambda_{\operatorname{ALE}}^0(g) |\Gamma|}{2(n-2)\vol{\mathbb{S}^{n-1}}}\frac{1}{\rho_{g_b}^{n-2}}+\textit{O}(\rho_{g_b}^{-n+2-\gamma}),
\end{equation}
for some positive real number $\gamma$.

		Finally, if $g\in\mathcal{M}^{2,\alpha}_{\tau}(g_b,\varepsilon)$,
		\begin{equation}
		\begin{split}
		\lambda_{\operatorname{ALE}}^0(g) &= \int_N \big(|\nabla^g f_g|_g^2 +\R_g  \big)e^{-f_g}\,d\mu_g\\
		& =2\int_N\left(|\nabla^gf_g|^2_g-\Delta_gf_g\right)\,e^{-f_g}d\mu_g\\
		&= -\lim_{R\to \infty} 2\int_{\{\rho_{g_b} = R\}} \langle\nabla^gf_g,\mathbf{n}_{g_b}\rangle\, d\sigma_g. \label{egalite-lambda-2-bis}
		\end{split}
		\end{equation}
	\end{prop}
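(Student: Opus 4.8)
The statement to prove is Proposition \ref{prop-pot-fct}, which collects properties of the potential function $f_g := -2\ln w_g$.

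\medskip

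The plan is to deduce everything from Proposition \ref{existence propriete-wg}, which already gives the existence, uniqueness, positivity, analyticity, and asymptotic expansion of $w_g$, together with the three variational identities for $\lambda_{\operatorname{ALE}}^0(g)$. First I would establish the analyticity of $g \mapsto f_g$: since $w_g = 1 + v_g$ with $v_g \in C^{2,\alpha}_\tau(N)$ and $v_g \to 0$ at infinity, for $\varepsilon$ small enough $w_g$ is uniformly close to $1$, so $\ln$ is a well-defined analytic function on a neighborhood of the range of $w_g$ in $\mathbb{R}$; composing the analytic map $g \mapsto w_g - 1 \in C^{2,\alpha}_\tau(N)$ (from Proposition \ref{existence propriete-wg}) with the analytic Nemytskii-type operator $u \mapsto -2\ln(1+u)$ on $C^{2,\alpha}_\tau(N)$ (analytic because $C^{2,\alpha}_\tau(N)$ is a Banach algebra up to the decay weight and $\ln(1+u) = \sum_k \frac{(-1)^{k+1}}{k}u^k$ converges for $\|u\|$ small) yields that $g \mapsto f_g \in C^{2,\alpha}_\tau(N)$ is analytic.

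\medskip

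Next I would derive the PDE \eqref{equ-criti-lambda-pot}. Starting from $-4\Delta_g w_g + \R_g w_g = 0$, i.e. $\Delta_g w_g = \tfrac14 \R_g w_g$, and using $w_g = e^{-f_g/2}$, compute $\nabla^g w_g = -\tfrac12 e^{-f_g/2}\nabla^g f_g$ and $\Delta_g w_g = e^{-f_g/2}\big(-\tfrac12 \Delta_g f_g + \tfrac14 |\nabla^g f_g|^2_g\big)$. Substituting and dividing by $e^{-f_g/2}$ gives $-\tfrac12 \Delta_g f_g + \tfrac14|\nabla^g f_g|^2_g = \tfrac14 \R_g$, which rearranges to $2\Delta_g f_g - |\nabla^g f_g|^2_g + \R_g = 0$. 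For the asymptotic expansion, I substitute the expansion \eqref{developpement w} for $w_g$ into $f_g = -2\ln w_g$: writing $w_g = 1 - c\rho_{g_b}^{2-n} + O(\rho_{g_b}^{2-n-\gamma})$ with $c = \frac{\lambda_{\operatorname{ALE}}^0(g)|\Gamma|}{4(n-2)\vol \mathbb{S}^{n-1}}$, we get $f_g = -2\ln(1 + (-c\rho_{g_b}^{2-n} + \dots)) = 2c\rho_{g_b}^{2-n} + O(\rho_{g_b}^{2-n-\gamma})$ (the quadratic term $\rho_{g_b}^{2(2-n)}$ being absorbed since $2-n < 0$), which matches the claimed coefficient $\frac{\lambda_{\operatorname{ALE}}^0(g)|\Gamma|}{2(n-2)\vol \mathbb{S}^{n-1}}$.

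\medskip

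Finally, for the three integral identities \eqref{egalite-lambda-2-bis}, I would translate the identities \eqref{egalite lambda 1} and \eqref{egalite lambda 2} of Proposition \ref{existence propriete-wg} via $w_g = e^{-f_g/2}$. From $|\nabla^g w_g|^2_g = \tfrac14 e^{-f_g}|\nabla^g f_g|^2_g$ and $w_g^2 = e^{-f_g}$, the identity $\lambda_{\operatorname{ALE}}^0(g) = \int_N (4|\nabla^g w_g|^2_g + \R_g w_g^2)\,d\mu_g$ becomes $\int_N (|\nabla^g f_g|^2_g + \R_g)e^{-f_g}\,d\mu_g$, the first line. Using the PDE \eqref{equ-criti-lambda-pot} to replace $\R_g = |\nabla^g f_g|^2_g - 2\Delta_g f_g$ gives the second line $2\int_N(|\nabla^g f_g|^2_g - \Delta_g f_g)e^{-f_g}\,d\mu_g$; one should note that $(|\nabla^g f_g|^2_g - \Delta_g f_g)e^{-f_g} = -\operatorname{div}_g(e^{-f_g}\nabla^g f_g) + \tfrac12(\dots)$ — more precisely $\Delta_g(e^{-f_g}) = e^{-f_g}(|\nabla^g f_g|^2_g - \Delta_g f_g)$, so this integrand is $\Delta_g(e^{-f_g}) = \Delta_g(w_g^2)$, and integrating by parts over $\{\rho_{g_b} \le R\}$ (using integrability from the decay rates, as in the proof of Proposition \ref{existence propriete-wg}) produces the boundary term $2\int_{\{\rho_{g_b}=R\}}\langle \nabla^g(e^{-f_g}), \mathbf{n}_{g_b}\rangle\,d\sigma_g = -2\int_{\{\rho_{g_b}=R\}}\langle e^{-f_g}\nabla^g f_g, \mathbf{n}_{g_b}\rangle\,d\sigma_g$, and since $e^{-f_g} \to 1$ and $\nabla^g f_g = O(\rho_{g_b}^{1-n})$ the factor $e^{-f_g}$ can be dropped in the limit, yielding the third line $-\lim_{R\to\infty} 2\int_{\{\rho_{g_b}=R\}}\langle \nabla^g f_g, \mathbf{n}_{g_b}\rangle\,d\sigma_g$. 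This also matches \eqref{egalite lambda 2} directly since $4\langle \nabla^g w_g, \mathbf{n}_{g_b}\rangle = -2e^{-f_g/2}\langle \nabla^g f_g, \mathbf{n}_{g_b}\rangle \to -2\langle \nabla^g f_g, \mathbf{n}_{g_b}\rangle$ at infinity.

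\medskip

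The only mildly delicate point is the analyticity of the composition $u \mapsto -2\ln(1+u)$ as a map of weighted Hölder spaces, and keeping careful track that all boundary integrals converge and the error terms are genuinely negligible in the limit $R \to \infty$; these are handled exactly as the corresponding steps in the proof of Proposition \ref{existence propriete-wg}, so no new ideas are needed.
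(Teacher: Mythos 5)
Your proposal is correct and follows exactly the route the paper intends: the paper gives no separate proof of Proposition \ref{prop-pot-fct}, stating only that it follows "in a straightforward way" from Proposition \ref{existence propriete-wg}, and your argument supplies precisely that derivation — analyticity by composing $g\mapsto w_g-1$ with the convergent logarithm series in the Banach algebra $C^{2,\alpha}_{\tau}(N)$, the Euler--Lagrange equation and the asymptotic expansion by substituting $w_g=e^{-f_g/2}$, and the three integral identities by translating \eqref{egalite lambda 1}--\eqref{egalite lambda 2}. The only cosmetic wobble is the stray "$+\tfrac12(\dots)$" before you correctly identify the integrand as $\Delta_g(e^{-f_g})=-\div_g(e^{-f_g}\nabla^g f_g)$; everything else, including the decay estimates justifying the removal of the factor $e^{-f_g}$ in the boundary limit, is accurate.
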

	
	Before stating the first variation of $\mathcal{F}_{\operatorname{ALE}}$ for arbitrary variations, we introduce several notions associated to a smooth metric measure space $(N^n,g,\nabla^gf)$ where $f$ is a given $C^1_{loc}$ function on $N$. The \textbf{weighted laplacian} of a tensor $T$ on $N$ denoted by $\Delta_fT$ is defined by:
	\begin{equation}
	\Delta_fT:=\Delta_gT-\nabla^g_{\nabla^gf}T,
	\end{equation}
	where $\Delta_g$ denotes the rough Laplacian associated to the Riemannian metric $g$.
	
	The \textbf{weighted divergence} of a $C^1_{loc}$ vector field $X$ on $N$ is defined by:
	\begin{equation}
	\div_fX:=\Delta_gX-g(\nabla^gf,X).\label{def-wei-div-vec}
	\end{equation}
	Finally, the \textbf{weighted divergence} of a $C^1_{loc}$ symmetric $2$-tensor $T$ on $N$ is defined by:
	\begin{equation}
	\div_fT:=\div_gT-T(\nabla^gf).\label{def-wei-div-sym}
	\end{equation}
	
	\begin{prop}[First variation of $\mathcal{F}_{\operatorname{ALE}}$]\label{first-var-prop} Let $(N^n,g_b)$ be an ALE Ricci-flat metric asymptotic to $\RR^n\slash\Gamma$, for some finite subgroup $\Gamma$ of $SO(n)$ acting freely on $\mathbb{S}^{n-1}$. Let $\tau\in(\frac{n-2}{2},n-2)$ and $\alpha\in(0,1)$. 
		
		The first variation of $\mathcal{F}_{\operatorname{ALE}}$ at a couple $(g,f)\in  \mathcal{M}^{2,\alpha}_{\tau}(g_b,\varepsilon)\times C^{2}_{\tau}(N)$ along directions $(h,\varphi)\in C^{2}_{\tau}(S^2T^*N)\times C^{2}_{\tau}(N)$ such that $g+h\in \mathcal{M}^{2,\alpha}_{\tau}(g_b,\varepsilon)$ is
		\begin{equation}
		\begin{split}
		\delta_{g,f}\mathcal{F}_{\operatorname{ALE}}(h,\varphi)&=-\int_N\langle h,\Ric(g)+\nabla^{g,2}f\rangle_g\, e^{-f}d\mu_g\\
		&+\int_N\left(2\Delta_gf-|\nabla^gf|^2_g+\R_g\right)\left(\frac{\tr_gh}{2}-\varphi\right)e^{-f}d\mu_g+m_{\operatorname{ADM}}(g_b+h),
		\label{first-var-F}
		\end{split}
		\end{equation}
		where $m_{\operatorname{ADM}}(g_b+h)$ is the mass of the metric $g_b+h$ defined in (\ref{def-mass}). 
		
		Finally, the first variation of $\lambda_{\operatorname{ALE}}^0$ on a neighborhood of $\mathcal{M}^{2,\alpha}_{\tau}(g_b,\varepsilon)$ is:
		\begin{equation}
		\begin{split}
		\delta_g \lambda_{\operatorname{ALE}}^0(h)=&-\int_N\langle h,\Ric(g)+\nabla^{g,2}f_g\rangle_g \,e^{-f_g}d\mu_g+m_{\operatorname{ADM}}(g_b+h).\label{first-var-lambda}
		\end{split}
		\end{equation}
		%\todo[inline]{Attention, ici, c'est plutôt $h\in L^2_{\frac{n}{2}-1}$ et $\Ric(g)+\nabla^{g,2}f_g\in L^2_{\frac{n}{2}+1}$}
	\end{prop}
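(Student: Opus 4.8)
\textbf{Proof plan for Proposition \ref{first-var-prop}.}

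The plan is to compute the first variation of $\mathcal{F}_{\operatorname{ALE}}(w,g)$ directly, then translate into the $f$-language via $f=-2\ln w$, and finally use the critical equation for $w_g$ to deduce \eqref{first-var-lambda}. First I would write $\mathcal{F}_{\operatorname{ALE}}(w,g)=\int_N(4|\nabla^g w|_g^2+\R_g w^2)\,d\mu_g$ and differentiate term by term along a variation $(h,\psi)$ of $(g,w)$, using the standard variation formulas recalled in Appendix \ref{app-C}: $\delta_h(d\mu_g)=\tfrac12(\tr_g h)\,d\mu_g$, the variation of the inverse metric $\delta_h g^{-1}=-h$ (raising indices), and crucially the linearization of the scalar curvature $\delta_h\R_g=-\Delta_g(\tr_g h)+\div_g\div_g h-\langle h,\Ric(g)\rangle_g$. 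The term $\int_N(\delta_h \R_g) w^2\,d\mu_g$ is the one producing both the Ricci contraction and, after integrating by parts the $-\Delta_g(\tr_g h)+\div_g\div_g h$ part twice against $w^2$, a bulk term involving $\nabla^{g,2}(w^2)$ together with a boundary term at $\{\rho_{g_b}=R\}$. Here I must be careful: because $h\in C^2_\tau$ with $\tau>\frac{n-2}{2}$ and $w-1$ decays like $\rho_{g_b}^{2-n}$, the boundary integrals coming from integration by parts against $w^2$ converge, and in the limit $R\to\infty$ the leading contribution (from $w^2\to 1$) is exactly the ADM mass integrand $\langle\div_{g_b}h-\nabla^{g_b}\tr_{g_b}h,\mathbf{n}\rangle_{g_b}$; the cross terms decay and vanish. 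This is where the $m_{\operatorname{ADM}}(g_b+h)$ term in \eqref{first-var-F} is born, and identifying it precisely — matching $\div_g$ versus $\div_{g_b}$ and $\nabla^g$ versus $\nabla^{g_b}$ up to terms that are $O(\rho^{-\tau})\cdot O(\rho^{-\tau-1})$ hence integrable and boundary-negligible — is the main technical obstacle.

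Next I would substitute $w=e^{-f/2}$, so that $4|\nabla^g w|_g^2=|\nabla^g f|_g^2 e^{-f}$ and $w^2=e^{-f}$, rewrite the whole functional as $\mathcal{F}_{\operatorname{ALE}}=\int_N(|\nabla^g f|_g^2+\R_g)e^{-f}\,d\mu_g$, and read off the variation with respect to $(h,\varphi)$ where $\varphi$ is the variation of $f$ (related to $\psi$ by $\psi=-\tfrac12 e^{-f/2}\varphi$). The variation in the $f$-direction gives, after an integration by parts, a factor $(2\Delta_g f-|\nabla^g f|_g^2+\R_g)$ times $-\varphi\, e^{-f}$, while the $\tfrac12(\tr_g h)$ from $\delta_h(d\mu_g)$ multiplies the integrand $(|\nabla^g f|_g^2+\R_g)e^{-f}$; combining the $|\nabla^g f|^2$ piece of the latter with the Ricci/Hessian terms coming from $\delta_h\int \R_g e^{-f}d\mu_g$ (using the Bochner-type identity and integrating by parts to convert $\langle h,\nabla^{g,2}f\rangle$ appropriately) produces the clean form: the $h$-contraction against $\Ric(g)+\nabla^{g,2}f$, plus the trace-free combination $\left(\tfrac{\tr_g h}{2}-\varphi\right)$ multiplying $(2\Delta_g f-|\nabla^g f|_g^2+\R_g)$, plus the mass boundary term. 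Keeping track of which integrations by parts generate no new boundary terms (they don't, since $h$, $\varphi$ lie in $C^2_\tau$ with $2\tau>n-2$ and $f$ decays like $\rho_{g_b}^{2-n}$, so every such product is $O(\rho^{-(n-1)})$ and integrable with vanishing flux) is the bookkeeping core of the argument.

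Finally, for \eqref{first-var-lambda} I would invoke Proposition \ref{existence propriete-wg} / Proposition \ref{prop-pot-fct}: along the family $g\mapsto w_g$ (equivalently $g\mapsto f_g$), the analyticity gives a well-defined variation $\varphi=\delta_g f_g(h)$, and the potential function satisfies exactly the critical equation \eqref{equ-criti-lambda-pot}, $2\Delta_g f_g-|\nabla^g f_g|_g^2+\R_g=0$. Hence the entire second integral in \eqref{first-var-F} vanishes identically regardless of $\varphi$ (which is why one need not compute $\delta_g f_g(h)$ explicitly — this is the usual envelope/criticality argument), and one is left with precisely $\delta_g\lambda_{\operatorname{ALE}}^0(h)=-\int_N\langle h,\Ric(g)+\nabla^{g,2}f_g\rangle_g\,e^{-f_g}\,d\mu_g+m_{\operatorname{ADM}}(g_b+h)$. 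I expect the routine variation-formula computations to be straightforward; the genuinely delicate points are (i) justifying all integrations by parts and the passage $R\to\infty$ in the weighted setting so that exactly the ADM-mass flux survives, and (ii) verifying that the decay rates $\tau>\frac{n-2}{2}$ indeed make every error term integrable with vanishing boundary flux, as flagged in the Remark following Lemma \ref{lemm-charac-M}.
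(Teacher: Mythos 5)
Your plan follows essentially the same route as the paper: vary $\int_N(|\nabla^g f|^2_g+\R_g)e^{-f}d\mu_g$ using the linearization of the scalar curvature, integrate by parts over sublevel sets $\{\rho_{g_b}\le R\}$ so that the double divergence produces the $\langle h,\nabla^{g,2}f\rangle$ contraction plus a boundary flux whose $R\to\infty$ limit is exactly $m_{\operatorname{ADM}}(g_b+h)$ (the remaining boundary terms being $O(R^{n-2-2\tau})$), and then obtain \eqref{first-var-lambda} by the envelope argument, setting $\varphi=\delta_g f(h)$ and using the critical equation \eqref{equ-criti-lambda-pot} to kill the second integral. The detour through the $w$-variable before substituting $w=e^{-f/2}$ is harmless and the decay bookkeeping you flag is precisely what the paper checks.
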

	\begin{rk}
		Notice that (\ref{first-var-lambda}) gives a link between the variation of the functional $\lambda_{\operatorname{ALE}}^0$, the mass of an ALE metric with integrable scalar curvature and its associated Bakry-\'Emery tensor.
	\end{rk}
	\begin{proof}
		We follow [Chapter $2$, \cite{Cho-Boo}] closely by using (\ref{lem-lin-equ-scal-first-var}) from Lemma \ref{lem-lin-equ-Ric-first-var}:
		\begin{equation*}
		\begin{split}
		\delta_{g,f}\big[\big(|\nabla^gf|^2_g+\R_g\big)\,&e^{-f}d\mu_g\big](h,\varphi)\\
		=\,&\left(\div_g(\div_gh)-\Delta_g\tr_gh-\langle h,\Ric(g)\rangle-h(\nabla^gf,\nabla^gf)\right) e^{-f}d\mu_g\\
		&+\left(2\langle \nabla^gf,\nabla^g\varphi\rangle+(\R_g+|\nabla^gf|^2_g)\left(\frac{\tr_gh}{2}-\varphi\right)\right)e^{-f}d\mu_g.
		\end{split}
		\end{equation*}
		Now, by integrating by parts twice on the domain $\{\rho_{g_b}\leq R\}$ with $R$ sufficiently large such that $\{\rho_{g_b}= R\}$ is a smooth compact hypersurface:
		\begin{equation*}
		\begin{split}
		\int_{\{\rho_{g_b}\leq R\}}\div_g(\div_gh)\,e^{-f}d\mu_g=\,&\int_{\{\rho_{g_b}\leq R\}}\left(h(\nabla^gf,\nabla^gf)-\langle h,\nabla^{g,2}f\rangle\right)e^{-f}d\mu_g\\
		&+\int_{\{\rho_{g_b}=R\}}\left<\div_gh+h(\nabla^gf),\mathbf{n}_{g_b}\right>\,e^{-f}d\sigma_g.
		\end{split}
		\end{equation*}
		Moreover,
		\begin{equation*}
		\begin{split}
		-\int_{\{\rho_{g_b}\leq R\}}\Delta_g\tr_gh\,e^{-f}d\mu_g=\,&-\int_{\{\rho_{g_b}\leq R\}}\langle\nabla^g\tr_gh,\nabla^gf\rangle \,e^{-f}d\mu_g\\
		&-\int_{\{\rho_{g_b}= R\}}\left<\nabla^g\tr_gh,\mathbf{n}_{g_b}\right>\,e^{-f}d\sigma_g\\
		=\,&\int_{\{\rho_{g_b}\leq R\}}\left(\Delta_gf-|\nabla^gf|^2_g\right)\tr_gh\,e^{-f}d\mu_g\\
		&-\int_{\{\rho_{g_b}= R\}}\left<\nabla^g\tr_gh,\mathbf{n}_{g_b}\right>\,e^{-f}d\sigma_g\\
		&-\int_{\{\rho_{g_b}=R\}}\tr_{g}h\langle\nabla^gf,\mathbf{n}_{g_b}\rangle\,e^{-f}d\sigma_g,\\
		2\int_{\{\rho_{g_b}\leq R\}}\langle \nabla^gf,\nabla^g\varphi\rangle\,e^{-f}d\mu_g=\,& -2\int_{\{\rho_{g_b}\leq R\}}\left(\Delta_gf-|\nabla^gf|^2_g\right)\varphi\,e^{-f}d\mu_g\\
		&+2\int_{\{\rho_{g_b}=R\}}\varphi\langle\nabla^gf,\mathbf{n}_{g_b}\rangle\,e^{-f}d\sigma_g.
		\end{split}
		\end{equation*}
		Therefore the expected result follows by summing the previous equalities and let $R$ go to $+\infty$ by using the asymptotics assumed on $h$, $f$ and $\varphi$ together with the definition of the mass of $g_b+h$ given in (\ref{def-mass}).
		
		 In order to compute the first variation of $\lambda_{\operatorname{ALE}}^0$, one proceeds similarly by setting $\varphi:=\delta_gf(h)$ and by using 
		(\ref{equ-criti-lambda-pot}) to cancel the second integral term on the righthand side of (\ref{first-var-F}). By density of $C_c^{\infty}$ in $L^2_{\frac{n}{2}-1}$, notice that (\ref{first-var-lambda}) still holds true for variations $h\in L^2_{\frac{n}{2}-1}$ since $\Ric(g)+\nabla^{g,2}f_g=\textit{O}(\rho_{g_b}^{-\tau-2})\in L^2_{\frac{n}{2}+1}$.
		%\todo[inline]{Attention, ici, c'est plutôt $h\in L^2_{\frac{n}{2}-1}$ et $\Ric(g)+\nabla^{g,2}f_g =\textit{O}(\rho_g^{-{\frac{n}{2}}-1})\in L^2_{\frac{n}{2}+1}$ est-ce qu'on veut des d\'eformations aussi g\'en\'erales ?}

	\end{proof}

	%\begin{coro}\label{mass-vs-lambda} Let $(N^n,g_b)$ be an ALE Ricci-flat metric asymptotic to $\RR^n\slash\Gamma$, for some finite subgroup $\Gamma$ of $SO(n)$ acting freely on $\mathbb{S}^{n-1}$ and let $\tau\in(\frac{n-2}{2},n)$. Let $g\in\mathcal{M}^{2,\alpha}_{\tau}(g_b,\varepsilon)$ and $h\in C^{2,\alpha}_{\tau}(S^2T^*N)$. Then,
	%\begin{equation*}
	%\begin{split}
	%\delta_{g}\lambda_{\operatorname{ALE}}^0(h)=m_{\operatorname{ADM}}(g_b+h)-\langle h,\Ric(g)+\nabla^{g,2}f_g\rangle_{L^2(e^{-f_g}d\mu_g)}.
	%\end{split}
	%\end{equation*}
	%where $m_{\operatorname{ADM}}(g)$ denotes the mass of the metric $g$.
	%\end{coro}
	
	As a first consequence of Proposition \ref{first-var-prop}, we recover (in our non-compact setting) the fact stated in [Remark $4.6$, \cite{Has-Sta}] that the weighted $L^2$-norm of the Bakry-\'Emery tensor $\Ric(g)+\nabla^{g,2}f_g$ associated to the functional $\lambda_{\operatorname{ALE}}^0$ is dominated by that of the Ricci curvature. More precisely, we have the following result:
	\begin{coro}
		Let $(N^n,g_b)$ be an ALE Ricci-flat metric asymptotic to $\RR^n\slash\Gamma$, for some finite subgroup $\Gamma$ of $SO(n)$ acting freely on $\mathbb{S}^{n-1}$. Let $\tau\in(\frac{n-2}{2},n-2)$ and $\alpha\in(0,1)$. 
		
		Then there exists $\varepsilon>0$ such that if $g\in B_{C^{2,\alpha}_{\tau}}(g_b,\varepsilon)$, the tensor $\Ric(g)+\nabla^{g,2}f_g$ is weighted divergence-free, i.e. 
		\begin{equation}
		\div_{f_g}\left(\Ric(g)+\nabla^{g,2}f_g\right)=0.\label{wei-div-free-obs-ten}
		\end{equation}
		In particular, if $g\in B_{C^{2,\alpha}_{\tau}}(g_b,\varepsilon)$,
		\begin{equation}
		\|\Ric(g)+\nabla^{g,2}f_g\|_{L^2(e^{-f_g}d\mu_g)}\leq \|\Ric(g)\|_{L^2(e^{-f_g}d\mu_g)}.\label{easy-inequ-ric-bak-eme}
		\end{equation}
		
	\end{coro}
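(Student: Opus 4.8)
The structure of the argument is standard for weighted divergence-free tensors, mirroring the compact computation of Haslhofer--Müller. The plan is to first establish the weighted Bianchi-type identity \eqref{wei-div-free-obs-ten} by differentiating \eqref{first-var-lambda} in the diffeomorphism direction, and then to deduce \eqref{easy-inequ-ric-bak-eme} by a purely algebraic orthogonality argument once integrability of the relevant quantities is checked.

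For the first identity, I would exploit the diffeomorphism invariance of $\lambda_{\operatorname{ALE}}^0$ proved in Proposition \ref{inv-diff-lambda}. Let $X$ be a compactly supported (or sufficiently fast decaying, e.g. in $C^{3,\alpha}_{\tau-1}(TN)$) vector field on $N$, and let $\phi_t$ be the flow of $X$. Since $\lambda_{\operatorname{ALE}}^0(\phi_t^*g)$ is constant in $t$ by Proposition \ref{inv-diff-lambda}, differentiating at $t=0$ and applying the first-variation formula \eqref{first-var-lambda} with $h = \tfrac{d}{dt}\big|_{t=0}\phi_t^*g = \mathcal{L}_X g = \tfrac12(\nabla^g X^\flat + (\nabla^g X^\flat)^T)$ gives
\begin{equation*}
0 = -\int_N\langle \mathcal{L}_X g,\Ric(g)+\nabla^{g,2}f_g\rangle_g\, e^{-f_g}d\mu_g + m_{\operatorname{ADM}}(g_b + \mathcal{L}_X g).
\end{equation*}
Because $X$ decays fast enough, $\mathcal{L}_X g$ decays at rate $\tau$ and moreover $\div_{g_b}\div_{g_b}(\mathcal{L}_X g) - \Delta_{g_b}\tr_{g_b}(\mathcal{L}_X g)$ is a total divergence integrating to zero, so the mass term vanishes; alternatively one simply takes $X$ compactly supported so the boundary term in \eqref{def-mass} is identically zero. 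It then remains to integrate by parts: writing $A := \Ric(g)+\nabla^{g,2}f_g$, one has $\langle \mathcal{L}_X g, A\rangle_g = 2\langle \nabla^g X^\flat, A\rangle_g$ (by symmetry of $A$), and integrating by parts against the weighted measure $e^{-f_g}d\mu_g$ produces $-2\int_N \langle X, \div_{f_g} A\rangle_g\, e^{-f_g} d\mu_g$, where $\div_{f_g}$ is the weighted divergence of \eqref{def-wei-div-sym} — this is precisely the adjoint of $\nabla^g$ with respect to the weighted volume form. Since this holds for all compactly supported $X$, we conclude $\div_{f_g} A = 0$, which is \eqref{wei-div-free-obs-ten}. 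One should double-check the decay of $A$ here: from Proposition \ref{prop-pot-fct}, $\nabla^{g,2}f_g = O(\rho_{g_b}^{-n})$ and $\Ric(g) = \Ric(g_b) + O(\rho_{g_b}^{-\tau-2}) = O(\rho_{g_b}^{-\tau-2})$ since $g_b$ is Ricci-flat, so $A = O(\rho_{g_b}^{-\tau-2}) \in L^2_{n/2+1}$, which makes all the integrations by parts legitimate.

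For the inequality \eqref{easy-inequ-ric-bak-eme}, the key observation is that $\nabla^{g,2}f_g$ is $L^2(e^{-f_g}d\mu_g)$-orthogonal to $A = \Ric(g) + \nabla^{g,2}f_g$. Indeed, integrating by parts twice against the weighted measure,
\begin{equation*}
\int_N \langle \nabla^{g,2}f_g, A\rangle_g\, e^{-f_g}d\mu_g = -\int_N \langle \nabla^g f_g, \div_{f_g} A\rangle_g\, e^{-f_g} d\mu_g = 0
\end{equation*}
by \eqref{wei-div-free-obs-ten}; here the boundary terms at $\{\rho_{g_b} = R\}$ vanish as $R\to\infty$ because $\nabla^g f_g = O(\rho_{g_b}^{-n+1})$, $A = O(\rho_{g_b}^{-\tau-2})$ and the surface area grows like $R^{n-1}$, so the product decays since $\tau > \frac{n-2}{2}$ (one needs $(n-1) - (n-1) - (\tau+2) < 0$, which is clear, and similarly for the other boundary term involving $\nabla^{g,2}f_g$). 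Consequently, writing $\Ric(g) = A - \nabla^{g,2}f_g$ as an orthogonal decomposition in $L^2(e^{-f_g}d\mu_g)$,
\begin{equation*}
\|\Ric(g)\|^2_{L^2(e^{-f_g}d\mu_g)} = \|A\|^2_{L^2(e^{-f_g}d\mu_g)} + \|\nabla^{g,2}f_g\|^2_{L^2(e^{-f_g}d\mu_g)} \geq \|A\|^2_{L^2(e^{-f_g}d\mu_g)},
\end{equation*}
which is \eqref{easy-inequ-ric-bak-eme}.

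\textbf{Main obstacle.} The genuine content beyond bookkeeping is making the integration-by-parts arguments rigorous in the non-compact ALE setting: one must verify that every boundary integral over $\{\rho_{g_b}=R\}$ appearing when deriving $\div_{f_g} A = 0$ and when proving the orthogonality goes to zero as $R\to\infty$. This is where the hypothesis $\tau > \frac{n-2}{2}$ is used — together with the faster decay $\nabla^{g,2} f_g, \Ric(g) = O(\rho_{g_b}^{-\tau-2})$ it guarantees that $A\in L^2_{n/2+1}$ and that all cross terms are integrable, so the limits are legitimate. The density argument extending \eqref{first-var-lambda} to $L^2_{n/2-1}$ variations (already noted at the end of the proof of Proposition \ref{first-var-prop}) can also be invoked to reduce to smooth compactly supported $X$, which sidesteps the mass term entirely.
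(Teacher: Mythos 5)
Your argument is correct in substance but reaches the key identity \eqref{wei-div-free-obs-ten} by a genuinely different route. The paper proves $\div_{f_g}\bigl(\Ric(g)+\nabla^{g,2}f_g\bigr)=0$ by a direct pointwise computation: it applies the contracted Bianchi identity and the Bochner formulas for vector fields and functions to reduce everything to the gradient of the Euler--Lagrange equation $2\Delta_g f_g-|\nabla^g f_g|^2_g+\R_g=0$ from Proposition \ref{prop-pot-fct}. You instead run the Noether-type variational argument: diffeomorphism invariance of the functional plus the first variation formula \eqref{first-var-lambda} tested on $h=\Li_Xg$ for compactly supported $X$, followed by integration by parts against $e^{-f_g}d\mu_g$. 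Your route is conceptually cleaner (it explains \emph{why} the gradient is weighted divergence-free), while the paper's is pointwise and therefore needs no global integrability at this stage. For the inequality \eqref{easy-inequ-ric-bak-eme} the two arguments coincide: both test \eqref{wei-div-free-obs-ten} against $X=\nabla^g f_g$ (equivalently, use $\nabla^{g,2}f_g=\tfrac12\Li_{\nabla^gf_g}(g)$) to get the $L^2(e^{-f_g}d\mu_g)$-orthogonality of $\Ric(g)+\nabla^{g,2}f_g$ and $\nabla^{g,2}f_g$, and then conclude by Pythagoras.

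One point needs repair. The corollary is asserted for every $g\in B_{C^{2,\alpha}_{\tau}}(g_b,\varepsilon)$, whereas Proposition \ref{inv-diff-lambda} and the formula \eqref{first-var-lambda} that you invoke are only established on $\mathcal{M}^{2,\alpha}_{\tau}(g_b,\varepsilon)$, i.e.\ for metrics with sufficiently decaying scalar curvature; for a general element of the $C^{2,\alpha}_{\tau}$-ball the functional $\lambda_{\operatorname{ALE}}^0$ need not even be finite. So as written your proof only covers $g\in\mathcal{M}^{2,\alpha}_{\tau}(g_b,\varepsilon)$. This is fixable: either replace $\lambda_{\operatorname{ALE}}^0$ by $\lambda_{\operatorname{ALE}}$, which is defined and analytic on the whole ball with $L^2(e^{-f_g}d\mu_g)$-gradient $-(\Ric(g)+\nabla^{g,2}f_g)$ and is diffeomorphism invariant (Propositions \ref{lambdaALE analytic} and \ref{scaling diffeo tildelambda}; the mass term is unaffected by compactly supported $X$), or observe, as the paper does, that the identity is a pointwise consequence of the Euler--Lagrange equation, which holds on all of $B_{C^{2,\alpha}_{\tau}}(g_b,\varepsilon)$ by Proposition \ref{prop-pot-fct}. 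A minor overstatement: $\nabla^{g,2}f_g$ is only $O(\rho_{g_b}^{-\tau-2})$ in general (from $f_g\in C^{2,\alpha}_{\tau}$), not $O(\rho_{g_b}^{-n})$; this does not affect your conclusion that $\Ric(g)+\nabla^{g,2}f_g\in L^2_{\frac{n}{2}+1}$ or the vanishing of the boundary terms, for which $\tau>\frac{n-2}{2}$ suffices.
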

	Notice that the quantitative version of the reverse inequality of (\ref{easy-inequ-ric-bak-eme}) is more delicate to prove in general: see \cite[Theorem C]{Has-Sta} for closed Ricci-flat metrics in the integrable case.
	%: see Proposition \ref{loja-by-hand-int-case}.
	
	\begin{proof}
	Let us prove that $\Ric(g)+\nabla^{g,2}f_g$ is divergence-free in the (weighted) sense of (\ref{def-wei-div-sym}):
		\begin{equation}
		\begin{split}
		2\div_{f_g}\left(\Ric(g)+\nabla^{g,2}f_g\right)&=2\div_g\left(\Ric(g)+\nabla^{g,2}f_g\right)-2\left(\Ric(g)+\nabla^{g,2}f_g\right)(\nabla^gf_g)\\
		&=\nabla^g\R_g+\div_g\Li_{\nabla^gf_g}(g)-2\left(\Ric(g)+\nabla^{g,2}f_g\right)(\nabla^gf_g)\\
		&=\nabla^g\R_g+\frac{1}{2}\nabla^g\tr_g\Li_{\nabla^gf_g}(g)+\Delta_g\nabla^gf_g+\Ric(g)(\nabla^gf_g)\\
		&-2\left(\Ric(g)+\nabla^{g,2}f_g\right)(\nabla^gf_g)\\
		&=\nabla^g\left(\R_g+2\Delta_gf_g-|\nabla^gf_g|^2_g\right)\\
		&=0.
		\end{split}
		\end{equation}
		Here, we have used the Bianchi identity (its traced version) in the second line together with the Bochner formula for vector fields in the third line and the one for functions in the fourth line. The last line comes from [(\ref{equ-criti-lambda-pot}), Proposition \ref{prop-pot-fct}].\\
		
		The proof of (\ref{easy-inequ-ric-bak-eme}) is essentially due to (\ref{wei-div-free-obs-ten}).
		Indeed, if $X$ is any smooth vector field which is compactly supported (or decaying faster than $\rho_{g_b}^{-\frac{n}{2}+2}$) on $N$, then 
		\begin{equation*}
		\begin{split}
		\left<\Ric(g)+\nabla^{g,2}f_g,\Li_X(g)\right>_{L^2(e^{-f_g}d\mu_g)}=0,
		\end{split}
		\end{equation*}
		by integration by parts. In particular, by applying this fact to $X=\nabla^{g}f_g=O(\rho_{g_b}^{-\tau-1})$ by Proposition \ref{prop-pot-fct}, one gets, 
		\begin{equation}
		\begin{split}
		\|\Ric(g)+\nabla^{g,2}f_g\|_{L^2(e^{-f_g}d\mu_g)}^2=\,&\|\Ric(g)\|^2_{L^2(e^{-f_g}d\mu_g)}+2\left<\Ric(g),\nabla^{g,2}f_g\right>_{L^2(e^{-f_g}d\mu_g)}\\
		&+\|\nabla^{g,2}f_g\|^2_{L^2(e^{-f_g}d\mu_g)}\\
		=\,&\|\Ric(g)\|^2_{L^2(e^{-f_g}d\mu_g)}+2\left<\Ric(g)+\nabla^{g,2}f_g,\nabla^{g,2}f_g\right>_{L^2(e^{-f_g}d\mu_g)}\\
		&-\|\nabla^{g,2}f_g\|^2_{L^2(e^{-f_g}d\mu_g)}\\
		=\,&\|\Ric(g)\|^2_{L^2(e^{-f_g}d\mu_g)}-\|\nabla^{g,2}f_g\|^2_{L^2(e^{-f_g}d\mu_g)}\\
		\leq\,& \|\Ric(g)\|^2_{L^2(e^{-f_g}d\mu_g)}.
		\end{split}
		\end{equation}
		All the integrals and integration by parts are justified here by the sufficiently fast decays at infinity satisfied by $\Ric(g)$ and $f_g$ and their covariant derivatives.

	\end{proof}

	We are in a good position to compute the second variation of $\lambda_{\operatorname{ALE}}^0$. We first need one more definition:
	\begin{defn}\label{defn-Lic-Op}
		Let $(N^n,g)$ be a Riemannian metric. Then the \emph{Lichnerowicz operator} associated to $g$ acting on symmetric $2$-tensors, denoted by $L_{g}$, is defined by:
		\begin{equation}
		L_{g}h:=\Delta_{g}h + 2\Rm(g)(h)-\Ric(g)\circ h-h\circ\Ric(g),\quad h\in C_{loc}^2(S^2T^*N),\label{defn-Lic-op-eq}
		\end{equation}
		where $\Delta_{g}=-\nabla^*\nabla$ and where $\Rm(g)(h)(X,Y) := h(\Rm(g)(e_i,X)Y,e_i)$ for an orthonormal basis $(e_i)_{i=1}^n$ with respect to $g$. In particular, if $(N^n,g)$ is a Ricci-flat metric, then,
		\begin{equation}
		L_{g}h:=\Delta_{g}h + 2\Rm(g)(h),\quad h\in C_{loc}^2(S^2T^*N).\label{defn-Lic-op-eq}
		\end{equation}
	\end{defn}
	
	With this definition in hand, we are able to identify the second variation of $\lambda_{\operatorname{ALE}}^0$  at an ALE Ricci-flat metric as follows.
	\begin{prop}[Second variation of $\lambda_{\operatorname{ALE}}^0$ at a Ricci-flat metric]\label{second-var-prop}
		Let $(N^n,g_b)$ be an ALE Ricci-flat metric asymptotic to $\RR^n\slash\Gamma$, for some finite subgroup $\Gamma$ of $SO(n)$ acting freely on $\mathbb{S}^{n-1}$. Let $\tau\in(\frac{n-2}{2},n-2)$ and $\alpha\in(0,1)$. Then the second variation of $\lambda_{\operatorname{ALE}}^0$ at $g_b$ along a divergence-free variation $h\in S^2T^*N$ such that $g_b+h\in \mathcal{M}^{2,\alpha}_{\tau}(g_b,\varepsilon)$ is:
		\begin{equation}
		\delta^2_{g_b}\lambda_{\operatorname{ALE}}^0(h,h)=\frac{1}{2}\langle L_{g_b}h,h\rangle_{L^2}.
		\end{equation}
	
	\end{prop}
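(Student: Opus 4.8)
The plan is to differentiate the first‑variation formula \eqref{first-var-lambda} of Proposition \ref{first-var-prop} once more along the straight segment $t\mapsto g(t):=g_b+th$, which stays in $\mathcal{M}^{2,\alpha}_{\tau}(g_b,\varepsilon)$ for $t\in[0,1]$ by the convexity of Lemma \ref{lemm-charac-M} and $\varepsilon$ small. Writing $A(g):=\Ric(g)+\nabla^{g,2}f_g$ for the Bakry--\'Emery tensor, \eqref{first-var-lambda} gives
$$\frac{d}{dt}\lambda_{\operatorname{ALE}}^0(g(t))=-\int_N\big\langle h,A(g(t))\big\rangle_{g(t)}\,e^{-f_{g(t)}}\,d\mu_{g(t)}+m_{\operatorname{ADM}}(g_b+h),$$
and, since the mass is defined by \eqref{def-mass} as a boundary integral linear in $g-g_b$, its contribution here is independent of $t$ and disappears upon a further $t$-derivative. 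Hence
$$\delta^2_{g_b}\lambda_{\operatorname{ALE}}^0(h,h)=-\frac{d}{dt}\Big|_{t=0}\int_N\big\langle h,A(g(t))\big\rangle_{g(t)}\,e^{-f_{g(t)}}\,d\mu_{g(t)}.$$

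The key point is that $A(g_b)=0$: indeed $\Ric(g_b)=0$ since $g_b$ is Ricci-flat, while $f_{g_b}=-2\ln w_{g_b}=0$ because $\lambda_{\operatorname{ALE}}^0(g_b)=0$ forces $w_{g_b}\equiv 1$ (equivalently by \eqref{developpement w}). Differentiating under the integral sign is legitimate because $g\mapsto f_g$, hence $g\mapsto A(g)$, is analytic by Proposition \ref{prop-pot-fct}, and because the integrand and its $t$-derivative decay like $\rho_{g_b}^{-2\tau-2}$, which is integrable since $2\tau+2>n$. In the Leibniz expansion every term other than the one differentiating $A$ carries the vanishing factor $A(g_b)=0$, and at $t=0$ one has $e^{-f_{g_b}}=1$, $d\mu_{g(0)}=d\mu_{g_b}$ and $\langle\cdot,\cdot\rangle_{g(0)}=\langle\cdot,\cdot\rangle_{g_b}$. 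Moreover, in the Hessian part of $A$ the variation of the Christoffel symbols is annihilated by $\nabla^{g_b}f_{g_b}=0$, so $\tfrac{d}{dt}\big|_{0}\nabla^{g(t),2}f_{g(t)}=\nabla^{g_b,2}\dot f$ with $\dot f:=\tfrac{d}{dt}\big|_{0}f_{g(t)}\in C^{2,\alpha}_{\tau}(N)$ by Proposition \ref{prop-pot-fct}. Therefore
$$\delta^2_{g_b}\lambda_{\operatorname{ALE}}^0(h,h)=-\int_N\big\langle h,\,\delta_{g_b}\Ric(h)+\nabla^{g_b,2}\dot f\,\big\rangle_{g_b}\,d\mu_{g_b},$$
and in particular $\dot f$ will not need to be computed explicitly.

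I would then insert the standard linearisation of the Ricci tensor at a Ricci-flat metric,
$$\delta_{g_b}\Ric(h)=-\tfrac12 L_{g_b}h+\delta^*_{g_b}\!\big(\div_{g_b}h-\tfrac12\nabla^{g_b}\tr_{g_b}h\big),$$
where $\delta^*_{g_b}$ is the symmetrisation of $\nabla^{g_b}$ (so that $\delta^*_{g_b}(\nabla^{g_b}\psi)=\nabla^{g_b,2}\psi$ for a function $\psi$) and $L_{g_b}$ is the operator of Definition \ref{defn-Lic-Op} (equal to $\Delta_{g_b}h+2\Rm(g_b)(h)$ here since $\Ric(g_b)=0$); this is consistent with the expansion $\R_{g_b+h}=\R_{g_b}+\div_{g_b}\div_{g_b}h-\Delta_{g_b}\tr_{g_b}h+O(\rho_{g_b}^{-2\tau-2})$ already used in Lemma \ref{lemm-charac-M}. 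Since $h$ is divergence-free, the gauge term reduces to $-\tfrac12\nabla^{g_b,2}\tr_{g_b}h$, so $\delta_{g_b}\Ric(h)+\nabla^{g_b,2}\dot f=-\tfrac12 L_{g_b}h+\nabla^{g_b,2}\phi$ with $\phi:=\dot f-\tfrac12\tr_{g_b}h\in C^{2,\alpha}_{\tau}(N)$, whence
$$\delta^2_{g_b}\lambda_{\operatorname{ALE}}^0(h,h)=\tfrac12\int_N\big\langle h,L_{g_b}h\big\rangle_{g_b}\,d\mu_{g_b}-\int_N\big\langle h,\nabla^{g_b,2}\phi\big\rangle_{g_b}\,d\mu_{g_b}.$$
The last integral vanishes: integrating by parts twice and using $\div_{g_b}h=0$ gives $\int_N\langle h,\nabla^{g_b,2}\phi\rangle_{g_b}\,d\mu_{g_b}=\int_N(\div_{g_b}\div_{g_b}h)\,\phi\,d\mu_{g_b}=0$, the boundary integrals over $\{\rho_{g_b}=R\}$ being of size $O(R^{\,n-2-2\tau})$ and hence tending to $0$ precisely because $\tau>\tfrac{n-2}{2}$. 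This yields $\delta^2_{g_b}\lambda_{\operatorname{ALE}}^0(h,h)=\tfrac12\langle L_{g_b}h,h\rangle_{L^2}$, as claimed.

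The computation is essentially routine; the only places genuinely using the ALE geometry — and the main points requiring care — are the justification of differentiation under the integral and the vanishing of the boundary integrals in the integration by parts, both controlled by the rate condition $\tau>\tfrac{n-2}{2}$, which is also what makes every integral above converge. Finally, the discrepancy between $\lambda_{\operatorname{ALE}}^0$ and $\lambda_{\operatorname{ALE}}$ is immaterial at this order, being the linear-in-$h$ mass term, so the same computation gives $\delta^2_{g_b}\lambda_{\operatorname{ALE}}(h,h)=\tfrac12\langle L_{g_b}h,h\rangle_{L^2}$ for divergence-free $h$.
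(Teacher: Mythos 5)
Your proposal is correct and follows essentially the same route as the paper: differentiate the first‑variation formula at $g_b$, use $\Ric(g_b)=0$ and $f_{g_b}=0$ so that only the linearisations of $\Ric$ and of the Hessian of $f$ survive, insert $\delta_{g_b}(-2\Ric)(h)=L_{g_b}h-\Li_{B_{g_b}(h)}(g_b)$, and kill the gauge/potential terms by integrating by parts against $\div_{g_b}h=0$, with the boundary terms controlled by $\tau>\tfrac{n-2}{2}$. The only (cosmetic) difference is that you bundle $\delta_{g_b}f(h)-\tfrac12\tr_{g_b}h$ into a single Hessian term before integrating by parts, whereas the paper treats the two Lie-derivative terms separately.
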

	
	\begin{proof}
		
		Recall by Lemma \ref{lem-lin-equ-Ric-first-var} that, if we denote the Bianchi operator $B_g(h):= \div_g\big(h-\frac{1}{2}(\tr_g h)g\big)$, we have
		\begin{equation*}
		\begin{split}
		\delta_{g_b}(-2\Ric)(h)&=L_{g_b}h-\Li_{B_{g_b}(h)}(g_b)\\
		&=L_{g_b}h+\frac{1}{2}\Li_{\nabla^{g_b}\tr_{g_b}h}(g_b),
		\end{split}
		\end{equation*}
		if $\div_{g_b}h=0$. Since $f_{g_b}=0$ then, denoting $\delta_{g_b}f(h)$ the first order variation of $g\mapsto f_g$ at $g_b$ in the direction $h$, we have:
		\begin{equation*}
		\delta_{g_b}\Li_{\nabla^gf_g}(h)=\Li_{\nabla^{g_b}\delta_{g_b}f(h)}(g_b).
		\end{equation*}
		Therefore, according to Proposition \ref{first-var-prop},
		\begin{equation*}
		2\delta_{g_b}^2\lambda_{\operatorname{ALE}}^0(h,h)=\langle L_{g_b}h,h\rangle_{L^2}+ \frac{1}{2}\langle\Li_{\nabla^{g_b}\tr_{g_b}h}(g_b),h\rangle_{L^2}-\langle \Li_{\nabla^{g_b}\left(\delta_{g_b}f(h)\right)}(g_b),h\rangle_{L^2}.
		\end{equation*}
		By integrating by parts:
		\begin{equation*}
		\begin{split}
		2\delta_{g_b}^2\lambda_{\operatorname{ALE}}^0(h,h)&=\langle L_{g_b}h,h\rangle_{L^2}-\langle\nabla^{g_b}\tr_{g_b}h,\div_{g_b}h\rangle_{L^2}+2\langle \delta_{g_b}f(h),\div_{g_b}h\rangle_{L^2}\\
		&=\langle L_{g_b}h,h\rangle_{L^2},
		\end{split}
		\end{equation*}
		if $\div_{g_b}h=0$.
	\end{proof}
	\begin{rk}\label{remark jauge div}
		Notice that if $h$ is a symmetric $2$-tensor $h$ on $N$ then by differentiating (\ref{equ-criti-lambda-pot}) at $g_b$ gives:
		\begin{eqnarray}
		2\Delta_{g_b}\left(\delta_{g_b}f_{g}(h)-\frac{\tr_{g_b}h}{2}\right)&=&-\div_{g_b}(\div_{g_b}h).\label{equ-first-var-pot-fct}
		\end{eqnarray}
		Now, if $\div_{g_b}h=0$ then the function $\delta_{g_b}f_{g}(h)-\frac{\tr_{g_b}h}{2}$ is a harmonic function on $N$. In the setting of Proposition \ref{second-var-prop}, the maximum principle applied to the previous function shows that it vanishes identically, i.e. the pointwise volume $e^{-f_g}d\mu_g$ is preserved under such a variation. On the other hand, if $B_{g_b}(h)=0$ then one gets by (\ref{equ-first-var-pot-fct}) that $\delta_{g_b}f_{g}(h)-\frac{\tr_{g_b}h}{4}=0$. This implies that the second variation of $\lambda_{\operatorname{ALE}}^0$ at $g_b$ along $h\in B_{g_b}^{-1}(0)$ satisfies:
		\begin{equation*}
		2\delta^2_{g_b}\lambda_{\operatorname{ALE}}^0(h,h)=\langle L_{g_b}h,h\rangle_{L^2}+\frac{1}{2}\|\nabla^{g_b}\tr_{g_b}h\|^2_{L^2}.
		\end{equation*}
		This leaves us with a less tractable formula for the second derivative of $\lambda_{\operatorname{ALE}}^0$.
	\end{rk}

	\section{A functional defined on a $C^{2,\alpha}_\tau$-neighborhood of Ricci-flat ALE metrics}\label{extension tilde lambda}
	\subsection{First and second variations of $\lambda_{\operatorname{ALE}}$}~~\\
	
	Let $(N^n,g_b)$ be an ALE Ricci-flat metric. We first recall that there exists a sequence of metrics $C^{2,\alpha}_\tau$-converging to $ g_b $ while having unbounded mass and $\lambda_{\operatorname{ALE}}^0$-functional. 
		\begin{exmp}\label{exemple masse infinie}
		Denote for $A>0$ large enough, $\chi_A$ a cut-off function supported in $\{\rho_{g_b}>A\}$ where $g_b$ has ALE coordinates, and constant equal to $1$ on $\{\rho_{g_b}>2A\}$ and assume that for $c>0$ uniform and all $k\in \{1,2,3\}$, its $k$-th derivative is bounded by $cA^{-k}$. Let us define the metric
		$$g_{A,m}:= \left(1+\chi_A\frac{m}{\rho_{g_b}^{n-2}}\right)^{\frac{4}{n-2}}g_b,$$
		whose scalar curvature vanishes on $\{\rho_{g_b}>2A\}$ by the usual variation of the scalar curvature for conformal changes of metric.
		
		Then, for some constant $C>0$ we have $$\|g_{A,m}-g_e\|_{C^{2,\alpha}_\tau}\leq C |m|A^{\tau-(n-2)}, $$
		and for some $c_n>0$, we have $m_{\operatorname{ADM}}(g_{A,m}) = c_n m$. As a consequence, by choosing $m\to \pm \infty$ while $|m|A^{\tau-(n-2)}\to 0$, we get a sequence of metrics $C^{2,\alpha}_\tau$-converging to $g_b$ while its mass tends to $\pm\infty$.
	\end{exmp}

	The computation of the first variation of $\lambda_{\operatorname{ALE}}^0$ in Proposition \ref{first-var-prop} motivates the study of the difference $\lambda_{\operatorname{ALE}}^0-m_{\operatorname{ADM}}$.
	
	\begin{defn}[$\lambda_{\operatorname{ALE}}$, a renormalized Perelman's functional]\label{defn-ale-lambda}
		Let $(N^n,g_b)$ be an ALE Ricci-flat metric and let $g\in \mathcal{M}^{2,\alpha}_{\tau}(g_b,\varepsilon)$ for $\tau>\frac{n-2}{2}$. We define 
		$$\lambda_{\operatorname{ALE}}(g) := \lambda_{\operatorname{ALE}}^0(g)-m_{\operatorname{ADM}}(g).$$
	\end{defn}
	
	\begin{rk}
		This is reminiscent of the introduction of the mass in General Relativity in order to replace the Hilbert-Einstein functional $\int_N\R_gd\mu_g$ by $\int_N\R_gd\mu_g-m_{\operatorname{ADM}}(g)$ which is better-behaved in the setting of  Asymptotically Euclidean metrics. Our functional $\lambda_{\operatorname{ALE}}$ is moreover an approximation up to third order:
		$$\int_N\R_{g_b+h}\,d\mu_{g_b+h}-m_{\operatorname{ADM}}(g_b+h)- \lambda_{\operatorname{ALE}}(g_b+h) = O\left(\|h\|^3_{C^{2,\alpha}_\tau}\right).$$
	\end{rk}
	Notice that Definition \ref{defn-ale-lambda} only makes sense for metrics lying in $\mathcal{M}^{2,\alpha}_{\tau}(g_b,\varepsilon)$ a priori. The following proposition ensures the functional $\lambda_{\operatorname{ALE}}$ is well-defined on a whole neighborhood of a given Ricci-flat ALE metric in the $C^{2,\alpha}_{\tau}$-topology.
	\begin{prop}\label{lambdaALE analytic}
		Let $(N^n,g_b)$ be an ALE Ricci-flat metric asymptotic to $\RR^n\slash\Gamma$, for some finite subgroup $\Gamma$ of $SO(n)$ acting freely on $\mathbb{S}^{n-1}$. Let $\tau\in(\frac{n-2}{2},n-2)$ and $\alpha\in(0,1)$. Then, the functional $\lambda_{\operatorname{ALE}}$, initially defined on $\mathcal{M}^{2,\alpha}_{\tau}(g_b,\varepsilon)$, extends to a $C^{2,\alpha}_\tau$-neighborhood of $ g_b $ as an analytic functional
		\begin{itemize}
			\item whose $L^2(e^{-f_g}d\mu_g)$-gradient at $g$ is $-(\Ric(g)+\nabla^{g,2}f_g),$
			\item and whose second variation at $g_b$ for divergence-free $2$-tensors is $\frac{1}{2}L_{g_b}$.
		\end{itemize}
		Moreover, if $g\in B_{C^{2,\alpha}_{\tau}}(g_b,\varepsilon)$,
		\begin{equation}
		\begin{split}
\lambda_{\operatorname{ALE}}(g)=\lim_{R\rightarrow+\infty}\Bigg(\int_{\{\rho_{g_b}\leq R\}}&\left(|\nabla^{g}f_{g}|^2_{g}+\R_{g}\right)\,e^{-f_{g}}d\mu_{g}\\
&-\int_{\{\rho_{g_b}=R\}}\left<\div_{g_b}(g)-\nabla^{g_b}\tr_{g_b}(g),\mathbf{n}_{g_b}\right>_{g_b}\,d\sigma_{g_b}\Bigg).\label{true-def-lambda}
\end{split}
\end{equation}
Finally, the functional $\lambda_{\operatorname{ALE}}$ is kept unchanged on $C^{2,\alpha'}_{\tau'}$-neighborhoods of $g_b$ for $\tau'\in (\frac{n-2}{2},\tau]$ and $\alpha'\in(0,\alpha]$.
	\end{prop}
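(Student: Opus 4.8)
The plan is to adopt \eqref{true-def-lambda} as the \emph{definition} of the extension and to read off all the remaining assertions from it. The first point is that the potential $f_g$ is available on the whole $C^{2,\alpha}_\tau$-ball, and not just on $\mathcal M^{2,\alpha}_\tau(g_b,\varepsilon)$: Claim \ref{claim-iso} only uses $\R_g\in C^{0,\alpha}_{\tau+2}(N)$ — which is automatic from $g-g_b\in C^{2,\alpha}_\tau$, with no integrability needed — to produce $w_g-1=:v_g=(-4\Delta_g+\R_g)^{-1}(-\R_g)\in C^{2,\alpha}_\tau(N)$ depending analytically on $g$. After shrinking $\varepsilon$ so that $\|v_g\|_{C^0}<\frac12$, the map $v\mapsto-2\ln(1+v)$ is analytic near $0$ in $C^{2,\alpha}_\tau(N)$, so $g\mapsto f_g=-2\ln w_g\in C^{2,\alpha}_\tau(N)$ is analytic, and, exactly as in Proposition \ref{prop-pot-fct}, $f_g$ solves \eqref{equ-criti-lambda-pot} on $N$ with $\nabla^gf_g=O(\rho_{g_b}^{-\tau-1})$ and $\nabla^{g,2}f_g=O(\rho_{g_b}^{-\tau-2})$.

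\emph{Convergence of \eqref{true-def-lambda} and agreement on $\mathcal M^{2,\alpha}_\tau$.} Writing $h=g-g_b$ and $V=\div_{g_b}h-\nabla^{g_b}\tr_{g_b}h$, the divergence theorem on an annulus $\{R\le\rho_{g_b}\le R'\}$ turns the increment of the boundary term of \eqref{true-def-lambda} into $\int_{\{R\le\rho_{g_b}\le R'\}}\div_{g_b}V\,d\mu_{g_b}$, while the linearisation of the scalar curvature at the Ricci-flat $g_b$ (Lemma \ref{lem-lin-equ-Ric-first-var}, cf.\ the proof of Lemma \ref{lemm-charac-M}) gives $\div_{g_b}V=\div_{g_b}\div_{g_b}h-\Delta_{g_b}\tr_{g_b}h=\R_g+O(\rho_{g_b}^{-2\tau-2})$. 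Together with $e^{-f_g}\,\frac{d\mu_g}{d\mu_{g_b}}=1+O(\rho_{g_b}^{-\tau})$, $|\nabla^gf_g|^2_g=O(\rho_{g_b}^{-2\tau-2})$ and $\R_g=O(\rho_{g_b}^{-\tau-2})$, this yields
\[\big(|\nabla^gf_g|^2_g+\R_g\big)e^{-f_g}\,d\mu_g-\div_{g_b}V\,d\mu_{g_b}=O(\rho_{g_b}^{-2\tau-2})\,d\mu_{g_b},\]
which is integrable at infinity precisely because $\tau>\frac{n-2}{2}$. Hence the net in $R$ in \eqref{true-def-lambda} is Cauchy and the limit exists; fixing a large $R_0$, the same computation rewrites it as $\lambda_{\operatorname{ALE}}(g)=I_1(g)-I_2(g)+I_3(g)$, where $I_1(g)=\int_{\{\rho_{g_b}\le R_0\}}(|\nabla^gf_g|^2_g+\R_g)e^{-f_g}d\mu_g$, $I_2(g)=\int_{\{\rho_{g_b}=R_0\}}\langle V,\mathbf{n}_{g_b}\rangle_{g_b}d\sigma_{g_b}$, and $I_3(g)=\int_{\{\rho_{g_b}>R_0\}}\big[(|\nabla^gf_g|^2_g+\R_g)e^{-f_g}\frac{d\mu_g}{d\mu_{g_b}}-\div_{g_b}V\big]d\mu_{g_b}$, the last integral converging absolutely by the display above. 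When $g\in\mathcal M^{2,\alpha}_\tau(g_b,\varepsilon)$, $\R_g$ is integrable, so $\int_{\{\rho_{g_b}\le R\}}(|\nabla^gf_g|^2_g+\R_g)e^{-f_g}d\mu_g\to\lambda^0_{\operatorname{ALE}}(g)$ by \eqref{egalite-lambda-2-bis} and the boundary term $\to m_{\operatorname{ADM}}(g)$ by \eqref{def-mass}; thus \eqref{true-def-lambda} equals $\lambda^0_{\operatorname{ALE}}(g)-m_{\operatorname{ADM}}(g)$ there, i.e.\ the functional of Definition \ref{defn-ale-lambda}.

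\emph{Analyticity.} It suffices that $I_1,I_2,I_3$ be analytic on $B_{C^{2,\alpha}_\tau}(g_b,\varepsilon)$. $I_2$ is linear in $h$; $I_1$ is the composition of the analytic maps $g\mapsto(g,f_g)$ and $(g,f)\mapsto(|\nabla^gf|^2_g+\R_g)e^{-f}d\mu_g\in C^{0,\alpha}(\{\rho_{g_b}\le R_0\})$ (inversion of $g$, $\R_g$, products, $\exp$ and the volume form all being analytic) with the bounded linear functional $\int_{\{\rho_{g_b}\le R_0\}}$. The crux is $I_3$: splitting its integrand as
\[\big(|\nabla^gf_g|^2_g+\R_g\big)e^{-f_g}\tfrac{d\mu_g}{d\mu_{g_b}}-\div_{g_b}V=|\nabla^gf_g|^2_g\,e^{-f_g}\tfrac{d\mu_g}{d\mu_{g_b}}+\R_g\Big(e^{-f_g}\tfrac{d\mu_g}{d\mu_{g_b}}-1\Big)+\big(\R_g-\div_{g_b}V\big),\]
each summand is now separately in $C^{0,\alpha}_{2\tau+2}(N)$: the first is quadratic in $\nabla^gf_g\in C^{1,\alpha}_{\tau+1}$; the second is the product of $\R_g\in C^{0,\alpha}_{\tau+2}$ with $e^{-f_g}\tfrac{d\mu_g}{d\mu_{g_b}}-1\in C^{0,\alpha}_\tau$; the third is the nonlinear remainder of the scalar curvature at $g_b$, of order $O(\rho_{g_b}^{-2\tau-2})$ by the structure of its expansion (proof of Lemma \ref{lemm-charac-M}). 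Each depends analytically on $g$ with values in $C^{0,\alpha}_{2\tau+2}(N)$; since $2\tau+2>n$ there is a continuous embedding $C^{0,\alpha}_{2\tau+2}(N)\hookrightarrow L^1(N,d\mu_{g_b})$ on which $\int_{\{\rho_{g_b}>R_0\}}$ is a bounded linear functional, so $I_3$ is analytic. (Throughout, ``analytic'' is in the sense of Definition \ref{def-analytic}, using the elementary calculus of analytic maps between Banach spaces and Lemma \ref{th fcts implicites}.) This step — exhibiting the cancellation of the two \emph{a priori} divergent boundary contributions and packaging the finite remainder as an $L^1$-valued analytic function of $g$ — is the main obstacle of the proof, and it is exactly where $\tau>\frac{n-2}{2}$ is used.

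\emph{Variations and independence of the weight.} For $\delta_g\lambda_{\operatorname{ALE}}(h')$, $h'\in C^2_\tau(S^2T^*N)$, I would differentiate the truncated identity $\lambda_{\operatorname{ALE}}(g)=\lim_R\big(\int_{\{\rho_{g_b}\le R\}}(|\nabla^gf_g|^2_g+\R_g)e^{-f_g}d\mu_g-\int_{\{\rho_{g_b}=R\}}\langle V,\mathbf{n}_{g_b}\rangle_{g_b}d\sigma_{g_b}\big)$: the variation of the interior integral is the one computed in the proof of Proposition \ref{first-var-prop}, kept on $\{\rho_{g_b}\le R\}$, with $\varphi:=\delta_gf(h')\in C^{2,\alpha}_\tau(N)$, where the term carrying the factor $2\Delta_gf_g-|\nabla^gf_g|^2_g+\R_g$ vanishes by \eqref{equ-criti-lambda-pot}; all but one of the boundary terms it produces pair $\nabla^gf_g=O(\rho_{g_b}^{-\tau-1})$ with an $O(\rho_{g_b}^{-\tau})$-quantity and are $O(\rho_{g_b}^{n-2-2\tau})\to0$, while the last cancels the variation of $\int_{\{\rho_{g_b}=R\}}\langle V,\mathbf{n}_{g_b}\rangle_{g_b}d\sigma_{g_b}$ up to $O(\rho_{g_b}^{n-2-2\tau})$. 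Since $\langle h',\Ric(g)+\nabla^{g,2}f_g\rangle_g=O(\rho_{g_b}^{-2\tau-2})$ is integrable, one is left with $\delta_g\lambda_{\operatorname{ALE}}(h')=-\int_N\langle h',\Ric(g)+\nabla^{g,2}f_g\rangle_g\,e^{-f_g}d\mu_g$, i.e.\ the $L^2(e^{-f_g}d\mu_g)$-gradient is $-(\Ric(g)+\nabla^{g,2}f_g)$. Linearising this at $g_b$, where $f_{g_b}=0$ and $\Ric(g_b)=0$, leaves only $-(\delta_{g_b}\Ric(h)+\nabla^{g_b,2}\delta_{g_b}f(h))$; for $\div_{g_b}h=0$, Lemma \ref{lem-lin-equ-Ric-first-var} gives $\delta_{g_b}\Ric(h)=-\frac12L_{g_b}h-\frac12\nabla^{g_b,2}\tr_{g_b}h$ and Remark \ref{remark jauge div} gives $\delta_{g_b}f(h)=\frac12\tr_{g_b}h$, so the $\nabla^{g_b,2}\tr_{g_b}h$ terms cancel and $\delta^2_{g_b}\lambda_{\operatorname{ALE}}(h,h)=\frac12\langle L_{g_b}h,h\rangle_{L^2}$, in accordance with Proposition \ref{second-var-prop}. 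Finally, \eqref{true-def-lambda} involves only $g_b$ and $f_g=-2\ln w_g$ with $w_g$ the unique solution of $-4\Delta_gw+\R_gw=0$ in $1+C^{2,\alpha'}_{\tau'}(N)$; if $\frac{n-2}{2}<\tau'\le\tau$ and $0<\alpha'\le\alpha$, then $C^{2,\alpha}_\tau(N)\subset C^{2,\alpha'}_{\tau'}(N)$ continuously, so on $B_{C^{2,\alpha}_\tau}(g_b,\varepsilon)$ the potential $f_g$ computed with the weight $\tau'$ equals the one computed with $\tau$ (by the injectivity in Claim \ref{claim-iso} applied with $\tau'$), and the convergence argument above applies verbatim with $\tau'$ since it used only $\tau>\frac{n-2}{2}$; hence \eqref{true-def-lambda} delivers the same number whichever admissible $(\tau',\alpha')$ is used, which is the claimed invariance.
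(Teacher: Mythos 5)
Your proof is correct and follows essentially the same route as the paper's: both rest on the expansion $\R_{g_b+h}=\div_{g_b}(\div_{g_b}h-\nabla^{g_b}\tr_{g_b}h)+Q_{g_b}(h)$ with $Q_{g_b}(h)\in C^{0,\alpha}_{2\tau+2}\hookrightarrow L^1$, the cancellation of the divergence term against the mass, and the analyticity of $g\mapsto f_g$ coming from Claim \ref{claim-iso}. The only difference is organizational: you take \eqref{true-def-lambda} as the primary definition and split it into a compact piece plus an absolutely convergent tail, whereas the paper first extends the expression $\int_N v_g\R_g\,d\mu_g+\int_NQ_{g_b}(h)\,d\mu_g$ and then derives \eqref{true-def-lambda} by the same argument, the two forms being related by the integration by parts already carried out in Proposition \ref{prop-pot-fct}.
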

	Before proving Proposition \ref{lambdaALE analytic}, we make a couple of remarks.
	\begin{rk}
		The largest definition space seems to be $h\in H^2_{\frac{n}{2}-1}$ where the first derivative $\langle h, \Ric_g+\nabla^{g,2}f_g\rangle_{L^2(e^{-f_g}d\mu_g)}$ is well defined for $g-g_b\in H^2_{\frac{n}{2}-1}$. The second derivative $\frac{1}{2}\langle h, L_{g_b}h\rangle_{L^2(e^{-f_g}d\mu_g)}$ is also well defined for $2$-tensors in $H^2_{\frac{n}{2}-1}$. However, it is not clear if the definition of the mass $m_{\operatorname{ADM}}$ is invariant by changes of coordinates under these assumptions.
	\end{rk}
	\begin{rk}
	As already noticed in the Introduction, $\lambda_{\operatorname{ALE}}(g)$ is the limit of the difference of two integrals which could be divergent in general. However, if $g\in B_{C^{2,\alpha}_{\tau}}(g_b,\varepsilon)$ is such that its scalar curvature is integrable, then $\lambda_{\operatorname{ALE}}(g)$ really is the difference of $\lambda_{\operatorname{ALE}}^0(g)$ with the mass $m_{\operatorname{ADM}}(g)$.
	\end{rk}
	\begin{proof}[Proof of Proposition \ref{lambdaALE analytic}]
		Let us consider a small $h$ in $ C^{2,\alpha}_\tau$ for $\tau>\frac{n-2}{2}$ such that $g_b+h$ is a metric on $N$. Thanks to [(\ref{lem-lin-equ-scal-first-var}), Lemma \ref{lem-lin-equ-Ric-first-var}], we have
		\begin{align}
		\R_{g_b+h} =& \int_0^1 \div_{g_b+th}( \div_{g_b+th}h - \nabla^{g_b+th}\textup{tr}_{g_b+th}h )\,dt\\
		=& \div_{g_b}( \div_{g_b}h - \nabla^{g_b}\tr_{g_b}h ) \nonumber\\
		&+\int_0^1 \Big[\big(\div_{g_b+th}( \div_{g_b+th} - \nabla^{g_b+th}\textup{tr}_{g_b+th})\big)-\big(\div_{g_b}( \div_{g_b} - \nabla^{g_b}\textup{tr}_{g_b})\big)\Big](h)\, dt\nonumber\\
		=&:\div_{g_b}( \div_{g_b}h - \nabla^{g_b}\textup{tr}_{g_b}h ) + Q_{g_b}(h),\label{dvp scal}
		\end{align}
		where $Q_{g_b}:C^{2,\alpha}_\tau\to \mathbb{R}$ is analytic and satisfies for some positive constant $C$, for any two symmetric $2$-tensors $h$ and $h'$, $$\|Q_{g_b}(h)- Q_{g_b}(h)\|_{C^{0,\alpha}_{2\tau +2}}\leq C\|h-h'\|_{C^{2,\alpha}_{\tau}} \left(\|h\|_{C^{2,\alpha}_{\tau}}+\|h'\|_{C^{2,\alpha}_{\tau}}\right).$$
		
		For a symmetric $2$-tensor $h\in B_{C^{2,\alpha}_{\tau}}(g_b,\varepsilon)$, denote $v_{g_b+h}\in C^{2,\alpha}_\tau$ to be the unique solution to 
		$$-4\Delta_{g_b+h}v_{g_b+h} + \R_{g_b+h}v_{g_b+h} = -\R_{g_b+h} =- \div_{g_b}\left(\div_{g_b}h-\nabla^{g_b}\textup{tr}_{g_b}h\right) - Q_{g_b}(h) \in C^{0,\alpha}_{\tau+2}.$$
		Its existence is ensured because $-4\Delta_{g_b+h} + \R_{g_b+h}: C^{2,\alpha}_\tau\to C^{0,\alpha}_{\tau+2}$ is invertible: indeed, we are in the invertibility range $0<\tau<n-2$ of the Laplacian as already noticed in Claim \ref{claim-iso}.
		
		We have already seen in (the proof of) Proposition \ref{existence propriete-wg}  by integration by parts against $1+v_{g_b+h}$ that we actually have 
		$$\lambda_{\operatorname{ALE}}^0(g_b+h) = \int_N (1+v_{g_b+h})\R_{g_b+h}\,d\mu_{g_b+h},$$
		if $g_b+h\in\mathcal{M}^{2,\alpha}(g_b,\varepsilon)$.
		
		Let us now consider the following expression, for $g_b+h\in\mathcal{M}^{2,\alpha}(g_b,\varepsilon)$, $$ \int_N (1+v_{g_b+h})\R_{g_b+h}\,d\mu_{g_b+h} - m_{\operatorname{ADM}}(g_b+h). $$ Use \eqref{dvp scal} together with the fact that 
		\begin{equation*}
		\begin{split}
		&\int_N\div_{g_b}( \div_{g_b}h - \nabla^{g_b}\textup{tr}_{g_b}h)\,d\mu_{g_b} - m_{\operatorname{ADM}}(g_b+h) =\\
		&\lim_{R\rightarrow+\infty}\Bigg(\int_{\{\rho_{g_b}\leq R\}}\div_{g_b}( \div_{g_b}h - \nabla^{g_b}\textup{tr}_{g_b}h)\,d\mu_{g_b}-\int_{\{\rho_{g_b}=R\}}\left<\div_{g_b}h-\nabla^{g_b}\tr_{g_b}h,\mathbf{n}_{g_b}\right>_{g_b}\,d\sigma_{g_b}\Bigg)\\
		&=0,
		\end{split}
		\end{equation*}
		 noticed in (\ref{def-mass}), to obtain
		\begin{align}
		\lambda_{\operatorname{ALE}}(g_b+h)=&\;\int_N v_{g_b+h}\R_{g_b+h}\,d\mu_{g_b+h} + \int_N Q_{g_b}(h)\,d\mu_{g_b+h} \\
		&+ \Big(\int_N\div_{g_b}( \div_{g_b}h - \nabla^{g_b}\tr_{g_b}h) - m_{\operatorname{ADM}}(g_b+h) \Big)\\
		=&\; \int_N v_{g_b+h}\R_{g_b+h}\,d\mu_{g_b+h} + \int_N Q_{g_b}(h)\,d\mu_{g_b+h}.\label{last-exp-make-sense-lambda}
		\end{align}
		This last expression (\ref{last-exp-make-sense-lambda}) is well-defined and analytic on a $C^{2,\alpha}_\tau$-neighborhood of $g_b$. A similar argument leads to the proof of (\ref{true-def-lambda}) by using the first expression of $\lambda_{\operatorname{ALE}}^0(g)$ given in [(\ref{egalite-lambda-2-bis}), Proposition \ref{prop-pot-fct}], $\lambda_{\operatorname{ALE}}^0(g)=\int_N(|\nabla^gf_g|^2_g+\R_g)\,e^{-f_{g}}d\mu_g$ instead.
		
		 Moreover, for any metric $g$ in this $C^{2,\alpha}_\tau$-neighborhood of $g_b$ and from the computations of Proposition \ref{first-var-prop}, the $L^2(e^{-f_g}d\mu_g)$-gradient of $\lambda_{\operatorname{ALE}}$ at $g$ is $-(\Ric(g)+\nabla^{g,2} f_g)$. Since $h\mapsto m_{\operatorname{ADM}}(g_b+h)$ is linear, the Hessian of $\lambda_{\operatorname{ALE}}$ at $g_b$ for divergence-free deformations is $\frac{1}{2}L_{g_b}$. 
		 
		 Finally, $\lambda_{\operatorname{ALE}}$ is independent of $(\tau',\alpha')\in(\frac{n-2}{2},\tau]\times(0,\alpha]$ since the potential function $f_g$ is. Indeed, let $\frac{n-2}{2}<\tau'\leq \tau<n-2$ and $0<\alpha'\leq \alpha<1$ and let us show that the map $g\rightarrow f_g$ is constant as $g$ varies in $B_{C^{2,\alpha'}_{\tau'}}(g_b,\varepsilon)\,\cap\,B_{C^{2,\alpha}_{\tau}}(g_b,\varepsilon)$. Recall from Proposition \ref{existence propriete-wg} that $w_g:=e^{-\frac{f_g}{2}}$ satisfies $-4\Delta_gw_g+\R_gw_g=0$ and $w_g-1\in C^{2,\alpha}_{\tau}$ by definition. Let $w:=w_g$ (respectively $w':=w_g$) if $g\in B_{C^{2,\alpha}_{\tau}}(g_b,\varepsilon)$ (respectively if $g\in B_{C^{2,\alpha'}_{\tau'}}(g_b,\varepsilon)$). Then the difference $w-w'\in C^{2,\alpha'}_{\tau'}$ lies in the kernel of $-4\Delta_g+\R_g$. Claim \ref{claim-iso} of the proof of Proposition \ref{existence propriete-wg}  leads to $w=w'$.

	\end{proof}
	
	The next proposition computes the second variation of $\lambda_{\operatorname{ALE}}$ at any metric $C^{2,\alpha}_{\tau}$-close to a given ALE Ricci flat metric. This is used in the proof of Theorem \ref{theo-loja-ALE} in order to check condition [(\ref{item-0-bis}), Proposition \ref{Lojasiewicz ineq weighted}].   
	\begin{prop}\label{snd-var-gal-lambda}
		Let $(N^n,g_b)$ be an ALE Ricci-flat metric asymptotic to $\RR^n\slash\Gamma$, for some finite subgroup $\Gamma$ of $SO(n)$ acting freely on $\mathbb{S}^{n-1}$. Let $\tau\in(\frac{n-2}{2},n-2)$ and $\alpha\in(0,1)$. Then there exists $\varepsilon>0$ such that for any $g\in B_{C^{2,\alpha}_{\tau}}(g_b,\varepsilon)$ and any $h\in C^{2,\alpha}_{\tau}$,
		\begin{equation}
		\begin{split}\label{snd-var-gal-lambda-formula}
\delta^2_g\lambda_{\operatorname{ALE}}&(h,h)=\\
&\frac{1}{2}\int_N\left\langle\Delta_{f_g}h+2\Rm(g)(h)-\Li_{B_{f_g}(h)}(g),h\right\rangle_g\,e^{-f_g}d\mu_g\\
&\quad+\frac{1}{2}\int_N\left\langle h\circ\Ric_{f_g}(g)+\Ric_{f_g}(g)\circ h-2\left(\frac{\tr_gh}{2}-\delta_gf(h)\right)\Ric_{f_g}(g),h\right\rangle_g\,e^{-f_g}d\mu_g.
		\end{split}
		\end{equation}
		Here, $\Ric_{f_g}(g)$ denotes the Bakry-\'Emery tensor $\Ric(g)+\nabla^{g,2}f_g$ associated to the smooth metric measure space $(N^n,g,\nabla^gf_g)$ and $B_{f_g}(h)$ denotes the weighted linear Bianchi gauge defined by 
		$$B_{f_g}(h):=\div_{f_g}h-\nabla^g\left(\frac{\tr_gh}{2}-\delta_gf(h)\right).$$
		\end{prop}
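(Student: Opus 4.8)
The starting point is the closed-form first variation established in Proposition~\ref{first-var-prop} and Proposition~\ref{lambdaALE analytic}: for every metric $g$ in the $C^{2,\alpha}_{\tau}$-neighborhood of $g_b$ and every admissible variation $h$,
\begin{equation*}
\delta_g\lambda_{\operatorname{ALE}}(h)=-\int_N\langle h,\Ric_{f_g}(g)\rangle_g\,e^{-f_g}d\mu_g,\qquad \Ric_{f_g}(g):=\Ric(g)+\nabla^{g,2}f_g.
\end{equation*}
Since the maps $g\mapsto f_g$, hence $g\mapsto\Ric_{f_g}(g)$, are analytic (Proposition~\ref{prop-pot-fct}), the plan is to differentiate this identity once more along the segment $g_t:=g+th$ and evaluate at $t=0$; every integration by parts below is legitimate because $\Ric_{f_g}(g)=\textit{O}(\rho_{g_b}^{-\tau-2})$ and $f_g=\textit{O}(\rho_{g_b}^{-(n-2)})$ together with their covariant derivatives, by Propositions~\ref{existence propriete-wg} and~\ref{prop-pot-fct}. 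The Leibniz rule splits $\delta^2_g\lambda_{\operatorname{ALE}}(h,h)$ into three groups: the variation of the tensor $\Ric_{f_g}(g)$, the variation of the contraction $\langle\cdot,\cdot\rangle_g$ with the other arguments frozen, and the variation of the weighted volume $e^{-f_g}d\mu_g$.

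Two of these groups are purely algebraic. Using $\delta_g(g^{ik}g^{jl})(h)=-h^{ik}g^{jl}-g^{ik}h^{jl}$ one obtains, after accounting for the overall sign, a contribution proportional to $\langle h\circ\Ric_{f_g}(g)+\Ric_{f_g}(g)\circ h,h\rangle_g$; using $\delta_g d\mu_g(h)=\tfrac12(\tr_g h)\,d\mu_g$ together with the chain rule for $f_g$ gives $-\big(\tfrac{\tr_g h}{2}-\delta_gf(h)\big)\langle\Ric_{f_g}(g),h\rangle_g$, which is the last summand of~\eqref{snd-var-gal-lambda-formula}. For the third group I would invoke Lemma~\ref{lem-lin-equ-Ric-first-var} in the form $\delta_g(-2\Ric)(h)=L_gh-\Li_{B_g(h)}(g)$, together with $\delta_g(\nabla^{g,2}f_g)(h)=\nabla^{g,2}\big(\delta_gf(h)\big)+(\delta_g\nabla^{g,2}(h))(f_g)$, where the Christoffel variation $\delta_g\Gamma^k_{ij}(h)=\tfrac12 g^{kl}(\nabla_ih_{jl}+\nabla_jh_{il}-\nabla_lh_{ij})$ enters the last term. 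The scalar $\delta_gf(h)$ is in turn governed by the linearization of the potential equation~\eqref{equ-criti-lambda-pot} (compare~\eqref{equ-first-var-pot-fct} in Remark~\ref{remark jauge div}).

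The heart of the proof is then a reorganization step. After moving all derivatives onto $h$ by weighted integration by parts --- i.e. using $\int_N\div_{f_g}X\,e^{-f_g}d\mu_g=0$ for decaying $X$ and the weighted divergences \eqref{def-wei-div-vec}--\eqref{def-wei-div-sym} --- one must show that the gauge term $\Li_{B_g(h)}(g)$ from the Ricci linearization, the Christoffel correction $(\delta_g\nabla^{g,2}(h))(f_g)$, and the term $\nabla^{g,2}\big(\delta_gf(h)\big)$ from varying the potential combine exactly into: first, the extra first-order piece $-\nabla^g_{\nabla^gf_g}h$ that upgrades $\Delta_g$ to the weighted Laplacian $\Delta_{f_g}$; and second, the weighted Bianchi gauge term $\Li_{B_{f_g}(h)}(g)$ with $B_{f_g}(h)=\div_{f_g}h-\nabla^g\big(\tfrac{\tr_gh}{2}-\delta_gf(h)\big)$, the residual zeroth-order terms being cancelled using~\eqref{equ-criti-lambda-pot} once more. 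This bookkeeping --- correctly matching the $\delta_gf(h)$-dependent pieces and the Christoffel terms into the single weighted operator and keeping track of every coefficient in~\eqref{snd-var-gal-lambda-formula} --- is the only genuine difficulty; it is lengthy but mechanical. As a consistency check, at $g=g_b$ one has $f_{g_b}=0$ and $\Ric_{f_{g_b}}(g_b)=0$, so the second line of~\eqref{snd-var-gal-lambda-formula} vanishes, $B_{f_{g_b}}=B_{g_b}$, and the formula reduces to $\tfrac12\langle L_{g_b}h-\Li_{B_{g_b}(h)}(g_b),h\rangle_{L^2}$, recovering Proposition~\ref{second-var-prop} on $\div_{g_b}$-free variations and Remark~\ref{remark jauge div} on $B_{g_b}$-free ones.
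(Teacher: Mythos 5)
Your proposal is correct and follows essentially the same route as the paper's proof: differentiate the first-variation identity $\delta_g\lambda_{\operatorname{ALE}}(h)=-\int_N\langle h,\Ric_{f_g}(g)\rangle_g\,e^{-f_g}d\mu_g$ by the Leibniz rule into the three contributions you list, compute $\delta_g(-2\Ric)$ from Lemma \ref{lem-lin-equ-Ric-first-var} and $\delta_g(\nabla^{g,2}f_g)$ from the Christoffel variation (which is exactly \eqref{first-var-lie-der-app} of Lemma \ref{lemma-app-lie-der-lin}), and regroup using $\Li_{\nabla^gf}T=\nabla^g_{\nabla^gf}T+T\circ\nabla^{g,2}f+\nabla^{g,2}f\circ T$. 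Two peripheral corrections: the residual zeroth-order terms are not cancelled via \eqref{equ-criti-lambda-pot} but are simply absorbed into $\Ric_{f_g}(g)\circ h+h\circ\Ric_{f_g}(g)$, and at $g=g_b$ one has $B_{f_{g_b}}(h)=B_{g_b}(h)+\nabla^{g_b}\delta_{g_b}f(h)\neq B_{g_b}(h)$ in general, so the reduction to Proposition \ref{second-var-prop} uses instead that $B_{f_{g_b}}(h)=0$ for divergence-free $h$ by Remark \ref{remark jauge div}.
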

	\begin{rk}
	In \eqref{snd-var-gal-lambda-formula}, notice that the function $\frac{\tr_gh}{2}-\delta_gf(h)$ is nothing but the infinitesimal variation of the weighted volume $e^{-f_g}d\mu_g$ and the weighted linear Bianchi gauge $B_{f_g}(h)$ differs from the linear Bianchi gauge defined in [\ref{defn-bianchi-op}, Lemma \ref{lem-lin-equ-Ric-first-var}] by $-h(\nabla^gf_g)+\nabla^g\delta_gf(h)$. This vector field is in turn the variation of the vector field $\nabla^g f_g$.
		\end{rk}
		\begin{rk}
		Proposition \ref{snd-var-gal-lambda} recovers the second variation of $\lambda_{\operatorname{ALE}}$ at $g=g_b$ along divergence-free variations.
		\end{rk}
		\begin{proof}
		We consider $\varepsilon>0$ so small that $f_g$ is well-defined by Proposition \ref{prop-pot-fct}. Now, as the $L^2(e^{-f_g}d\mu_g)$-gradient of $\lambda_{\operatorname{ALE}}$ is $-\Ric(g)-\nabla^{g,2}f_g=:-\Ric_{f_g}(g)$ by Proposition \ref{lambdaALE analytic}, we deduce the following formula with the help of Lemma \ref{lem-lin-equ-Ric-first-var} and [\eqref{first-var-lie-der-app}, Lemma \ref{lemma-app-lie-der-lin}]:
		\begin{equation}
\begin{split}\label{form-var-bak-eme}
2\delta_g(-\Ric_{f_g}(g))(h)&=\left(L_gh-\Li_{B_g(h)}(g)\right)-\Li_{\nabla^g(\delta_gf(h))}(g)-\Li_{\nabla^gf_g}(h)+\Li_{h(\nabla^gf_g)}(g)\\
&=\Delta_gh+2\Rm(g)(h)-\Ric(g)\circ h-h\circ\Ric(g)\\
&\quad-\nabla^g_{\nabla^gf}h-\nabla^{g,2}f_g\circ h-h\circ\nabla^{g,2}f_g-\Li_{B_{f_g}(h)}(g)\\
&=\Delta_{f_g}h+2\Rm(g)(h)-\Ric_{f_g}(g)\circ h-h\circ\Ric_{f_g}(g)-\Li_{B_{f_g}(h)}(g).
\end{split}
\end{equation}
Here we have used the general fact that $\Li_{\nabla^gf}T=\nabla^g_XT+T\circ\nabla^{g,2}f+\nabla^{g,2}f\circ T$ for any symmetric $C^1_{loc}$ $2$-tensor and any $C^1_{loc}$ function $f$.

Next, we observe that the volume variation is 
\begin{equation}\label{form-weig-var}
\delta_g(e^{-f_g}d\mu_g)(h)=\left(\frac{\tr_gh}{2}-\delta_gf(h)\right)e^{-f_g}d\mu_g.
\end{equation}
Finally, we compute the variation with respect to the norm on symmetric $2$-tensors induced by the metric $g$ as follows:
\begin{equation}
\begin{split}\label{gal-form-var-norm}
\delta_g\left(\left\langle h,T\right\rangle_g\right)(h)&=-g^{im}h_{mn}g^{nk}g^{jl}h_{ij}T_{kl}-g^{ik}g^{jm}h_{mn}g^{nl}h_{ij}T_{kl}\\
&=-\left\langle h\circ T+T\circ h,h\right\rangle_g,
\end{split}
\end{equation}
for any symmetric $2$-tensor $T$. Then \eqref{snd-var-gal-lambda-formula} follows by considering the linear combination $\frac{1}{2}$\eqref{form-var-bak-eme}+\eqref{form-weig-var} to which we add \eqref{gal-form-var-norm} applied to $T:=-\Ric_{f_g}(g)$.

		\end{proof}
		
		We end this section by establishing the weighted elliptic equation satisfied by the volume variation at a metric $C^{2,\alpha}_{\tau}$-close to a given ALE Ricci flat metric.
		Again, this is used in the proof of Theorem \ref{theo-loja-ALE} in order to check conditions [(\ref{item-0}), (\ref{item-0-bis}), Proposition \ref{Lojasiewicz ineq weighted}].   
	\begin{prop}\label{var-vol-var-ell-eqn-prop}
		Let $(N^n,g_b)$ be an ALE Ricci-flat metric asymptotic to $\RR^n\slash\Gamma$, for some finite subgroup $\Gamma$ of $SO(n)$ acting freely on $\mathbb{S}^{n-1}$. Let $\tau\in(\frac{n-2}{2},n-2)$ and $\alpha\in(0,1)$. Then there exists $\varepsilon>0$ such that for any $g\in B_{C^{2,\alpha}_{\tau}}(g_b,\varepsilon)$ and any $h\in C^{2,\alpha}_{\tau}$,
		\begin{equation}
		\begin{split}\label{var-vol-var-ell-eqn-for}
		\Delta_{f_g}\left(\frac{\tr_gh}{2}-\delta_gf(h)\right)=\frac{1}{2}\left(\div_{f_g}(\div_{f_g}h)-\langle h,\Ric_{f_g}(g)\rangle_g\right).
		\end{split}
		\end{equation}
		\end{prop}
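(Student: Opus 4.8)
The plan is to obtain \eqref{var-vol-var-ell-eqn-for} by differentiating the critical point equation \eqref{equ-criti-lambda-pot} for the potential function, namely $2\Delta_gf_g-|\nabla^gf_g|^2_g+\R_g=0$, in the direction $h$, and then rewriting the resulting identity in weighted form. First I would record the linearizations of the three terms at $g$: by standard formulas (Appendix \ref{app-C}), $\delta_g(\R_g)(h)=\div_g\div_gh-\Delta_g\tr_gh-\langle h,\Ric(g)\rangle_g$; the variation of $|\nabla^gf_g|^2_g$ splits into the metric variation $-h(\nabla^gf_g,\nabla^gf_g)$ plus $2\langle\nabla^gf_g,\nabla^g\delta_gf(h)\rangle_g$; and the variation of $2\Delta_gf_g$ produces $2\Delta_g\delta_gf(h)$ together with the terms coming from varying the Laplacian applied to the fixed function $f_g$, i.e. contributions of the form $-2\langle h,\nabla^{g,2}f_g\rangle_g$ and a divergence-type term $-2\langle\div_g(h-\tfrac12(\tr_gh)g),\nabla^gf_g\rangle_g = -2\langle B_g(h),\nabla^gf_g\rangle_g$ (using $\delta_g(\Delta_g u)(h) = -\langle h,\nabla^{g,2}u\rangle_g - \langle B_g(h),\nabla^g u\rangle_g$).

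The key step is then to collect these terms and recognize the weighted operators. The plan is to move all first-order derivatives of $\delta_gf(h)$ and $\tr_gh$ onto the left, producing $2\Delta_{f_g}\bigl(\delta_gf(h)\bigr)$ and, after grouping $-\Delta_g\tr_gh$ with $\langle\nabla^g f_g,\nabla^g\tr_g h\rangle_g$ type contributions, $-\Delta_{f_g}(\tr_gh)$; the combination $\delta_gf(h)-\tfrac12\tr_gh$ is exactly the object we want. On the right we are left with $\div_g\div_gh$, the cross term $-h(\nabla^gf_g,\nabla^gf_g)$, the Lie-derivative / Bianchi term $-\langle B_g(h),\nabla^gf_g\rangle_g$ and $-\langle h,\Ric(g)+2\nabla^{g,2}f_g\rangle_g$; these must be reorganized into $\div_{f_g}(\div_{f_g}h)$ and $-\langle h,\Ric_{f_g}(g)\rangle_g$ using the definitions \eqref{def-wei-div-sym} of $\div_{f_g}$ on $2$-tensors and $\div_{f_g}$ on vector fields, the identity $\div_{f_g}(\div_{f_g}h)=\div_g\div_gh - 2\langle\div_gh,\nabla^gf_g\rangle_g - \Hess f_g(\ \cdot\ ) + h(\nabla f_g,\nabla f_g)+\dots$ (to be checked term by term), and the Bianchi/Bochner manipulations already used in the proof of the corollary following Proposition \ref{first-var-prop}. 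A clean way to organize this is to differentiate \eqref{equ-criti-lambda-pot} written in the equivalent form $\R_g + 2\Delta_{f_g}f_g + |\nabla^gf_g|^2_g = 0$ (equivalently $2\Delta_g f_g - |\nabla^g f_g|^2 + \R_g=0$), or even better to differentiate the equation $-4\Delta_g w_g+\R_gw_g=0$ for $w_g=e^{-f_g/2}$, since there the linearization is essentially linear in $w_g$ and the weighted structure emerges automatically upon dividing by $w_g$; I would then translate back to $f_g$.

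The main obstacle I expect is purely bookkeeping: correctly tracking the several terms that involve $\nabla^g f_g$ contracted with $h$, $\div_g h$, and $\nabla^g\tr_g h$, and verifying that they assemble precisely into the two weighted operators on the right-hand side of \eqref{var-vol-var-ell-eqn-for} with the stated coefficient $\tfrac12$, rather than into some superficially different expression. A useful consistency check at the end is to specialize to $g=g_b$ (so $f_{g_b}=0$, $\Ric(g_b)=0$), where \eqref{var-vol-var-ell-eqn-for} must reduce to the known identity \eqref{equ-first-var-pot-fct} from Remark \ref{remark jauge div}, namely $2\Delta_{g_b}\bigl(\delta_{g_b}f(h)-\tfrac{\tr_{g_b}h}{2}\bigr)=-\div_{g_b}\div_{g_b}h$; and a second check is that the trace of the formula is compatible with the variation of the weighted volume equation. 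Once the flat-case check and the trace check pass, the general computation is just the weighted analogue with all the $\nabla^gf_g$-terms reinstated, and no new analytic input (no elliptic regularity, no Fredholm theory) is needed beyond the smoothness of $g\mapsto f_g$ already provided by Proposition \ref{prop-pot-fct}.
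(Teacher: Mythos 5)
Your proposal is correct and follows essentially the same route as the paper: the paper also differentiates the Euler--Lagrange equation \eqref{equ-criti-lambda-pot} term by term (using exactly the three linearizations you record, including $\delta_g(\Delta_g u)(h)=-\langle h,\nabla^{g,2}u\rangle_g-\langle B_g(h),\nabla^g u\rangle_g$), groups the first-order terms into $2\Delta_{f_g}\bigl(\delta_gf(h)-\tfrac{\tr_gh}{2}\bigr)$, and identifies the remainder with $\div_{f_g}(\div_{f_g}h)-\langle h,\Ric_{f_g}(g)\rangle_g$ via the expansion $\div_{f_g}(\div_{f_g}h)=\div_g\div_gh-2\langle\div_gh,\nabla^gf_g\rangle_g-\langle h,\nabla^{g,2}f_g\rangle_g+h(\nabla^gf_g,\nabla^gf_g)$. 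Your consistency checks and the alternative of working with $w_g$ are sensible but not needed; the bookkeeping closes exactly as you anticipate.
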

		\begin{proof}
		We consider $\varepsilon>0$ so small that $f_g$ is well-defined by Proposition \ref{prop-pot-fct}. In particular, the potential function $f_g$ satisfies the Euler-Lagrange equation \eqref{equ-criti-lambda-pot} that we differentiate along a variation $h\in C^{2,\alpha}_{\tau}$ as follows:
		\begin{equation}
		\begin{split}\label{diff-var-for-eul-lag}
2\delta_g\left(\Delta_gf_g\right)(h)&=2\Delta_g\delta_gf(h)-2\langle B_g(h),\nabla^gf_g\rangle_g-2\langle h,\nabla^{g,2}f_g\rangle_g,\\
\delta_g(|\nabla^gf_g|^2_g)(h)&=-h(\nabla^gf_g,\nabla^gf_g)+2\langle\nabla^gf_g,\nabla^g(\delta_gf(h))\rangle_g,\\
\delta_g\R(h)&=\div_{g}\div_{g}h-\Delta_{g}\tr_gh-\left<h,\Ric(g)\right>_g.
\end{split}
\end{equation}
Here, we have used [\eqref{first-var-lie-der-app}, Lemma \ref{lemma-app-lie-der-lin}] in the first line and the last line is simply [\eqref{lem-lin-equ-scal-first-var}, Lemma \ref{lem-lin-equ-Ric-first-var}]. By considering a suitable linear combination of the first variations described in \eqref{diff-var-for-eul-lag} and using \eqref{equ-criti-lambda-pot} leads to:
\begin{equation*}
\begin{split}
0&=\delta_g\left(2\Delta_gf_g-|\nabla^gf_g|^2_g+\R_g\right)(h)\\
&=2\Delta_{f_g}\left(\delta_gf(h)-\frac{\tr_gh}{2}\right)-\langle h,\Ric_{f_g}(g)\rangle_g\\
&\quad-2\langle \div_gh,\nabla^gf_g\rangle_g-\langle h,\nabla^{g,2}f_g\rangle_g+h(\nabla^gf_g,\nabla^gf_g)+\div_g\div_gh.
\end{split}
\end{equation*}
This ends the proof of the desired equation satisfied by $\delta_gf(h)-\frac{\tr_gh}{2}$ once we observe that:
\begin{equation*}
\div_{f_g}(\div_{f_g}h)=-2\langle \div_gh,\nabla^gf_g\rangle_g-\langle h,\nabla^{g,2}f_g\rangle_g+h(\nabla^gf_g,\nabla^gf_g)+\div_g\div_gh.
\end{equation*}

		\end{proof}
		\subsection{Further properties of $\lambda_{\operatorname{ALE}}$}~~\\
		
	We can approximate $C^{2,\alpha}$-perturbations by metrics which are Ricci flat outside a compact subset: this is the content of the following lemma that we state without proof.
	\begin{lemma}\label{cutoffC2alphatau}
		Let $(N^n,g_b)$ be an ALE Ricci-flat metric asymptotic to $\RR^n\slash\Gamma$, for some finite subgroup $\Gamma$ of $SO(n)$ acting freely on $\mathbb{S}^{n-1}$. Let $\tau\in\left(\frac{n-2}{2},n-2\right)$ and $\alpha\in (0,1)$. Let $g$ be a metric in $B_{C^{2,\alpha}_{\tau}}(g_b,\varepsilon)$ for $\varepsilon$ sufficiently small. Then, for a sequence of cut-off functions $\chi_s$ for $s>1$ vanishing in smaller and smaller neighborhoods of infinity, we have for any $\alpha'\in(0,\alpha)$ and $\tau'<\tau$,
		%where $g$ is close to the asymptotic locally Euclidean metric $g_e$ as $s\to \infty$ whith uniformly bounded derivatives, 
		\begin{equation*}
		\begin{split}
		&\chi_sg + (1-\chi_s)g_b\xrightarrow[s\to +\infty]{C^{2,\alpha'}_{\tau'}} g,\\
		&\lambda_{\operatorname{ALE}}^0(\chi_sg + (1-\chi_s)g_b)-m_{\operatorname{ADM}}(\chi_sg + (1-\chi_s)g_b)\xrightarrow[s\to +\infty]{} \lambda_{\operatorname{ALE}}(g).
		\end{split}
		\end{equation*}
	\end{lemma}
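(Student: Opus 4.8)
The plan is to approximate $g$ by the interpolating metrics $g_s := \chi_s g + (1-\chi_s) g_b$ and exploit the two facts established above: that $\lambda_{\operatorname{ALE}}$ is analytic (hence continuous) on a $C^{2,\alpha}_\tau$-neighborhood of $g_b$ (Proposition \ref{lambdaALE analytic}), and that on such a neighborhood it is kept unchanged when the exponents $(\tau,\alpha)$ are decreased within $(\frac{n-2}{2},n-2)\times(0,1)$. First I would verify the $C^{2,\alpha'}_{\tau'}$-convergence $g_s \to g$: writing $g_s - g_b = \chi_s(g - g_b)$, one has $\|\chi_s(g-g_b)\|_{C^{2,\alpha'}_{\tau'}} \le \|g-g_b\|_{C^{2,\alpha'}_{\tau'}}$ uniformly, with the supremum defining the $C^{2,\alpha'}_{\tau'}$-norm of $g_s - g$ concentrated on the shrinking support of $1-\chi_s$; since $\tau' < \tau$, the weight $\rho_{g_b}^{\tau'}$ is dominated there by $\rho_{g_b}^{\tau}$ times a factor tending to $0$, and one checks the derivative terms (including the extra terms produced by $\nabla\chi_s$, which are of lower order in the weight because $\chi_s$ is a radial cutoff at increasing heights) go to zero as well. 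In particular each $g_s$ lies in $B_{C^{2,\alpha'}_{\tau'}}(g_b,\varepsilon)$ for $s$ large, so $\lambda_{\operatorname{ALE}}(g_s)$ is defined and, by the exponent-independence clause of Proposition \ref{lambdaALE analytic}, agrees with the value computed in any admissible topology.

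Next I would observe that each $g_s$ has \emph{compactly supported} difference from $g_b$ on the "outside" in the sense that $g_s = g_b$ on $\{\rho_{g_b} > 2A_s\}$ for the radius $A_s$ where $\chi_s$ becomes supported; hence $\R_{g_s}$ is compactly supported, in particular integrable, so $g_s \in \mathcal{M}^{2,\alpha'}_{\tau'}(g_b,\varepsilon)$ and Definition \ref{defn-ale-lambda} applies literally: $\lambda_{\operatorname{ALE}}(g_s) = \lambda_{\operatorname{ALE}}^0(g_s) - m_{\operatorname{ADM}}(g_s)$. This identifies the sequence in the statement, $\lambda_{\operatorname{ALE}}^0(g_s) - m_{\operatorname{ADM}}(g_s)$, with $\lambda_{\operatorname{ALE}}(g_s)$.

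Finally I would close the argument by continuity: since $g_s \to g$ in $C^{2,\alpha'}_{\tau'}$ and $\lambda_{\operatorname{ALE}}$ is continuous on a $C^{2,\alpha'}_{\tau'}$-neighborhood of $g_b$ (analyticity from Proposition \ref{lambdaALE analytic}, valid in this topology by the exponent-independence statement), we get $\lambda_{\operatorname{ALE}}(g_s) \to \lambda_{\operatorname{ALE}}(g)$, which combined with the previous paragraph is exactly the second displayed convergence. The main obstacle I anticipate is the first step: controlling the cutoff error in the weighted $C^{2,\alpha'}_{\tau'}$-norm, specifically showing that the terms where derivatives land on $\chi_s$ (which carry factors $A_s^{-k}$ but also sit at radius $\sim A_s$, hence contribute $\rho^{\tau'}\cdot A_s^{-k}\cdot A_s^{k}\|g-g_b\|_{C^0_\tau}\rho^{-\tau}$-type quantities) genuinely vanish — this needs the strict inequality $\tau' < \tau$ and the standard bounds $|\nabla^k\chi_s| \le c A_s^{-k}$ on the annular support, exactly as in Example \ref{exemple masse infinie}. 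Everything after that is a formal consequence of results already proven.
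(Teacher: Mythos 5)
The paper states this lemma explicitly \emph{without proof}, so there is no argument of the authors to compare against; your proposal supplies the missing argument and it is correct. The three-step structure — (i) weighted convergence $g_s\to g$ in the strictly weaker topology $C^{2,\alpha'}_{\tau'}$, using $g_s-g=(1-\chi_s)(g-g_b)$, the gain $A_s^{\tau'-\tau}\to 0$ from $\tau'<\tau$ on the support of $1-\chi_s$, and the bounds $|\nabla^k\chi_s|\le cA_s^{-k}$ at radius $\sim A_s$ for the commutator terms; (ii) the observation that $\R_{g_s}$ is compactly supported, so $g_s\in\mathcal{M}^{2,\alpha'}_{\tau'}(g_b,C\varepsilon)$ and $\lambda_{\operatorname{ALE}}(g_s)=\lambda^0_{\operatorname{ALE}}(g_s)-m_{\operatorname{ADM}}(g_s)$ literally by Definition \ref{defn-ale-lambda}; (iii) continuity of $\lambda_{\operatorname{ALE}}$ in the $C^{2,\alpha'}_{\tau'}$-topology from Proposition \ref{lambdaALE analytic} together with its exponent-independence clause — is exactly what the statement is designed to make work. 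Two small points you should make explicit: the continuity step requires $\tau'$ to remain in $(\frac{n-2}{2},\tau)$, not merely $\tau'<\tau$ (this is harmless since one such $\tau'$ suffices for the second convergence, but the lemma as stated allows any $\tau'<\tau$ for the first); and the uniform bound $\|\chi_s(g-g_b)\|_{C^{2,\alpha'}_{\tau'}}\le C\|g-g_b\|_{C^{2,\alpha}_{\tau}}$ holds with a constant $C$ depending on the cutoff (because derivatives can land on $\chi_s$), not with $C=1$ as your first display suggests — one then shrinks $\varepsilon$ accordingly so that all $g_s$ stay in the ball where Proposition \ref{lambdaALE analytic} applies.
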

	
	The following proposition sums up the scaling properties and the diffeomorphism invariance of the functional $\lambda_{\operatorname{ALE}}$: it echoes Proposition \ref{inv-diff-lambda} and Lemma \ref{scaling lambdaALE} established for $\lambda_{\operatorname{ALE}}^0$.
	\begin{prop}\label{scaling diffeo tildelambda}
		Let $(N^n,g_b)$ be an ALE Ricci-flat metric asymptotic to $\RR^n\slash\Gamma$, for some finite subgroup $\Gamma$ of $SO(n)$ acting freely on $\mathbb{S}^{n-1}$. Let $\tau\in\left(\frac{n-2}{2},n-2\right)$ and $\alpha\in (0,1)$.
		Let $g$ be a metric in $B_{C^{2,\alpha}_{\tau}}(g_b,\varepsilon)$ and consider $\phi$ a diffeomorphism close to the identity in the $C^{3,\alpha}_{\tau-1}(TN)$ topology. Then, we have $\lambda_{\operatorname{ALE}}(\phi^*g) = \lambda_{\operatorname{ALE}}(g)$.
		
		Moreover, if $s>0$,
		$$\lambda_{\operatorname{ALE}}(sg) = s^{\frac{n-2}{2}}\lambda_{\operatorname{ALE}}(g).$$
	\end{prop}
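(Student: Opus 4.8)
The plan is to reduce both identities to the subclass $\mathcal{M}^{2,\alpha}_{\tau}(g_b,\varepsilon)$ of metrics with integrable scalar curvature, where $\lambda_{\operatorname{ALE}}=\lambda_{\operatorname{ALE}}^0-m_{\operatorname{ADM}}$ holds by definition, and then to invoke the known behaviour of $\lambda_{\operatorname{ALE}}^0$ (Proposition~\ref{inv-diff-lambda}, Lemma~\ref{scaling lambdaALE}) together with that of the ADM mass. To pass from $\mathcal{M}^{2,\alpha}_{\tau}(g_b,\varepsilon)$ to a whole $C^{2,\alpha}_\tau$-neighborhood I would use Lemma~\ref{cutoffC2alphatau}: the cut-off metrics $g_s:=\chi_sg+(1-\chi_s)g_b$ are Ricci-flat, hence scalar-flat, outside a compact set, so they lie in $\mathcal{M}^{2,\alpha'}_{\tau'}(g_b,\varepsilon)$ for slightly worse $\alpha'<\alpha$, $\tau'<\tau$, converge to $g$ in $C^{2,\alpha'}_{\tau'}$, and on such neighborhoods $\lambda_{\operatorname{ALE}}$ agrees with the original functional and is analytic, hence continuous, by Proposition~\ref{lambdaALE analytic}.

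\emph{Scaling.} Here I would work directly from the intrinsic formula \eqref{true-def-lambda}, fixing $g$ so that $sg$ lies near $sg_b$. Each term of the Euler-Lagrange equation \eqref{equ-criti-lambda-pot} is homogeneous of degree $-1$ under $g\mapsto sg$, so the uniqueness in Claim~\ref{claim-iso} forces $f_{sg}=f_g$; combined with $\R_{sg}=s^{-1}\R_g$, $|\nabla^{sg}\cdot|^2_{sg}=s^{-1}|\nabla^g\cdot|^2_g$, $d\mu_{sg}=s^{n/2}d\mu_g$, $\mathbf{n}_{sg_b}=s^{-1/2}\mathbf{n}_{g_b}$, $d\sigma_{sg_b}=s^{(n-1)/2}d\sigma_{g_b}$ and the reindexing $\{\rho_{sg_b}\le\sqrt sR\}=\{\rho_{g_b}\le R\}$, both the bulk and the boundary term in \eqref{true-def-lambda} for $sg$ equal $s^{(n-2)/2}$ times the corresponding term for $g$, whence $\lambda_{\operatorname{ALE}}(sg)=s^{(n-2)/2}\lambda_{\operatorname{ALE}}(g)$ after taking the limit. (For $g\in\mathcal{M}^{2,\alpha}_\tau(g_b,\varepsilon)$ this is also immediate from Lemma~\ref{scaling lambdaALE} and the classical homogeneity $m_{\operatorname{ADM}}(sg)=s^{(n-2)/2}m_{\operatorname{ADM}}(g)$, with the general case following by the approximation above.)

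\emph{Diffeomorphism invariance.} On $\mathcal{M}^{2,\alpha}_\tau(g_b,\varepsilon)$ this is immediate from $\lambda_{\operatorname{ALE}}^0(\phi^*g)=\lambda_{\operatorname{ALE}}^0(g)$ (Proposition~\ref{inv-diff-lambda}) and the standard diffeomorphism invariance of the ADM mass for maps asymptotic to the identity with the decay imposed here (cf. \cite{Bart-Mass,Lee-Parker}), which gives $m_{\operatorname{ADM}}(\phi^*g)=m_{\operatorname{ADM}}(g)$. For arbitrary $g$ in the $C^{2,\alpha}_\tau$-neighborhood I would transport this through the $g_s$: since $\phi$ is $C^{3,\alpha}_{\tau-1}$-close to $\Id$ it sends a neighborhood of infinity into one, $\phi^*(g_s-g_b)$ remains supported near infinity, $\phi^*g_b-g_b\in C^{2,\alpha}_\tau$ is small, and $\Ric(\phi^*g_b)=\phi^*\Ric(g_b)=0$, so $\phi^*g_s$ is scalar-flat outside a compact set and lies in $\mathcal{M}^{2,\alpha'}_{\tau'}(g_b,\varepsilon')$; hence $\lambda_{\operatorname{ALE}}(\phi^*g_s)=\lambda_{\operatorname{ALE}}(g_s)$ for all $s$, and passing to the limit (using $g_s\to g$, $\phi^*g_s\to\phi^*g$ in $C^{2,\alpha'}_{\tau'}$ and continuity of $\lambda_{\operatorname{ALE}}$) yields $\lambda_{\operatorname{ALE}}(\phi^*g)=\lambda_{\operatorname{ALE}}(g)$. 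A more intrinsic alternative, valid on the whole neighborhood, is to join $\Id$ to $\phi$ by $\phi_t=\exp(tX)$, note $f_{\phi_t^*g}=\phi_t^*f_g$ by the uniqueness of Claim~\ref{claim-iso}, and differentiate: since the $L^2(e^{-f_g}d\mu_g)$-gradient of $\lambda_{\operatorname{ALE}}$ is $-(\Ric(g)+\nabla^{g,2}f_g)$ and this Bakry-\'Emery tensor is $f_g$-divergence-free \eqref{wei-div-free-obs-ten}, an integration by parts (legitimate because $\Ric(g)+\nabla^{g,2}f_g=O(\rho_{g_b}^{-\tau-2})$ and $|X|=O(\rho_{g_b}^{-\tau+1})$ with $\tau>\frac{n-2}{2}$) gives $\frac{d}{dt}\lambda_{\operatorname{ALE}}(\phi_t^*g)=0$.

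\emph{Main obstacle.} The scaling identity is pure bookkeeping of homogeneities; the only genuinely non-formal input is the invariance $m_{\operatorname{ADM}}(\phi^*g)=m_{\operatorname{ADM}}(g)$ — equivalently, the weighted-divergence-free identity \eqref{wei-div-free-obs-ten} together with the decay estimates used in the integration by parts. The hypothesis that $\phi-\Id$ decay like $\rho_{g_b}^{-\tau+1}$ with $\tau>\frac{n-2}{2}$ is precisely the Bartnik-type condition ensuring the boundary integral in \eqref{def-mass} is unaffected by the coordinate change, so I would quote it rather than reprove it. Everything else — that $\phi^*g$ and $sg$ stay in the relevant neighborhoods, that the cut-off metrics and their pullbacks converge in the weaker $C^{2,\alpha'}_{\tau'}$-topology where $\lambda_{\operatorname{ALE}}$ is still continuous, and the integrations by parts — is routine.
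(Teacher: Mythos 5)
Your proposal is correct and its main line is essentially the paper's proof: reduce to metrics that are scalar-flat near infinity via the cut-off approximation of Lemma \ref{cutoffC2alphatau}, apply Proposition \ref{inv-diff-lambda} and Lemma \ref{scaling lambdaALE} to $\lambda_{\operatorname{ALE}}^0$, and quote the homogeneity $m_{\operatorname{ADM}}(sg)=s^{\frac{n}{2}-1}m_{\operatorname{ADM}}(g)$ together with Bartnik's invariance of the mass under $C^{3,\alpha}_{\tau-1}$-diffeomorphisms. The two extra routes you sketch (direct homogeneity of \eqref{true-def-lambda} using $f_{sg}=f_g$, and differentiating along the flow using that $\Ric_{f_g}(g)$ is weighted divergence-free) are sound supplements but not needed.
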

	\begin{proof}
		Let us prove the result assuming that $\R_g=0$ in a neighborhood of infinity: the general case is obtained by approximation thanks to Lemma \ref{cutoffC2alphatau}. Under this assumption, we know that $\lambda_{\operatorname{ALE}}^0$ is bounded and has the expected behavior by scaling and action of diffeomorphism thanks to Proposition \ref{inv-diff-lambda} and Lemma \ref{scaling lambdaALE}.
		
		The mass behaves the same way by rescaling. Indeed, if $g$ is ALE asymptotic to $g_e$ at infinity, then $sg$ is ALE asymptotic to $sg_e$ at infinity, and we therefore have 
		$$m_{\operatorname{ADM}}(sg) = s^{\frac{n}{2}- 1} m_{\operatorname{ADM}}(g),$$
		by studying the scaling of the operators involved.
		
		The invariance of $m_{\operatorname{ADM}}$ by $C^{3,\alpha}_{\tau-1}$-diffeomorphisms was proved in \cite{Bart-Mass}.
	\end{proof}
	
	We end this section with the proof of the monotonicity of the functional $\lambda_{\operatorname{ALE}}$ along the Ricci flow in case it stays in a neighborhood of a Ricci-flat metric. 	
		\begin{prop}\label{prop-mono-lambda}
		Let $(N^n,g_b)$ be an ALE Ricci-flat metric asymptotic to $\RR^n\slash\Gamma$, for some finite subgroup $\Gamma$ of $SO(n)$ acting freely on $\mathbb{S}^{n-1}$. Let $\tau\in\left(\frac{n-2}{2},n-2\right)$ and $\alpha\in (0,1)$ and let $(g(t))_{t\in[0,T]}$ be a solution to the Ricci flow on $N$ starting from $g(0)\in B_{C^{2,\alpha}_{\tau}}(g_b,\varepsilon)$.
		
		Then, $t\in[0,T]\rightarrow \lambda_{\operatorname{ALE}}(g(t))\in\RR$ is non-decreasing along the Ricci flow as long as $g(t)\in B_{C^{2,\alpha}_{\tau}}(g_b,\varepsilon)$ for every $t\in[0,T]$ and,
		\begin{equation}
		\frac{d}{dt}\lambda_{\operatorname{ALE}}(g(t))=2\|\Ric(g(t))+\nabla^{g(t),2}f_{g(t)}\|_{L^2(e^{-f_{g(t)}})}^2.\label{first-mono-lambda}
		\end{equation}
		Moreover, $\lambda_{\operatorname{ALE}}(g(\cdot))$ is constant in time on Ricci-flat metrics only.
		
	\end{prop}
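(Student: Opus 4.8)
The plan is to reproduce Perelman's monotonicity computation, the only genuinely new ingredient being the control of the weighted improper integrals at infinity. First I would differentiate $t\mapsto\lambda_{\operatorname{ALE}}(g(t))$ directly. Since $\lambda_{\operatorname{ALE}}$ is analytic on $B_{C^{2,\alpha}_{\tau}}(g_b,\varepsilon)$ with $L^2(e^{-f_g}d\mu_g)$-gradient at $g$ equal to $-(\Ric(g)+\nabla^{g,2}f_g)$ by Proposition~\ref{lambdaALE analytic}, while a Ricci flow issuing from $B_{C^{2,\alpha}_{\tau}}(g_b,\varepsilon)$ is a $C^1$ curve into $C^{2,\alpha}_{\tau}$ solving $\partial_tg(t)=-2\Ric(g(t))$, the chain rule gives
\begin{equation*}
\frac{d}{dt}\lambda_{\operatorname{ALE}}(g(t))=\big\langle-(\Ric(g(t))+\nabla^{g(t),2}f_{g(t)}),\,-2\Ric(g(t))\big\rangle_{L^2(e^{-f_{g(t)}}d\mu_{g(t)})},
\end{equation*}
the pairing being absolutely convergent because $\Ric(g(t))$ and $\nabla^{g(t),2}f_{g(t)}$ are both $O(\rho_{g_b}^{-\tau-2})$ (by Lemma~\ref{lem-lin-equ-Ric-first-var} and Proposition~\ref{prop-pot-fct}) while $2\tau+4>n$.

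Next I would remove the cross term by writing $\Ric(g)=(\Ric(g)+\nabla^{g,2}f_g)-\nabla^{g,2}f_g$ and using $\nabla^{g,2}f_g=\tfrac12\Li_{\nabla^gf_g}(g)$: exactly as in the proof of the corollary to Proposition~\ref{first-var-prop}, the weighted divergence-free identity \eqref{wei-div-free-obs-ten} applied to $X=\nabla^gf_g=O(\rho_{g_b}^{-\tau-1})$ gives, after integration by parts, $\langle\Ric(g)+\nabla^{g,2}f_g,\nabla^{g,2}f_g\rangle_{L^2(e^{-f_g}d\mu_g)}=0$. Hence
\begin{equation*}
\frac{d}{dt}\lambda_{\operatorname{ALE}}(g(t))=2\big\|\Ric(g(t))+\nabla^{g(t),2}f_{g(t)}\big\|^2_{L^2(e^{-f_{g(t)}}d\mu_{g(t)})}\geq0,
\end{equation*}
which is \eqref{first-mono-lambda} and proves the asserted monotonicity as long as $g(t)\in B_{C^{2,\alpha}_{\tau}}(g_b,\varepsilon)$.

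For the rigidity statement, $\lambda_{\operatorname{ALE}}(g(\cdot))$ is constant if and only if $\Ric(g(t))+\nabla^{g(t),2}f_{g(t)}\equiv0$ for all $t$, i.e.\ each $g(t)$ is a gradient steady soliton with potential $f_{g(t)}$. I would trace this identity to get $\R_{g(t)}=-\Delta_{g(t)}f_{g(t)}$, combine it with the Euler--Lagrange equation \eqref{equ-criti-lambda-pot}, namely $2\Delta_{g(t)}f_{g(t)}-|\nabla^{g(t)}f_{g(t)}|^2_{g(t)}+\R_{g(t)}=0$, to obtain $\Delta_{g(t)}f_{g(t)}=|\nabla^{g(t)}f_{g(t)}|^2_{g(t)}$, and conclude $\Delta_{g(t)}(e^{-f_{g(t)}})=0$. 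Since $e^{-f_{g(t)}}-1\in C^{2,\alpha}_{\tau}$ tends to $0$ at infinity, the maximum principle forces $e^{-f_{g(t)}}\equiv1$, so $f_{g(t)}\equiv0$ and $\Ric(g(t))=0$. Conversely a Ricci-flat $g(0)$ is fixed by the flow and has $f_{g(0)}\equiv0$ — apply \eqref{equ-criti-lambda} and the same maximum principle to $w_{g(0)}$ — so $\lambda_{\operatorname{ALE}}(g(\cdot))$ is constant.

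The step I expect to be the main obstacle is the differentiation in the first paragraph: one must establish that $t\mapsto g(t)$ is $C^1$ into $C^{2,\alpha}_{\tau}$ and that $\lambda_{\operatorname{ALE}}$ may be differentiated along it through the $L^2(e^{-f_g}d\mu_g)$-pairing. This relies on short-time existence and interior smoothing for the Ricci flow within this class of ALE metrics (granted by the standing assumption that the flow stays in $B_{C^{2,\alpha}_{\tau}}(g_b,\varepsilon)$) together with the analyticity of $\lambda_{\operatorname{ALE}}$, plus the weighted-decay bookkeeping that legitimizes every improper integral and integration by parts appearing above. Everything else is Perelman's computation, with diffeomorphism invariance encoded in the vanishing of the cross term through the weighted Bianchi identity.
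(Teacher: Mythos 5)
Your computation of the cross-term cancellation and your rigidity argument (tracing the soliton identity against the Euler--Lagrange equation \eqref{equ-criti-lambda-pot} to get $\Delta_{g}(e^{-f_g})=0$ and invoking the maximum principle) are both sound. However, the step you yourself flag as the main obstacle is a genuine gap, and it is exactly the point the paper warns about in the remark following the statement: a solution to the Ricci flow starting in $B_{C^{2,\alpha}_{\tau}}(g_b,\varepsilon)$ is a priori only a \emph{$C^0$} curve when viewed with values in $C^{2,\alpha}_{\tau}$, not a $C^1$ one. Knowing that $g(t)$ stays in the ball gives boundedness of the $C^{2,\alpha}_{\tau}$-norm, but it does not give differentiability of $t\mapsto g(t)$ in that Banach topology: that would require $\Ric(g(t))$ to lie in $C^{2,\alpha}_{\tau}$ (two more weighted derivatives than the equation provides) and to depend continuously on $t$ in the top weighted H\"older seminorm, which is the classical failure of parabolic solutions to be continuous in time into the same H\"older space. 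Consequently you cannot apply the chain rule through the Fr\'echet differentiability of $\lambda_{\operatorname{ALE}}$ furnished by Proposition \ref{lambdaALE analytic}, and the first display of your argument is unjustified.

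The paper circumvents this by never differentiating $\lambda_{\operatorname{ALE}}$ along the curve in the Banach-space sense. Instead it integrates the first-variation identity of Proposition \ref{first-var-prop} in time and over the compact sublevel sets $\{\rho_{g_b}\leq R\}$, with $h(t)=-2\Ric(g(t))$ and $\varphi(t)=\delta_{g(t)}f(-2\Ric(g(t)))$, obtaining an exact identity between the difference of the truncated functionals at times $t_1,t_2$, the bulk term $2\int\!\!\int|\Ric+\nabla^2f|^2e^{-f}$, and explicit boundary integrals on $\{\rho_{g_b}=R\}$. The decay $\tau>\frac{n-2}{2}$, the estimates on $f_{g(t)}$ from Proposition \ref{prop-pot-fct}, and Shi's derivative estimates for $t>0$ then show the boundary terms vanish as $R\to+\infty$ uniformly in $t$, yielding \eqref{first-mono-lambda} in integrated form. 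To repair your proof you would either need to carry out this localization, or independently establish that the flow is $C^1$ into $C^{2,\alpha'}_{\tau'}$ for suitable $(\alpha',\tau')$ and that $\lambda_{\operatorname{ALE}}$ is differentiable along such curves --- neither of which follows from the standing hypotheses.
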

	\begin{rk}
	Notice that Proposition \ref{prop-mono-lambda} is not a direct consequence of  Proposition \ref{lambdaALE analytic} since in general, the curve $t\in[0,T]\rightarrow g(t)\in B_{C^{2,\alpha}_{\tau}}(g_b,\varepsilon)$ induced by a solution to the Ricci flow is only $C^0$ continuous a priori when interpreted with values into the space $C^{2,\alpha}_{\tau}$.
	\end{rk}
	
	\begin{rk}
		To check that a solution to the Ricci flow stays in a neighborhood $B_{C^{2,\alpha}_{\tau}}(g_b,\varepsilon)$ of $g_b$ is a delicate problem. In a forthcoming paper, we will investigate this question in case $(N^n,g_b)$ is stable.
	\end{rk}
	\begin{proof}
	Let $R\geq R_0>0$ where $R_0$ is sufficiently large so that the level sets $\{\rho_{g_b}=R\}$ are closed smooth hypersurfaces. Then if $(g(t))_{t\in[0,T]}$ is a Ricci flow in $B_{C^{2,\alpha}_{\tau}}(g_b,\varepsilon)$, thanks to the proof of Proposition \ref{first-var-prop} by taking into account that $h(t):=-2\Ric(g(t))$ and $\varphi(t):=\delta_{g(t)}f(-2\Ric(g(t)))$,
	\begin{equation}
	\begin{split}\label{delicate-first-var}
&\int_{\{\rho_{g_b}\leq R\}}\left(|\nabla^{g(t)}f_{g(t)}|^2_{g(t)}+\R_{g(t)}\right)\,e^{-f_{g(t)}}d\mu_{g(t)}\\
&-\int_{\{\rho_{g_b}=R\}}\left<\div_{g_b}(g(t)-g_b)-\nabla^{g_b}\tr_{g_b}(g(t)-g_b),\mathbf{n}_{g_b}\right>_{g_b}\,d\sigma_{g_b}\Bigg\rvert_{t=t_1}^{t_2}=\\
&2\int_{t_1}^{t_2}\int_{\{\rho_{g_b}\leq R\}}|\Ric(g(t))+\nabla^{g(t),2}f_{g(t)}|^2_{g(t)}\,e^{-f_{g(t)}}d\mu_{g(t)}dt\\
&+\int_{t_1}^{t_2}\int_{\{\rho_{g_b}=R\}}\left<\nabla^{g(t)}\R_{g(t)}+2\Ric(g(t))(\nabla^{g(t)}f_{g(t)}),\mathbf{n}_{g(t)}\right>_{g(t)}\,e^{-f_{g(t)}}d\sigma_{g(t)}dt\\
&+\int_{t_1}^{t_2}\int_{\{\rho_{g_b}= R\}}2\left(\R_{g(t)}+\delta_{g(t)}f(-2\Ric(g(t)))\right)\langle\nabla^{g(t)}f_{g(t)},\mathbf{n}_{g(t)}\rangle_{g(t)}\,e^{-f_{g(t)}}d\sigma_{g(t)}dt\\
&+2\int_{t_1}^{t_2}\int_{\{\rho_{g_b}=R\}}\left<\div_{g_b}(\Ric(g(t))-\nabla^{g_b}\tr_{g_b}(\Ric(g(t))),\mathbf{n}_{g_b}\right>_{g_b}\,d\sigma_{g_b}dt.
\end{split}
\end{equation}
All we need to check is that the boundary integrals go to $0$ as $R$ tends to $+\infty$ uniformly in time. First, observe that by properties of the map $g\rightarrow f_g$ summarized in Proposition \ref{prop-pot-fct},
	\begin{equation}
	\begin{split}\label{easy-peasy-est}
&\left|\int_{t_1}^{t_2}\int_{\{\rho_{g_b}=R\}}\left<2\Ric(g(t))(\nabla^{g(t)}f_{g(t)}),\mathbf{n}_{g(t)}\right>_{g(t)}\,e^{-f_{g(t)}}d\sigma_{g(t)}dt\right|\leq CR^{n-2\tau-4},\\
&\left|\int_{t_1}^{t_2}\int_{\{\rho_{g_b}= R\}}2\left(\R_{g(t)}+\delta_{g(t)}f(-2\Ric(g(t)))\right)\langle\nabla^{g(t)}f_{g(t)},\mathbf{n}_{g(t)}\rangle_{g(t)}\,e^{-f_{g(t)}}d\sigma_{g(t)}dt\right|\leq CR^{n-2\tau-2},
\end{split}
\end{equation}
for some positive constant $C=C(n,g_b,\varepsilon)$. Since $\tau>\frac{n-2}{2}$, the righthand sides of (\ref{easy-peasy-est}) decay to $0$ as $R$ tends to $+\infty$. Now, the remaining boundary integrals in (\ref{delicate-first-var}) would cancel each other if the last integral was expressed in terms of the evolving metric $g(t)$ thanks to the Bianchi identity. To conclude, it is sufficient to notice that: 
\begin{equation*}
|\mathbf{n}_{g(t)}-\mathbf{n}_{g_b}|_{g_b}+\left|e^{-f_{g(t)}}\frac{d_{\sigma_{g(t)}}}{d_{\sigma_{g_b}}}-1\right|\leq C\rho^{-\tau},
\end{equation*}
for some positive constant $C=C(n,g_b,\varepsilon)$. Indeed, this implies that:
\begin{equation*}
\begin{split}
\Bigg\lvert\int_{t_1}^{t_2}&\int_{\{\rho_{g_b}=R\}}\Bigg(\left<\nabla^{g(t)}\R_{g(t)},\mathbf{n}_{g(t)}\right>_{g(t)}\,e^{-f_{g(t)}}\frac{d\sigma_{g(t)}}{d\sigma_{g_b}}\\
&+2\left<\div_{g_b}(\Ric(g(t))-\nabla^{g_b}\tr_{g_b}(\Ric(g(t))),\mathbf{n}_{g_b}\right>_{g_b}\Bigg)\,d\sigma_{g_b}dt\Bigg\rvert\leq CR^{n-2-2\tau}.
\end{split}
\end{equation*}
Here we have used that $\nabla^{g(t)}\Ric(g(t))=O(\rho_{g_b}^{-\tau-2})$ for $t>0$ by Shi's estimates \cite{Shi-Def}.

By letting $R$ tend to $+\infty$ in (\ref{delicate-first-var}) gives the expected result:
\begin{equation*}
\lambda_{\operatorname{ALE}}(g(t_2))-\lambda_{\operatorname{ALE}}(g(t_1))=2\int_{t_1}^{t_2}\int_{N}|\Ric(g(t))+\nabla^{g(t),2}f_{g(t)}|^2_{g(t)}\,e^{-f_{g(t)}}d\mu_{g(t)}dt.
\end{equation*}

	\end{proof}
	\begin{rk}
		Under our assumptions, the mass $m_{\operatorname{ADM}}$, when it is defined, is constant along the Ricci flow by \cite{Dai-Ma-Mass} (see also \cite{Li-Yu-ALE}), the variations of the functional $\lambda_{\operatorname{ALE}}$ therefore only come from those of $\lambda_{\operatorname{ALE}}^0$.
	\end{rk}
	\subsection{Local properties of stable ALE Ricci flat metric}~~\\
	
	Propositions \ref{second-var-prop} and \ref{lambdaALE analytic} justify the following notion of stability for an ALE Ricci-flat metric.
	\begin{defn}\label{defn-stable}
		An ALE Ricci-flat metric $(N^n,g_b)$ asymptotic to $\RR^n\slash\Gamma$, for some finite subgroup $\Gamma$ of $SO(n)$ acting freely on $\mathbb{S}^{n-1}$ is said to be \emph{linearly stable} if the second variation of $\lambda_{\operatorname{ALE}}$ at $g_b$ along a divergence-free variation $h\in C^{2,\alpha}_{\tau}(S^2T^*N)$, $\tau\in\left(\frac{n-2}{2},n-2\right)$, $\alpha\in(0,1)$, is nonpositive, i.e. if $L_{g_b}$ is a nonpositive operator in the $L^2$ sense when restricted to divergence-free variations in $ C^{2,\alpha}_{\tau}(S^2T^*N)$.
	\end{defn}
	
	Definition \ref{defn-stable} is relevant with respect to the functional $\lambda_{\operatorname{ALE}}$ but it has the apparent disadvantage that it depends on a choice of parameters $(\tau,\alpha)\in\left(\frac{n-2}{2},n-2\right)\times(0,1)$ a priori. The following lemma shows that Definition \ref{defn-stable} is actually independent of this choice of parameters.
	
	\begin{lemma}\label{lemma-equiv-def-stable}
	Let $(N^n,g_b)$, $n\geq 4$, be an ALE Ricci-flat metric. Then the following assertions are equivalent:
	\begin{enumerate}
	\item \label{first-def-sta}$(N^n,g_b)$ is linearly stable in the sense of Definition \ref{defn-stable}.\\
	
	\item \label{sec-def-sta}$\left<-L_{g_b}h,h\right>_{L^2}\geq 0$ for all $h\in C_c^{\infty}(S^2T^*N)$.\\
	
	\item \label{thir-def-sta}$\left<-L_{g_b}h,h\right>_{L^2}\geq 0$ for all $h\in H^2_{\frac{n}{2}-1}(S^2T^*N)$.
	\end{enumerate} 
	\end{lemma}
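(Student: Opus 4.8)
The plan is to prove the chain of implications $(\ref{thir-def-sta}) \Rightarrow (\ref{sec-def-sta}) \Rightarrow (\ref{first-def-sta}) \Rightarrow (\ref{thir-def-sta})$, so that all three are equivalent. The implication $(\ref{thir-def-sta}) \Rightarrow (\ref{sec-def-sta})$ is immediate from the inclusion $C_c^\infty(S^2T^*N) \subset H^2_{\frac{n}{2}-1}(S^2T^*N)$. The implication $(\ref{sec-def-sta}) \Rightarrow (\ref{thir-def-sta})$ should follow by a density argument: $C_c^\infty(S^2T^*N)$ is dense in $H^2_{\frac{n}{2}-1}(S^2T^*N)$ by definition of the weighted Sobolev space, and the bilinear form $h \mapsto \langle -L_{g_b}h, h\rangle_{L^2}$ is continuous on $H^2_{\frac{n}{2}-1}$ — indeed $\Delta_{g_b}h \in L^2_{\frac{n}{2}+1}$ when $h \in H^2_{\frac{n}{2}-1}$, so $\langle \Delta_{g_b}h,h\rangle_{L^2}$ makes sense as the pairing of $L^2_{\frac{n}{2}+1}$ with $L^2_{\frac{n}{2}-1}$, and the curvature term $\langle \Rm(g_b)(h),h\rangle$ is controlled because $\Rm(g_b) = O(\rho_{g_b}^{-n})$ decays fast enough to absorb the weight. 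Passing to the limit along an approximating sequence preserves the sign.

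The real content is the equivalence $(\ref{sec-def-sta}) \Leftrightarrow (\ref{first-def-sta})$, i.e. showing that nonpositivity of $L_{g_b}$ on all compactly supported symmetric $2$-tensors is the same as nonpositivity on divergence-free variations lying in $C^{2,\alpha}_\tau(S^2T^*N)$ with $\tau \in (\frac{n-2}{2}, n-2)$. One direction, that $(\ref{sec-def-sta})$ restricted to divergence-free tensors in $C^{2,\alpha}_\tau$ gives $(\ref{first-def-sta})$, again follows by density of divergence-free compactly supported tensors in the appropriate space, combined with the decay $\tau > \frac{n-2}{2}$ which, as emphasized in the remark after Lemma~\ref{lemm-charac-M}, places everything in $L^2$ and makes the $L^2$-pairing $\langle L_{g_b}h,h\rangle_{L^2}$ well-defined and continuous. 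The harder direction is that nonpositivity on divergence-free tensors forces nonpositivity on all compactly supported tensors: here one would use the standard decomposition of a symmetric $2$-tensor into its divergence-free part and a Lie derivative term, $h = h^{TT} + \Li_X(g_b)$ (solving the elliptic system for $X$ with suitable decay — this is exactly the gauge-fixing provided in Appendix~\ref{app-B}), together with the fact that $L_{g_b}$ annihilates Lie derivatives of the form $\Li_X(g_b)$ when $g_b$ is Ricci-flat (a Bianchi-type identity), so that $\langle L_{g_b}h,h\rangle_{L^2} = \langle L_{g_b}h^{TT},h^{TT}\rangle_{L^2}$ after integration by parts, all boundary terms vanishing by the decay of $X$.

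The main obstacle I anticipate is the care needed in the integration-by-parts and boundary-term analysis when reducing a general compactly supported $h$ to its divergence-free part: the vector field $X$ solving $\div_{g_b}\Li_X(g_b) = \div_{g_b}h$ (or the analogous equation in the Bianchi gauge) is not compactly supported — it decays only polynomially — so one must check that all the cross terms $\langle L_{g_b}\Li_X(g_b), h^{TT}\rangle_{L^2}$ and $\langle L_{g_b}h^{TT}, \Li_X(g_b)\rangle_{L^2}$ integrate to zero, i.e. that the relevant Green's identities hold with no contribution from infinity. This requires knowing the precise decay rates of $X$, $\Li_X(g_b)$, and $h^{TT}$ and matching them against the $\rho_{g_b}^{-2}$ gain from the Laplacian, which is where the hypothesis $\tau \in (\frac{n-2}{2}, n-2)$ and Minerbe's Hardy inequality (Theorem~\ref{thm-min-har-inequ}) enter. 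Once the boundary terms are controlled, the argument is essentially formal.
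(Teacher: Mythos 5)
Your overall architecture matches the paper's: the chain of implications, the reduction of the hard step to the decomposition $h=h'+\Li_X(g_b)$ into a divergence-free part plus a Lie derivative (Proposition \ref{prop-decomp-2-tensor}), and the vanishing of the cross terms by integration by parts. The easy implications are fine, though once you have $(\ref{sec-def-sta})\Leftrightarrow(\ref{thir-def-sta})$ the step $(\ref{thir-def-sta})\Rightarrow(\ref{first-def-sta})$ is a pure inclusion $C^{2,\alpha}_{\tau}\subset H^2_{\frac{n}{2}-1}$, so you do not need the (less obvious) density of divergence-free compactly supported tensors among divergence-free $C^{2,\alpha}_{\tau}$ tensors.

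There is, however, a genuine gap at the crux of $(\ref{first-def-sta})\Rightarrow(\ref{sec-def-sta})$: the claim that ``$L_{g_b}$ annihilates Lie derivatives $\Li_X(g_b)$ when $g_b$ is Ricci-flat'' is false, and with it the asserted identity $\left<L_{g_b}h,h\right>_{L^2}=\left<L_{g_b}h',h'\right>_{L^2}$. Differentiating the Ricci-flat equation along the flow of $X$ only gives $\delta_{g_b}\Ric(\Li_X(g_b))=0$, i.e.
\begin{equation*}
L_{g_b}\Li_X(g_b)=\Li_{B^X}(g_b),\qquad B^X:=B_{g_b}(\Li_X(g_b))=\div_{g_b}\Li_X(g_b)-\tfrac{1}{2}\nabla^{g_b}\tr_{g_b}\Li_X(g_b),
\end{equation*}
which does not vanish in general: already on flat space with $X=\nabla u$ one finds $\left<L\Li_X(g_e),\Li_X(g_e)\right>_{L^2}=-4\|\nabla^3u\|^2_{L^2}<0$ generically. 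So the quadratic form does pick up a contribution from the Lie-derivative part and your argument, as written, does not close. What saves the statement is that this contribution has the favorable sign: using $L_{g_b}\Li_X(g_b)=\Li_{B^X}(g_b)$, its trace $\Delta_{g_b}\tr_{g_b}\Li_X(g_b)=2\div_{g_b}B^X$, and integration by parts, one gets
\begin{equation*}
\left<-L_{g_b}\Li_X(g_b),\Li_X(g_b)\right>_{L^2}=2\|B^X\|_{L^2}^2+\frac{1}{2}\|\nabla^{g_b}\tr_{g_b}\Li_X(g_b)\|_{L^2}^2\geq 0,
\end{equation*}
so that $\left<-L_{g_b}h,h\right>_{L^2}\geq\left<-L_{g_b}h',h'\right>_{L^2}\geq 0$ --- an inequality, not an equality. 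This extra computation, carried out in \eqref{IBP-Bianchi-lemma-loc-stab}, is the missing step. Your anticipated obstacle (boundary terms at infinity for the cross terms) is real but routine: it is resolved by $B^X=O(\rho_{g_b}^{-\tau-1})$ and $2\tau+1>n-1$, exactly as you guessed.
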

	
	\begin{proof}
	We proceed by proving the implications $(\ref{first-def-sta})\Rightarrow (\ref{sec-def-sta})\Rightarrow (\ref{thir-def-sta})\Rightarrow (\ref{first-def-sta}).$
	
	Notice first that we have the following inclusions $$C^{\infty}_c(S^2T^*N)\subset C^{2,\alpha}_{\tau}(S^2T^*N)\subset H^2_{\frac{n}{2}-1}(S^2T^*N),$$ for any $\tau>\frac{n}{2}-1$ and $\alpha\in(0,1)$ according to Remark \ref{sobolev embeddings}. Moreover, $C_c^{\infty}(S^2T^*N)$ is dense in $H^2_{\frac{n}{2}-1}(S^2T^*N)$. In particular, the implications $(\ref{sec-def-sta})\Rightarrow (\ref{thir-def-sta})\Rightarrow (\ref{first-def-sta})$ are straightforward.
	
	We claim that if $(\ref{first-def-sta})$ holds true then 
	\begin{equation}
	\left<-L_{g_b}h,h\right>_{L^2}\geq 0,\quad\text{for all $h\in C^{2,\alpha}_{\tau}(S^2T^*N)$.}\label{cond-1-bis-lin-sta}
	\end{equation}
	  Taken (\ref{cond-1-bis-lin-sta}) for granted, it is immediate to conclude the proof of the implication $(\ref{first-def-sta})\Rightarrow (\ref{sec-def-sta}).$
	Therefore, all is left to prove is (\ref{cond-1-bis-lin-sta}) under Condition $(\ref{first-def-sta})$. By Proposition \ref{prop-decomp-2-tensor}, if $h$ is a symmetric $2$-tensor in $C^{2,\alpha}_{\tau}$, $\beta:=\tau\in\left(\frac{n-2}{2},n-2\right)\subset(1,n-1)$ then there exist a symmetric $2$-tensor $h'$ in $C^{2,\alpha}_{\tau}$ and a vector field $X\in C^{3,\alpha}_{\tau-1}$ such that $h=h'+\Li_X(g_b)$. Now, observe by bilinearity that:
	\begin{equation}
\begin{split}\label{easy-quad-form}
\left<-L_{g_b}h,h\right>_{L^2}=\,&\left<-L_{g_b}h',h'\right>_{L^2}+\left<-L_{g_b}h',\Li_X(g_b)\right>_{L^2}\\
&+\left<-L_{g_b}\Li_X(g_b),h'\right>_{L^2}+\left<-L_{g_b}\Li_X(g_b),\Li_X(g_b)\right>_{L^2}.
\end{split}
\end{equation}
On the one hand, an integration by parts shows that:
\begin{equation}\label{easy-IBP}
\left<L_{g_b}h',\Li_X(g_b)\right>_{L^2}=\left<h',L_{g_b}\Li_X(g_b)\right>_{L^2}.
\end{equation}
Here we have used the fact that $2\tau+1>n-1$ to handle the boundary term in the integration by parts.

On the other hand, by using the flow $(\phi^X_t)_{t\in \RR}$ generated by the vector field $X$, one observes that the one-parameter family $((\phi_t^X)^*g_b)_{t\in \RR}$ is a curve of Ricci-flat metrics. In particular, by differentiating the Ricci-flat equation at $t=0$ with the help of Lemma \ref{lem-lin-equ-Ric-first-var}, 
\begin{equation}\label{identif-im-lie-der}
L_{g_b}\Li_X(g_b)=\Li_{B^X}(g_b),\quad B^X:=B_{g_b}(\Li_X(g_b)).
\end{equation}

Plugging (\ref{identif-im-lie-der}) in (\ref{easy-IBP}) leads to:
\begin{equation}
\begin{split}
\left<L_{g_b}h',\Li_X(g_b)\right>_{L^2}=\,&\left<h',\Li_{B^X}(g_b)\right>_{L^2}\\
=\,&-2\left<\div_{g_b}h',B^X\right>_{L^2}=0,\label{delicate-orth-decom}
\end{split}
\end{equation}
since by definition, $h'$ is divergence-free. Here again, the integration by parts is legitimated by the fact that $B^X=O(\rho_{g_b}^{-\tau-1})$. 

Going back to (\ref{easy-quad-form}), the vanishing (\ref{delicate-orth-decom}) shows that it is sufficient to prove that 
\begin{equation}
\left<-L_{g_b}\Li_X(g_b),\Li_X(g_b)\right>_{L^2}\geq 0,\label{last-cond-fulfill}
\end{equation}
 since Condition (\ref{first-def-sta}) is assumed to hold. Because of (\ref{identif-im-lie-der}), it is equivalent to check the following:
\begin{equation}
\begin{split}\label{IBP-Bianchi-lemma-loc-stab}
-\left<\Li_X(g_b),\Li_{B^X}(g_b)\right>_{L^2}=\,&2\left<\div_{g_b}\Li_X(g_b),B^X\right>_{L^2}\\
=\,&2\|B^X\|_{L^2}^2+\left<\nabla^{g_b}\tr_{g_b}\Li_X(g_b),B^X\right>_{L^2}\\
=\,&2\|B^X\|_{L^2}^2-\left<\tr_{g_b}\Li_X(g_b),\div_{g_b}B^X\right>_{L^2}\\
=\,&2\|B^X\|_{L^2}^2+\frac{1}{2}\left<\tr_{g_b}\Li_X(g_b),-\Delta_{g_b}\Li_{X}(g_b)\right>_{L^2}\\
=\,&2\|B^X\|_{L^2}^2+\frac{1}{2}\|\nabla^{g_b}\tr_{g_b}\Li_{X}(g_b)\|_{L^2}^2\geq 0.
\end{split}
\end{equation}
Here, we have integrated by parts in the first, third and last lines. The second line uses the definition of $B^X$ given in (\ref{identif-im-lie-der}) only. Taking into account the Ricci-flatness of $g_b$, the penultimate line is obtained by considering the trace of (\ref{identif-im-lie-der}) with respect to the metric $g_b$. This concludes the proof of (\ref{last-cond-fulfill}).
	\end{proof}
	We end this section with the following strong positivity property shared by the Lichnerowicz operator associated to a stable ALE Ricci-flat metric $(N^n,g_b)$: this result established in \cite[Theorem $3.9$]{Der-Kro} has been proved to be useful for the dynamical stability of integrable Ricci-flat ALE metrics. The proof of this result is essentially due to Devyver \cite{Dev-Gau-Est} in a more general setting. 
	
	\begin{theo}\label{theo-der-kro}
		Let $(N^n,g_b)$ be a linearly stable ALE Ricci-flat metric. Then there exists some positive constant $\varepsilon(g_b)\in[0,1)$ such that 
		\begin{equation*}
		(1-\varepsilon(g_b))\left<-\Delta_{g_b}h,h\right>_{g_b}\leq \left<-L_{g_b}h,h\right>_{g_b}, 
		\end{equation*}
		for all $h\in H^2_{\frac{n}{2}-1}$ which is $L^2(g_b)$-orthogonal to $\ker_{L^2(g_b)}L_{g_b}.$
	\end{theo}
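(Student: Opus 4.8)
The plan is to read the asserted inequality as a coercivity statement for the quadratic form of $-L_{g_b}$ relative to that of $-\Delta_{g_b}=\nabla^*\nabla$, and to establish it by a concentration--compactness argument exploiting the fast decay of the curvature tensor. Since $g_b$ is Ricci--flat, $-L_{g_b}h=\nabla^*\nabla h-2\Rm(g_b)(h)$, so $\langle-L_{g_b}h,h\rangle_{L^2}=\|\nabla h\|_{L^2}^2-2\langle\Rm(g_b)(h),h\rangle_{L^2}$ while $\langle-\Delta_{g_b}h,h\rangle_{L^2}=\|\nabla h\|_{L^2}^2$; hence the conclusion is equivalent to $\langle-L_{g_b}h,h\rangle_{L^2}\geq(1-\varepsilon)\|\nabla h\|_{L^2}^2$ for $h$ that is $L^2$-orthogonal to $\ker_{L^2}L_{g_b}$. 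It therefore suffices to show
\begin{equation*}
\Lambda:=\inf\Big\{\tfrac{\langle-L_{g_b}h,h\rangle_{L^2}}{\|\nabla h\|_{L^2}^2}\ :\ h\in H^2_{\frac{n}{2}-1}(S^2T^*N),\ \nabla h\not\equiv 0,\ h\perp_{L^2}\ker_{L^2}L_{g_b}\Big\}>0,
\end{equation*}
for then $\varepsilon:=\max(0,1-\Lambda)$ lies in $[0,1)$ and does the job. Note that $\Lambda\geq0$ is exactly the linear stability of $g_b$ as formulated in Lemma \ref{lemma-equiv-def-stable} (after the density of $C^\infty_c$ in $H^2_{\frac{n}{2}-1}$), and that by the Fredholm theory of Proposition \ref{prop-lic-fred} the space $\ker_{L^2}L_{g_b}$ is finite dimensional and consists of $L$-harmonic $2$-tensors decaying at least like $O(\rho_{g_b}^{-(n-2)})$; recall also $\Rm(g_b)=O(\rho_{g_b}^{-n})\in L^{n/2}$ since a Ricci-flat ALE metric has fast-decaying curvature.

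To prove $\Lambda>0$ I would argue by contradiction: assuming $\Lambda=0$, pick $h_j\perp_{L^2}\ker_{L^2}L_{g_b}$ with $\|\nabla h_j\|_{L^2}=1$ and $\langle-L_{g_b}h_j,h_j\rangle_{L^2}\to0$, i.e. $2\langle\Rm(g_b)(h_j),h_j\rangle_{L^2}\to1$. Minerbe's Hardy inequality (Theorem \ref{thm-min-har-inequ}, applied to $h_j$ through Kato's inequality) bounds $\|\rho_{g_b}^{-1}h_j\|_{L^2}$ uniformly, and the Sobolev inequality on $(N,g_b)$ bounds $\|h_j\|_{L^{2n/(n-2)}}$ uniformly; after passing to a subsequence $h_j\rightharpoonup h_\infty$ weakly in $H^1_{\mathrm{loc}}$, in the weighted space $\rho_{g_b}^{-1}L^2$, and in $L^{2n/(n-2)}$. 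Because $\Rm(g_b)\in L^{n/2}$, the functional $h\mapsto\langle\Rm(g_b)(h),h\rangle_{L^2}$ is a ``compact'' perturbation: on a large compact set $h_j\to h_\infty$ strongly in $L^2$ by Rellich, while on its complement the contribution is bounded by $\|\Rm(g_b)\|_{L^{n/2}(\{\rho_{g_b}>R\})}\sup_j\|h_j\|_{L^{2n/(n-2)}}^2$, which is small uniformly in $j$; hence $\langle\Rm(g_b)(h_j),h_j\rangle_{L^2}\to\langle\Rm(g_b)(h_\infty),h_\infty\rangle_{L^2}=\tfrac12$, so in particular $h_\infty\not\equiv0$. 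By weak lower semicontinuity of $h\mapsto\|\nabla h\|_{L^2}^2$,
\begin{equation*}
0\leq\langle-L_{g_b}h_\infty,h_\infty\rangle_{L^2}=\|\nabla h_\infty\|_{L^2}^2-2\langle\Rm(g_b)(h_\infty),h_\infty\rangle_{L^2}\leq\liminf_j\|\nabla h_j\|_{L^2}^2-1=0,
\end{equation*}
so $h_\infty$ realises the minimum value $0$ of the nonnegative form $\langle-L_{g_b}\cdot,\cdot\rangle_{L^2}$; its Euler--Lagrange equation together with elliptic regularity then yields $L_{g_b}h_\infty=0$. Moreover the Hardy/Sobolev bounds pass to the limit, so pairing against the (fast-decaying, hence $L^2$) elements of $\ker_{L^2}L_{g_b}$ is continuous under this convergence and $h_\infty\perp_{L^2}\ker_{L^2}L_{g_b}$ as well.

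It remains to conclude $h_\infty\in\ker_{L^2}L_{g_b}$, for then $h_\infty\perp_{L^2}\ker_{L^2}L_{g_b}$ forces $h_\infty=0$, contradicting $h_\infty\not\equiv0$ and finishing the proof. Since $L_{g_b}h_\infty=0$ with $\rho_{g_b}^{-1}h_\infty\in L^2$ and the coefficient $\Rm(g_b)$ is a fast-decaying lower order term, an indicial-root analysis at infinity rules out a nonzero $O(1)$ leading term and yields $h_\infty=O(\rho_{g_b}^{-(n-2)})$, which is square-integrable as soon as $n\geq5$. The hard part is precisely the borderline dimension $n=4$: there an $L$-harmonic $2$-tensor decaying at the slowest admissible rate $\rho_{g_b}^{-2}$ just fails to be in $L^2$, so the soft argument does not close by itself. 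This is exactly where one invokes Devyver's finer analysis \cite{Dev-Gau-Est} of Schr\"odinger-type operators $\nabla^*\nabla+\mathcal{W}$ with fast-decaying potential $\mathcal{W}$ on asymptotically (locally) Euclidean ends: from nonnegativity plus the triviality of the kernel it produces a positive $L_{g_b}$-harmonic section comparable to a $\Delta_{g_b}$-harmonic one together with a ground-state (Doob) transform that reduces the required coercivity to a weighted Hardy inequality, giving the estimate uniformly in $n\geq4$ and recovering \cite[Theorem $3.9$]{Der-Kro}. I would therefore present the compactness argument above to cover $n\geq5$ and appeal to Devyver's estimates to settle $n=4$ (or, more efficiently, use the latter to obtain all dimensions at once).
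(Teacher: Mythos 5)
The paper offers no proof of Theorem \ref{theo-der-kro}: it is imported verbatim from \cite[Theorem 3.9]{Der-Kro}, whose argument (following Devyver \cite{Dev-Gau-Est}) runs through the construction of a positive $L_{g_b}$-harmonic section and a ground-state transform for Schr\"odinger operators $\nabla^*\nabla+\mathcal{W}$ with fast-decaying potential. Your proposal is therefore a genuinely different and more elementary route, namely a direct concentration--compactness argument, and it is essentially correct: the reduction to $\Lambda>0$, the uniform Hardy/Sobolev bounds on the minimizing sequence, the compactness of $h\mapsto\langle\Rm(g_b)(h),h\rangle_{L^2}$ coming from $\Rm(g_b)\in L^{n/2}$ with small tails, the non-vanishing of the weak limit, and the identification $L_{g_b}h_\infty=0$ via the unconstrained minimality of the nonnegative form all go through (the stability inequality extends from $C^\infty_c$ to the limit by density and continuity of the form on $H^1_{\frac{n}{2}-1}$, and the orthogonality to $\ker_{L^2}L_{g_b}\subset C^0_n\subset L^2_{\frac{n}{2}+1}$ survives weak convergence in $L^2_{\frac{n}{2}-1}$ exactly as you say). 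What each approach buys: Devyver's method gives the statement uniformly in all dimensions in one stroke, whereas your compactness argument is self-contained but, as you correctly flag, the soft decay estimate $h_\infty=O(\rho_{g_b}^{-(n-2)})$ only places $h_\infty$ in $L^2$ when $n\ge5$. You should note, however, that your $n=4$ gap can be closed without Devyver by invoking the paper's own Proposition \ref{prop-lic-fred}: elliptic regularity and the embedding $H^k_{\frac{n}{2}-1}\subset C^{0,\alpha}_{\frac{n}{2}-1}$ put $h_\infty$ in $C^{2,\alpha}_{\beta}$ with $\beta=\frac{n}{2}-1\in(0,n-2)$, and that proposition shows any such element of $\ker L_{g_b}$ is divergence-free with vanishing $\rho_{g_b}^{-(n-2)}$ leading term (by \cite[Lemma 4.1]{ozu2}), hence lies in $C^\infty_n\subset\ker_{L^2}L_{g_b}$; the contradiction with $h_\infty\perp\ker_{L^2}L_{g_b}$ and $h_\infty\not\equiv0$ then closes the argument in every dimension $n\ge4$.
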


	\section{Energy estimates on the potential function}\label{sec-ene-est-pot-fct}
	If $(N^n,g_b)$ is an ALE Ricci-flat metric, we establish energy estimates on the gradient and the (weighted) laplacian of the potential function $f_g$ associated to a metric $g\in B_{C^{2,\alpha}_{\tau}}(g_b,\varepsilon)$ in terms of the norm $\|g-g_b\|_{H^2_{\frac{n}{2}-1}}$. These estimates will be crucially used in the proof of Proposition \ref{prop-energy-est}.
	
	\begin{prop}\label{prop-ene-pot-fct}
		Let $(N^n,g_b)$ be an ALE Ricci-flat metric asymptotic to $\RR^n\slash\Gamma$, for some finite subgroup $\Gamma$ of $SO(n)$ acting freely on $\mathbb{S}^{n-1}$. Let $\tau\in\left(\frac{n-2}{2},n-2\right)$ and $\alpha\in (0,1)$. Then there exists a neighborhood $B_{C^{2,\alpha}_{\tau}}(g_b,\varepsilon)$ of $g_b$ such that the following energy estimates on the potential function $v_g:=e^{-f_g}-1$ and its first variation hold true:
		\begin{equation}
		\|\nabla^gv_g\|_{L^2}\leq C(n,g_b,\varepsilon)\|\nabla^{g_b}(g-g_b)\|_{L^2}\label{grad-est-int-ene},
		\end{equation}
		and, if $g_1$ and $g_2$ are metrics in $B_{C^{2,\alpha}_{\tau}}(g_b,\varepsilon)$,
		\begin{equation}
		\left\|\nabla^{g_t}\left(\delta_{g_t}f(h)-\frac{\tr_{g_t}h}{2}\right)\right\|_{L^2}\leq C(n,g_b,\varepsilon)\|\nabla^{g_t}h\|_{L^2},\label{est-grad-first-der-pot-fct-ene}
		\end{equation}
		where $g_t:=g_1+(t-1)h:=g_1+(t-1)(g_2-g_1)$ for $t\in[1,2]$.
		
		Moreover,
		\begin{equation}\label{first-var-pot-fct-ell-equ-gal-case-prop}
		\left\|\Delta_{g_t,f_{g_t}}\left(\delta_{g_t}f(h)-\frac{\tr_{g_t}h}{2}\right)\right\|_{L^2_{\frac{n}{2}+1}}\leq C(n,g_b,\varepsilon)\left(\|\nabla^{g_t}h\|_{L^2}+\|\nabla^{g_t,2}h\|_{L^2_{\frac{n}{2}+1}}\right),\quad t\in[1,2].
		\end{equation}

	\end{prop}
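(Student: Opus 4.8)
The plan is to read off each of the three estimates from the elliptic equations already established for the potential function and for the variation of the weighted volume, tested against a suitable function, and then to close the resulting inequalities using the coercivity estimate \eqref{dom-lap-per-ope} and Minerbe's Hardy inequality (Theorem \ref{thm-min-har-inequ}). Two facts will be used throughout. First, since $\tau>\frac{n-2}{2}$, the tensor $\nabla^{g_b}(g-g_b)$ lies exactly in $L^2$, so every integration by parts over $\{\rho_{g_b}\le R\}$ produces a boundary term of size $O(R^{n-2-2\tau})\to 0$. Second, since $\rho_{g_b}$ is bounded below, any extra decay factor $\rho_{g_b}^{-\tau}$ appearing in a remainder term may simply be discarded (bounded by a constant), which is what will let us absorb lower-order terms into the stated right-hand sides via Hardy. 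I also note that $g_t$ is a convex combination of $g_1,g_2\in B_{C^{2,\alpha}_\tau}(g_b,\varepsilon)$, hence lies in the same ball, so $f_{g_t}$ and the associated objects are well-defined with norms $\lesssim\varepsilon$ by Proposition \ref{prop-pot-fct}.

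For \eqref{grad-est-int-ene} I would write $h:=g-g_b$ and recall from Proposition \ref{existence propriete-wg} that $w:=e^{-f_g/2}$ satisfies $w-1\in C^{2,\alpha}_\tau$ and $-4\Delta_g w+\R_g w=0$; since $w$ is bounded away from $0$ and $\infty$, $\|\nabla^g v_g\|_{L^2}$ is comparable to $\|\nabla^g(w-1)\|_{L^2}$, so it suffices to bound the latter. Pairing the equation with $w-1$ and integrating over $\{\rho_{g_b}\le R\}$, the coercivity \eqref{dom-lap-per-ope} gives $c\|\nabla^g(w-1)\|_{L^2}^2\le\langle(-4\Delta_g+\R_g)(w-1),w-1\rangle=-\int_N\R_g(w-1)\,d\mu_g$ once the boundary term, which is $O(R^{n-2-2\tau})$, is sent to zero. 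Then I expand $\R_g=\div_{g_b}(\div_{g_b}h-\nabla^{g_b}\tr_{g_b}h)+Q_{g_b}(h)$ as in \eqref{dvp scal}: integrating the linear part once by parts gives $\int_N\langle\div_{g_b}h-\nabla^{g_b}\tr_{g_b}h,\nabla^{g_b}(w-1)\rangle\le C\|\nabla^{g_b}h\|_{L^2}\|\nabla^{g_b}(w-1)\|_{L^2}$, while the remainder $Q_{g_b}(h)$ has, schematically, the form $h\ast\nabla^{g_b,2}h+\nabla^{g_b}h\ast\nabla^{g_b}h$, so integrating the $h\ast\nabla^{g_b,2}h$ contribution by parts once moves one derivative onto $w-1$, after which every term is bounded by $\|w-1\|_{L^\infty}\|\nabla^{g_b}h\|_{L^2}^2+\|h\|_{L^\infty}\|\nabla^{g_b}h\|_{L^2}\|\nabla^{g_b}(w-1)\|_{L^2}\lesssim\varepsilon\big(\|\nabla^{g_b}h\|_{L^2}^2+\|\nabla^{g_b}h\|_{L^2}\|\nabla^g(w-1)\|_{L^2}\big)$, the new boundary terms being $O(R^{n-1-3\tau})\to0$. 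Young's inequality and the equivalence of $g$ and $g_b$ then give \eqref{grad-est-int-ene}.

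For \eqref{est-grad-first-der-pot-fct-ene} and \eqref{first-var-pot-fct-ell-equ-gal-case-prop} I would set $u:=\delta_{g_t}f(h)-\frac{\tr_{g_t}h}{2}$, so that Proposition \ref{var-vol-var-ell-eqn-prop} reads $-\Delta_{g_t,f_{g_t}}u=\frac12\big(\div_{f_{g_t}}\div_{f_{g_t}}h-\langle h,\Ric_{f_{g_t}}(g_t)\rangle_{g_t}\big)$, with $\Ric_{f_{g_t}}(g_t)=\Ric(g_t)+\nabla^{g_t,2}f_{g_t}=O(\rho_{g_b}^{-\tau-2})$ of $C^0_{\tau+2}$-norm $\lesssim\varepsilon$. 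Estimate \eqref{first-var-pot-fct-ell-equ-gal-case-prop} then follows by bounding this right-hand side in $L^2_{\frac{n}{2}+1}$ directly: $\div_{f_{g_t}}\div_{f_{g_t}}h$ is $\div_{g_t}\div_{g_t}h$ plus terms in which $\nabla^{g_t}f_{g_t}$ or $\nabla^{g_t,2}f_{g_t}$ is contracted with $h$ or $\nabla^{g_t}h$, so $\|\div_{f_{g_t}}\div_{f_{g_t}}h\|_{L^2_{\frac{n}{2}+1}}\lesssim\|\nabla^{g_t,2}h\|_{L^2_{\frac{n}{2}+1}}+\|\nabla^{g_t}h\|_{L^2}$ after discarding $\rho_{g_b}^{-\tau}$ factors and using Hardy on the $h$-terms, while $\|\langle h,\Ric_{f_{g_t}}(g_t)\rangle\|_{L^2_{\frac{n}{2}+1}}\lesssim\varepsilon\|\rho_{g_b}^{-1}h\|_{L^2}\lesssim\|\nabla^{g_t}h\|_{L^2}$, again by Hardy. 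For \eqref{est-grad-first-der-pot-fct-ene} I would pair the equation with $u$ against the measure $e^{-f_{g_t}}d\mu_{g_t}$, for which $\Delta_{g_t,f_{g_t}}$ is formally self-adjoint: the left side gives $\|\nabla^{g_t}u\|_{L^2(e^{-f_{g_t}}d\mu_{g_t})}^2$ up to a boundary term $O(R^{n-2-2\tau})$; the $\div_{f}\div_{f}h$ term, after one weighted integration by parts, is $\le C\|\nabla^{g_t}h\|_{L^2}\|\nabla^{g_t}u\|_{L^2}$ (using $|\div_{f_{g_t}}h|\lesssim|\nabla^{g_t}h|+\varepsilon\rho_{g_b}^{-\tau}|h|$ and Hardy); and the term $\int_N\langle h,\Ric_{f_{g_t}}(g_t)\rangle u\,e^{-f_{g_t}}d\mu_{g_t}$, writing $\rho_{g_b}|\Ric_{f_{g_t}}(g_t)|\lesssim\varepsilon\rho_{g_b}^{-\tau-1}\lesssim\varepsilon\rho_{g_b}^{-1}$, is $\le C\varepsilon\|\rho_{g_b}^{-1}h\|_{L^2}\|\rho_{g_b}^{-1}u\|_{L^2}\le C\varepsilon\|\nabla^{g_t}h\|_{L^2}\|\nabla^{g_t}u\|_{L^2}$. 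Dividing through by $\|\nabla^{g_t}u\|_{L^2}$ gives \eqref{est-grad-first-der-pot-fct-ene}. Throughout, Hardy's inequality is transplanted from $g_b$ to $g_t$ and to the measure $e^{-f_{g_t}}d\mu_{g_t}$ by uniform equivalence, and extended to $C^2_\tau$ with $2\tau>n-2$ by density exactly as in the proof of Proposition \ref{existence propriete-wg}.

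The hard part will be \eqref{grad-est-int-ene}: the quadratic remainder $Q_{g_b}(h)$ carries two derivatives of $h$, which are not in $L^2$ for $\tau$ in the admissible range, so $\int_N\R_g(w-1)$ cannot be estimated term by term against $\|\nabla^{g_b}h\|_{L^2}$ without first integrating its top-order piece $h\ast\nabla^{g_b,2}h$ by parts — this is legitimate precisely because $w-1$ and its gradient decay enough to kill the new boundary terms, and the resulting quadratic expression then closes using only the $L^\infty$-smallness of $h$ and $w-1$ together with Hardy, never the full $C^{2,\alpha}_\tau$-norm of $h$. The remaining difficulty, common to all three estimates, is the weight bookkeeping at the threshold $\tau=\frac{n-2}{2}$: every boundary term must be checked to decay, and Hardy must be invoked repeatedly to trade $\|\rho_{g_b}^{-1}\,\cdot\,\|_{L^2}$ for $\|\nabla\,\cdot\,\|_{L^2}$.
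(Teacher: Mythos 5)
Your proposal is correct and follows essentially the same strategy as the paper: for \eqref{grad-est-int-ene} one tests the equation for $w_g$ against $w_g-1$ (the paper phrases this as integrating $\Delta_g v_g^2$, but the coercivity \eqref{dom-lap-per-ope} it invokes is the same input), expands $\R_g$ into its linear double-divergence plus the quadratic remainder, integrates the linear part by parts once, and closes with Hardy; for \eqref{est-grad-first-der-pot-fct-ene} and \eqref{first-var-pot-fct-ell-equ-gal-case-prop} one tests, respectively bounds in $L^2_{\frac{n}{2}+1}$, the weighted elliptic equation of Proposition \ref{var-vol-var-ell-eqn-prop}, exactly as you do. The only divergence is your treatment of the top-order quadratic term $h\ast\nabla^{g_b,2}h$ in $\int_N\R_g(w_g-1)$, which you integrate by parts once more, whereas the paper keeps it as is and uses the pointwise quadratic decay $\|\rho_{g_b}^2\nabla^{g_b,2}h\|_{C^0}<\infty$ together with weighted Cauchy--Schwarz and Hardy applied to $\rho_{g_b}^{-1}h$ and $\rho_{g_b}^{-1}v_g$; both routes are valid and yield the same bound.
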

	
	\begin{proof}
		Let us remark first that if $v_g:=w_g-1=e^{-f_g}-1$ then:
		\begin{equation*}
		\begin{split}
		\Delta_g(w_g-1)^2&=\Delta_gv_g^2=2|\nabla^gv_g|^2+2\Delta_gv_g\cdot v_g\\
		&=2|\nabla^gv_g|^2+\frac{1}{2}\R_gw_g\cdot v_g\\
		&=2|\nabla^gv_g|^2+\frac{1}{2}\R_gv_g^2+\frac{1}{2}\R_gv_g.
		\end{split}
		\end{equation*}
		Integrating by parts the previous identity gives:
		\begin{eqnarray}
		2\|\nabla^gv_g\|^2_{L^2}&\leq&\int_N|\R_g|v_g^2\,d\mu_g-\frac{1}{2}\int_N\R_gv_g\,d\mu_g.\label{first-int-est-nabla-v}
		\end{eqnarray}
		Notice that the last term on the righthand side is kept unchanged for the following reason: if $n=4$, $v_g$ or equivalently, $f_g$, is not in $L^2$ so one needs to proceed in a more subtle way than just using Young's inequality. By tracing Lemma \ref{Ric-lin-lemma-app} together with [(\ref{lem-lin-equ-scal-first-var}), Lemma \ref{lem-lin-equ-Ric-first-var}], recall that pointwise:
		\begin{equation*}
		\left|-\R_g+\div_{g_b}(\div_{g_b}h)-\Delta_{g_b}\tr_{g_b}h\right|\leq C(n,g_b,\varepsilon)\left(|\nabla^{g_b}h|_{g_b}^2+|h|_{g_b}|\nabla^{g_b,2}h|_{g_b}\right).
		\end{equation*}
		
		In particular, by integrating by parts, if $\gamma>0$,
		\begin{equation}
		\begin{split}
		\left|\int_N\R_gv_g\,d\mu_g\right|&\leq C\left(\int_N|\nabla^{g_b}h|_{g_b}|\nabla^{g_b}v_g|_{g_b}\,d\mu_g\right)+C\int_N\left(|\nabla^{g_b}h|^2_{g_b}+|h|_{g_b}|\nabla^{g_b,2}h|_{g_b}\right)|v_g|d\mu_g\\
		&\leq\gamma\|\nabla^{g_b}v_g\|_{L^2}^2+C\left(\gamma^{-1}\|\nabla^{g_b}h\|_{L^2}^2+\|\rho_{g_b}^{-1}h\|_{L^2}\|\rho_{g_b}\cdot v_g\cdot |\nabla^{g_b,2}h|\|_{L^2}\right)\\
		&\leq \gamma\|\nabla^{g_b}v_g\|_{L^2}^2+C\left(\gamma^{-1}\|\nabla^{g_b}h\|_{L^2}^2+\|\rho_{g_b}^{-1}h\|_{L^2}\|\rho_{g_b}^{-1}\cdot v_g\|_{L^2}\|\rho_{g_b}^2\nabla^{g_b,2}h\|_{C^0}\right)\\
		&\leq \gamma\left(\|\rho_{g_b}^{-1}\cdot v_g\|_{L^2}^2+\|\nabla^{g_b}v_g\|_{L^2}^2\right)+C\gamma^{-1}\left(\|\nabla^{g_b}h\|_{L^2}^2+\|\rho_{g_b}^{-1}h\|_{L^2}^2\right),
		\label{sec-int-est-nabla-v}
		\end{split}
		\end{equation}
		where $C=C(n,g_b,\varepsilon)$ is a positive constant that may vary from line to line, and where we used the inequality $2\|\rho_{g_b}^{-1}h\|_{L^2}\|\rho_{g_b}^{-1}\cdot v_g\|_{L^2}\leq \gamma\|\rho_{g_b}^{-1}\cdot v_g\|_{L^2}^2+\gamma^{-1}\|\rho_{g_b}^{-1}h\|_{L^2}^2$. Here we have used Cauchy-Schwarz inequality together with the fact that $\nabla^{g_b,2}h$ decays at least quadratically since $\|h\|_{C^2_0}$ is finite. To sum it up, (\ref{first-int-est-nabla-v}) and (\ref{sec-int-est-nabla-v}) give by Hardy's inequality for any $\gamma>0$:
		\begin{equation*}
		\begin{split}
		\|\nabla^gv_g\|^2_{L^2}&\leq\int_N|\R_g|v_g^2\,d\mu_g+C\gamma\|\nabla^gv_g\|_{L^2}^2+C\gamma^{-1}\|\nabla^{g_b}h\|_{L^2}^2.
		\end{split}
		\end{equation*}
		By choosing $\gamma$ small enough together with the fact that $\sup_{N}\rho^2_{g_b}|\R_g|$ can be made arbitrarily small by shrinking $B_{C^{2,\alpha}_{\tau}}(g_b,\varepsilon)$ if necessary, Hardy's inequality yields:
		\begin{equation}
		\begin{split}\label{est-grad-pot-fct-ene}
		\|\nabla^gv_g\|^2_{L^2}&\leq\frac{1}{2}\|\nabla^gv_g\|^2_{L^2}+C(n,g_b,\varepsilon,\gamma)\|\nabla^{g_b}h\|_{L^2}^2,
		\end{split}
		\end{equation}
		i.e. one gets (\ref{grad-est-int-ene}) as expected.
		\\
		
		Now, let us turn to the proof of (\ref{est-grad-first-der-pot-fct-ene}).\\
		
		We invoke the variation of equation (\ref{equ-criti-lambda-pot}) satisfied by the potential function $f_g$ established in [\eqref{var-vol-var-ell-eqn-for}, Proposition \ref{var-vol-var-ell-eqn-prop}].
		
		Consequently, $\delta_{g_t}f(h)$ satisfies for $t\in[1,2]$:
		\begin{equation}
		\begin{split}\label{first-var-pot-fct-ell-equ-gal-case}
		\Delta_{f_{g_t}}\left(\frac{\tr_{g_t}h}{2}-\delta_{g_t}f(h)\right)&=\frac{1}{2}\left(\div_{f_{g_t}}(\div_{f_{g_t}}h)-\langle h,\Ric_{f_{g_t}}(g_t)\rangle_{g_t}\right).
		\end{split}
		\end{equation}
		Now, thanks to Proposition \ref{prop-pot-fct} and the fact that $g\in B_{C^{2,\alpha}_{\tau}}(g_b,\varepsilon)$, 
		\begin{equation}
		\rho_{g_b}|\nabla^{g_t}f_{g_t}|_{g_t}+\rho^2_{g_b}\left(|\Ric(g_t)|_{g_t}+|\nabla^{g_t,2}f_{g_t}|_{g_t}\right)\leq C(n,g_b,\varepsilon).\label{est-pot-fct-ene-est}
		\end{equation}

		Once we multiply the previous elliptic equation (\ref{first-var-pot-fct-ell-equ-gal-case}) by $\delta_{g_t}f(h)-\frac{\tr_{g_t}h}{2}$, let us integrate by parts as follows:
		\begin{equation}
		\begin{split}\label{grad-first-var-pot-fct-ene-est}
		\left\|\nabla^{g_t}\left(\delta_{g_t}f(h)-\frac{\tr_{g_t}h}{2}\right)\right\|^2_{L^2(e^{-f_{g_t}}d\mu_{g_t})}&=-\frac{1}{2}\int_N\left\langle\nabla^{g_t}\left(\delta_{g_t}f(h)-\frac{\tr_{g_t}h}{2}\right),\div_{f_{g_t}} h\right\rangle_{g_t}\,e^{-f_{g_t}}d\mu_{g_t}\\
		&\quad-\frac{1}{2}\int_N\left(\delta_{g_t}f(h)-\frac{\tr_{g_t}h}{2}\right)\left\langle  h,\Ric_{f_{g_t}}(g_t)\right\rangle_{g_t}\,e^{-f_{g_t}}d\mu_{g_t}\\
		&=:I_1+I_2.
		\end{split}
		\end{equation}
		
		The first integrals $I_1$ on the righthand side of the previous computation can be handled as follows:
		\begin{equation}
		\begin{split}\label{I_1-est}
		|I_1|&\leq \frac{1}{4}\left\|\nabla^{g_t}\left(\delta_{g_t}f(h)-\frac{\tr_{g_t}h}{2}\right)\right\|^2_{L^2(e^{-f_{g_t}}d\mu_{g_t})}+C\|\div_{f_{g_t}}h\|^2_{L^2(d\mu_{g_t})}\\
		&\leq \frac{1}{4}\left\|\nabla^{g_t}\left(\delta_{g_t}f(h)-\frac{\tr_{g_t}h}{2}\right)\right\|^2_{L^2(e^{-f_{g_t}}d\mu_{g_t})}\\
		&\quad+C(n,g_b,\varepsilon)\left(\|\nabla^{g_t}h\|^2_{L^2(d\mu_{g_t})}+\|\rho_{g_b}^{-1}|h|_{g_b}\|_{L^2(d\mu_{g_t})}^2\right)\\
		&\leq \frac{1}{4}\left\|\nabla^{g_t}\left(\delta_{g_t}f(h)-\frac{\tr_{g_t}h}{2}\right)\right\|^2_{L^2(e^{-f_{g_t}}d\mu_{g_t})}+C(n,g_b,\varepsilon)\|\nabla^{g_t}h\|^2_{L^2(d\mu_{g_t})},
		\end{split}
		\end{equation}
		where we have used Young's inequality in the first line. The second inequality follows from (\ref{est-pot-fct-ene-est}) and the third inequality uses Hardy's inequality from Theorem \ref{thm-min-har-inequ} together with the fact that $g_t$ (and therefore $f_{g_t}$ by Proposition \ref{prop-pot-fct}) is arbitrary close to $0$ in the $C^{2,\alpha}_{\tau}$-topology.
		
		The integral $I_2$ can be estimated from above in a similar way for any $t\in[1,2]$ and $\gamma\in(0,1)$:
		\begin{equation}
		\begin{split}\label{I_2-est}
		|I_2|&\leq C(n,g_b,\varepsilon)\int_N\rho_{g_b}^{-2}\left|\delta_{g_t}f(h)-\frac{\tr_{g_t}h}{2}\right||h|_{g_t}\,d\mu_{g_t}\\
		&\leq \gamma \int_N\rho_{g_b}^{-2}\left|\delta_{g_t}f(h)-\frac{\tr_{g_t}h}{2}\right|^2\,d\mu_{g_t}+C(\gamma,n,g_b,\varepsilon)\int_N\rho_{g_b}^{-2}|h|^2_{g_t}\,d\mu_{g_t}\\
		&\leq \frac{1}{4}\left\|\nabla^{g_t}\left(\delta_{g_t}f(h)-\frac{\tr_{g_t}h}{2}\right)\right\|^2_{L^2(e^{-f_{g_t}}d\mu_{g_t})}+C(\gamma,n,g_b,\varepsilon)\|\nabla^{g_t}h\|_{L^2(d\mu_{g_t})},
		\end{split}
		\end{equation}
		if $\gamma=\gamma(n,g_b,\varepsilon)$ is chosen sufficiently small. Here, we have used Hardy's inequality in the last inequality.
		
		Putting (\ref{grad-first-var-pot-fct-ene-est}), (\ref{I_1-est}) and (\ref{I_2-est}) altogether lead to the expected estimate (\ref{est-grad-first-der-pot-fct-ene}).

		By considering the $L^2_{\frac{n}{2}+1}$ norm of (\ref{first-var-pot-fct-ell-equ-gal-case}), (\ref{first-var-pot-fct-ell-equ-gal-case-prop}) follows by using Hardy's inequality and (\ref{est-pot-fct-ene-est}).
	\end{proof}
	
	We conclude this section by stating a quantitative version of Proposition \ref{prop-ene-pot-fct} whose proof is very similar and is therefore omitted:
	\begin{prop}\label{prop-ene-pot-fct-bis}
		Let $(N^n,g_b)$ be an ALE Ricci-flat metric asymptotic to $\RR^n\slash\Gamma$, for some finite subgroup $\Gamma$ of $SO(n)$ acting freely on $\mathbb{S}^{n-1}$. Let $\tau\in\left(\frac{n-2}{2},n-2\right)$ and $\alpha\in (0,1)$. Then there exists a neighborhood $B_{C^{2,\alpha}_{\tau}}(g_b,\varepsilon)$ of $g_b$ such that if $g_1$ and $g_2$ are metrics in $B_{C^{2,\alpha}_{\tau}}(g_b,\varepsilon)$ and $h\in C^{2,\alpha}_{\tau}$,
		\begin{equation}
		\begin{split}
		\left\|\nabla^{g_1}\left(\delta_{g_2}f(h)-\delta_{g_1}f(h)\right)\right\|_{L^2}&\leq C(n,g_b,\varepsilon)\|g_2-g_1\|_{C^{2,\alpha}_{\tau}}\|h\|_{H^2_{\frac{n}{2}-1}},\\
		\left\|\Delta_{g_1,f_{g_1}}\left(\delta_{g_2}f(h)-\delta_{g_1}f(h)\right)\right\|_{L^2_{\frac{n}{2}+1}}&\leq C(n,g_b,\varepsilon)\|g_2-g_1\|_{C^{2,\alpha}_{\tau}}\|h\|_{H^2_{\frac{n}{2}-1}}.
		\end{split}
		\end{equation}
			\end{prop}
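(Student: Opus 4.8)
The strategy is to mirror the energy argument of Proposition~\ref{prop-ene-pot-fct}, now applied to the \emph{difference} of the two elliptic problems attached to $g_1$ and $g_2$, while keeping track of the dependence of every error term on $g_2-g_1$. Write $\psi_i:=\tfrac{\tr_{g_i}h}{2}-\delta_{g_i}f(h)$ for $i=1,2$, so that
\[
\delta_{g_2}f(h)-\delta_{g_1}f(h)=\tfrac12\big(\tr_{g_2}h-\tr_{g_1}h\big)-(\psi_2-\psi_1).
\]
Since $\tr_{g_2}h-\tr_{g_1}h=(g_2^{-1}-g_1^{-1})\ast h$ and $g_i\in B_{C^{2,\alpha}_{\tau}}(g_b,\varepsilon)$, the bounds $|\nabla^{g_b,k}(g_2^{-1}-g_1^{-1})|\le C\|g_2-g_1\|_{C^{2,\alpha}_{\tau}}\rho_{g_b}^{-\tau-k}$ for $k=0,1,2$ give at once
\[
\big\|\nabla^{g_1}(\tr_{g_2}h-\tr_{g_1}h)\big\|_{L^2}+\big\|\Delta_{g_1,f_{g_1}}(\tr_{g_2}h-\tr_{g_1}h)\big\|_{L^2_{\frac n2+1}}\le C\|g_2-g_1\|_{C^{2,\alpha}_{\tau}}\|h\|_{H^2_{\frac n2-1}},
\]
using that $\rho_{g_b}^{-\tau}$ is bounded, that $\tau>\tfrac{n-2}{2}$, and that $\|\rho_{g_b}^{-1}h\|_{L^2}$, $\|\nabla h\|_{L^2}$ and $\|\nabla^{g_b,2}h\|_{L^2_{\frac n2+1}}$ are all controlled by $\|h\|_{H^2_{\frac n2-1}}$. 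Hence it remains to estimate $\psi_2-\psi_1$.

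By Proposition~\ref{var-vol-var-ell-eqn-prop}, each $\psi_i$ solves $\Delta_{g_i,f_{g_i}}\psi_i=\tfrac12\big(\div_{f_{g_i}}(\div_{f_{g_i}}h)-\langle h,\Ric_{f_{g_i}}(g_i)\rangle_{g_i}\big)=:F_i$, whence
\[
\Delta_{g_1,f_{g_1}}(\psi_2-\psi_1)=(F_2-F_1)+\big(\Delta_{g_1,f_{g_1}}-\Delta_{g_2,f_{g_2}}\big)\psi_2.
\]
Every coefficient difference occurring here — $g_1^{-1}-g_2^{-1}$, the Christoffel differences, $\nabla^{g_1}f_{g_1}-\nabla^{g_2}f_{g_2}$, $\Ric_{f_{g_1}}(g_1)-\Ric_{f_{g_2}}(g_2)$ — is $O(\|g_2-g_1\|_{C^{2,\alpha}_{\tau}})$ with decay $\rho_{g_b}^{-\tau}$, $\rho_{g_b}^{-\tau-1}$ or $\rho_{g_b}^{-\tau-2}$, where the differences of the potential functions are handled by the analyticity of $g\mapsto f_g$ into $C^{2,\alpha}_{\tau}$ (Proposition~\ref{prop-pot-fct}).

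For the first inequality I would pair the difference equation with $(\psi_2-\psi_1)\,e^{-f_{g_1}}$ and integrate by parts as in \eqref{grad-first-var-pot-fct-ene-est}: one derivative is moved off $h$ in the $\div_{f}(\div_{f}h)$ terms of $F_2-F_1$, and one derivative is moved off $\psi_2$ in the second-order part of $\big(\Delta_{g_1,f_{g_1}}-\Delta_{g_2,f_{g_2}}\big)\psi_2$. After these integrations by parts only first derivatives of $h$ and of $\psi_2$ survive; combining Young's inequality with Hardy's inequality (Theorem~\ref{thm-min-har-inequ}) — and using Hardy rather than Young on each zeroth-order occurrence of $\psi_2-\psi_1$, since $\psi_2-\psi_1$ need not lie in $L^2$ when $n=4$, exactly as in the proof of Proposition~\ref{prop-ene-pot-fct} — one bounds the right-hand side by $\tfrac12\|\nabla^{g_1}(\psi_2-\psi_1)\|_{L^2}^2$ plus $C\|g_2-g_1\|_{C^{2,\alpha}_{\tau}}^2\big(\|\nabla^{g_2}h\|_{L^2}^2+\|\nabla^{g_2}\psi_2\|_{L^2}^2\big)$. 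By estimate \eqref{est-grad-first-der-pot-fct-ene} of Proposition~\ref{prop-ene-pot-fct} applied to $g_2$, the last term is $\le C\|g_2-g_1\|_{C^{2,\alpha}_{\tau}}^2\|h\|_{H^2_{\frac n2-1}}^2$, so absorbing the gradient term gives the first inequality.

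For the second inequality I would instead take the $L^2_{\frac n2+1}$-norm of the difference equation directly, as at the end of the proof of Proposition~\ref{prop-ene-pot-fct}. No integration by parts is available here, so one needs $\nabla^{g_2,2}h\in L^2_{\frac n2+1}$ (part of $\|h\|_{H^2_{\frac n2-1}}$) together with $\nabla^{g_2}\psi_2\in L^2$ and $\nabla^{g_2,2}\psi_2\in L^2_{\frac n2+1}$; the latter follow from estimates \eqref{est-grad-first-der-pot-fct-ene} and \eqref{first-var-pot-fct-ell-equ-gal-case-prop} of Proposition~\ref{prop-ene-pot-fct} applied to $g_2$, combined with weighted elliptic estimates for $\Delta_{g_2}$ in the non-exceptional range (treating $\nabla^{g_2}f_{g_2}=O(\rho_{g_b}^{-1})$, see \eqref{est-pot-fct-ene-est}, as a lower-order perturbation and using Hardy to control $\|\psi_2\|_{L^2_{\frac n2-1}}$). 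Multiplying each coefficient difference, of size $\rho_{g_b}^{-\tau}$, $\rho_{g_b}^{-\tau-1}$ or $\rho_{g_b}^{-\tau-2}$, by these quantities keeps the product in $L^2_{\frac n2+1}$ and extracts the factor $\|g_2-g_1\|_{C^{2,\alpha}_{\tau}}$; adding back the elementary trace-difference contribution from the first paragraph gives the claim. The main obstacle is purely one of bookkeeping: arranging the integrations by parts in the energy identity so that no term carrying two derivatives of $h$ or of $\psi_2$ is ever tested against a quantity controlled only in $L^2$, and checking in the $L^2_{\frac n2+1}$-estimate that the $\rho_{g_b}^{-\tau}$ (rather than faster) decay of $g_1^{-1}-g_2^{-1}$ still suffices once it multiplies $\nabla^{g_2,2}\psi_2$ and $\nabla^{g_2,2}h$.
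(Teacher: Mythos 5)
Your proposal is correct and is essentially the argument the paper has in mind: the authors omit the proof, stating only that it is ``very similar'' to that of Proposition \ref{prop-ene-pot-fct}, and your plan — subtract the two elliptic equations from Proposition \ref{var-vol-var-ell-eqn-prop}, control every coefficient difference by $\|g_2-g_1\|_{C^{2,\alpha}_{\tau}}$ times the appropriate $\rho_{g_b}$-decay via the analyticity of $g\mapsto f_g$, then rerun the same energy/Hardy estimate for the gradient bound and the same direct weighted-$L^2$ estimate (with the Bochner/elliptic control of $\nabla^{g_2,2}\psi_2$) for the second bound — is exactly that. The care you take with the zeroth-order occurrences of $\psi_2-\psi_1$ via Hardy (since it need not lie in $L^2$ when $n=4$) matches the corresponding step in the proof of Proposition \ref{prop-ene-pot-fct}.
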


	\section{Fredholm properties of the Lichnerowicz operator in weighted spaces}\label{fred-sec-prop}
	
	Another motivation for our weighted H\"older spaces is that they ensure that the Lichnerowicz operator which controls the second variations of $\lambda_{\operatorname{ALE}}$ has adequate Fredholm properties. Indeed, the Lichnerowicz operator $L_{g_b}:H^{2}(S^2T^*N)\rightarrow L^2(S^2T^*N)$ is symmetric and bounded, but is \emph{not} Fredholm and does not have satisfying analytical properties. This becomes the case when considering weighted spaces.

	\begin{prop}[Fredholm properties of the Lichnerowicz operator] \label{prop-lic-fred}
		Let $(N^n,g_b)$ be an ALE Ricci-flat metric, asymptotic to $\RR^n\slash\Gamma$, for some finite subgroup $\Gamma\neq \{\textup{Id}\}$ of $SO(n)$ acting freely on $\mathbb{S}^{n-1}$.  If $\beta\in(0,n-2)\cup(n-2,n)$, then $$L_{g_b}:C^{2,\alpha}_{\beta}(S^2T^*N)\rightarrow C^{0,\alpha}_{\beta+2}(S^2T^*N)$$
		is Fredholm for every $\alpha\in(0,1)$, and
		$$L_{g_b}:H^2_{\beta}(S^2T^*N)\rightarrow L^2_{\beta+2}(S^2T^*N)$$
		is Fredholm.
		
		Moreover, if a $2$-tensor $h\in C^{2,\alpha}_{\beta}(S^2T^*N)$  is in the kernel of $L_{g_b}$, then $h\in C^{\infty}_{n}(S^2T^*N)$ and is divergence-free.
		
		In dimension $n\geq 3$, the operator $L_{g_b}:H^2_{\frac{n}{2}-1}(S^2T^*N)\rightarrow L^2_{\frac{n}{2}+1}(S^2T^*N)$ is Fredholm and both its kernel and $L^2$-cokernel equal $\ker_{L^2}L_{g_b}$. As a consequence, there exists $C>0$ such that for any $h\perp_{L^2} \ker_{L^2}L_{g_b}$, we have the following control:
		\begin{equation}
		\|\nabla^{g_b, 2}h\|_{L^2_{\frac{n}{2} +1}}\leq\|h\|_{H^2_{\frac{n}{2}-1}} \leq C \|L_{g_b}h\|_{L^2_{\frac{n}{2}+1}}.\label{controle hessienne lic n}
		\end{equation}	
	\end{prop}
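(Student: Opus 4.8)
The plan is to establish the Fredholm properties via the general theory of asymptotically translation-invariant (or ``ALE-admissible'') elliptic operators on weighted spaces, and then to combine this with the self-adjointness structure of $L_{g_b}$ to identify the kernel and cokernel precisely at the critical weight $\beta=\tfrac{n}{2}-1$. First I would recall that $L_{g_b}=\Delta_{g_b}+2\Rm(g_b)\ast$ has the same principal part as $\Delta_{g_b}$, and that the curvature term $\Rm(g_b)\ast$ decays like $\rho_{g_b}^{-n}$ (since Ricci-flat ALE metrics are ALE of order $n$ by \cite{Ban-Kas-Nak,Che-Tian-Ric-Fla}), hence is a ``small'' perturbation in the sense that it maps $C^{2,\alpha}_\beta\to C^{0,\alpha}_{\beta+2}$ compactly (it gains decay rate $n\ge 2$, which is the quantitative source of compactness on ALE ends). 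Therefore, by the standard result for the Laplacian on ALE manifolds (e.g. \cite[Thm.~8.3.6]{Joy-Book} used already in Claim~\ref{claim-iso}, and Bartnik \cite{Bart-Mass}) stating that $\Delta_{g_b}:C^{2,\alpha}_\beta\to C^{0,\alpha}_{\beta+2}$ is Fredholm for $\beta\notin\{0,1,\dots\}\cup\{n-2,n-1,\dots\}$ — in particular for $\beta\in(0,n-2)\cup(n-2,n)$ — the operator $L_{g_b}$ is Fredholm on these same ranges, being a compact perturbation of a Fredholm operator. The identical argument in Sobolev spaces (citing the $H^k_\beta$ version from \cite{Bart-Mass}, recalling the index-shift convention noted in Remark~\ref{sobolev embeddings}) gives the Fredholmness of $L_{g_b}:H^2_\beta\to L^2_{\beta+2}$.

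Next I would address the regularity and divergence-freeness of kernel elements. If $h\in C^{2,\alpha}_\beta$ with $L_{g_b}h=0$, elliptic regularity (interior Schauder estimates bootstrapped with the decay of the coefficients) immediately gives $h\in C^\infty_{loc}$, and the weighted Schauder estimates combined with a standard ``optimal decay'' argument for solutions of $\Delta_{g_b}h=-2\Rm(g_b)\ast h=O(\rho^{-n-\beta})$ — using that the indicial roots of $\Delta$ between $\beta$ and $n$ contribute nothing once one shows the relevant projection onto harmonic homogeneous pieces vanishes — upgrade the decay to $h=O(\rho^{-n})$, i.e. $h\in C^\infty_n$. (Concretely: iterate, at each step the right-hand side decays two orders faster than $h$, so $h$ decays faster until one hits the first obstruction at rate $n$; the rates $n-2, n-1$ correspond to the mass/momentum-type harmonic expansions of $\Delta$, and one checks they don't appear, either by a direct integration-by-parts identity or by invoking the Bianchi-type structure.) For divergence-freeness, I would apply $\div_{g_b}$ to $L_{g_b}h=0$ and use the Bianchi-type commutation identity for the Lichnerowicz Laplacian on a Ricci-flat background, namely $\div_{g_b}L_{g_b}h=\Delta_{g_b}\div_{g_b}h$ (modulo curvature terms that vanish when $\Ric=0$, and with suitable corrections — essentially the fact that $L_{g_b}$ preserves the divergence-free condition in the gauge sense, cf. the computation $L_{g_b}\Li_X(g_b)=\Li_{B^X}(g_b)$ used in Lemma~\ref{lemma-equiv-def-stable}). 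This shows $\div_{g_b}h$ is harmonic and, by its decay ($h=O(\rho^{-n})$ forces $\div_{g_b}h=O(\rho^{-n-1})$, which decays faster than any nonconstant harmonic function can), it must vanish by the maximum principle.

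Finally I would treat the borderline weight $\beta=\tfrac{n}{2}-1$, for which $\beta+2=\tfrac{n}{2}+1$ and $\tfrac{n}{2}-1\in(0,n-2)$ whenever $n\ge 3$, so Fredholmness is already granted by the first paragraph. The point is to identify $\ker$ and $\operatorname{coker}$. Since $L_{g_b}$ is formally self-adjoint with respect to the (unweighted) $L^2$ pairing, and the $L^2$ duality pairs $L^2_{\frac{n}{2}-1}$ with $L^2_{\frac{n}{2}+1}$ up to the symmetric weight $L^2=L^2_{n/2}$, the cokernel of $L_{g_b}:H^2_{\frac{n}{2}-1}\to L^2_{\frac{n}{2}+1}$ is identified with $\ker(L_{g_b}:H^2_{\frac{n}{2}-1}\to L^2_{\frac{n}{2}+1})$ via this pairing — and then one shows $\ker(L_{g_b}$ on $H^2_{\frac{n}{2}-1})=\ker_{L^2}L_{g_b}$. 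The inclusion $\supseteq$ holds because any $L^2$-kernel element is automatically in $C^\infty_n\subset H^2_{\frac{n}{2}-1}$ (again by the decay-improvement argument: an $L^2$ solution of $L_{g_b}h=0$ has $h=O(\rho^{-n})$ by the same bootstrap, using $L^2\Rightarrow$ decay via elliptic estimates as in Bartnik); the inclusion $\subseteq$ holds because $H^2_{\frac{n}{2}-1}\hookrightarrow L^2_{\beta}$ for $\beta<\tfrac{n}{2}$ close to $\tfrac n2$, which is still within the Fredholm/no-indicial-root window down to... — more carefully, elements of $H^2_{\frac n2-1}$ a priori only barely fail to be $L^2$, but kernel elements are smooth with the improved decay $O(\rho^{-n})$, hence genuinely $L^2$. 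So both kernel and $L^2$-cokernel coincide with $\ker_{L^2}L_{g_b}$. The coercive estimate \eqref{controle hessienne lic n} then follows from the open mapping theorem / closed range property applied to $L_{g_b}$ restricted to the $L^2$-orthogonal complement of its (finite-dimensional) kernel: on that complement $L_{g_b}:H^2_{\frac{n}{2}-1}\to L^2_{\frac{n}{2}+1}$ is injective with closed range equal to $\ker_{L^2}L_{g_b}^\perp\cap L^2_{\frac{n}{2}+1}$, hence bounded below, giving $\|h\|_{H^2_{\frac n2-1}}\le C\|L_{g_b}h\|_{L^2_{\frac n2+1}}$; the left inequality $\|\nabla^{g_b,2}h\|_{L^2_{\frac n2+1}}\le\|h\|_{H^2_{\frac n2-1}}$ is immediate from the definition of the $H^2_{\frac n2-1}$ norm (Definition~\ref{def-weighted-sobolev-norms}).

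\medskip

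\emph{Main obstacle.} The genuinely delicate point is the decay-improvement step — showing that any kernel element of $L_{g_b}$ (in $C^{2,\alpha}_\beta$ for $\beta\in(0,n)$, or merely in $L^2$, or in $H^2_{\frac n2-1}$) in fact lies in $C^\infty_n$ and is divergence-free. This requires knowing the indicial roots of the Lichnerowicz operator on the model cone $\RR^n/\Gamma$ and ruling out, via the Bianchi/gauge structure together with a maximum-principle or energy argument, that $h$ has a nonzero component decaying exactly at rates $n-2$ or $n-1$. The hypothesis $\Gamma\ne\{\mathrm{Id}\}$ (used in the statement) is what kills the translational and linear-growth harmonic modes and makes the analysis on the quotient cone clean; handling the interaction of these modes with the curvature perturbation term — and propagating the sharp decay uniformly — is where the real work lies.
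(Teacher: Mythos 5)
Your overall strategy --- indicial-root analysis on the model cone for Fredholmness, a Bianchi-type commutation identity plus the maximum principle for divergence-freeness, a decay-improvement bootstrap for kernel elements, and duality plus the bounded inverse theorem at the weight $\tfrac{n}{2}-1$ --- is the same as the paper's. There is, however, one genuine gap, located exactly where you flag the ``main obstacle,'' and your ordering of the two middle steps makes it circular. You derive $\div_{g_b}h=0$ \emph{from} the improved decay $h=O(\rho_{g_b}^{-n})$, while the improved decay itself requires killing the harmonic homogeneous piece $H^{n-2}\sim\rho_{g_b}^{-(n-2)}$ in the asymptotic expansion of $h$; you leave the mechanism for that unspecified (``a direct integration-by-parts identity or \dots the Bianchi-type structure''). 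The paper runs the steps in the opposite order: from $\div_{g_b}L_{g_b}=\frac12(\nabla^{g_b})^{\ast}\nabla^{g_b}\div_{g_b}$, the vector field $\div_{g_b}h$ is $\nabla^{\ast}\nabla$-harmonic and tends to zero at infinity already from the \emph{initial} decay $h\in C^{2,\alpha}_{\beta}$, $\beta>0$, hence vanishes by the maximum principle; only then does one expand $h=H^{n-2}+O(\rho_{g_b}^{-n+2-\epsilon})$ and invoke the fact that a \emph{divergence-free} harmonic homogeneous $2$-tensor of rate $-(n-2)$ on the flat cone must vanish (\cite[Lemma 4.1]{ozu2}). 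Without this input your bootstrap stalls at rate $n-2$; note that for $n=4$ that rate equals the borderline $\tfrac{n}{2}$, so the identification of the kernel on $H^2_{\frac{n}{2}-1}$ with $\ker_{L^2}L_{g_b}$ --- and hence the estimate \eqref{controle hessienne lic n} --- would also be left open.

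Two smaller points. Your stated exceptional set for $\Delta_{g_b}$, ``$\beta\notin\{0,1,\dots\}\cup\{n-2,n-1,\dots\}$,'' is inconsistent with your own conclusion $\beta\in(0,n-2)\cup(n-2,n)$ (it would exclude $\beta=1,\dots,n-3$ and $\beta=n-1$); with the paper's convention the exceptional weights are $\beta\in\{0,-1,-2,\dots\}\cup\{n-2,n-1,n,\dots\}$, with $\beta=-1$ and $\beta=n-1$ removed when $\Gamma\neq\{\mathrm{Id}\}$ because there are no $\Gamma$-invariant linear functions --- which is precisely what opens the window $(n-2,n)$. On the positive side, your treatment of the curvature term $2\Rm(g_b)\ast$ as a compact perturbation of $\Delta_{g_b}$ (using $\Rm(g_b)=O(\rho_{g_b}^{-n-2})$) is a clean alternative to citing the weighted theory for $L_{g_b}$ directly, and your argument at the weight $\tfrac{n}{2}-1$ (duality $(L^2_{\frac{n}{2}+1})^{*}\simeq L^2_{\frac{n}{2}-1}$, absence of exceptional values in $(\tfrac{n}{2}-1,n-2)$, bounded inverse theorem on the orthogonal complement of the finite-dimensional kernel) matches the paper's.
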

	\begin{rk}
		The statement holds for $\mathbb{R}^n$ if we additionally assume $\beta<n-1$.
	\end{rk}
	\begin{proof}
		The fact that the two operators are Fredholm is a consequence of the theory of elliptic operators between weighted Hölder spaces. Indeed, the elements of the kernel of the operator $-\nabla^*\nabla$ on a flat nontrivial quotient of $\mathbb{R}^n$ are sums of homogeneous $2$-tensors of order $k$ or $-n+2-k$ for $k\in \mathbb{N}\backslash\{1\}$ where $1$ is not in the set of possible values because there are no nonvanishing linear functions on $\mathbb{R}^n$ invariant by the group action induced by $\Gamma$.\\
		
		Let us now consider a $2$-tensor $h\in C^{2,\alpha}_{\beta}(S^2T^*N)$ which satisfies $L_{g_b}h = 0$, then, since $\div_{g_b}L_{g_b}=\frac{1}{2}(\nabla^{g_b})^{\ast}\nabla^{g_b}\div_{g_b}$, by the maximum principle, we have $\div_{g_b}h=0$.  At infinity, we have $h = H^{n-2} + O(\rho_{g_b}^{-n+2-\epsilon})$ for a harmonic homogeneous $2$-tensor $H^{n-2}\sim \rho_{g_b}^{-n+2}$. Now, such a divergence-free $2$-tensor $H^{n-2}$ must vanish by \cite[Lemma 4.1]{ozu2}. We therefore have $ h =  O(\rho_{g_b}^{-n+2-\epsilon}) $, and since the next decay rate in the kernel of $-\nabla^*\nabla$ is $\rho_{g_b}^{-n}$, we have $h\in C^{\infty}_{n}(S^2T^*N)$.
		\\
		
		For the weighted Sobolev spaces, the operator $$L_{g_b}:H^2_{\frac{n}{2}-1}(S^2T^*N)\rightarrow L^2_{\frac{n}{2}+1}(S^2T^*N)$$ whose kernel is the kernel of $L_{g_b}$ on $H^2_{\frac{n}{2}-1}(S^2T^*N)$ is reduced to the $L^2$-kernel of $L_{g_b}$ because there is no exceptional value between $\frac{n}{2}-1$ and $n-2$. Its cokernel is the kernel of $L_{g_b}$ on $L^2_{\frac{n}{2}-1}(S^2T^*N)\approx \left(L^2_{\frac{n}{2}+1}(S^2T^*N)\right)^*$ (see Note \ref{note L2 cokernel} below) which is also equal to $\ker_{L^2}L_{g_b}$. This operator is in particular of index $0$. By Banach bounded inverse theorem, this implies that there exists $C>0$ depending on $g_b$ such that for any $h\perp\ker_{L^2}L_{g_b}$, we have
		$$\|\nabla^{g_b,2}h\|_{L^2_{\frac{n}{2}+1}}\leq \|h\|_{H^2_{\frac{n}{2}-1}}\leq C \|L_{g_b} h \|_{L^2_{\frac{n}{2}+1}}.$$
		Note that the $L^2$-product is well-defined between elements of $L^2_{\frac{n}{2}-1}(S^2T^*N)$ and elements of $\ker_{L^2}L_{g_b} \subset C^0_n(S^2T^*N)\subset L^2_{\frac{n}{2}+1}(S^2T^*N)$.
	\end{proof}
	
	\begin{note}\label{note L2 cokernel}
		For any $s\in \mathbb{R}$, the dual of $ L^2_{\frac{n}{2}+s} $ classically identifies with $ L^2_{\frac{n}{2}-s} $ because by definition, this is the set of tensors for which the $L^2$-product is defined for any $2$-tensor in $ L^2_{\frac{n}{2}+s} $. We therefore define the $L^2$-cokernel of a symmetric operator $H^2_{\frac{n}{2}+s-2}\mapsto L^2_{\frac{n}{2}+s}$ as its kernel on the dual of its image: $L^2_{\frac{n}{2}-s}$. This is the $L^2$-orthogonal of its image. 
		
		We keep this definition on subsets of $ L^2_{\frac{n}{2}+s}$, for instance, consider a smooth elliptic operator $L$ asymptotic to the Euclidean Laplacian at infinity between $ C^{2,\alpha}_{\frac{n}{2}+s-2} $ and $ C^{0,\alpha}_{\frac{n}{2}+s} \subset L^2_{\frac{n}{2}+s-\epsilon}$ for all $\epsilon>0$. Assume moreover that ${\frac{n}{2}+s}$ is not a critical exponent of the Laplacian. Then, the $L^2$-cokernel of $L$ is the kernel of $L$ on $C^{0,\alpha}_{\frac{n}{2}-s}$. Indeed, by the above discussion, we first identify it with the kernel on $L^2_{\frac{n}{2}-s+\epsilon}$ which by elliptic regularity is the kernel on $H^k_{\frac{n}{2}-s+\epsilon}$ for all $k$. For $k$ large enough, this embeds in $C^{0,\alpha}_{\frac{n}{2}-s+\epsilon}$, and finally, by choosing $\epsilon$ small enough so that there is no critical exponent of the Laplacian in $[\frac{n}{2}-s,\frac{n}{2}-s+\epsilon] $, this is also the kernel of $L$ on $C^{0,\alpha}_{\frac{n}{2}-s}$.
	\end{note}
	
	%\todo{typos dans certains indices dans les poids des espaces de H\"older ?}
	
	\section{Properties of $\lambda_{\operatorname{ALE}}$ in the integrable case}\label{loj-sim-sec-int-case}
	Now that we have proved that the functional is well-defined and well-behaved in relevant function spaces, we start by investigating the stable integrable case (which corresponds to all known examples) first where a \L{}ojasiewicz inequality can be proved ``by hand".
	
	\subsection{Integrability of Ricci-flat ALE metrics}\label{sec-int-ric-fla}

	We say that a Ricci-flat ALE metric $(N^n,g_b)$ is integrable if the moduli space of Ricci-flat ALE metric on $N$ with the same cone at infinity is a smooth manifold around $g_b$.
	\begin{defn}[Integrable Ricci-flat ALE metric]\label{definition integrable}
		A Ricci-flat ALE metric $(N^n,g_b)$ is \emph{integrable} if for all $v\in\ker_{L^2}L_{g_b}$ small enough, there exists a (unique) Ricci-flat ALE metric $\Bar{g}_v$ satisfying $\Bar{g}_v-(g_b+v)\perp \ker_{L^2}L_{g_b}$, and such that $\div_{g_b}\Bar{g}_v=0$ and $\|\Bar{g}_v-g_b\|_{C^{2,\alpha}_{n}}\leq 2 \|v\|_{C^{2,\alpha}_{n}}$.
	\end{defn}
	
	We will need the following description of the neighborhood of an \emph{integrable} Ricci-flat ALE metric to restrict ourselves to deformations which are transverse to the Ricci-flat deformations.
	\begin{prop}\label{gauge fixing ALE integrable}
		Let $n\geqslant 4$ and $(N^n,g_b)$ be an integrable ALE Ricci-flat metric, asymptotic to $\RR^n\slash\Gamma$, for some finite subgroup $\Gamma$ of $SO(n)$ acting freely on $\mathbb{S}^{n-1}$. Let $\tau\in(1,n)$ and $\alpha\in(0,1)$. Then there exist $C>0$ and $\varepsilon>0$ such that for any metric $g$ satisfying $ \|g-g_b\|_{C^{2,\alpha}_\tau(g_b)}\leq \epsilon $, there exists a Ricci-flat ALE metric $ g'_b $ such that
		\begin{itemize}
			\item $\| g_b - g'_b \|_{C^{2,\alpha}_n(g_b)}\leq C \|g-g_b\|_{C^{2,\alpha}_\tau(g_b)}$,
			\item $ g - g'_b \perp_{L^2(g'_b)}\ker_{L^2(g'_b)}L_{g'_b} $, and
			\item $\div_{g'_b}g = 0$.
		\end{itemize}
	\end{prop}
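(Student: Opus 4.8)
The plan is to set up a fixed-point / implicit-function-theorem argument that simultaneously achieves the Ricci-flat gauge metric $g'_b$ and the divergence-free condition. First I would recall the parametrization of the moduli space from integrability: by Definition \ref{definition integrable}, for each small $v \in \ker_{L^2}L_{g_b}$ there is a unique Ricci-flat ALE metric $\bar g_v$ with $\bar g_v - (g_b+v) \perp \ker_{L^2}L_{g_b}$, $\div_{g_b}\bar g_v = 0$, and $\|\bar g_v - g_b\|_{C^{2,\alpha}_n} \leq 2\|v\|_{C^{2,\alpha}_n}$; moreover $v \mapsto \bar g_v$ is smooth with $D_0\bar g (v) = v$. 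So the candidates for $g'_b$ form a finite-dimensional smooth submanifold through $g_b$, tangent to $\ker_{L^2}L_{g_b}$.

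Next I would encode the two remaining conditions as an equation to solve. Given $g$ close to $g_b$ in $C^{2,\alpha}_\tau$ and a vector field $X \in C^{3,\alpha}_{\tau-1}(TN)$ (producing a diffeomorphism $\phi_X$ close to the identity via $\exp^{g_b}_x(X(x))$), and given $v \in \ker_{L^2}L_{g_b}$, consider the map
\begin{equation*}
\mathcal{G}(g; X, v) := \Big( \div_{\bar g_v}(\phi_X^* g), \ \Pi_{\ker_{L^2}L_{\bar g_v}}(\phi_X^* g - \bar g_v) \Big),
\end{equation*}
where $\Pi_{\ker_{L^2}L_{\bar g_v}}$ is the $L^2(\bar g_v)$-orthogonal projection onto the (finite-dimensional, smooth in $v$) kernel. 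We want $\mathcal{G}(g; X, v) = 0$; then $g'_b := \bar g_v$ works, because $\phi_X^* g$ is divergence-free with respect to $\bar g_v$ and $L^2(\bar g_v)$-orthogonal to $\ker_{L^2}L_{\bar g_v}$ — and since $m_{\operatorname{ADM}}$, $\lambda_{\operatorname{ALE}}$, etc. are diffeomorphism-invariant, working with $\phi_X^*g$ in place of $g$ costs nothing (alternatively one pushes the gauge back and states the conclusion for $g$ directly, absorbing $\phi_X$). At $g = g_b$ one has the trivial solution $X = 0$, $v = 0$. I would then compute the differential of $\mathcal{G}(g_b; \cdot, \cdot)$ at $(0,0)$ in the variables $(X,v)$: the first component linearizes to $X \mapsto \div_{g_b}\mathcal{L}_X(g_b) = \frac12 (\nabla^{g_b})^*\nabla^{g_b} X^\flat + \tfrac12 \nabla^{g_b}\div_{g_b}X^\flat$ (plus a contribution $-\div_{g_b}v$ from the $v$-direction, but $v$ is transverse-traceless-ish and in particular $\div_{g_b}v = 0$ since kernel elements are divergence-free by Proposition \ref{prop-lic-fred}), and the second component linearizes in $v$ to the identity on $\ker_{L^2}L_{g_b}$ (using $D_0\bar g(v) = v$ and $\Pi \circ (\mathrm{id} - D_0\bar g) = 0$). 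The operator $X \mapsto \div_{g_b}\mathcal{L}_X(g_b)$ is an isomorphism $C^{3,\alpha}_{\tau-1}(TN) \to C^{1,\alpha}_{\tau+1}(TN)$ for $\tau - 1 \in (0, n-1)$, i.e. $\tau \in (1,n)$ — this is exactly the divergence-free gauge-fixing of Appendix \ref{app-B} (cf. Proposition \ref{prop-decomp-2-tensor}), and it is the reason for the hypothesis $\tau \in (1,n)$. Hence $D_{(X,v)}\mathcal{G}(g_b; 0,0)$ is a Banach space isomorphism, and the analytic (or $C^1$) implicit function theorem yields, for $\|g - g_b\|_{C^{2,\alpha}_\tau} \leq \varepsilon$, unique small $X(g)$, $v(g)$ depending smoothly on $g$ with $\mathcal{G}(g; X(g), v(g)) = 0$, together with the estimate $\|X(g)\|_{C^{3,\alpha}_{\tau-1}} + \|v(g)\|_{C^{2,\alpha}_n} \leq C\|g - g_b\|_{C^{2,\alpha}_\tau}$. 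Combined with the Lipschitz bound $\|\bar g_{v} - g_b\|_{C^{2,\alpha}_n} \leq 2\|v\|_{C^{2,\alpha}_n}$ from integrability, this gives $\|g'_b - g_b\|_{C^{2,\alpha}_n} \leq C\|g - g_b\|_{C^{2,\alpha}_\tau}$, which is the first bullet; the second and third bullets hold by construction (after replacing $g$ by $\phi_{X(g)}^*g$, or equivalently stating the gauge for $g$ itself).

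The main obstacle I anticipate is purely bookkeeping at the level of function spaces and the moving base metric: one must make sure that all the weighted Hölder pairings are consistent — $g - g'_b$ only lies in $C^{2,\alpha}_\tau$ (not $C^{2,\alpha}_n$), whereas $g'_b - g_b \in C^{2,\alpha}_n$, so the projection $\Pi_{\ker_{L^2}L_{g'_b}}$ and the operator $\div_{g'_b}$ must be shown to depend smoothly (or at least $C^1$-ly) on $v$ as maps between the appropriate fixed spaces, uniformly for $v$ small; this uses that $\ker_{L^2}L_{g'_b} \subset C^\infty_n$ decays fast enough (Proposition \ref{prop-lic-fred}) that the $L^2(g'_b)$-pairing against $C^{2,\alpha}_\tau$-tensors is well-defined and continuous precisely because $\tau > \frac{n-2}{2}$. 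The other subtlety is ensuring the right target space for the divergence component: $\div_{g_b}$ maps $C^{2,\alpha}_\tau(S^2T^*N) \to C^{1,\alpha}_{\tau+1}(T^*N)$, and $X \mapsto \div_{g_b}\mathcal{L}_X(g_b)$ is a second-order elliptic operator $C^{3,\alpha}_{\tau-1} \to C^{1,\alpha}_{\tau+1}$, so the indices match up and the isomorphism statement is the one already available from the appendix. Once these are in place the argument is a routine application of the implicit function theorem.
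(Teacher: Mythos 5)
Your proposal is correct and takes essentially the same route as the paper: parametrize the nearby Ricci-flat metrics $\bar g_v$ via integrability, use the divergence-free gauge of Appendix \ref{app-B} (Proposition \ref{prop-gauge-div-free}), and apply the analytic implicit function theorem to annihilate the projection onto $\ker_{L^2}L_{\bar g_v}$; the only cosmetic difference is that you solve for $(X,v)$ in one simultaneous implicit function theorem, whereas the paper first solves for $X=X(g,v)$ and then runs a second implicit function theorem in $v$ alone for the map $F(g,v)=\pi^{\phi_{g,v}^*\bar g_v}(\phi_{g,v}^*\bar g_v-g)$. Your parenthetical "push the gauge back" remark is exactly what the paper does — the correct reference metric is $g'_b=(\exp_X)_*\bar g_v=\phi_{g,v}^*\bar g_v$ rather than $\bar g_v$ itself, so that the divergence-free and orthogonality conditions hold for the original $g$.
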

	\begin{proof}
		According to \cite[Corollary 5.16]{ozu2} and Definition \ref{definition integrable}, the integrability assumption rewrites in the following way. For any $v\in\ker_{L^2(g_b)}L_{g_b}$, there exists a unique Ricci-flat ALE metric $\bar{g}_v$ satisfying
		\begin{enumerate}
			\item $\div_{g_b}\Bar{g}_v=0$,
			\item $\Bar{g}_v-(g_b+v)\perp \ker_{L^2}L_{g_b}$,
			\item $\Ric(\bar{g}_v) = 0$.
		\end{enumerate}
		Moreover, since these metrics are obtained from the implicit function theorem, Lemma \ref{th fcts implicites}, applied to $(v,g)\mapsto \Ric(g) + \frac{1}{2}\mathcal{L}_{\div_{g_b}\Bar{g}_v}g$ seen as an operator from $(\ker_{L^2}L_{g_b})\times C^{2,\alpha}_\tau$ to $C^{0,\alpha}_{\tau+2}$ for $\tau\in (\frac{n-2}{2},n-2)$, they consequently vary analytically in $v$. Similarly, the elements of $\ker_{L^2(\bar{g}_v)}L_{\bar{g}_v}$ vary analytically in $v$ as solutions $h\in L^2(g_b)$ of the parametrized equation $(v,h)\mapsto L_{\bar{g}_v} h = 0$.
		
		According to Proposition \ref{prop-gauge-div-free} which is proved by implicit function theorem, for any $\alpha\in(0,1)$, there exists $\varepsilon>0$ for which for any metric $g\in B_{C^{2,\alpha}_\tau}(g_b,\varepsilon)$ and any $v\in \ker_{L^2}L_{g_b}$, there exists a unique vector field $X(g,v)\in C^{2+1,\alpha}_{\tau-1}(TN)$ depending analytically on $g$ and $v$ for which
		$$\div_{(\exp_{X(g,v)})_*\bar{g}_v}g =0,$$
		where $\exp_{X}g:x \mapsto \exp_x^{g}(X(x))$. Define $\phi_{g,v} := (\exp_{X(g,v)})^{-1}$. We will naturally look for a $2$-tensor $v$ such that $g - \phi_{g,v}^*\bar{g}_v\perp \ker_{L^2(\phi_{g,v}^*\bar{g}_v)}L_{\phi_{g,v}^*\bar{g}_v}$, where the $L^2$-scalar product is defined with respect to $\phi_{g,v}^*\bar{g}_v$ and such that the elements of $\ker_{L^2(\phi_{g,v}^*\bar{g}_v)}L_{\phi_{g,v}^*\bar{g}_v}$ vary analytically in $v$. 
		
		Let us therefore consider the analytic map
		$$F:(g,v)\mapsto \pi^{\phi_{g,v}^*\bar{g}_v}\big(\phi_{g,v}^*\bar{g}_v-g\big),$$
		where $\pi^{\phi_{g,v}^*\bar{g}_v}$ is the $L^2(\phi_{g,v}^*\bar{g}_v)$-orthogonal projection on $\ker_{L^2(\phi_{g,v}^*\bar{g}_v)}L_{\phi_{g,v}^*\bar{g}_v}$. Note that this projection is smooth on $C^{2,\alpha}_\tau$ for $\tau>0$ since the elements of $\ker_{L^2(\phi_{g,v}^*\bar{g}_v)}L_{\phi^*_{g,v}g_{\bar{v}}}$ decay like $\rho_{g_b}^{-n}$ at infinity. 
		
		We consider $F$ on a neighborhood of $(g_b,0)$ in $C^{2,\alpha}_\tau\times \ker_{L^2(g_b)}L_{g_b}$ with values in \newline$\ker_{L^2(\phi_{g,v}^*\bar{g}_v)}L_{\phi_{g,v}^*\bar{g}_v}$. Here the map $F$ is analytic. We can apply the implicit function theorem as stated in Lemma \ref{th fcts implicites} to the map $F$. Indeed, $F(g_b,0)=0$ and $d_{(g_b,0)}F(0,v) = v$ is an isomorphism, and the spaces $C^{2,\alpha}_\tau\times \ker_{L^2(g_b)}L_{g_b}$ and $\ker_{L^2(\phi_{g,v}^*\bar{g}_v)}L_{\phi_{g,v}^*\bar{g}_v}$ are Banach spaces. We then conclude that there exists an analytic map (unique as a continuous map) $V$ such that for all metrics $g$ in a $C^{2,\alpha}_\tau$-neighborhood of $g_b$, we have 
		$$ F(g,V(g))=0. $$
		Now, for any $g$ satisfying $ \|g-g_b\|_{C^{2,\alpha}_\tau(g_b)}\leq \epsilon $ for $\epsilon>0$ small enough, we consider $g'_b = \phi_{g,V(g)}^*\bar{g}_{V(g)}$ which satisfies the desired properties.
	\end{proof}
	\begin{rk}
		The integrability of the Ricci-flat ALE metric $(N^n,g_b)$ is crucial to obtain the above statement. 
	\end{rk}
	
	Next, we prove that if a Ricci-flat ALE metric is stable and integrable then it is a local maximum of $\lambda_{\operatorname{ALE}}$: this result echoes [Theorem $A$, \cite{Has-Sta}]. Before stating and proving this result, we make a pause to discuss the relevant notion of stability we need here: 
	\begin{defn}[Locally stable integrable Ricci-flat ALE metrics]\label{definition integrable-loc-stable}
		An integrable Ricci-flat ALE metric $(N^n,g_b)$ is \emph{locally stable} if for any metric $g$ satisfying $ \|g-g_b\|_{C^{2,\alpha}_\tau(g_b)}\leq \epsilon $ for $\epsilon>0$ small enough, there exists a linearly stable Ricci-flat ALE metric $ g'_b $ satisfying the conclusions of Proposition \ref{gauge fixing ALE integrable} .
			\end{defn}
			
	Let us show that being linearly stable and integrable implies being locally stable.
		
	\begin{prop}\label{prop-int-lin-sta-loc-sta}
	Let $(N^n,g_b)$ be an integrable and linearly stable ALE Ricci-flat metric. Then it is locally stable in the sense of Definition \ref{definition integrable-loc-stable}.
	\end{prop}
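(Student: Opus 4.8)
The plan is to leverage Proposition \ref{gauge fixing ALE integrable}, which already produces, for any metric $g$ with $\|g-g_b\|_{C^{2,\alpha}_\tau(g_b)}\le\varepsilon$, a Ricci-flat ALE metric $g'_b=\phi_{g,V(g)}^*\bar g_{V(g)}$ satisfying all three structural conclusions of that proposition; the only thing left is to verify that this particular $g'_b$ is \emph{linearly stable} in the sense of Definition \ref{defn-stable}. By Lemma \ref{lemma-equiv-def-stable} this amounts to $\langle -L_{g'_b}h,h\rangle_{L^2}\ge 0$ for all $h\in H^2_{\frac n2-1}$. First I would observe that this property is invariant under pullback by a diffeomorphism $\phi$ close to the identity in the topology $C^{3,\alpha}_{\tau-1}(TN)$: since $L_{\phi^*\bar g}=\phi^*\circ L_{\bar g}\circ(\phi^{-1})^*$, since $(\phi^{-1})^*$ is an isomorphism of the weighted spaces $H^2_{\frac n2-1}$ (up to equivalent norms) as $\phi$ decays at infinity, and since the $L^2$ pairing transforms by the change of variables formula, one has $\langle -L_{\phi^*\bar g}h,h\rangle_{L^2(\phi^*\bar g)}=\langle -L_{\bar g}(\phi^{-1})^*h,(\phi^{-1})^*h\rangle_{L^2(\bar g)}$, and $\phi^*\bar g$ is again ALE Ricci-flat so Lemma \ref{lemma-equiv-def-stable} applies to both. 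Hence it suffices to show $\bar g_{V(g)}$ is linearly stable. As $V$ is continuous with $V(g_b)=0$, after shrinking $\varepsilon$ the statement reduces to: there is $\delta>0$ such that every Ricci-flat ALE metric $\bar g_v$ with $v\in\ker_{L^2(g_b)}L_{g_b}$ and $\|v\|<\delta$ is linearly stable.

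Next I would exploit that both $g_b$ and $\bar g_v$ are Ricci-flat, so by the Bochner formula $\langle -L_g h,h\rangle_{L^2(g)}=\|\nabla^g h\|^2_{L^2(g)}-2\int_N\langle\Rm(g)(h),h\rangle_g\,d\mu_g$. Since $\bar g_v-g_b=O(\|v\|)$ in $C^{2,\alpha}_n(g_b)$, all the $\bar g_v$-quantities differ from the $g_b$-ones by $O(\|v\|)$, and the curvature term (with $\Rm=O(\rho_{g_b}^{-n})$) is absorbed into $\|\nabla^{g_b}h\|_{L^2}^2$ by Minerbe's Hardy inequality (Theorem \ref{thm-min-har-inequ}), which gives, for all $h\in H^2_{\frac n2-1}$,
\[\langle -L_{\bar g_v}h,h\rangle_{L^2(\bar g_v)}\ \ge\ \langle -L_{g_b}h,h\rangle_{L^2(g_b)}-C\|v\|\,\|\nabla^{g_b}h\|_{L^2}^2 .\]
Since $g_b$ is linearly stable, Theorem \ref{theo-der-kro} yields $\langle -L_{g_b}h,h\rangle_{L^2(g_b)}\ge(1-\varepsilon(g_b))\|\nabla^{g_b}h\|_{L^2}^2$ for all $h\perp_{L^2(g_b)}\ker_{L^2(g_b)}L_{g_b}$; combined with the above, $\langle -L_{\bar g_v}h,h\rangle_{L^2(\bar g_v)}\ge 0$ for all such $h$ once $C\|v\|<1-\varepsilon(g_b)$.

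To pass from $h\perp\ker_{L^2(g_b)}L_{g_b}$ to \emph{all} $h$ — the delicate point, because $0$ lies in the essential $L^2$-spectrum of $L_{g_b}$, so there is no spectral gap to perturb, and $\ker_{L^2(\bar g_v)}L_{\bar g_v}$ is merely close to $\ker_{L^2(g_b)}L_{g_b}$, not equal to it — I would use the \emph{exact} $L^2(\bar g_v)$-orthogonal splitting for $\bar g_v$ itself: by integrability, $\ker_{L^2(\bar g_v)}L_{\bar g_v}$ has the same finite dimension as $\ker_{L^2(g_b)}L_{g_b}$, consists of $\rho_{g_b}^{-n}$-decaying tensors, and varies analytically in $v$ (all used already in the proof of Proposition \ref{gauge fixing ALE integrable}, and consistent with Proposition \ref{prop-lic-fred}). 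Writing $h=h_v^{\ker}+h'$ with $h_v^{\ker}\in\ker_{L^2(\bar g_v)}L_{\bar g_v}$ and $h'\perp_{L^2(\bar g_v)}$ it, self-adjointness of $L_{\bar g_v}$ and $L_{\bar g_v}h_v^{\ker}=0$ give $\langle -L_{\bar g_v}h,h\rangle_{L^2(\bar g_v)}=\langle -L_{\bar g_v}h',h'\rangle_{L^2(\bar g_v)}$. Then I would further decompose $h'=q_v+p_v$ with $q_v\perp_{L^2(g_b)}\ker_{L^2(g_b)}L_{g_b}$ and $p_v\in\ker_{L^2(g_b)}L_{g_b}$; because $\ker_{L^2(\bar g_v)}L_{\bar g_v}$ is $O(\|v\|)$-close to $\ker_{L^2(g_b)}L_{g_b}$ and the $L^2(\bar g_v)$ and $L^2(g_b)$ pairings are $O(\|v\|)$-close, the coefficients of $p_v$ are $O(\|v\|\,\|h'\|_{L^2_{\frac n2-1}})$; and since the kernel is a fixed finite-dimensional space of smooth fast-decaying tensors (so all norms on it are equivalent) and $L_{g_b}p_v=0$, Hardy's inequality upgrades this to $\|p_v\|_{H^2_{\frac n2-1}}\le C\|v\|\,\|\nabla^{g_b}q_v\|_{L^2}$ and forces every cross/error term to be $O(\|v\|^2\,\|\nabla^{g_b}q_v\|^2_{L^2})$. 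Therefore $\langle -L_{\bar g_v}h',h'\rangle_{L^2(\bar g_v)}\ge\langle -L_{\bar g_v}q_v,q_v\rangle_{L^2(\bar g_v)}-C\|v\|^2\|\nabla^{g_b}q_v\|^2_{L^2}\ge(1-\varepsilon(g_b)-C\|v\|)\|\nabla^{g_b}q_v\|^2_{L^2}\ge 0$ for $\|v\|$ small. This shows $\langle -L_{\bar g_v}h,h\rangle_{L^2}\ge 0$ for all $h\in H^2_{\frac n2-1}$, so $\bar g_v$ is linearly stable by Lemma \ref{lemma-equiv-def-stable}, which concludes the proof. I expect the bookkeeping of the two mismatched kernels — ensuring every error is controlled by $\|\nabla^{g_b}q_v\|^2_{L^2}$ rather than a full $H^2$-norm, which is exactly what finite-dimensionality of the kernel together with Hardy's inequality provides — to be the only genuine obstacle.
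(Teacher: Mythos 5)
Your proposal is correct and follows essentially the same route as the paper: reduce to the metrics $\bar g_v$, perturb the quadratic form $\langle -L_{\bar g_v}h,h\rangle$ to that of $L_{g_b}$ with an error $O(\|v\|)\|\nabla^{g_b}h\|^2_{L^2}$ absorbed via Hardy's inequality and Theorem \ref{theo-der-kro}, and handle the mismatch between $\ker_{L^2(\bar g_v)}L_{\bar g_v}$ and $\ker_{L^2(g_b)}L_{g_b}$ by the analytic dependence of the projections on $v$ (your $q_v+p_v$ splitting is the paper's $h_0+h'$). The only differences are cosmetic: you spell out the diffeomorphism-invariance of linear stability and the reduction from general $h$ to $h\perp_{L^2(\bar g_v)}\ker_{L^2(\bar g_v)}L_{\bar g_v}$, both of which the paper leaves implicit.
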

	
	\begin{proof}
	Denote $\Bar{g}_0:=g_b$ and for $v\in \ker_{L^2(\Bar{g}_0)} L_{\Bar{g}_0}$ small enough, let $\Bar{g}_v$ be a Ricci-flat ALE metric satisfying $\Bar{g}_v-(\Bar{g}_0+v)\perp \ker_{L^2}L_{\Bar{g}_0}$ and $\textup{div}_{\bar{g}_0}\bar{g}_v=0$ since $\Bar{g}_0$ is assumed to be integrable. The map $v\in C^{2,\alpha}_\tau\rightarrow \Bar{g}_v\in C^{2,\alpha}_\tau$ is analytic: as already seen in the proof of Proposition \ref{gauge fixing ALE integrable}, this implies that there exists a basis of each $\ker_{L^2(\bar{g}_v)} L_{\Bar{g}_v}$ which depends analytically on $v$. Therefore, the $L^2(\bar{g}_v)$-projection on $\ker_{L^2(\bar{g}_v)}L_{\bar{g}_v}$ denoted by $\pi_v: H_{\frac{n}{2}-1}^1(\bar{g}_0)\to H^1_{\frac{n}{2}-1}(\bar{g}_0)$ depends analytically on $v$ since $\ker_{L^2(\bar{g}_v)}L_{\bar{g}_v}\subset C^\infty_n$ by Theorem \ref{prop-lic-fred}.
	
As $\Bar{g}_0$ is assumed to be linearly stable, thanks to Lemma \ref{lemma-equiv-def-stable}, there exists $c>0$ such that if $h_0\perp \ker_{L^2(\bar{g}_0)} L_{\Bar{g}_0}$ and $h_0\in H^1_{\frac{n}{2}-1}$, then one has
\begin{equation}
    \left<-L_{\Bar{g}_0} h_0,h_0\right>_{L^2}\geq c\|\nabla^{\Bar{g}_0}h_0\|_{L^2}^2,\label{stability at bar g0}
\end{equation}
by Theorem \ref{theo-der-kro}.

Now, if $h\perp \ker_{L^2(\bar{g}_v)} L_{\Bar{g}_v}$ and $h\in C_c^\infty(S^2T^*N)$, we decompose it as $h = h_0+h'$ where
$h_0\perp \ker_{L^2(\bar{g}_0)} L_{\Bar{g}_0}$ and $h'\in \ker_{L^2(\bar{g}_0)} L_{\Bar{g}_0}$. 

Let us first show that $h'$ is small when $v$ is. We have $0 = \pi_vh$ and $h' = \pi_0 h$, and since $v\mapsto \pi_v$ is analytic, there exists $C>0$ such that for $v$ small enough, one has $\|h'\|_{H^1_{\frac{n}{2}-1}(\bar{g}_0)}\leq \|v\|_{C^{2,\alpha}_{\tau}}\|h\|_{H^1_{\frac{n}{2}-1}(\bar{g}_0)}$. 

Using the fact that $h'\in\ker_{L^2(\bar{g}_0)} L_{\bar{g}_0}$, one gets immediately that
\begin{equation*}
\begin{split}
\left<-L_{\Bar{g}_0} h,h\right>_{L^2(\bar{g}_0)}=\,&\left<-L_{\Bar{g}_0} h_0,h_0\right>_{L^2(\bar{g}_0)}\\
\geq\,& c\|\nabla^{\Bar{g}_0}h_0\|_{L^2(\bar{g}_0)}^2,
\end{split}
\end{equation*}
where we used the inequality \eqref{stability at bar g0}.

In particular, if $\|v\|_{C^{2,\alpha}_\tau}$ is chosen small enough, then we have $\|\nabla^{\Bar{g}_0}h_0\|_{L^2(\bar{g}_0)}\geq \|\nabla^{\Bar{g}_0}h\|_{L^2(\bar{g}_0)}-\|\nabla^{\Bar{g}_0}h'\|_{L^2(\bar{g}_0)}\geq \frac{1}{\sqrt{2}}\|\nabla^{\Bar{g}_0}h\|_{L^2(\bar{g}_0)}$ and $\|\nabla^{\Bar{g}_0}h\|^2_{L^2(g_0)}\geqslant \frac{1}{2}\|\nabla^{\Bar{g}_v}h\|_{L^2(\bar{g}_v)}^2$ since $\|\Bar{g}_v-\Bar{g}_0\|_{C_{n}^{2,\alpha}}\leq 2\|v\|_{C_{n}^{2,\alpha}}$ by Definition \ref{definition integrable}. Therefore,
\begin{equation}
\left<-L_{\Bar{g}_0} h,h\right>_{L^2(\bar{g}_0)}\geq c\|\nabla^{\Bar{g}_0}h\|^2_{L^2(\bar{g}_0)}\geqslant \frac{c}{4}\|\nabla^{\Bar{g}_v}h\|_{L^2(\bar{g}_v)}^2,\label{fir-est-int-lin-sta}
\end{equation}
where $c$ is a positive constant independent of $v$ and $h$ that may vary from line to line. On the other hand, by linearizing $L_{\Bar{g}_0}$ with respect to $L_{\Bar{g}_v}$, we have
\begin{equation}
\begin{split}\label{sec-est-int-lin-sta}
\left|\left<(L_{\Bar{g}_0}-L_{\Bar{g}_v}) h,h\right>_{L^2(\bar{g}_v)}\right|&\leq\left|\left<(\Delta_{\Bar{g}_0}-\Delta_{\Bar{g}_v}) h,h\right>_{L^2(\bar{g}_v)}\right|+2\left|\left<\left(\Rm(\Bar{g}_0)-\Rm(\Bar{g}_v)\right)\ast h,h\right>_{L^2(\bar{g}_v)}\right|\\
&\leq\sum_{i=0}^2\left\langle\nabla^{\bar{g}_v,i}(\bar{g}_0-\bar{g}_v)\ast\nabla^{\bar{g}_v,2-i}h, h \right\rangle_{L^2(\bar{g}_v)}\\
&\leq C\|\bar{g}_v-\bar{g}_0\|_{C^{2,\alpha}_{n}}\sum_{i=1}^2\left\|\rho_{\bar{g}_0}^{-i}\nabla^{\bar{g}_0,2-i}h\ast h \right\|_{L^1(\bar{g}_0)}\\
&\quad+\left\langle\nabla^{\bar{g}_v}(\bar{g}_0-\bar{g}_v)\ast\nabla^{\bar{g}_v}h, h\right\rangle_{L^2(\bar{g}_v)}+\left\langle(\bar{g}_0-\bar{g}_v)\ast\nabla^{\bar{g}_v}h\ast \nabla^{\bar{g}_v}h\right\rangle_{L^2(\bar{g}_v)}\\
&\leq C\|v\|_{C^{2,\alpha}_{n}}\|\nabla^{\Bar{g}_v}h\|^2_{L^2(\bar{g}_v)},
\end{split}
\end{equation}
for some positive constant $C$ independent of $v$ and $h$.
 Here we have used the density of $C_c^\infty$ in $H^1_{\frac{n}{2}-1}$, integration by parts in the third inequality and Hardy's inequality (Theorem \ref{thm-min-har-inequ}) in the last line.

Combining (\ref{fir-est-int-lin-sta}) and (\ref{sec-est-int-lin-sta}), one gets for $v$ small enough, and for some constant $c>0$ independent on $v$ and $h$:
\begin{equation}
\left<-L_{\Bar{g}_v} h,h\right>_{L^2(\bar{g}_v)}\geq c\|\nabla^{\Bar{g}_v}h\|^2_{L^2(\bar{g}_v)},
\end{equation}
for any $h\perp\ker_{L^2(\Bar{g}_v)} L_{\Bar{g}_v}$. This shows that $\bar{g}_v$ is also linearly stable.
	\end{proof}
	
	Notice that all known examples of $4$-dimensional Ricci flat ALE metrics are hyperk\"ahler which implies that they are integrable. Furthermore, each infinitesimal deformation lying in the kernel of the corresponding Lichnerowicz operator is the first jet of a curve of hyperk\"ahler metrics. Since a hyperk\"ahler metric is linearly stable, an ALE hyperk\"ahler metric is integrable and locally stable in the sense of Definition \ref{definition integrable-loc-stable}: see \cite[Section $11$, Chapter $12$]{Besse} for more details.

	\begin{prop}\label{local maximum stable integrable}
		Let $(N^n,g_b)$, $n\geq 4$, be an ALE Ricci-flat metric asymptotic to $\RR^n\slash\Gamma$, for some finite subgroup $\Gamma$ of $SO(n)$ acting freely on $\mathbb{S}^{n-1}$. Let $\tau\in\left(\frac{n-2}{2},n-2\right)$ and let $\alpha\in(0,1)$. If $(N^n,g_b)$ is assumed to be integrable and linearly stable then it is a local maximum for the energy $\lambda_{\operatorname{ALE}}$ with respect to the topology defined by $C^{2,\alpha}_{\tau}(S^2T^*N)$. 
	\end{prop}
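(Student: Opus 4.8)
The plan is to reduce, via the gauge-fixing of Proposition~\ref{gauge fixing ALE integrable}, to a divergence-free deformation transverse to the Ricci-flat directions, and then to run a second-order Taylor expansion of $\lambda_{\operatorname{ALE}}$ controlled by the stability inequality of Theorem~\ref{theo-der-kro}. First I would take $g$ with $\|g-g_b\|_{C^{2,\alpha}_\tau}\leq\varepsilon$ small. By Proposition~\ref{prop-int-lin-sta-loc-sta}, $(N^n,g_b)$ is locally stable, so Proposition~\ref{gauge fixing ALE integrable} (and Definition~\ref{definition integrable-loc-stable}) produces a \emph{linearly stable} Ricci-flat ALE metric $g'_b$ with $\|g_b-g'_b\|_{C^{2,\alpha}_n}\leq C\|g-g_b\|_{C^{2,\alpha}_\tau}$, such that $h:=g-g'_b$ satisfies $\div_{g'_b}h=0$ and $h\perp_{L^2(g'_b)}\ker_{L^2(g'_b)}L_{g'_b}$. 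Since $g'_b$ is Ricci-flat ALE, it is itself a critical point of $\lambda_{\operatorname{ALE}}$ with $\lambda_{\operatorname{ALE}}(g'_b)=0$ (the $L^2$-gradient at a Ricci-flat metric vanishes by Proposition~\ref{lambdaALE analytic}), and by diffeomorphism and scaling invariance together with the fact that $\lambda_{\operatorname{ALE}}$ vanishes on Ricci-flat ALE metrics, it suffices to compare $\lambda_{\operatorname{ALE}}(g)$ with $\lambda_{\operatorname{ALE}}(g'_b)=0$.

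Next I would Taylor-expand. By analyticity of $\lambda_{\operatorname{ALE}}$ on a $C^{2,\alpha}_\tau$-neighborhood (Proposition~\ref{lambdaALE analytic}) and the vanishing of its first variation at $g'_b$,
\[
\lambda_{\operatorname{ALE}}(g'_b+h)=\tfrac12\,\delta^2_{g'_b}\lambda_{\operatorname{ALE}}(h,h)+R(h),
\]
where, because $\div_{g'_b}h=0$, Proposition~\ref{snd-var-gal-lambda} (or directly Proposition~\ref{second-var-prop} for $g'_b$) gives $\delta^2_{g'_b}\lambda_{\operatorname{ALE}}(h,h)=\tfrac12\langle L_{g'_b}h,h\rangle_{L^2}$, and the remainder obeys $|R(h)|\leq C\|h\|_{C^{2,\alpha}_\tau}\|h\|_{H^1_{\frac n2-1}}^2$ — the cubic control following from the structure of $\lambda_{\operatorname{ALE}}$ in \eqref{last-exp-make-sense-lambda} (the analytic remainder $Q_{g'_b}$ is quadratic in $h$ with the stated Hölder bound, and $v_{g'_b+h}$ is linear-plus-higher-order with $\|\nabla^{g'_b}v_{g'_b+h}\|_{L^2}\lesssim\|\nabla^{g'_b}h\|_{L^2}$ by Proposition~\ref{prop-ene-pot-fct}). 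Since $h$ is $L^2(g'_b)$-orthogonal to $\ker_{L^2(g'_b)}L_{g'_b}$ and $g'_b$ is linearly stable, Theorem~\ref{theo-der-kro} yields
\[
\langle L_{g'_b}h,h\rangle_{L^2}\leq -(1-\varepsilon(g'_b))\langle-\Delta_{g'_b}h,h\rangle_{L^2}=-(1-\varepsilon(g'_b))\|\nabla^{g'_b}h\|_{L^2}^2.
\]
Combining, $\lambda_{\operatorname{ALE}}(g)\leq -\tfrac14(1-\varepsilon(g'_b))\|\nabla^{g'_b}h\|_{L^2}^2+C\|h\|_{C^{2,\alpha}_\tau}\|\nabla^{g'_b}h\|_{L^2}^2$ (using Hardy's inequality, Theorem~\ref{thm-min-har-inequ}, to absorb $\|\rho_{g'_b}^{-1}h\|_{L^2}$ into $\|\nabla^{g'_b}h\|_{L^2}$); for $\varepsilon$ small enough the positive term is dominated, so $\lambda_{\operatorname{ALE}}(g)\leq 0=\lambda_{\operatorname{ALE}}(g_b)$, i.e.\ $g_b$ is a local maximum.

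The main obstacle I anticipate is twofold. First, uniformity of the constant $\varepsilon(g'_b)$ in Theorem~\ref{theo-der-kro} as $g'_b$ ranges over the (finite-dimensional, analytically parametrized) family of nearby stable Ricci-flat ALE metrics: one needs that $\varepsilon(g'_b)$ stays bounded away from $1$ uniformly, which should follow from the analytic dependence of $L_{g'_b}$ and of $\ker_{L^2(g'_b)}L_{g'_b}$ on the parameter $v$ established in the proof of Proposition~\ref{gauge fixing ALE integrable} and Proposition~\ref{prop-int-lin-sta-loc-sta}, together with the continuity of the spectral gap, but it requires care. Second, making the cubic remainder estimate rigorous in the right norms: one must check that the "error" terms in \eqref{last-exp-make-sense-lambda} and in the expansion of the Bakry--Émery tensor are genuinely $O(\|h\|_{C^{2,\alpha}_\tau}\|\nabla^{g'_b}h\|_{L^2}^2)$, which is exactly the kind of estimate Proposition~\ref{prop-ene-pot-fct} and the bound on $Q_{g_b}$ are designed to provide, so this should be routine but is where the bulk of the technical work lies.
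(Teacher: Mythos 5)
Your proposal follows essentially the same route as the paper: gauge-fix via Proposition \ref{gauge fixing ALE integrable} and Proposition \ref{prop-int-lin-sta-loc-sta} to a divergence-free deformation $h$ of a nearby linearly stable Ricci-flat metric $g'_b$, Taylor-expand $\lambda_{\operatorname{ALE}}$ with vanishing zeroth and first order terms, control the quadratic term by Theorem \ref{theo-der-kro}, and absorb a remainder of size $C\|h\|_{C^{2,\alpha}_{\tau}}\|\nabla^{g'_b}h\|_{L^2}^2$. The cubic bound you flag as the main technical burden is exactly the content of the paper's Claim \ref{third-der-est} (proved by a direct estimate of $\delta^3_{g_b+th}\lambda_{\operatorname{ALE}}$ using Hardy's inequality and Proposition \ref{prop-ene-pot-fct}), and the uniformity of the stability constant over the family $g'_b$ is precisely what Proposition \ref{prop-int-lin-sta-loc-sta} supplies, so both of your anticipated obstacles are resolved by the tools you cite.
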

	
	\begin{proof}
		Consider the following Taylor expansion of the functional $\lambda_{\operatorname{ALE}}$ at $g_b$ of order $3$:
		\begin{equation}
		\lambda_{\operatorname{ALE}}(g_b+h)=\lambda_{\operatorname{ALE}}(g_b)+\delta_{g_b}\lambda_{\operatorname{ALE}}(h)+\delta^2_{g_b}\lambda_{\operatorname{ALE}}(h,h)+\int_0^1\frac{(1-t)^2}{2}\delta_{g_b+th}^3\lambda_{\operatorname{ALE}}(h,h,h)\,dt.\label{tay-exp-ord-3}
		\end{equation}
		Now, by definition of $\lambda_{\operatorname{ALE}}$, $\lambda_{\operatorname{ALE}}(g_b)=0$ and by (\ref{first-var-lambda}), one has $\delta_{g_b}\lambda_{\operatorname{ALE}}(h)=0$ as well by Proposition \ref{lambdaALE analytic}. Finally, by Proposition \ref{second-var-prop}, $\delta^2_{g_b}\lambda_{\operatorname{ALE}}(h,h)=\frac{1}{2}\langle L_{g_b}h,h\rangle_{L^2}$ if $\div_{g_b}h=0$. Since $(N^n,g_b)$ is integrable and linearly stable, it is integrable and locally stable by Proposition \ref{prop-int-lin-sta-loc-sta} and thanks to Theorem \ref{theo-der-kro} and Proposition \ref{gauge fixing ALE integrable}, it suffices to estimate the integral on the righthand side of (\ref{tay-exp-ord-3}) in such a way that it can be absorbed by $\|\nabla^{g_b} h\|^2_{L^2}$. More precisely, we claim the following:
		\begin{claim}\label{third-der-est}
			\begin{equation*}
			|\delta_{g_b+th}^3\lambda_{\operatorname{ALE}}(h,h,h)|\leq C\|h\|_{C^{2,\alpha}_{\tau}}\|\nabla^{g_b} h\|_{L^2}^2,\quad \div_{g_b}h=0,
			\end{equation*}
			for some positive constant $C=C(n,g_b,\varepsilon)$ uniform in $t\in[0,1]$.
		\end{claim}
		\begin{proof}[Proof of Claim \ref{third-der-est}]
			Recall from (\ref{first-var-lambda}) that:
			\begin{equation*}
			\delta^3_{g_b+th} \lambda_{\operatorname{ALE}}(h,h,h)=-\frac{d^2}{dt^2}\int_N\langle \Ric(g_b+th)+\nabla^{g_b+th,2}f_{g_b+th},h\rangle_{g_b+th} \,e^{-f_{g_b+th}}d\mu_{g_b+th}.
			\end{equation*}
			Denote the curve of metrics $(g_b+th)_{t\in[0,1]}$ by $(g_t)_{t\in[0,1]}$: the family $(g_t)_{t\in[0,1]}$ is uniformly equivalent in $t\in[0,1]$ since $g_t$ lies in an arbitrarily small neighborhood of $g_b$ in the $C^{2,\alpha}_{\tau}$ topology. Moreover, Proposition \ref{prop-pot-fct} ensures that $\|f_{g_t}\|_{C^{2,\alpha}_{\tau}}\leq \varepsilon\left(\|g_t-g_b\|_{C^{2,\alpha}_{\tau}}\right)$, where $\varepsilon(\cdot)$ is a positive function on $[0,+\infty)$ that tends to $0$ as its argument goes to $0$. Notice by [(\ref{lin-Bian-app}), Lemma \ref{Ric-lin-lemma-app}] applied to $g_1:=g_b$ and $g_2:=g_b+h$, that the Bianchi gauge satisfies:
			\begin{equation*}
			\begin{split}
			B&=\div_{g_1}(g_t-g_1)-\frac{1}{2}\nabla^{g_1}\tr_{g_1}(g_t-g_1)+g_t^{-1}\ast(g_t-g_1)\ast\nabla^{g_1}g_t\\
			&=-\frac{t}{2}\nabla^{g_b}\tr_{g_b}h+t^2g_t^{-1}\ast h\ast\nabla^{g_b}h,
			\end{split}
			\end{equation*}
			since $\div_{g_b}h=0$. Finally, since $\nabla^{g_t}T=\nabla^{g_b}T+g_t^{-1}\ast \nabla^{g_b}(g_t-g_b)\ast T$ for any tensor $T$, one gets 
			\begin{equation}
			\begin{split}
			\Li_B(g_t)&=-t\nabla^{g_b,2}\tr_{g_b}h+t^2\nabla^{g_b}(g_t^{-1}\ast h\ast\nabla^{g_b}h)\\
			&+tg_t^{-1}\ast \nabla^{g_b}h\ast \left(-\frac{t}{2}\nabla^{g_b}\tr_{g_b}h+t^2g_t^{-1}\ast h\ast\nabla^{g_b}h\right).\label{lie-der-gauge-bianchi}
			\end{split}
			\end{equation}
			
			Moreover, it can be shown with the help of Lemmata \ref{Ric-lin-lemma-app} and \ref{lem-lin-equ-Ric-first-var} that:
			\begin{equation}
			\left|-2\Ric(g_t)-tL_{g_b}h-t\nabla^{g_b,2}\tr_{g_b}h\right|\lesssim |\Rm(g_b)|_{g_b}|h|^2_{g_b}+|\nabla^{g_b}h|^2_{g_b}+|h|_{g_b}|\nabla^{g_b,2}h|_{g_b},
			\end{equation}
			and similarly, 
			\begin{equation}
			\left|-2\partial_t\Ric(g_t)-L_{g_b}h-\nabla^{g_b,2}\tr_{g_b}h\right|\lesssim |\Rm(g_b)|_{g_b}|h|^2_{g_b}+|\nabla^{g_b}h|^2_{g_b}+|h|_{g_b}|\nabla^{g_b,2}h|_{g_b},
			\end{equation}
			where the symbol $\lesssim$ denotes less than or equal to up to a positive multiplicative constant uniform in $t\in[0,1]$ which might depend on $n$, $g_b$, $\varepsilon$.

			By [(\ref{sec-der-Ric-rough}), Lemma \ref{Ric-lin-lemma-app}],  
			\begin{equation}
			\begin{split}
			\left|\int_N\left<\frac{\partial^2}{\partial t^2}\Ric(g_t),h\right>_{g_t}\,e^{-f_{g_t}}d\mu_{g_t}\right|&\lesssim \int_N|h|^2_{g_b}|\nabla^{g_b,2}h|_{g_b}+|\Rm(g_b)||h|_{g_b}^3+|\nabla^{g_b}h|^2_{g_b}|h|_{g_b}\,d\mu_{g_b}\\
			&\lesssim \left(\|\nabla^{g_b,2}h\|_{C^0_{2}}+\|h\|_{C^0_{0}}\right)\|\rho_{g_b}^{-1}h\|_{L^2}^2+\|h\|_{C^0_{0}}\|\nabla^{g_b}h\|_{L^2}^2\\
			&\lesssim \|h\|_{C^2_{\tau}}\|\nabla^{g_b}h\|^2_{L^2},
			\end{split}
			\end{equation}
			%\todo[inline]{On n'utilise que $C^2_0$ ici}
			where we have used Hardy's inequality that holds on $(N^n,g_b)$ thanks to Theorem \ref{thm-min-har-inequ}. A similar estimate holds for mixed derivatives with respect to the parameter $t\in[0,1]$ that involve the terms $\Ric(g_t)$, the scalar product on symmetric $2$-tensors induced by $g_t$ and the Riemannian volume $d\mu_{g_t}$. 
			
			We use Proposition \ref{prop-pot-fct} that ensures that 
			\begin{equation}\label{control-covid-delta-f}
			\|\delta^k_{g_t}f(h,...,h)\|_{C^{2,\alpha}_{\tau}}\leq C\left(\|g_t-g_b\|_{C^{2,\alpha}_{\tau}}\right)\|h\|^k_{C^{2,\alpha}_{\tau}},\,\quad k\geq 0,
			\end{equation}
			to handle the derivatives falling on $e^{-f_{g_t}}$.
			
			For instance, let us handle the term involving $\left<\partial_t\Ric(g_t),h\right>_{g_t}\delta_{g_t}f(h)$ as follows:
			\begin{equation*}
			\begin{split}
			\left|\int_N\left<\partial_t\Ric(g_t),h\right>_{g_t}\delta_{g_t}f(h)\,e^{-f_{g_t}}d\mu_{g_t}\right|\lesssim\,&\left|\int_N\langle L_{g_b}h+\nabla^{g_b,2}\tr_{g_b}h,h\rangle_{g_b}\delta_{g_t}f(h)\,e^{-f_{g_t}}d\mu_{g_b}\right|\\
			&+\|h\|_{C^{2,\alpha}_{\tau}}\int_N |\Rm(g_b)|_{g_b}|h|^2_{g_b}+|\nabla^{g_b}h|^2_{g_b}\,d\mu_{g_b}\\
			&+\|h\|_{C^{2,\alpha}_{\tau}}\int_N|h|^2_{g_b}|\nabla^{g_b,2}h|_{g_b}\,d\mu_{g_b}\\
			\lesssim\,&\left|\int_N\langle \Delta_{g_b}h+\nabla^{g_b,2}\tr_{g_b}h,h\rangle_{g_b}\delta_{g_t}f(h)\,e^{-f_{g_t}}d\mu_{g_b}\right|\\
			&+\|h\|_{C^{2,\alpha}_{\tau}}\|\nabla^{g_b}h\|_{L^2}^2.
			\end{split}
			\end{equation*}
			Here we have used Hardy's inequality in the first line to get rid of the zeroth order term appearing in the Lichnerowicz operator, in the first term of the second line by using that the curvature $\Rm(g_b)$ decays at least quadratically and in the third line invoking the quadratic decay of $\nabla^{g_b,2}h$. Finally, the weighted Riemannian measure $e^{-f_{g_t}}d\mu_{g_t}$ on the righthand side of the first line has been turned into $d\mu_{g_b}$ since they are uniformly equivalent as measures by (\ref{control-covid-delta-f}) and the fact that $g_b+h$ and $g_b$ are uniformly equivalent as metrics. Notice that we have only made use of $h\in C^2_0$ here.

			It remains to estimate the terms involving the second covariant derivatives of $h$: by integration by parts, 
			\begin{equation*}
			\begin{split}
			\left|\int_N\langle \Delta_{g_b}h,h\rangle_{g_b}\delta_{g_t}f(h)e^{-f_{g_t}}d\mu_{g_b}\right|&\lesssim\int_N |\nabla^{g_b}h|^2_{g_b}|\delta_{g_t}f(h)|+|\nabla^{g_b}h|_{g_b}|h|_{g_b}|\nabla^{g_b}\delta_{g_t}f(h)|_{g_b}d\mu_{g_b}\\
			&\quad+\int_N|\nabla^{g_b}h|_{g_b}|h|_{g_b}|\delta_{g_t}f(h)||\nabla^{g_b}f_{g_t}|_{g_b}d\mu_{g_b}\\
			&\lesssim \|h\|_{C^{2,\alpha}_{\tau}}\|\nabla^{g_b}h\|_{L^2}^2+\|h\|_{C^{2,\alpha}_{\tau}}\|\nabla^{g_b}h\|_{L^2}\|\rho_{g_b}^{-1}h\|_{L^2}\\
			&\lesssim \|h\|_{C^{2,\alpha}_{\tau}}\|\nabla^{g_b}h\|_{L^2}^2.
			\end{split}
			\end{equation*}
			Here, we have made constant use of (\ref{control-covid-delta-f}). In particular, we have used the fact that $\nabla^{g_b}f_{g_t}$ and $\nabla^{g_b}\delta_{g_t}f(h)$ decay at least linearly together with Cauchy-Schwarz inequality in the second line and Hardy's inequality (Theorem \ref{thm-min-har-inequ}) in the third line. The integral involving the Hessian of $\tr_{g_b}h$ can be handled similarly.
			
			We are left with estimating integrals involving (the $t$-derivatives of) $\nabla^{g_t,2}f_{g_t}$. Let us notice first that any term which contains a $t$-derivative that falls either on the scalar product on symmetric $2$-tensors induced by $g_t$ or on the volume element $d\mu_{g_t}$ can be estimated as previously by using Hardy's inequality (Theorem \ref{thm-min-har-inequ}) in a quite straightforward way. Consequently, only the integrals that contain $t$-derivatives of $\nabla^{g_t,2}f_{g_t}$ and $e^{-f_{g_t}}$ will be estimated.
			
			We start with terms involving derivatives of $e^{-f_{g_t}}$ only. By integration by parts,
			\begin{equation*}
			\begin{split}
			\Bigg|\int_N\langle\nabla^{g_t,2}f_{g_t},h\rangle_{g_t}\delta_{g_t}^2f(h,h)&e^{-f_{g_t}}d\mu_{g_t}\Bigg|\\
			\lesssim\,&\int_N\left(|h|_{g_t}|\nabla^{g_t}f_{g_t}|^2_{g_t}+|\div_{g_t}h|_{g_t}|\nabla^{g_t}f_{g_t}|_{g_t}\right)|\delta_{g_t}^2f(h,h)|d\mu_{g_t}\\
			&+\int_N|h|_{g_t}|\nabla^{g_t}f_{g_t}|_{g_t}|\nabla^{g_t}\delta_{g_t}^2f(h,h)|_{g_t}\,d\mu_{g_t}\\
			\lesssim\,&\|h\|^2_{C^{2,\alpha}_{\tau}}\left(\|\rho_{g_b}^{-1}h\|_{L^2}\|\nabla^{g_t}f_{g_t}\|_{L^2}+\|\nabla^{g_t}h\|_{L^2}\|\nabla^{g_t}f_{g_t}\|_{L^2}\right)\\
			\lesssim\,&\|h\|_{C^{2,\alpha}_{\tau}}\|\nabla^{g_t}h\|_{L^2}\|\nabla^{g_t}f_{g_t}\|_{L^2},
			\end{split}
			\end{equation*}
			where we have used the linear decay of $\nabla^{g_t}f_{g_t}$ together with the fact that $\|h\|_{C^{2,\alpha}_{\tau}}\lesssim 1$. Now, from [(\ref{grad-est-int-ene}), Proposition \ref{prop-ene-pot-fct}],
			
			\begin{equation*}
			\begin{split}
			\left|\int_N\langle\nabla^{g_t,2}f_{g_t},h\rangle_{g_t}\delta_{g_t}^2f(h,h)e^{-f_{g_t}}d\mu_{g_t}\right|&\lesssim\|h\|_{C^{2,\alpha}_{\tau}}\|\nabla^{g_t}h\|_{L^2}^2,
			\end{split}
			\end{equation*}
			as desired.
			With the help of (\ref{first-var-lie-der-app}) from Lemma \ref{lemma-app-lie-der-lin}, we proceed to estimate terms involving $t$-derivatives of $\nabla^{g_t}f_{g_t}$: by integrating by parts,
			\begin{equation}
			\begin{split}\label{huge-est-covid}
			\left|\int_N\langle\frac{\partial}{\partial t}\nabla^{g_t,2}f_t,h\rangle_{g_t}\delta_{g_t}f(h)\,e^{-f_{g_t}}d\mu_{g_t}\right|\lesssim& \left|\int_N\langle\nabla^{g_t,2}\delta_{g_t}f(h),h\rangle_{g_t}\delta_{g_t}f(h)\,e^{-f_{g_t}}d\mu_{g_t}\right|\\
			& +\left|\int_N\langle\mathcal{L}_{\nabla^{g_t}f_{g_t}}(h),h\rangle_{g_t}\delta_{g_t}f(h)\,e^{-f_{g_t}}d\mu_{g_t}\right|\\
			& +\left|\int_N\langle\mathcal{L}_{h(\nabla^{g_t}f_{g_t})}(g_t),h\rangle_{g_t}\delta_{g_t}f(h)\,e^{-f_{g_t}}d\mu_{g_t}\right|.
			\end{split}
			\end{equation}
			
			Let us estimate the first integral on the righthand side of the previous inequalities (\ref{huge-est-covid}):			
			\begin{equation}
			\begin{split}\label{intermed-est-covid}
			\Bigg|\int_N\langle\nabla^{g_t,2}\delta_{g_t}f(h),h\rangle_{g_t}\delta_{g_t}f(h)\,&e^{-f_{g_t}}d\mu_{g_t}\Bigg|\\
			\lesssim\,& \int_N|\nabla^{g_t}\delta_{g_t}f(h)|_{g_t}|\div_{g_t}h|_{g_t}|\delta_{g_t}f(h)|d\mu_{g_t}\\
			&+\int_N|\nabla^{g_t}\delta_{g_t}f(h)|_{g_t}|h|_{g_t}\left(|\nabla^{g_t}f_{g_t}|_{g_t}+|\nabla^{g_t}\delta_{g_t}f(h)|_{g_t}\right)d\mu_{g_t}.
			%&+\int_N|\nabla^{g_t}h|_{g_t}|\nabla^{g_t}f_{g_t}|_{g_t}|\delta_{g_t}f(h)|d\mu_{g_t}.
			\end{split}
			\end{equation}
			Now, since $\div_{g_b}h=0$, one has $|\div_{g_t}h|_{g_t}\lesssim |h|_{g_b}|\nabla^{g_b}h|_{g_b}.$ Using that $|\nabla^{g_t}\delta_{g_t}f(h)|_{g_t}$ decays at least linearly together with (\ref{control-covid-delta-f}) applied to $k=1$,
			\begin{equation*}
			\begin{split}
			\int_N|\nabla^{g_t}\delta_{g_t}f(h)|_{g_t}|\div_{g_t}h|_{g_t}|\delta_{g_t}f(h)|d\mu_{g_t}&\lesssim \|h\|_{C^{2,\alpha}_{\tau}}\|\nabla^{g_b}h\|_{L^2}^2+\|\rho_{g_b}^{-1}h\|_{L^2}\|\nabla^{g_b}h\|_{L^2}\\
			&\lesssim \|h\|_{C^{2,\alpha}_{\tau}}\|\nabla^{g_b}h\|_{L^2}^2,
			\end{split}
			\end{equation*}
			by Young's inequality and Hardy's inequality (Theorem \ref{thm-min-har-inequ}). Thanks to [(\ref{est-grad-first-der-pot-fct-ene}), Proposition \ref{prop-ene-pot-fct}],
			\begin{equation*}
			\int_N|\nabla^{g_t}\delta_{g_t}f(h)|^2_{g_t}|h|_{g_t}d\mu_{g_t}\lesssim\|h\|_{C^0}\|\nabla^{g_t}\delta_{g_t}f(h)\|_{L^2}^2\lesssim \|h\|_{C^0}\|\nabla^{g_b}h\|_{L^2}^2.
			\end{equation*}
			The other integrals involved on the righthand sides of (\ref{intermed-est-covid}) and (\ref{huge-est-covid})  can be treated in a similar way. The same is true for terms involving the second $t$-derivatives of $\nabla^{g_t,2}f_{g_t}$ by using (\ref{sec-var-lie-der-app}) from Lemma \ref{lemma-app-lie-der-lin}.
		\end{proof}
		This concludes the proof of Proposition \ref{local maximum stable integrable}.
	\end{proof}

	\subsection{A first way to prove a \L{}ojasiewicz inequality}\label{naive-loja-sec}~~\\

	In order to prove a \L{}ojasiewicz inequality in the stable integrable case, a natural strategy consists in proving it at an infinitesimal level and then it is sufficient to control the nonlinear terms, see \cite{Has-Sta} for example in the case of a closed manifold. Here, we succintly mention how one can implement this strategy in a non-compact situation. This will be proven in general in the next section.
	
	We start by proving an infinitesimal version of \L{}ojasiewicz inequality.
	
	\begin{prop}\label{prop-baby-loja-l2n/2+1} Let $(N^n,g_b)$, $n\geq 4$, be a linearly stable ALE Ricci-flat metric asymptotic to $\RR^n\slash\Gamma$, for some finite subgroup $\Gamma$ of $SO(n)$ acting freely on $\mathbb{S}^{n-1}$. Let  $\tau\in\left(\frac{n-2}{2},n-2\right)$. Then the following \L{}ojasiewicz inequality holds true for any sufficiently small symmetric $2$-tensor $h\in C_{\tau}^{2}(S^2T^*N)$:
		\begin{equation}
		\langle-L_{g_b} h,h\rangle_{L^2}\leq C\|L_{g_b}h\|^2_{L^2_{\frac{n}{2}+1}}, \quad \label{loja-ineq-stab-babyL2n/2+1}
		\end{equation}\
		for some positive constant $C=C(n,\tau,g_b)$.\\
	\end{prop}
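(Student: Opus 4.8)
The plan is to split $h$ along the finite-dimensional space $\ker_{L^2}L_{g_b}$, on which $L_{g_b}$ vanishes, and then to exploit the linear stability of $g_b$ — in the quantitative form of Theorem~\ref{theo-der-kro} — to obtain coercivity of $-L_{g_b}$ on the orthogonal complement; the inequality \eqref{loja-ineq-stab-babyL2n/2+1} then drops out of a weighted Cauchy--Schwarz inequality combined with Minerbe's Hardy inequality (Theorem~\ref{thm-min-har-inequ}).

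First I would fix the decomposition. By Proposition~\ref{prop-lic-fred}, $\ker_{L^2}L_{g_b}$ is finite-dimensional and, crucially, is contained in $C^\infty_n(S^2T^*N)$; consequently, although a tensor $h\in C^2_\tau$ need not itself lie in $L^2$ when $\tau$ is close to $\frac{n-2}{2}$ (the relevant regime in dimension $4$), the $L^2$-pairing of $h$ against any element of $\ker_{L^2}L_{g_b}$ is an absolutely convergent integral since the integrand is $O(\rho_{g_b}^{-\tau-n})$. I therefore write $h=h^\perp+h^K$, where $h^K$ is the $L^2$-orthogonal projection of $h$ onto $\ker_{L^2}L_{g_b}$ and $h^\perp$ is the complementary piece. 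As $h^K\in C^\infty_n\subset C^2_\tau$ (because $\tau<n-2<n$), one has $h^\perp\in C^2_\tau\subset H^2_{\frac n2-1}$, the last inclusion holding precisely because $\tau>\frac{n-2}{2}$. Since $L_{g_b}h^K=0$ we get $L_{g_b}h=L_{g_b}h^\perp$, and because $L_{g_b}$ is formally self-adjoint with boundary terms that vanish whenever one of the factors is $h^K=O(\rho_{g_b}^{-n})$, the cross term $\langle L_{g_b}h^\perp,h^K\rangle_{L^2}$ vanishes, whence
\[
\langle-L_{g_b}h,h\rangle_{L^2}=\langle-L_{g_b}h^\perp,h^\perp\rangle_{L^2},
\]
an absolutely convergent integral since its integrand is $O(\rho_{g_b}^{-2\tau-2})$ with $\tau>\frac{n-2}{2}$.

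The core estimate is then carried out on $h^\perp$. Splitting the integrand as $\rho_{g_b}\,L_{g_b}h^\perp$ paired with $\rho_{g_b}^{-1}h^\perp$ and applying Cauchy--Schwarz yields
\[
\langle-L_{g_b}h^\perp,h^\perp\rangle_{L^2}\le\|L_{g_b}h^\perp\|_{L^2_{\frac n2+1}}\,\|h^\perp\|_{L^2_{\frac n2-1}}.
\]
Applying Theorem~\ref{thm-min-har-inequ} to $|h^\perp|$ together with Kato's inequality bounds $\|h^\perp\|_{L^2_{\frac n2-1}}\le C(g_b)\,\|\nabla^{g_b}h^\perp\|_{L^2}$. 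Finally, integrating $-\Delta_{g_b}$ by parts — whose boundary term vanishes again because $\tau>\frac{n-2}{2}$ — gives $\|\nabla^{g_b}h^\perp\|_{L^2}^2=\langle-\Delta_{g_b}h^\perp,h^\perp\rangle_{L^2}$, and since $h^\perp\in H^2_{\frac n2-1}$ is $L^2$-orthogonal to $\ker_{L^2}L_{g_b}$ and $g_b$ is linearly stable, Theorem~\ref{theo-der-kro} gives $\langle-\Delta_{g_b}h^\perp,h^\perp\rangle_{L^2}\le(1-\varepsilon(g_b))^{-1}\langle-L_{g_b}h^\perp,h^\perp\rangle_{L^2}$. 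Chaining the three inequalities and dividing by $\langle-L_{g_b}h^\perp,h^\perp\rangle_{L^2}^{1/2}$ — the case where this quantity vanishes being trivial, as $-L_{g_b}\ge0$ on $H^2_{\frac n2-1}$ by Lemma~\ref{lemma-equiv-def-stable} — produces $\langle-L_{g_b}h^\perp,h^\perp\rangle_{L^2}^{1/2}\le C(n,\tau,g_b)\,\|L_{g_b}h^\perp\|_{L^2_{\frac n2+1}}$; squaring and using $L_{g_b}h^\perp=L_{g_b}h$ yields \eqref{loja-ineq-stab-babyL2n/2+1}.

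I do not anticipate a genuine obstacle: the statement is essentially an assembly of Theorem~\ref{theo-der-kro}, the Hardy inequality, and Cauchy--Schwarz. The only delicate point is the bookkeeping of decay rates — verifying that $\langle-L_{g_b}h,h\rangle_{L^2}$ and the projection onto $\ker_{L^2}L_{g_b}$ are meaningful on $C^2_\tau$, and that the two integrations by parts have vanishing boundary terms — all of which reduce to the standing hypothesis $\tau>\frac{n-2}{2}$ and the $\rho_{g_b}^{-n}$ decay of elements of $\ker_{L^2}L_{g_b}$. I would also remark that both sides of \eqref{loja-ineq-stab-babyL2n/2+1} are homogeneous of degree $2$ in $h$, so the smallness assumption is immaterial, and that one may bypass Theorem~\ref{theo-der-kro} entirely by invoking instead the bound $\|h^\perp\|_{L^2_{\frac n2-1}}\le\|h^\perp\|_{H^2_{\frac n2-1}}\le C\,\|L_{g_b}h^\perp\|_{L^2_{\frac n2+1}}$ from Proposition~\ref{prop-lic-fred}.
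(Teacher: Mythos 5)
Your proposal is correct and follows essentially the same route as the paper: weighted Cauchy--Schwarz $\langle-L_{g_b}h,h\rangle_{L^2}\le\|L_{g_b}h\|_{L^2_{\frac n2+1}}\|h\|_{L^2_{\frac n2-1}}$, Minerbe's Hardy inequality to bound $\|h\|_{L^2_{\frac n2-1}}$ by $\|\nabla^{g_b}h\|_{L^2}$, and Theorem~\ref{theo-der-kro} to convert $\langle-\Delta_{g_b}h,h\rangle_{L^2}$ into $\langle-L_{g_b}h,h\rangle_{L^2}$ on the orthogonal complement of $\ker_{L^2}L_{g_b}$. The only difference is presentational: you spell out the orthogonal splitting $h=h^\perp+h^K$ and the convergence of the relevant pairings, where the paper simply asserts that one may restrict to $h\perp\ker_{L^2}L_{g_b}$ after treating the scalar case.
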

	\begin{rk}
		This is the first order version of the inequality 
		$$|\lambda_{\operatorname{ALE}}(g)|\leq C \|\nabla \lambda_{\operatorname{ALE}}(g)\|^2_{L^2_{\frac{n}{2}+1}},$$
		which is an $L^2_{\frac{n}{2}+1}$-\L{}ojasiewicz inequality with optimal exponent $\theta=1$.
	\end{rk}
	\begin{proof}
		Let us prove inequality (\ref{loja-ineq-stab-baby}) for functions on $(N^n,g_b)$ first. Let $u\in C_{\tau}^{2}(N)$ which implies in particular that for any $\tau'<\tau$, $\Delta_{g_b} u\in L^2_{\tau'+2}$ and $u\in L^2_{\tau'}$. Let us now assume that $\tau>\frac{n}{2}-1$ which naturally implies that $u\in L^2_{\frac{n}{2}-1}$, $\nabla^{g_b} u\in L^2_{\frac{n}{2}}$ and $\Delta_{g_b} u \in L^2_{\frac{n}{2}+1}$. By Cauchy-Schwarz inequality, we have $\langle-\Delta_{g_b} u,u\rangle_{L^2}\leq \|\Delta_{g_b} u\|_{L^2_{\frac{n}{2}+1}}\|u\|_{L^2_{\frac{n}{2}-1}}$. Now, by Hardy's inequality from Theorem \ref{thm-min-har-inequ}, we get
		\begin{equation*}
		\|u\|_{L^2_{\frac{n}{2}-1}}^2 = \|\rho_{g_b}^{-1}u\|_{L^2_{\frac{n}{2}}}^2\leq C \|\nabla^{g_b}u\|^2_{L^2} = C\langle-\Delta_{g_b} u,u\rangle_{L^2}.
		\end{equation*}
		
		Therefore, 
		\begin{equation}
		\langle-\Delta_{g_b} u,u\rangle_{L^2}\leq C(n,\tau,g_b)\|\Delta_{g_b} u\|_{L^2_{\frac{n}{2}+1}}\langle-\Delta_{g_b} u,u\rangle_{L^2}^\frac{1}{2}.\label{int-baby-loj-fcts}
		\end{equation}
		
		Now, notice that the proof goes almost verbatim for symmetric $2$-tensors $h\in C_{\tau}^{2}(S^2T^*N)$ such that $\|h\|_{C_{\tau}^0}\leq 1$. Notice also that it is sufficient to restrict to tensors orthogonal to $\ker_{L^2}L_{g_b}$.
		
		Indeed, by Cauchy-Schwarz inequality, $\langle-L_{g_b} h,h\rangle_{L^2}\leq \|L_{g_b} h\|_{L^2_{\frac{n}{2}+1}}\|h\|_{L^2_{\frac{n}{2}-1}}$. On the one hand, by following the same reasoning as in the proof of (\ref{int-baby-loj-fcts}), one then gets:
		\begin{equation}
		\langle-L_{g_b} h,h\rangle_{L^2}\leq C(n,\tau,g_b)\|L_{g_b} h\|_{L^2_{\frac{n}{2}+1}}\langle-\Delta_{g_b} h,h\rangle_{L^2}^\frac{1}{2}.\label{int-baby-loj-tensors}
		\end{equation}
		On the other hand, by Theorem \ref{theo-der-kro}, if $h\in C^{2}_{\tau}(S^2T^*N)$ and if $h\perp\ker_{L^2}L_{g_b}$ (which is well-defined because $\tau>0$ and $\ker_{L^2}L_{g_b}\subset C^0_n$), one gets thanks to (\ref{int-baby-loj-tensors}), 
		\begin{equation}
		\langle-L_{g_b} h,h\rangle_{L^2}\leq C(n,\tau,g_b)\|L_{g_b} h\|_{L^2_{\frac{n}{2}+1}}\langle-L_{g_b} h,h\rangle_{L^2}^\frac{1}{2}.\label{int-baby-loj-tensors-bis}
		\end{equation}
	\end{proof}
	
	We now state and prove an interpolation inequality between weighted Sobolev spaces. By definition of our weighted norms, we have $\|.\|_{L^2}\leq \|.\|_{L^2_{\frac{n}{2}+1}}$. We will show that assuming that our tensors decay at infinity, we have a weaker reverse inequality.
	\begin{lemma}\label{lemma-interpol-delig}
		Let $T$ be in $L^2(S^2T^*N)\,\cap \,L^2_\beta(S^2T^*N)$ for $\beta>\frac{n}{2}+1$. Then, for $\delta\in(0,1)$ such that $\beta = \frac{n}{2}+\frac{1}{1-\delta}$ we have the following control :
		\begin{equation}
		\|T\|_{L^2_{\frac{n}{2}+1}}\leq \|T\|_{L^2}^\delta\|T\|_{L^2_\beta}^{1-\delta}.\label{interpolation L2}
		\end{equation}
	
		In particular, if $T$ is small enough in norm $C^0_{\tau+2}$ for $\tau>\frac{n-2}{2}$, we have $$\|T\|_{L^2_{\frac{n}{2}+1}}\leq \|T\|_{L^2}^\delta,$$ for any $\delta$ such that $ \tau+2> \frac{n}{2}+\frac{1}{1-\delta}$, that is $\delta<\frac{2\tau-(n-2)}{2\tau-(n-4)}$.
	\end{lemma}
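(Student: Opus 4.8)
The plan is to derive \eqref{interpolation L2} from a single application of H\"older's inequality, and then to deduce the ``in particular'' statement from a crude bound on the weighted $L^2_\beta$-norm using the ALE volume growth.

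First I would unwind Definition \ref{def-weighted-sobolev-norms}. Writing $\rho$ for the weight function and $d\mu$ for the volume measure of the background ALE Ricci-flat metric, one has
\begin{equation*}
\|T\|_{L^2}^2=\int_N|T|^2\,d\mu,\qquad \|T\|_{L^2_{\frac n2+1}}^2=\int_N|T|^2\rho^2\,d\mu,\qquad \|T\|_{L^2_\beta}^2=\int_N|T|^2\rho^{2\beta-n}\,d\mu .
\end{equation*}
The key elementary observation is that the relation $\beta=\frac n2+\frac1{1-\delta}$ is equivalent to $2\beta-n=\frac2{1-\delta}$, hence $(2\beta-n)(1-\delta)=2$, so that pointwise on $N$
\begin{equation*}
|T|^2\rho^2=\bigl(|T|^2\bigr)^{\delta}\,\bigl(|T|^2\rho^{2\beta-n}\bigr)^{1-\delta}.
\end{equation*}
Applying H\"older's inequality with the conjugate exponents $p=\tfrac1\delta$ and $q=\tfrac1{1-\delta}$ to the two nonnegative $L^1$-functions $|T|^2$ and $|T|^2\rho^{2\beta-n}$ (both integrable by hypothesis), I obtain
\begin{equation*}
\int_N|T|^2\rho^2\,d\mu\le\Bigl(\int_N|T|^2\,d\mu\Bigr)^{\delta}\Bigl(\int_N|T|^2\rho^{2\beta-n}\,d\mu\Bigr)^{1-\delta}=\|T\|_{L^2}^{2\delta}\,\|T\|_{L^2_\beta}^{2(1-\delta)},
\end{equation*}
and taking square roots yields \eqref{interpolation L2}.

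For the last assertion I would estimate the $L^2_\beta$-norm directly: if $T\in C^0_{\tau+2}$ then $|T|\le\|T\|_{C^0_{\tau+2}}\,\rho^{-\tau-2}$, whence
\begin{equation*}
\|T\|_{L^2_\beta}^2\le\|T\|_{C^0_{\tau+2}}^2\int_N\rho^{\,2\beta-2\tau-4-n}\,d\mu .
\end{equation*}
The only geometric input here is that on an ALE manifold the weighted volume integral $\int_N\rho^{-a}\,d\mu$ converges as soon as $a>n$ (since $\rho$ is comparable to the distance function, the volume grows like $r^n$, and $\rho$ is bounded above and below on the compact core). Here $a=n+2\tau+4-2\beta$, so the integral is finite exactly when $\tau+2>\beta$, i.e. $\frac1{1-\delta}<\tau+2-\frac n2$; since $\tau>\frac{n-2}2$ the right-hand side exceeds $1$, and this rearranges to $\delta<\frac{2\tau-(n-2)}{2\tau-(n-4)}$, a subinterval of $(0,1)$ which is nonempty precisely thanks to $\tau>\frac{n-2}2$. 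For such $\delta$ the integral is a finite constant $C=C(n,g_b,\tau,\delta)$, so $\|T\|_{L^2_\beta}\le C^{1/2}\|T\|_{C^0_{\tau+2}}\le1$ once $\|T\|_{C^0_{\tau+2}}$ is small enough; plugging this into \eqref{interpolation L2} gives $\|T\|_{L^2_{\frac n2+1}}\le\|T\|_{L^2}^{\delta}$, as claimed. I do not expect any genuine obstacle here: it is a one-line H\"older estimate, and the only step requiring care is the bookkeeping of exponents and the verification that the convergence threshold for the weighted volume integral matches exactly the stated bound on $\delta$.
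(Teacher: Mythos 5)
Your proof is correct and follows essentially the same route as the paper: the main inequality \eqref{interpolation L2} is exactly the paper's one-line H\"older estimate with exponents $1/\delta$ and $1/(1-\delta)$ using $(2\beta-n)(1-\delta)=2$. Your verification of the ``in particular'' clause (bounding $\|T\|_{L^2_\beta}$ via the pointwise decay $|T|\le\|T\|_{C^0_{\tau+2}}\rho^{-\tau-2}$ and the convergence of $\int_N\rho^{-a}\,d\mu$ for $a>n$, which matches the threshold $\delta<\frac{2\tau-(n-2)}{2\tau-(n-4)}$) is also correct and fills in a step the paper leaves implicit.
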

	\begin{proof}
		For this, let us choose $0<\delta<1$ such that $\beta=\frac{n}{2}+\frac{1}{1-\delta}$ which always exists since $\beta>\frac{n}{2}+1$. We then have,
		\begin{align}
		\|T\|^2_{L^2_{\frac{n}{2}+1}} &= \int_{N}|T|_{g_b}^{2\delta}\cdot |T|_{g_b}^{2(1-\delta)}\rho_{g_b}^{2}\,d\mu_{g_b}\nonumber\\
		&\leq \left(\int_{N}|T|_{g_b}^2\,d\mu_{g_b}\right)^{\delta}\cdot\left(\int_{N}|T|_{g_b}^2\rho_{g_b}^{\frac{2}{1-\delta}}\,d\mu_{g_b}\right)^{1-\delta}\nonumber\\
		&=\left(\int_{N}|T|_{g_b}^2\,d\mu_{g_b}\right)^{\delta}\cdot\left(\int_{N}|T|_{g_b}^2\rho_{g_b}^{2\beta-n}\,d\mu_{g_b}\right)^{1-\delta}\nonumber\\
		&=\|T\|_{L^2}^{2\delta}\|T\|_{L^2_\beta}^{2(1-\delta)}\nonumber.
		\end{align}
		Here we have made use of the assumption $\frac{2}{1-\delta} = 2\beta-n$ together with the definition of the space $L^2_\beta(S^2T^*N)$.
	\end{proof}
	
We use this inequality in order to find a \L{}ojasiewicz inequality  with an $L^2$ norm of $\nabla\lambda_{\operatorname{ALE}}$ on the right-hand side.
	\begin{prop}\label{prop-baby-loja} Let $(N^n,g_b)$, $n\geq 5$, be a linearly stable ALE Ricci-flat metric asymptotic to $\RR^n\slash\Gamma$, for some finite subgroup $\Gamma$ of $SO(n)$ acting freely on $\mathbb{S}^{n-1}$. Let $\tau\in\left(\frac{n}{2},n-2\right)$. Then for any $0<\delta<\frac{2\tau-(n-2)}{2\tau-(n-4)}$, there exists $C>0$ such that for all  the following \L{}ojasiewicz inequality holds true for any sufficiently small symmetric $2$-tensor $h\in C_{\tau}^{2}(S^2T^*N)$:
		\begin{equation}
		\langle-L_{g_b} h,h\rangle_{L^2}^{2-\theta_{L^2}}\leq C\|L_{g_b}h\|^2_{L^2},\quad\theta_{L^2}:=2-\frac{1}{\delta}, \quad \label{loja-ineq-stab-baby}
		\end{equation}\
		for some positive constant $C=C(n,\theta_{L^2},\tau,g_b)$.\\
	\end{prop}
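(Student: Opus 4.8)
The plan is to obtain this $L^2$-\L{}ojasiewicz inequality as a direct consequence of the $L^2_{\frac{n}{2}+1}$-version proved in Proposition \ref{prop-baby-loja-l2n/2+1}, converting the weight $\rho_{g_b}^2$ into extra decay by means of the interpolation inequality of Lemma \ref{lemma-interpol-delig}. Throughout, one may assume $h\perp_{L^2}\ker_{L^2}L_{g_b}$: since $L_{g_b}$ is formally self-adjoint and $\ker_{L^2}L_{g_b}\subset C^0_n\subset C^2_\tau$ by Proposition \ref{prop-lic-fred}, replacing $h$ by its component orthogonal to the kernel changes neither $L_{g_b}h$ nor $\langle -L_{g_b}h,h\rangle_{L^2}$, hence affects neither side of \eqref{loja-ineq-stab-baby}.

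The first step is to apply Lemma \ref{lemma-interpol-delig} to the symmetric $2$-tensor $T:=L_{g_b}h$. Because $L_{g_b}$ maps $C^2_\tau$ boundedly into $C^0_{\tau+2}$, for $h$ small enough in $C^2_\tau$ the tensor $T$ is small in $C^0_{\tau+2}$, and the standing hypothesis $\tau\in\left(\frac{n}{2},n-2\right)$ (which in particular forces $n\geq 5$ for the interval to be nonempty) gives $\tau+2>\frac{n}{2}+1$. Lemma \ref{lemma-interpol-delig} then yields, for every $\delta$ with $0<\delta<\frac{2\tau-(n-2)}{2\tau-(n-4)}$, equivalently $\tau+2>\frac{n}{2}+\frac{1}{1-\delta}$,
\begin{equation*}
\|L_{g_b}h\|_{L^2_{\frac{n}{2}+1}}\leq\|L_{g_b}h\|_{L^2}^{\delta}.
\end{equation*}

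The second step is to feed this into Proposition \ref{prop-baby-loja-l2n/2+1}, which gives $\langle -L_{g_b}h,h\rangle_{L^2}\leq C\,\|L_{g_b}h\|_{L^2_{\frac{n}{2}+1}}^{2}\leq C\,\|L_{g_b}h\|_{L^2}^{2\delta}$; raising to the power $\frac{1}{\delta}$ then yields
\begin{equation*}
\langle -L_{g_b}h,h\rangle_{L^2}^{\frac{1}{\delta}}\leq C\,\|L_{g_b}h\|_{L^2}^{2},
\end{equation*}
which is exactly \eqref{loja-ineq-stab-baby} with $2-\theta_{L^2}=\frac{1}{\delta}$, i.e. $\theta_{L^2}=2-\frac{1}{\delta}$.

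I do not expect a genuine obstacle here: the whole analytic weight of the statement — the weighted infinitesimal \L{}ojasiewicz inequality together with the Devyver-type positivity estimate of Theorem \ref{theo-der-kro} — has already been packaged into Proposition \ref{prop-baby-loja-l2n/2+1}, so that what is left is the interpolation and exponent bookkeeping above. The only conceptual point worth flagging is that $\delta<1$ forces $\theta_{L^2}<1$, so this route never reaches the optimal exponent $\theta_{L^2}=1$; since $\frac{2\tau-(n-2)}{2\tau-(n-4)}<\frac{n-2}{n}$ for $\tau<n-2$, the best attainable exponent is strictly below $\frac{n-4}{n-2}$, tending to $1$ only as $n\to\infty$ and $\tau\to n-2$, consistent with the merely polynomial convergence of the DeTurck--Ricci flow near Euclidean space.
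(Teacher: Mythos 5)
Your proposal is correct and follows essentially the same route as the paper: the paper's proof likewise applies Lemma \ref{lemma-interpol-delig} to $T:=L_{g_b}h$ to bound $\|L_{g_b}h\|_{L^2_{\frac{n}{2}+1}}$ by $\|L_{g_b}h\|_{L^2}^{\delta}$, substitutes this into the $L^2_{\frac{n}{2}+1}$-inequality of Proposition \ref{prop-baby-loja-l2n/2+1}, and reads off $\theta_{L^2}=2-\frac{1}{\delta}$, noting as you do that a nontrivial exponent requires $\delta>\frac12$ and hence $\tau>\frac{n}{2}$.
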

\begin{proof}
	Let us use the interpolation inequality [\eqref{interpolation L2}, Lemma \ref{lemma-interpol-delig}] on the $L^2_{\frac{n}{2}+1}$- norm on the right-hand side of the inequality \eqref{loja-ineq-stab-babyL2n/2+1}. This leads to the desired inequality (\ref{loja-ineq-stab-baby}) with $\theta>0$ such that $1-\frac{\theta}{2} = \frac{1}{2\delta}$ by considering $T:= L_{g_b} h$. Note however that this leads to $\theta>0$ only if $\delta>\frac{1}{2}$ which is only possible when $\tau>\frac{n}{2}$.
\end{proof}

	\begin{rk}
		The proof above works in all dimensions and yields a nontrivial infinitesimal \L{}ojasiewicz inequality (that is with $0<\theta<1$) assuming that $\tau>\frac{n}{2}$. However, in dimension $3$ and $4$, this imposes $\tau>n-2$ and induces several difficulties later on. We will also see in a forthcoming work that we cannot expect a $C^{2,\alpha}_\tau$-convergence for $\tau>n-2$ along the Ricci flow: this shows that we crucially need a \L{}ojasiewicz inequality adapted to the case $\tau<n-2$.
	\end{rk}

Both of the exponents obtained for the $L^2_{\frac{n}{2}+1}$ and the $L^2$-\L{}ojasiewicz inequalities of Propositions \ref{prop-baby-loja-l2n/2+1}  and \ref{prop-baby-loja} are moreover optimal as one can see in the following example on functions which obviously extends to conformal deformations.

\begin{exmp}
    Let $A\gg 1$ and $\tau>0$. Consider the cut off function $\chi_A$ of Example \ref{exemple masse infinie}. Let us define $u_{A,\tau} = \chi_A \rho_{g_b}^{-\tau}\in C^{2,\alpha}_{\tau}$.
    
    Then, denoting $f(u_{A,\tau})\sim g(A,\tau)$ if there exists $C>0$ independent of $A$ large enough such that $C^{-1}g(A,\tau)<f(u_{A,\tau})<Cg(A,\tau)$, we have the following controls:
    \begin{itemize}
        \item $-\int_{N}u_{A,\tau}\Delta_{g_b} u_{A,\tau}dv_{g_b}=\int_{N}|\nabla^{g_b} u_{A,\tau}|^2dv_{g_b} \sim A^{(n-2)-2\tau}$,
        \item $\|\Delta_{g_b} u_{A,\tau}\|_{L^2}^2=\int_{N}|\Delta_{g_b} u|^2dv_{g_b} \sim A^{(n-4)-2\tau}$, and
        \item $\|\Delta_{g_b} u_{A,\tau}\|_{L^2_{\frac{n}{2}+1}}^2=\int_{N}\rho_{g_b}^2|\Delta_{g_b} u|^2dv_{g_b} \sim A^{(n-2)-2\tau}$.
    \end{itemize}
    We therefore see that we exactly have 
    $$-\int_{N}u_{A,\tau}\Delta_{g_b} u_{A,\tau}dv_{g_b}\sim \|\Delta_{g_b} u_{A,\tau}\|_{L^2_{\frac{n}{2}+1}}^2, $$
     and 
    $$-\int_{N}u_{A,\tau}\Delta_{g_b} u_{A,\tau}dv_{g_b}\sim \|\Delta_{g_b} u_{A,\tau}\|_{L^2}^{2\delta}$$
    with $\delta = \frac{2\tau-(n-2)}{2\tau-(n-4)}$.
\end{exmp}

	\begin{rk}\label{rk-exp-loja}
		Notice that the exponent $2-\theta$ is smaller than $1$ and is asymptotically $1$ as the dimension $n$ increases. This is in contrast with the case where $(N^n,g_b)$ is a closed Riemannian manifold endowed with an integrable Ricci-flat metric $g_b$.
	\end{rk}
	
	By bounding from above the nonlinear terms of $\|\nabla\lambda_{\operatorname{ALE}}(g_b+h)\|_{L^2}$ by $\frac{1}{2}\|L_{g_b}h\|_{L^2}$ and by bounding the higher order terms of $|\lambda_{\operatorname{ALE}}(g_b+h)|$ by $\frac{1}{2}|\langle-L_{g_b}h,h\rangle_{L^2}|$ in an analogous way to the proof of Proposition \ref{local maximum stable integrable}, we get an $L^2$-\L{}ojasiewicz inequality. Note that controlling these nonlinear terms at this point is far from being an easy task, and in particular it does not seem to be possible to do so in dimension $3$ and $4$, see the discussion below Remark \ref{rk-exp-loja}. For these reasons, we do not state it and refer the reader to Theorem \ref{theo-loja-int-opt}.

	There are several difficulties to develop a similar argument in dimension $n=4$. 
	\begin{itemize}
		\item The exponent given in \eqref{loja-ineq-stab-baby} does not provide a \L{}ojasiewicz inequality with $\theta>0$.
		\item In the proof of the bounds for the nonlinear terms, a reason is that $2 = n-2 = \frac{n}{2}$ as well as $0 = \frac{n}{2}-2$ are exceptional values of the Laplacian in this dimension, which complicates the use of Fredholm theory. In particular, the space $\big(\ker_{L^2}L_{g_b}\big)^\perp$ is not closed for the norms $ H^2_0 $ (or $H^2_{-\epsilon}$).
		\item There are also asymptotically constant $2$-tensors in the kernel of the Lichnerowicz Laplacian which have to be dealt with to obtain informations on the $L^2 = L^2_2$-norm of the Hessian of functions. Indeed an inequality $$\|\nabla^{g_b,2}h\|_{L^2}\leq C \|L_{g_b}h\|_{L^2}$$
		for $h\perp \ker_{L^2}L_{g_b}$ is contradicted by the elements of $\ker_{C^0}L_{g_b}\cap (\ker_{L^2}L_{g_b})^\perp$. 
	\end{itemize}
	A priori, on this last point, the orthogonality is not well defined because the elements of $\ker_{L^2}L_{g_b}$ are $O(r^{-4})$ while the elements of $\ker_{C^0}L_{g_b}$ are $O(1)$ and this might lead to a nonconvergent integral. However, the elements of $\ker_{L^2}L_{g_b}$ are asymptotic to $H_2\cdot \rho_{g_b}^{-4}$ for $H_2$ a $2$-tensor whose coefficients are eigenfunctions of the spherical Laplacian for the second eigenvalue while the elements of $\ker_{C^0}L_{g_b}$ are asymptotic to some constant $2$-tensor $H_0$. Since $\int_{\mathbb{S}^3}\langle H_0,H_2\rangle_{\mathbb{S}^3}\, d\mu_{\mathbb{S}^3} = 0$, the $L^2$-product of an element of $\ker_{L^2}L_{g_b}$ and an element of $\ker_{C^0}L_{g_b}$ is well-defined.
	%\end{rk}

	\section{A \L{}ojasiewicz inequality for $\lambda_{\operatorname{ALE}}$: the general case}\label{sec-loja-ineq-gal-case}
	
	In this section, we check that the classical proof of the \L{}ojasiewicz-Simon inequality, and in particular its version summarized in \cite{Col-Min-Ein-Tan-Con}, holds in the context of weighted spaces. We will then deduce a \L{}ojasiewicz-Simon inequality for $\lambda_{\operatorname{ALE}}$ in the neighborhood of a given Ricci-flat ALE metric.
	
	\subsection{A general $L^2_{\frac{n}{2}+1}$-\L{}ojasiewicz inequality for functionals on ALE metrics}\label{sec-gal-loja}~~\\
	
	The scheme of proof of \L{}ojasiewicz-Simon inequality by Lyapunov-Schmidt reduction summarized in \cite{Col-Min-Ein-Tan-Con} extends to the setting of weighted norms. Indeed, the fact that the function spaces are modeled on H\"older spaces $C^{k,\alpha}$ is not so essential in Colding-Minicozzi's proof, the crucial properties of these spaces being that they are Banach and that the linearization of the gradient is Fredholm between them.
	
Before we state the main result of this section, we discuss the notion of the gradient of  functionals in the setting of ALE metrics when defined on $C^{2,\alpha}_{\tau}$ or on $C^{0,\alpha}_{\tau+2}$, $\tau\in \left(\frac{n-2}{2},n-2\right)$, $\alpha\in(0,1)$. If $F:\mathcal{O}\rightarrow \RR$ is a $C^1$ functional defined on an open subset $\mathcal{O}$ of $C^{2,\alpha}_{\tau}$ then it admits a unique gradient denoted by $\nabla F$ defined on $\mathcal{O}$ with values into $L^{2}_{\frac{n}{2}+1}$ such that $D_{h}F(v)=\left<\nabla F(h),v\right>_{L^2}$ for all $h\in\mathcal{O}$ and $v\in L^2_{\frac{n}{2}-1}$. Similarly, we use the same notation to denote the gradient of any $C^1$ functional defined on $C^{0,\alpha}_{\tau+2}\subset L^2_{\frac{n}{2}+1}$, the only difference here being that the gradient belongs to $L^2_{\frac{n}{2}-1}$. 
	\begin{prop}\label{Lojasiewicz ineq weighted}
		Let $\tau\in(\frac{n-2}{2},n-2)$, $\alpha\in (0,1)$ and let $(N^n,g_b)$ be an ALE Ricci-flat metric. Let $E$ (respectively $F$) be a closed subspace of $L^2_{\frac{n}{2}-1}(S^2T^*N)$ (respectively of $L^2_{\frac{n}{2}+1}(S^2T^*N)$) such that $ C^{2,\alpha}_{\tau}(S^2T^*N)\,\cap\,E$ and $C^{0,\alpha}_{\tau+2}(S^2T^*N)\,\cap\,F$ are respectively a closed subset of $C^{2,\alpha}_{\tau}(S^2T^*N)$ and a closed subset of $C^{0,\alpha}_{\tau+2}(S^2T^*N)$. Let $G : \mathcal{O}\subset C^{2,\alpha}_{\tau}(S^2T^*N) \to \RR$ be an analytic functional in the sense of Definition \ref{def-analytic} defined on $\mathcal{O}$, a neighborhood of $0$ in $  C^{2,\alpha}_{\tau}(S^2T^*N)\cap E$.
		
		If it satisfies,
		\begin{enumerate}
			\item\label{item-0} the gradient of $G$, $\nabla G : \mathcal{O}\to C^{0,\alpha}_{\tau+2}$ has a Fr\'echet derivative at each point which varies continuously, with $\nabla G(0)=0$, and 
			\begin{equation}
			\big\|\nabla G(x)-\nabla G(y)\big\|_{L^2_{\frac{n}{2}+1}} \leq C\|x-y\|_{H^{2}_{\frac{n}{2}-1}},\label{lip-bd-nabla-G}
			\end{equation}
						\item \label{item-0-bis}the Fr\'echet derivative of $\nabla G:\mathcal{O}\rightarrow C^{0,\alpha}_{\tau}$ is continuous when interpreted as a map from $H^{2}_{\frac{n}{2}-1}$ to $L^2_{\frac{n}{2}+1}$,
			\item the linearization $L$ of (the extension to $H^2_{\frac{n}{2}-1}$ of) $\nabla G$ at $0$,
			\begin{enumerate}
				%\item is symmetric,
				\item \label{item-1}is bounded from $H^{2}_{\frac{n}{2}-1}$ to $L^2_{\frac{n}{2}+1}$ and Fredholm $H^{2}_{\frac{n}{2}-1}\cap E$ to $L^2_{\frac{n}{2}+1}\cap F$,
				\item \label{item-2} its kernel $\mathbf{K}$ on $H^{2}_{\frac{n}{2}-1}\cap E$ equals its $L^2$-cokernel on $L^2_{\frac{n}{2}+1}\cap F$,
				\item \label{item-3} is bounded from $C^{2,\alpha}_{\tau}$ to $C^{0,\alpha}_{\tau+2}$,
				\item \label{item-4} is Fredholm from $C^{2,\alpha}_{\tau}\,\cap\, E$ to $C^{0,\alpha}_{\tau+2}\,\cap\, F$,
				\item \label{item-5} its kernel on $C^{2,\alpha}_\tau\,\cap \,E$ equals its $L^2$-cokernel on $C^{0,\alpha}_{\tau+2}\,\cap\,F$ and is equal to $\mathbf{K}$,
			\end{enumerate}
		\end{enumerate}
		then, there exist $\theta \in (0,1]$ and $C>0$ such that for all sufficiently small $x$ in $C^{2,\alpha}_{\tau}(S^2T^*N)\cap E$,
		\begin{equation*}
		|G(x)-G(0)|^{2-\theta}\leq C\|\nabla G(x)\|_{L^2_{\frac{n}{2}+1}}^{2}.
		\end{equation*}
		Moreover, the constant $\theta$ is independent of $\alpha\in(0,1)$ and is a monotone nondecreasing function of $\tau\in\left(\frac{n-2}{2},n-2\right)$.
	\end{prop}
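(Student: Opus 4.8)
\emph{Strategy.} The plan is to run the Lyapunov--Schmidt reduction of \cite{sim}, in the streamlined form of \cite{Col-Min-Ein-Tan-Con}, the novelty being that the argument lives simultaneously in two functional-analytic pictures — a Hölder one, in which the analytic implicit function theorem and Fredholm theory (Lemma \ref{th fcts implicites} and hypotheses (\ref{item-3})--(\ref{item-5})) apply to $\nabla G:\mathcal{O}\to C^{0,\alpha}_{\tau+2}$, and a Sobolev one, in which the Lipschitz bound \eqref{lip-bd-nabla-G} and the a priori estimates take place. Write $\mathcal{M}:=\nabla G$ and $L:=D\mathcal{M}(0)$. By (\ref{item-1})--(\ref{item-2}), $L$ has a finite-dimensional kernel $\mathbf{K}$; arguing as in Proposition \ref{prop-lic-fred} one checks that $\mathbf{K}$ is the same in all the weights considered and consists of tensors decaying at least like $\rho_{g_b}^{-n}$, so $\mathbf{K}\subset C^{2,\alpha}_{\tau}\cap E$ and the $L^2$-orthogonal projection $\Pi$ onto $\mathbf{K}$ is bounded both on $C^{2,\alpha}_{\tau}\cap E$ and on $H^2_{\frac{n}{2}-1}\cap E$. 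By (\ref{item-2}) and (\ref{item-5}) the image of $L$ is, in either picture, the $L^2$-orthogonal complement of $\mathbf{K}$; hence $L$ restricts to a Banach isomorphism from $\ker\Pi\cap C^{2,\alpha}_{\tau}\cap E$ onto $\ker\Pi\cap C^{0,\alpha}_{\tau+2}\cap F$, and from $\ker\Pi\cap H^2_{\frac{n}{2}-1}\cap E$ onto $\ker\Pi\cap L^2_{\frac{n}{2}+1}\cap F$.

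\emph{Reduction to finite dimensions.} I would apply Lemma \ref{th fcts implicites} to the analytic map $(\kappa,z)\mapsto(\Id-\Pi)\mathcal{M}(\kappa+z)$, defined near $(0,0)$ on $\mathbf{K}\times(\ker\Pi\cap C^{2,\alpha}_{\tau}\cap E)$ with values in $\ker\Pi\cap C^{0,\alpha}_{\tau+2}\cap F$: it vanishes at $(0,0)$ since $\mathcal{M}(0)=0$, its $z$-differential there is the isomorphism $L|_{\ker\Pi}$, and its $\kappa$-differential is $(\Id-\Pi)L|_{\mathbf{K}}=0$. This yields an analytic $\psi:\mathbf{K}\supset U\to\ker\Pi\cap C^{2,\alpha}_{\tau}$ with $\psi(0)=0$, $D\psi(0)=0$ and $(\Id-\Pi)\mathcal{M}(\kappa+\psi(\kappa))\equiv0$. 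Setting $d(\kappa):=G(\kappa+\psi(\kappa))$, an analytic function on the finite-dimensional space $\mathbf{K}$, and differentiating with the help of $\psi(\kappa)\in\ker\Pi$ and the defining equation gives $\nabla d(\kappa)=\Pi\mathcal{M}(\kappa+\psi(\kappa))$. The classical \L{}ojasiewicz inequality \cite{loj} then provides $\theta\in(0,1]$ and $C$ with $|d(\kappa)-d(0)|^{2-\theta}\le C|\nabla d(\kappa)|^{2}$.

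\emph{Comparison estimates and conclusion.} For small $x\in C^{2,\alpha}_{\tau}\cap E$ put $\kappa:=\Pi x$, $y:=x-\kappa\in\ker\Pi\cap C^{2,\alpha}_{\tau}$. The three estimates to prove are: (i) $\|y-\psi(\kappa)\|_{H^2_{\frac{n}{2}-1}}\le C\|\mathcal{M}(x)\|_{L^2_{\frac{n}{2}+1}}$, from the local invertibility of $z\mapsto(\Id-\Pi)\mathcal{M}(\kappa+z)$ near $z=\psi(\kappa)$ (its derivative being close to the isomorphism $L|_{\ker\Pi}$ by (\ref{item-0-bis})) together with \eqref{lip-bd-nabla-G}; (ii) $|G(x)-d(\kappa)|\le C\|\mathcal{M}(x)\|_{L^2_{\frac{n}{2}+1}}^{2}$, obtained by integrating $\langle\mathcal{M}(\cdot),y-\psi(\kappa)\rangle_{L^2}=\langle(\Id-\Pi)\mathcal{M}(\cdot),y-\psi(\kappa)\rangle_{L^2}$ along the segment from $\kappa+\psi(\kappa)$ to $x$, using $(\Id-\Pi)\mathcal{M}(\kappa+\psi(\kappa))=0$, \eqref{lip-bd-nabla-G} and (i); (iii) $|\nabla d(\kappa)|\le C\|\mathcal{M}(x)\|_{L^2_{\frac{n}{2}+1}}$, from $\nabla d(\kappa)=\Pi\mathcal{M}(\kappa+\psi(\kappa))$, \eqref{lip-bd-nabla-G} and (i). Since $d(0)=G(0)$, combining (ii), (iii) and the finite-dimensional \L{}ojasiewicz inequality gives $|G(x)-G(0)|^{2-\theta}\le C(\|\mathcal{M}(x)\|_{L^2_{\frac{n}{2}+1}}^{2}+\|\mathcal{M}(x)\|_{L^2_{\frac{n}{2}+1}}^{2(2-\theta)})$, and since $\theta\le1$ while $\|\mathcal{M}(x)\|_{L^2_{\frac{n}{2}+1}}\le C\|x\|_{C^{2,\alpha}_{\tau}}$ by \eqref{lip-bd-nabla-G} is arbitrarily small, the last term is absorbed, yielding $|G(x)-G(0)|^{2-\theta}\le C\|\nabla G(x)\|_{L^2_{\frac{n}{2}+1}}^{2}$. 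For the final assertion: $\mathbf{K}$, $\psi$ and the germ of $d$ at $0$ do not depend on $\alpha$, nor on $\tau$ over the overlap of two admissible weights (uniqueness in Lemma \ref{th fcts implicites}), so one may take $\theta$ equal to the \L{}ojasiewicz exponent of $d$; enlarging $\tau$ only shrinks the class of admissible perturbations $x$, whence $\theta$ may be chosen nondecreasing in $\tau$.

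\emph{Expected main difficulty.} Keeping the two pictures coherent is the delicate point: the implicit function theorem producing $\psi$ must be run with $\nabla G$ between the Hölder spaces $C^{2,\alpha}_{\tau}$ and $C^{0,\alpha}_{\tau+2}$, where ellipticity and Fredholmness are available via (\ref{item-3})--(\ref{item-5}), whereas every quantitative step in estimates (i)--(iii) is forced to use the weighted $L^2$-pairing, \eqref{lip-bd-nabla-G} and hypothesis (\ref{item-0-bis}), i.e. the Sobolev picture of (\ref{item-1})--(\ref{item-2}). Checking that $\mathbf{K}$ and its $L^2$-projection $\Pi$ are literally the same finite-dimensional object in both settings, so that $d$ is unambiguously defined and $\nabla d(\kappa)=\Pi\mathcal{M}(\kappa+\psi(\kappa))$ makes sense, is precisely what the matching of kernels and $L^2$-cokernels in (\ref{item-2}) and (\ref{item-5}) is there to guarantee.
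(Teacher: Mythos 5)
Your proof is correct, but it follows the classical Simon-style Lyapunov--Schmidt reduction rather than the Colding--Minicozzi variant used in the paper. You solve the projected equation $(\Id-\Pi)\nabla G(\kappa+z)=0$ for $z=\psi(\kappa)$ via the analytic implicit function theorem and reduce to $d(\kappa)=G(\kappa+\psi(\kappa))$; the paper instead inverts the single map $\mathcal{N}=\nabla G+\Pi_{\mathbf{K}}$ (Lemma \ref{reduction LS}), sets $f=G\circ\mathcal{N}^{-1}$ and restricts $f$ to $\mathbf{K}$. The two constructions chart the same local critical manifold $\{(\Id-\Pi)\nabla G=0\}$ in different finite-dimensional coordinates, so the reduced functions have the same \L{}ojasiewicz exponent. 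Your route buys the clean exact identity $\nabla d(\kappa)=\Pi\,\nabla G(\kappa+\psi(\kappa))$ (orthogonality of $D\psi(\kappa)$ to $\mathbf{K}$ kills the extra term), which replaces the paper's Lemma \ref{control nabla f Pi nabla G} and its chain-rule/dual-norm bookkeeping through $(D\mathcal{N})^{-1}$; and your estimate (ii) integrates $G$ along a segment in the domain rather than along the target-space path $y_t=\Pi_{\mathbf{K}}(x)+t\nabla G(x)$ of Lemma \ref{loja orth noyau 1}. The price is that your estimate (i) requires a quantitative lower bound for $z\mapsto(\Id-\Pi)\nabla G(\kappa+z)$ at each base point $\psi(\kappa)$, which you correctly extract from hypothesis (\ref{item-0-bis}) and the mean value inequality, whereas the paper gets the analogous Lipschitz bound once and for all from the global inverse $\Phi$. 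Two shared (harmless) implicit assumptions you inherit from the paper: that $\nabla G$ maps $E$ into $F$ and is itself analytic (needed for the analytic implicit function theorem, though item (\ref{item-0}) only states $C^1$), and that $\mathbf{K}$ lies in $C^{0,\alpha}_{\tau+2}$ so that $\Pi$ is bounded on the target H\"older space; both hold in the application to $\lambda_{\operatorname{ALE}}$. Your closing argument for the monotonicity of $\theta$ in $\tau$ via uniqueness of $\psi$ across weights and the nesting of the balls $B_{C^{2,\alpha}_{\tau}}$ is also sound and slightly more explicit than the paper's one-line assertion.
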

	\begin{rk}
		We will use this quite general statement with
		\begin{itemize}
			\item the closed subspaces $E = \ker\div_{g_b}\subset C^{2,\alpha}_{\tau}$ and $F:=\div_{g_b}^{\ast}\left(C^{\infty}_c(TN)\right)^{\perp}.$
			\item the functional $G={\lambda}_{ALE}(g_b +.)$ defined on a $C^{2,\alpha}_\tau$-neighborhood of $0$ by Section \ref{extension tilde lambda},
			\item its $L^2(e^{-f_g}d\mu_g)$-gradient, $$h\mapsto\Ric(g_b+h) + \nabla^{g_b+h,2}f_{g_b+h},$$
			\item the linearization of $h\mapsto\Ric(g_b+h) + \nabla^{g_b+h,2}f_{g_b+h}$ at $0$ on $E$ is $\frac{1}{2}L_{g_b}$.
			% which is Fredholm from $C_{\tau}^{2,\alpha}$ to $C^{0,\alpha}_{\tau+2}$ whose kernel and $L^2$-cokernel are equal to $\ker_{L^2}L_{g_b}$.
		\end{itemize}
	\end{rk}
	\begin{rk}
		Proposition \ref{Lojasiewicz ineq weighted} is stated in terms of function spaces modelled on symmetric $2$-tensors, the only reason for this restriction being its main application to the functional $\lambda_{\operatorname{ALE}}$ defined on metrics. The condition $\frac{n-2}{2}<\tau< n-2$ is assumed to ensure the Fredholmness of the linearization of the gradient in the particular case of $\lambda_{\operatorname{ALE}}$.
	\end{rk}

	\subsection{Proof of Proposition \ref{Lojasiewicz ineq weighted}}\label{sec-prop-loja-ineq}~~\\
	
	Let us consider $G$ satisfying the assumptions of Proposition \ref{Lojasiewicz ineq weighted} and let us prove that we can reduce our context to the finite-dimensional case by Lyapunov-Schmidt reduction by following the scheme of proof of \cite[Section 7]{Col-Min-Ein-Tan-Con}. We first have to find a replacement for \cite[Lemma 7.5]{Col-Min-Ein-Tan-Con} which generally does not hold in the setting of weighted Hölder spaces because of the following remark.
	\begin{rk}\label{cokernel weighted spaces}
		Even if $L$ is selfadjoint, its kernel and its $L^2$-cokernel might be different because we are dealing with weighted H\"older spaces. For example, the $L^2$-cokernel of the Lichnerowicz operator $L_{g_b} : C^{2,\alpha}_{\tau}(S^2T^*N)\to C^{0,\alpha}_{\tau+2}(S^2T^*N)$, that is the space $\mathbf{C}$ such that $L_{g_b}(C^{2,\alpha}_{\tau}) = \mathbf{C}^\perp \cap C^{0,\alpha}_{\tau+2}$ is the kernel of $L$ on $C^{k,\alpha}_{n-(\tau+2)}(S^2T^*N)$ where $-2<n-(\tau+2)<\frac{n}{2}-1$ (see Note \ref{note L2 cokernel}) which is larger than $\mathbf{K}$ if $\tau>n-2$. We therefore always have $\mathbf{K}\subset\mathbf{C}$ here and the inclusion can be strict in our applications because there are asymptotically constant $2$-tensors in the kernel of the Lichnerowicz operator on a Ricci-flat ALE metrics if we do not impose a decay at infinity.
	\end{rk}
	Denote by $\mathbf{K}$ the finite dimensional kernel (and $L^2$-cokernel for $0<\tau<n-2$) of $L : C^{2,\alpha}_{\tau}(S^2T^*N)\cap E\to C^{0,\alpha}_{\tau+2}(S^2T^*N)\cap F$, and define $\Pi_\mathbf{K}$ to be the associated $L^2$-projection onto $\mathbf{K}$. Define the mapping:
	\begin{equation*}
	\mathcal{N} =\nabla G + \Pi_\mathbf{K}.
	\end{equation*}
	\begin{lemma}\label{reduction LS}
		There is an open subset $$\mathcal{U}\subset  C^{0,\alpha}_{\tau+2}(S^2T^*N)\,\cap\,F,$$ about $0$ and a map $\Phi : \mathcal{U}\to C^{2,\alpha}_{\tau}(S^2T^*N)\cap E$ with $\Phi(0) = 0$, and $C>0$, so that for any $x,y\in \mathcal{U}$ and $z\in C^{2,\alpha}_{\tau}(S^2T^*N)\,\cap E$ sufficiently small,
		\begin{itemize}
			\item $\Phi\circ \mathcal{N}(z) = z$ and $\mathcal{N}\circ\Phi(x) = x$,
			\item $ \|\Phi(x)\|_{C^{2,\alpha}_{\tau}}\leq C \|x\|_{C^{0,\alpha}_{\tau+2}} $ and $ \|\Phi(x)-\Phi(y)\|_{H^2_{\frac{n}{2}-1}}\leq C \|x-y\|_{L^2_{\frac{n}{2}+1}} $,
			\item the function $ f := G\circ \Phi$ is analytic on $\mathcal{U}$. In particular, it is analytic on $\mathbf{K}$.
		\end{itemize}
	\end{lemma}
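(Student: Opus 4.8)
The plan is to realize $\Phi$ as the local inverse of $\mathcal{N}=\nabla G+\Pi_{\mathbf K}$, so that the whole statement reduces to showing that $\mathcal{N}$ is an analytic local diffeomorphism at $0$, simultaneously on the Hölder scale $C^{2,\alpha}_{\tau}\cap E\to C^{0,\alpha}_{\tau+2}\cap F$ and, after extension, on the Sobolev scale $H^2_{\frac n2-1}\cap E\to L^2_{\frac n2+1}\cap F$. Since $\nabla G(0)=0$ and $\Pi_{\mathbf K}$ is linear, $\mathcal{N}(0)=0$ and the Fréchet derivative of $\mathcal{N}$ at $0$ is $L+\Pi_{\mathbf K}$ in both settings. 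First I would check that $L+\Pi_{\mathbf K}$ is an isomorphism $C^{2,\alpha}_{\tau}\cap E\to C^{0,\alpha}_{\tau+2}\cap F$: for injectivity, if $(L+\Pi_{\mathbf K})x=0$ then $Lx=-\Pi_{\mathbf K}x\in\mathbf K$, but by items \ref{item-4}--\ref{item-5} the image of $L$ is the $L^2$-orthogonal complement of $\mathbf K$ inside $C^{0,\alpha}_{\tau+2}\cap F$, so $Lx\in\mathbf K\cap\mathbf K^{\perp}=\{0\}$, forcing $x\in\mathbf K$ and then $x=\Pi_{\mathbf K}x=0$; for surjectivity, given $y$ write $y=\Pi_{\mathbf K}y+(y-\Pi_{\mathbf K}y)$ with $\Pi_{\mathbf K}y\in\mathbf K\subset C^{2,\alpha}_{\tau}\cap E$ and $y-\Pi_{\mathbf K}y\in\mathbf K^{\perp}\cap C^{0,\alpha}_{\tau+2}\cap F=L(C^{2,\alpha}_{\tau}\cap E)$, pick $x_1\perp_{L^2}\mathbf K$ with $Lx_1=y-\Pi_{\mathbf K}y$, and note $\Pi_{\mathbf K}y+x_1$ maps to $y$. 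The open mapping theorem gives the isomorphism, and the verbatim argument with items \ref{item-1}--\ref{item-2} in place of \ref{item-4}--\ref{item-5} gives that $L+\Pi_{\mathbf K}$ is also an isomorphism $H^2_{\frac n2-1}\cap E\to L^2_{\frac n2+1}\cap F$; boundedness of $\Pi_{\mathbf K}$ between all these spaces is immediate as $\mathbf K$ is finite-dimensional and, by items \ref{item-2} and \ref{item-5}, consists of tensors lying in $C^{2,\alpha}_{\tau}\cap H^2_{\frac n2-1}$ and in the dual space $L^2_{\frac n2-1}$.

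Next I would apply the analytic inverse function theorem (Lemma \ref{th fcts implicites}, Appendix \ref{app-A}) to the analytic map $\mathcal{N}:\mathcal{O}\to C^{0,\alpha}_{\tau+2}\cap F$: from $\mathcal{N}(0)=0$ and $d_0\mathcal{N}=L+\Pi_{\mathbf K}$ invertible, $\mathcal{N}$ restricts to an analytic diffeomorphism of a neighborhood of $0$ onto an open set $\mathcal{U}\ni 0$ in $C^{0,\alpha}_{\tau+2}\cap F$; set $\Phi:=\mathcal{N}^{-1}$, which is analytic, satisfies $\Phi(0)=0$, $\Phi\circ\mathcal{N}=\mathrm{id}$, $\mathcal{N}\circ\Phi=\mathrm{id}$, and $\|\Phi(x)\|_{C^{2,\alpha}_{\tau}}\le C\|x\|_{C^{0,\alpha}_{\tau+2}}$ for $x$ small, since $\Phi$ is $C^1$ with bounded derivative and vanishes at $0$. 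For the Sobolev Lipschitz bound, for $x,y$ in a small $C^{2,\alpha}_{\tau}$-ball I would write
\[
\mathcal{N}(x)-\mathcal{N}(y)=(L+\Pi_{\mathbf K})(x-y)+\int_0^1\big(d_{y+t(x-y)}(\nabla G)-L\big)(x-y)\,dt ,
\]
and estimate the integral: by items \ref{item-0} and \ref{item-0-bis}, the Fréchet derivative of $\nabla G$ read as a map $H^2_{\frac n2-1}\to L^2_{\frac n2+1}$ is continuous and equals $L$ at $0$, so after shrinking the ball the integral term has $L^2_{\frac n2+1}$-norm at most $\varepsilon\|x-y\|_{H^2_{\frac n2-1}}$ with $\varepsilon$ as small as desired; combined with the lower bound $\|(L+\Pi_{\mathbf K})(x-y)\|_{L^2_{\frac n2+1}}\ge c\|x-y\|_{H^2_{\frac n2-1}}$ coming from the Sobolev-scale isomorphism, choosing $\varepsilon<c$ yields $\|x-y\|_{H^2_{\frac n2-1}}\le C\|\mathcal{N}(x)-\mathcal{N}(y)\|_{L^2_{\frac n2+1}}$; taking $x=\Phi(a)$, $y=\Phi(b)$ gives the claimed $\|\Phi(a)-\Phi(b)\|_{H^2_{\frac n2-1}}\le C\|a-b\|_{L^2_{\frac n2+1}}$. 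Finally $f:=G\circ\Phi$ is analytic on $\mathcal{U}$ as a composition of analytic maps, hence so is its restriction to the finite-dimensional subspace $\mathbf K\cap\mathcal{U}$, which is therefore real-analytic in the classical sense.

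The main difficulty I expect is keeping the two functional-analytic scales coherent: the inverse function theorem must be run on the Hölder scale, where $G$ and $\nabla G$ are genuinely analytic, to produce an analytic $\Phi$, whereas the quantitative Lipschitz control needed downstream for the \L{}ojasiewicz estimate lives on the Sobolev scale. The bridge is precisely that the \emph{same} finite-rank projection $\Pi_{\mathbf K}$ serves in both settings, which is what items \ref{item-2} and \ref{item-5} guarantee by forcing the kernel and the $L^2$-cokernel of $L$ to coincide and to equal the common space $\mathbf K$ on both scales. The one genuinely delicate analytic input is the continuity of $x\mapsto d_x(\nabla G)$ in the weaker operator norm $H^2_{\frac n2-1}\to L^2_{\frac n2+1}$ together with $d_0(\nabla G)=L$ there; this is exactly item \ref{item-0-bis}, and establishing it for $\nabla\lambda_{\operatorname{ALE}}$ is where Propositions \ref{snd-var-gal-lambda}, \ref{var-vol-var-ell-eqn-prop} and \ref{prop-ene-pot-fct} will enter.
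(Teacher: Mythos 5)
Your proof is correct and follows essentially the same route as the paper: both invert $\mathcal{N}=\nabla G+\Pi_{\mathbf K}$ via the analytic inverse/implicit function theorem on the Hölder scale after checking $L+\Pi_{\mathbf K}$ is an isomorphism there, and both obtain the Sobolev Lipschitz bound from the isomorphism $L+\Pi_{\mathbf K}:H^2_{\frac n2-1}\cap E\to L^2_{\frac n2+1}\cap F$ together with the continuity of $x\mapsto d_x(\nabla G)$ in the weaker operator norm (item~(\ref{item-0-bis})). The only cosmetic differences are that the paper deduces bijectivity of $D_0\mathcal{N}$ from Fredholmness of index~$0$ plus injectivity rather than your explicit surjectivity argument, and leaves the Taylor-remainder estimate implicit where you write it out.
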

	\begin{proof}		
		By assumptions (\ref{item-0}) and (\ref{item-5}), the mapping $\mathcal{N} : C^{2,\alpha}_{\tau}\,\cap\, E\to   C^{0,\alpha}_{\tau+2}\,\cap\,F$ is $C^1$ and its Fr\'echet derivative at $0$ is 
		$$D_0\mathcal{N} = L + \Pi_\mathbf{K}.$$
		Note that since $L$ is Fredholm of index $0$ by assumptions (\ref{item-4}) and (\ref{item-5}), it is enough to prove that $D_0\mathcal{N}$ is injective and Fredholm in order to use the implicit function theorem to define $\Phi$. The kernel $\mathbf{K}$ being finite dimensional, the projection $\Pi_{\mathbf{K}}$ is a compact operator which implies that $D_0\mathcal{N}$ is Fredholm. Now, by assumption (\ref{item-5}), the $L^2$-cokernel of $L$ is $\mathbf{K}$. Therefore, if $L(x) + \Pi_\mathbf{K}(x) = 0$, then, by projecting on $\mathbf{K}^\perp$, we have $L(x) = 0$, hence $x\in \mathbf{K}$ and $\Pi_\mathbf{K}(x) =0$, thus, finally $x = 0$. We can then conclude like in \cite[Lemma 7.5]{Col-Min-Ein-Tan-Con} by using the implicit function theorem, Lemma \ref{th fcts implicites}. 
		
		The bound $ \|\Phi(x)\|_{C^{2,\alpha}_{\tau}}\leq C \|x\|_{C^{0,\alpha}_{\tau+2}} $ comes from the integral mean value theorem in Banach spaces and the fact that 
		\begin{equation}
		D_y\Phi = \big(D_{\Phi(y)}\mathcal{N}\big)^{-1}\label{inverse linearisation Phi}
		\end{equation}
		is continuous and bounded from $\mathcal{U} \subset C^{0,\alpha}_{\tau+2}\,\cap\,F$ to $C^{2,\alpha}_{\tau}$.
		
		The Lipschitz bound $ \|\Phi(x)-\Phi(y)\|_{H^2_{\frac{n}{2}-1}}\leq C \|x-y\|_{L^2_{\frac{n}{2}+1}} $ for sufficiently small $x,y\in \mathcal{U}$ is equivalent to $ \|x-y\|_{H^2_{\frac{n}{2}-1}}\leq C \|\mathcal{N}(x)-\mathcal{N}(y)\|_{L^2_{\frac{n}{2}+1}} $ for sufficiently small $x,y\in \mathcal{O}$. This in turn is implied if $ \|x-y\|_{H^2_{\frac{n}{2}-1}}\leq C \|D_0\mathcal{N}(x-y)\|_{L^2_{\frac{n}{2}+1}} $ for sufficiently small $x,y\in \mathcal{O}$ by assumption (\ref{item-0-bis}). Now, the same reasoning that led us to prove that $\mathcal{N}$ is a local diffeomorphism at $0$ between weighted H\"older spaces, $L+\Pi_{\mathbf{K}}:H^{2}_{\frac{n}{2}-1}\,\cap\,E\rightarrow L^2_{\frac{n}{2}+1}\,\cap\,F$ is an isomorphism of Banach spaces by assumptions (\ref{item-1}) and (\ref{item-2}): this implies the desired lower bound on $\|D_0\mathcal{N}(x-y)\|_{L^2_{\frac{n}{2}+1}}$ since $L+\Pi_{\mathbf{K}}=D_0\mathcal{N}$ on $C^{2,\alpha}_{\tau}\,\cap\,E$.

		The analyticity of $f$ on $\mathcal{U}$ (and therefore on $\mathbf{K}$ by assumption (\ref{item-5})) comes from the analyticity of $G$ and that of $\Phi$ ensured by the analytic implicit function theorem stated in Lemma \ref{th fcts implicites}.
	\end{proof}
	\begin{rk}
		In case the $L^2$-cokernel $\mathbf{C}$ and the kernel $\mathbf{K}$ are different, one can instead define $$\mathcal{N} := \Pi_{\mathbf{C}^\perp}\circ \nabla G +\Pi_\mathbf{K},$$
		and define $\Phi$ on its image. This however induces some technical difficulties in weighted spaces. Since we do not need it presently we only considered the case when $\mathbf{C} = \mathbf{K}$.
	\end{rk}
	
	Let us now adapt \cite[Lemma 7.10]{Col-Min-Ein-Tan-Con} to our slightly different case.
	\begin{lemma}\label{control nabla f Pi nabla G}
		There exists $C>0$ such that for any $x$ in a small enough neighborhood of $0$ in $C^{2,\alpha}_{\tau}(S^2T^*N)\,\cap\, E$, we have 
		\begin{equation}
		\|\nabla f(\Pi_\mathbf{K}(x))\|_{L^2_{\frac{n}{2}-1}}\leq C\|\nabla G(x)\|_{L^2_{\frac{n}{2}+1}}.\label{est-nabla-f-nabla-G}
		\end{equation}
		More generally, if $y_t:=\Pi_{\mathbf{K}}(x)+t\nabla G(x)$, $t\in[0,1]$, for $x$ in a small enough neighborhood of $0$ in $C^{2,\alpha}_{\tau}(S^2T^*N)\,\cap\, E$, then:
		\begin{equation}
		\|\nabla f(y_t)\|_{L^2_{\frac{n}{2}-1}}\leq C\|\nabla G(x)\|_{L^2_{\frac{n}{2}+1}},\label{est-nabla-f-nabla-G-path}
		\end{equation}
		for some positive constant $C$ independent of $x$ and $t$.
	\end{lemma}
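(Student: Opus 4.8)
The plan is to transcribe the argument of \cite[Lemma 7.10]{Col-Min-Ein-Tan-Con} to the weighted setting, the two nonroutine inputs being the Lipschitz estimates for $\Phi$ from Lemma \ref{reduction LS} and the $L^2$-duality between the spaces $L^2_{\frac{n}{2}\pm 1}$ discussed in Note \ref{note L2 cokernel}.

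First I would differentiate $f=G\circ\Phi$: for $y$ small in $\mathcal{U}$ and $v\in L^2_{\frac{n}{2}+1}(S^2T^*N)\cap F$, the chain rule together with \eqref{inverse linearisation Phi} gives $D_yf(v)=\langle \nabla G(\Phi(y)),(D_{\Phi(y)}\mathcal{N})^{-1}(v)\rangle_{L^2}$. Since $D_0\mathcal{N}=L+\Pi_\mathbf{K}$ is an isomorphism from $H^2_{\frac{n}{2}-1}\cap E$ onto $L^2_{\frac{n}{2}+1}\cap F$ by assumptions (\ref{item-1}) and (\ref{item-2}), and invertibility is an open condition, $(D_{\Phi(y)}\mathcal{N})^{-1}$ is bounded from $L^2_{\frac{n}{2}+1}\cap F$ to $H^2_{\frac{n}{2}-1}\cap E\hookrightarrow L^2_{\frac{n}{2}-1}$ with constant uniform for $y$ near $0$; taking the supremum over $\|v\|_{L^2_{\frac{n}{2}+1}}\le 1$ then yields
\begin{equation*}
\|\nabla f(y)\|_{L^2_{\frac{n}{2}-1}}\le C\,\|\nabla G(\Phi(y))\|_{L^2_{\frac{n}{2}+1}}
\end{equation*}
on a fixed small ball. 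Here one must pause to identify $\nabla f(y)$ as the element of $L^2_{\frac{n}{2}-1}$ representing $D_yf$ through the $L^2$-pairing between $L^2_{\frac{n}{2}+1}\cap F$ and $L^2_{\frac{n}{2}-1}$, exactly as in Note \ref{note L2 cokernel}, and to check that $\nabla G(\Phi(y))\in C^{0,\alpha}_{\tau+2}\subset L^2_{\frac{n}{2}+1}$ so that all pairings converge.

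Next I would apply this at $y=y_t=\Pi_\mathbf{K}(x)+t\nabla G(x)$. For $x$ small in $C^{2,\alpha}_\tau\cap E$ one checks $y_t\in\mathcal{U}$ for all $t\in[0,1]$, since $\Pi_\mathbf{K}(x)\in\mathbf{K}\subset C^\infty_n\cap F$ and $\nabla G(x)\in C^{0,\alpha}_{\tau+2}\cap F$ are both small. Because $\Phi\circ\mathcal{N}=\mathrm{id}$ and $\mathcal{N}(x)=\nabla G(x)+\Pi_\mathbf{K}(x)$ we have $x=\Phi(\mathcal{N}(x))$ and $\mathcal{N}(x)-y_t=(1-t)\nabla G(x)$, so the Lipschitz bound of Lemma \ref{reduction LS} gives $\|\Phi(y_t)-x\|_{H^2_{\frac{n}{2}-1}}\le C\|\nabla G(x)\|_{L^2_{\frac{n}{2}+1}}$, uniformly in $t$. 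Combined with \eqref{lip-bd-nabla-G} this gives $\|\nabla G(\Phi(y_t))\|_{L^2_{\frac{n}{2}+1}}\le \|\nabla G(x)\|_{L^2_{\frac{n}{2}+1}}+C\|\Phi(y_t)-x\|_{H^2_{\frac{n}{2}-1}}\le C\|\nabla G(x)\|_{L^2_{\frac{n}{2}+1}}$, and feeding this into the first step at $y=y_t$ proves \eqref{est-nabla-f-nabla-G-path} with $C$ independent of $x$ and $t$; the case $t=0$ is \eqref{est-nabla-f-nabla-G}.

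I expect the only real obstacle to be the bookkeeping in the first step: keeping precise track of which weighted spaces the gradients $\nabla G$ and $\nabla f$ live in, making the $L^2$-duality identification rigorous, and verifying that $\Phi(y_t)$, $\nabla G(\Phi(y_t))$ and $(D_{\Phi(y_t)}\mathcal{N})^{-1}$ all remain in the ranges where they are defined and uniformly bounded as $x\to 0$. Once the weighted-space pairing is set up as in Note \ref{note L2 cokernel}, the remainder is a direct transcription of the Hölder-space proof in \cite{Col-Min-Ein-Tan-Con}.
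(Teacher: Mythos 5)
Your proposal is correct and follows essentially the same route as the paper: differentiate $f=G\circ\Phi$, use the identity $D_y\Phi=(D_{\Phi(y)}\mathcal{N})^{-1}$ together with the uniform boundedness of this inverse from $L^2_{\frac{n}{2}+1}\cap F$ to $H^2_{\frac{n}{2}-1}$ to get $\|\nabla f(y)\|_{L^2_{\frac{n}{2}-1}}\leq C\|\nabla G(\Phi(y))\|_{L^2_{\frac{n}{2}+1}}$, and then transfer $\nabla G(\Phi(y_t))$ back to $\nabla G(x)$ via the Lipschitz bounds \eqref{lip-bd-nabla-G} and those of Lemma \ref{reduction LS} applied to $\Phi(y_t)-\Phi(y_1)$ with $y_1=\mathcal{N}(x)$. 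Your unified treatment (recovering \eqref{est-nabla-f-nabla-G} as the case $t=0$) is a harmless streamlining of the paper's two-step presentation.
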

	\begin{proof}
		For $y\in \mathcal{U}$ sufficiently small, since $f = G\circ \Phi$,  we have $D_yf(v)=D_{\Phi(y)}G\circ D_{y}\Phi(v)$ for $v\in L^{2}_{\frac{n}{2}+1}$. In particular, if $v\in L^{2}_{\frac{n}{2}+1}$
\begin{equation*}
\begin{split}
|D_yf(v)|\leq\,& \|\nabla G(\Phi(y))\|_{L^2_{\frac{n}{2}+1}}\|D_y\Phi(v)\|_{H^2_{\frac{n}{2}-1}}\\
\leq\,&C\|\nabla G(\Phi(y))\|_{L^2_{\frac{n}{2}+1}}\|v\|_{L^2_{\frac{n}{2}+1}},
\end{split}
\end{equation*}
where $C$ is a positive constant independent of $v\in L^{2}_{\frac{n}{2}+1}$. Here, we have used the Lipschitz bound on $\Phi$ established in Lemma \ref{reduction LS}. By definition of the gradient of $f$, one gets: 
		\begin{equation*}
		\|\nabla f(y)\|_{L^2_{\frac{n}{2}-1}}\leq C\|\nabla G\circ\Phi(y)\|_{L^2_{\frac{n}{2}+1}},
		\end{equation*}
	In particular, for any $x\in C^{2,\alpha}_{\tau}\,\cap\, E$ sufficiently small, we have
		\begin{equation}
		\|\nabla f(\Pi_\mathbf{K}(x))\|_{L^2_{\frac{n}{2}-1}}\leq C\|\nabla G\circ\Phi( \Pi_\mathbf{K}(x))\|_{L^2_{\frac{n}{2}+1}}.\label{lovely-inequ-nabla-f-G}
		\end{equation}
		Now, since $x = \Phi\big(\Pi_\mathbf{K}(x)+\nabla G(x)\big)$, the Lipschitz bound (\ref{lip-bd-nabla-G}) for $\nabla G$ and the one obtained in Lemma \ref{reduction LS} for $\Phi$ yield
		\begin{align*}
		\|\nabla G(\Phi\circ \Pi_\mathbf{K}(x)) - \nabla G(x)\|_{L^2_{\frac{n}{2}+1}} &= \|\nabla G(\Phi( \Pi_\mathbf{K}(x))) -\nabla G(\Phi(\Pi_\mathbf{K}(x)+\nabla G(x)))\|_{L^2_{\frac{n}{2}+1}}\\
		&\leq C \|\Phi( \Pi_\mathbf{K}(x))- \Phi(\Pi_\mathbf{K}(x)+\nabla G(x))\|_{H^2_{\frac{n}{2}-1}}\\
		&\leq C \|\nabla G(x)\|_{L^2_{\frac{n}{2}+1}}.
		\end{align*}
		This ends the proof of the desired estimate (\ref{est-nabla-f-nabla-G}) by the triangular inequality.
		
		In order to prove (\ref{est-nabla-f-nabla-G-path}), notice that since $f = G\circ \Phi$ and consequently $\Phi\circ \mathcal{N} = \mathrm{Id}$, we have $G = f\circ \mathcal{N}$ which by differentiation implies that we have
\begin{equation}
    D_{\Phi(y_t)} G = D_{y_t}f\circ D_{\Phi(y_t)} \mathcal{N}.\label{nablaG}
\end{equation}
Now, since $ \nabla_{\Phi(y_t)}\mathcal{N} $ is invertible with a bounded inverse by Lemma \ref{reduction LS}, we deduce from \eqref{nablaG} that $D_{y_t}f = D_{\Phi(y_t)} G\circ (D_{\Phi(y_t)} \mathcal{N})^{-1}$ and therefore that
\begin{equation}
    \|D_{y_t}f\|_1\leq C \|D_{\Phi(y_t)} G\|_2,\label{ineq diff f diff G}
\end{equation} 
where the norm $\|.\|_1$ is that of operators from $L^2_{n+1}$ to $\mathbb{R}$ and the norm $\|.\|_2$ is that of operators from $H^2_{\frac{n}{2}-1}$ to $\mathbb{R}$. Since for the $L^2(g_b)$ scalar product, the dual of $L^2_{\frac{n}{2}+1}$ is identified with $L^2_{\frac{n}{2}-1}$ and that of $L^2_{\frac{n}{2}-1}$ with
$L^2_{\frac{n}{2}+1}$, \eqref{ineq diff f diff G} rewrites in the following way thanks to the $L^2(g_b)$ gradients:
\begin{equation}
    \|\nabla f(y_t)\|_{L^2_{\frac{n}{2}-1}}\leq C\|\nabla G (\Phi(y_t))\|_{L^2_{\frac{n}{2}+1}}.\label{nabla f nablaG}
\end{equation}

 Now, we invoke \eqref{nabla f nablaG} to observe that it remains to control $\|\nabla G(\Phi(y_t))\|_{L^2_{\frac{n}{2}+1}}$ from above by $\|\nabla G(x)\|_{L^2_{\frac{n}{2}+1}}$. For this, we use \eqref{lip-bd-nabla-G} which yields 
\begin{equation*}
\begin{split}
    \|\nabla G(\Phi(y_t))-\nabla G(x)\|_{L^2_{\frac{n}{2}+1}}
    &\leq C\|\Phi(y_t)-x\|_{H^2_{\frac{n}{2}-1}}\\
    &= C \|\Phi(y_t)-\Phi(y_1)\|_{H^2_{\frac{n}{2}-1}}\\
    &\leq C\|y_t-y_1\|_{L^2_{\frac{n}{2}+1}}\\
    &= C (1-t)\|\nabla G(x)\|_{L^2_{\frac{n}{2}+1}}.
    \end{split}
\end{equation*}
This implies that $\|\nabla G(\Phi(y_t))\|_{L^2_{\frac{n}{2}+1}}\leq (1+C)\|\nabla G(x)\|_{L^2_{\frac{n}{2}+1}}$ by the triangular inequality and therefore by \eqref{nabla f nablaG},
\begin{equation}
    \|\nabla f(y_t)\|_{L^2_{\frac{n}{2}-1}}\leq C \|\nabla G(x)\|_{L^2_{\frac{n}{2}+1}}.\label{nabla f nablaG(x)}
\end{equation}
This ends the proof of the desired estimate.
	\end{proof}
	
	Let us now show that an adaptation of \cite[Lemma 7.15]{Col-Min-Ein-Tan-Con} to our situation yields a control in $L^2_{\frac{n}{2}+1}$ only. 
	
	\begin{lemma}\label{loja orth noyau 1}
		There exists $C>0$ such that for sufficiently small $x\in C^{2,\alpha}_{\tau}(S^2T^*N)\cap E$, we have
		\begin{equation}
		|G(x)-f(\Pi_\mathbf{K}(x))|\leq C \|\nabla G(x)\|_{L^2_{\frac{n}{2}+1}}^2.\label{est G - f Pi}
		\end{equation}
	\end{lemma}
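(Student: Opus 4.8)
The plan is to integrate the derivative of $f=G\circ\Phi$ along the segment in $\mathcal{U}$ joining $\Pi_\mathbf{K}(x)$ to $\mathcal{N}(x)=\Pi_\mathbf{K}(x)+\nabla G(x)$, following \cite[Lemma 7.15]{Col-Min-Ein-Tan-Con}; the one new point is that the pairing producing $\tfrac{d}{dt}f$ must be read in the weighted duality between $L^2_{\frac{n}{2}-1}$ and $L^2_{\frac{n}{2}+1}$ of Note \ref{note L2 cokernel}.

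First I would fix $x\in C^{2,\alpha}_{\tau}(S^2T^*N)\cap E$ small. Since $\nabla G(0)=0$ and $\nabla G$ is Lipschitz by \eqref{lip-bd-nabla-G} (and continuous into $C^{0,\alpha}_{\tau+2}$), both $\nabla G(x)$ and $\Pi_\mathbf{K}(x)$ are small, so $y_t:=\Pi_\mathbf{K}(x)+t\,\nabla G(x)$ remains in $\mathcal{U}$ for all $t\in[0,1]$ and $\Phi(y_t)$ is defined. Here $y_0=\Pi_\mathbf{K}(x)$ and $y_1=\mathcal{N}(x)$, so by Lemma \ref{reduction LS} one has $\Phi(y_1)=x$, hence $f(y_1)=G(x)$ and $f(y_0)=f(\Pi_\mathbf{K}(x))$.

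Then the curve $t\mapsto f(y_t)$ is $C^1$ with $\tfrac{d}{dt}f(y_t)=D_{y_t}f(\dot y_t)=D_{y_t}f(\nabla G(x))=\langle\nabla f(y_t),\nabla G(x)\rangle_{L^2}$, the last identity making sense because $\nabla f(y_t)\in L^2_{\frac{n}{2}-1}$ and $\nabla G(x)\in L^2_{\frac{n}{2}+1}$. The weighted Cauchy--Schwarz inequality then gives $\big|\tfrac{d}{dt}f(y_t)\big|\le \|\nabla f(y_t)\|_{L^2_{\frac{n}{2}-1}}\,\|\nabla G(x)\|_{L^2_{\frac{n}{2}+1}}$, and the path estimate \eqref{est-nabla-f-nabla-G-path} of Lemma \ref{control nabla f Pi nabla G} bounds the first factor by $C\|\nabla G(x)\|_{L^2_{\frac{n}{2}+1}}$ uniformly in $t$. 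Integrating over $[0,1]$ yields
\[
|G(x)-f(\Pi_\mathbf{K}(x))|=|f(y_1)-f(y_0)|\le \int_0^1\Big|\tfrac{d}{dt}f(y_t)\Big|\,dt\le C\|\nabla G(x)\|_{L^2_{\frac{n}{2}+1}}^2,
\]
which is \eqref{est G - f Pi}.

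I do not anticipate a genuine obstacle here: the only care needed is the bookkeeping of weights --- $\dot y_t=\nabla G(x)$ lives in $L^2_{\frac{n}{2}+1}$ while $\nabla f$ takes values in $L^2_{\frac{n}{2}-1}$, so the contraction $D_{y_t}f(\nabla G(x))$ has to be interpreted through the $L^2(g_b)$-pairing, and one must keep $y_t$ inside the domain $\mathcal{U}$ of $\Phi$. Once this is in place, the estimate is the same integral mean value computation as in the compact case, with the uniform-in-$t$ bound \eqref{est-nabla-f-nabla-G-path} supplying the crucial \emph{second} factor of $\|\nabla G(x)\|_{L^2_{\frac{n}{2}+1}}$.
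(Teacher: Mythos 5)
Your proof is correct and follows the paper's own argument essentially verbatim: both integrate $\frac{d}{dt}f(y_t)$ along the segment $y_t=\Pi_\mathbf{K}(x)+t\nabla G(x)$, apply the weighted Cauchy--Schwarz pairing between $L^2_{\frac{n}{2}-1}$ and $L^2_{\frac{n}{2}+1}$, and invoke the uniform path estimate \eqref{est-nabla-f-nabla-G-path} to get the second factor of $\|\nabla G(x)\|_{L^2_{\frac{n}{2}+1}}$. The extra bookkeeping you note (keeping $y_t$ in $\mathcal{U}$ and interpreting the pairing via the weighted duality) is exactly the care the paper takes implicitly.
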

	\begin{proof}
		Define for all $t\in [0,1]$, 
		$$y_t:= \Pi_\mathbf{K}(x) + t\nabla G(x),$$
		for which we have $\Phi(y_1) = x$, $y_0 = \Pi_\mathbf{K}(x)$ and $\frac{d}{dt}y_t =\nabla G(x)$.

		By integration, we have
		\begin{align}
		G(x)-f(\Pi_\mathbf{K}(x)) &= f(y_1)-f(y_0) \nonumber\\
		&= \int_0^1 \langle \nabla f(y_t) , \nabla G(x) \rangle_{L^2} \,dt,\label{difference-G-fPi}
		\end{align}
		where $\nabla f(y_t)$ (respectively $\nabla G(x)$) is interpreted as an element of $L^2_{\frac{n}{2}-1}$ (respectively $L^2_{\frac{n}{2}+1}$). Thanks to [\eqref{est-nabla-f-nabla-G-path}, Lemma \ref{control nabla f Pi nabla G}], there exists $C>0$ such that for all $t\in[0,1]$, we have 
		\begin{equation*}
		\|\nabla f (y_t) \|_{L^2_{\frac{n}{2}-1}}\leq C \|\nabla G(x) \|_{L^2_{\frac{n}{2}+1}}.
		\end{equation*}

			The estimate \eqref{est G - f Pi} then comes from Cauchy-Schwarz inequality.
	\end{proof}
	
	We can then conclude exactly like in the end of the proof of \cite[Theorem 7.3]{Col-Min-Ein-Tan-Con}, by using the finite dimensional \L{}ojasiewicz inequality on $\mathbf{K}$. Note that this is the only step for which the analyticity of the functional is used.
	
	Denote $f_\mathbf{K}$ the restriction of $f$ to the finite-dimensional space $\mathbf{K}$ which is an analytic function. Let $x\in C^{2,\alpha}_\tau\cap E$ be small enough. Thanks to the estimate \eqref{est-nabla-f-nabla-G}, and thanks to the finite-dimensional \L{}ojasiewicz inequality \cite{loj}, we have for some $0<\theta\leq 1$,
	\begin{align}
	C^2\|\nabla G(x)\|_{L^2_{\frac{n}{2}+1}}^2&\geq C\|\nabla f (\Pi_\mathbf{K}(x))\|_{L^2_{\frac{n}{2}-1}}^2\\
	&\geq \|\nabla f_{|\mathbf{K}}(\Pi_\mathbf{K}(x))\|_{L^2_{\frac{n}{2}-1}}^2\\
	&\geq |f_\mathbf{K}(\Pi_\mathbf{K}(x))-f_\mathbf{K}(0)|^{2-\theta}\\
	&=|f(\Pi_\mathbf{K}(x))-G(0)|^{2-\theta},
	\end{align}
	where  $C$ denotes the positive constant of \eqref{est-nabla-f-nabla-G}. Here, we see $\mathbf{K}$ equipped with the $L^2_{\frac{n}{2}-1}$-norm (but any other norm would just change the constants as the dimension is finite). We can then finally use the estimate \eqref{est G - f Pi} together with the triangular inequality to obtain a general \L{}ojasiewicz inequality : 
	\begin{equation}
	|G(x)-G(0)|^{2-\theta}\leq C\|\nabla G(x)\|^2_{L^2_{\frac{n}{2}+1}},\label{loja L2-n/2+1}
	\end{equation}
	with $0<\theta\leq 1$. We therefore obtain a \L{}ojasiewicz inequality with a $L^2_{\frac{n}{2}+1}$-norm for the gradient.  The above constant $\theta$ moreover does not depend on $\alpha$ or $\tau$.
	
	\subsection{Proof of a general \L{}ojasiewicz inequality for $\lambda_{\operatorname{ALE}}$ on ALE metrics}\label{sec-proof-gal-loja}~~\\

	Let us now ensure that our functional $G=\lambda_{\operatorname{ALE}}$ satisfies the assumptions of Proposition \ref{Lojasiewicz ineq weighted}. We start by defining the set $E =\ker \div_{g_b}$. Let us spend some time understanding the image $L_{g_b}(C^{2,\alpha}_{\tau}\cap \ker\div_{g_b})$ which intuitively is $C^{0,\alpha}_{\tau+2}\cap \ker\div_{g_b}$. However, strictly speaking, we a priori cannot see $C^{0,\alpha}_{\tau+2}\cap \ker\div_{g_b}$ as a closed subset of $C^{0,\alpha}_{\tau+2}$ since the equation $\div_{g_b}h=0$ is not well-defined as it takes one derivative. It has to be understood in the weak sense. Let us be more precise about this in order to define the set $F:=\div^*_{g_b}(C^\infty_c(TN))^\perp$. 
	\begin{lemma}\label{definition E et F pour loja}
		Let us define $E := \ker \div_{g_b}$, $\mathbf{K} := \ker_{L^2} L_{g_b}$ and define $\Pi_\mathbf{K}$ the $L^2(g_b)$-projection on $\mathbf{K}$. Then the image $(L_{g_b} + \Pi_{\mathbf{K}})(C^{2,\alpha}_{\tau}\cap E)$ is $C^{0,\alpha}_{\tau+2}\cap \div^*_{g_b}(C^\infty_c(TN))^\perp$.
	\end{lemma}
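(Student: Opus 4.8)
The plan is to show the two inclusions separately, using the Fredholm theory of Proposition \ref{prop-lic-fred} together with the decomposition of $2$-tensors into a divergence-free part and a Lie-derivative part. First I would prove that $(L_{g_b}+\Pi_\mathbf{K})(C^{2,\alpha}_\tau\cap E)\subset C^{0,\alpha}_{\tau+2}\cap \div^*_{g_b}(C^\infty_c(TN))^\perp$. If $h\in C^{2,\alpha}_\tau$ with $\div_{g_b}h=0$, then $L_{g_b}h\in C^{0,\alpha}_{\tau+2}$ by the boundedness statement in Proposition \ref{prop-lic-fred}, and since $\tau\in(\tfrac{n-2}{2},n-2)\subset(1,n-1)$ the integration by parts is legitimate (the relevant boundary term is $O(\rho_{g_b}^{-2\tau-1})$ with $2\tau+1>n-1$), so for every $X\in C^\infty_c(TN)$,
\[
\langle L_{g_b}h,\div^*_{g_b}X\rangle_{L^2}=\langle \div_{g_b}L_{g_b}h,X\rangle_{L^2}=\tfrac12\langle (\nabla^{g_b})^*\nabla^{g_b}\div_{g_b}h,X\rangle_{L^2}=0,
\]
using the identity $\div_{g_b}L_{g_b}=\tfrac12(\nabla^{g_b})^*\nabla^{g_b}\div_{g_b}$ recalled in the proof of Proposition \ref{prop-lic-fred}. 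For the $\Pi_\mathbf{K}$ term, one notes that $\mathbf{K}=\ker_{L^2}L_{g_b}$ consists of divergence-free tensors lying in $C^\infty_n$, hence $\Pi_\mathbf{K}(h)$ is also divergence-free and decays like $\rho_{g_b}^{-n}$, so it sits in $C^{0,\alpha}_{\tau+2}$ (since $\tau+2<n$) and is $L^2$-orthogonal to all $\div^*_{g_b}X$ by the same integration by parts. This gives the inclusion $\subset$.

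For the reverse inclusion, I would argue via Fredholmness and a dimension count. By Proposition \ref{prop-lic-fred}, $L_{g_b}:C^{2,\alpha}_\tau\cap E\to C^{0,\alpha}_{\tau+2}\cap F$ is Fredholm of index $0$ (this is essentially assumption \ref{item-4}–\ref{item-5} of Proposition \ref{Lojasiewicz ineq weighted}, which the paper arranges precisely for $\lambda_{\operatorname{ALE}}$), with kernel and $L^2$-cokernel both equal to $\mathbf{K}$. Hence the image of $L_{g_b}$ on $C^{2,\alpha}_\tau\cap E$ is the $L^2$-orthogonal complement of $\mathbf{K}$ inside $C^{0,\alpha}_{\tau+2}\cap F$, i.e. $\{u\in C^{0,\alpha}_{\tau+2}\cap F : u\perp_{L^2}\mathbf{K}\}$. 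Adding $\Pi_\mathbf{K}$: given any $u\in C^{0,\alpha}_{\tau+2}\cap F$, decompose $u=u_0+\Pi_\mathbf{K}(u)$ with $u_0\perp_{L^2}\mathbf{K}$; then $u_0=L_{g_b}h$ for some $h\in C^{2,\alpha}_\tau\cap E$ with $h\perp_{L^2}\mathbf{K}$ (choosing the representative orthogonal to the kernel), and $\Pi_\mathbf{K}(h')=0$ for that $h'$... more carefully, I would take $h$ with $L_{g_b}h=u_0$ and $\Pi_\mathbf{K}(h)=\Pi_\mathbf{K}(u)$ — possible since $\mathbf{K}\subset C^{2,\alpha}_\tau\cap E$ and one can add any kernel element — so that $(L_{g_b}+\Pi_\mathbf{K})(h)=u_0+\Pi_\mathbf{K}(u)=u$. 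This shows surjectivity onto $C^{0,\alpha}_{\tau+2}\cap F$, completing the proof.

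The main subtlety, as the paper flags in the discussion preceding the lemma, is making rigorous sense of "$C^{0,\alpha}_{\tau+2}\cap\ker\div_{g_b}$" as a \emph{closed} subspace of $C^{0,\alpha}_{\tau+2}$: the condition $\div_{g_b}h=0$ costs a derivative and is not closed on $C^{0,\alpha}_{\tau+2}$ directly. The fix is exactly to replace it by the weak formulation $F=\div^*_{g_b}(C^\infty_c(TN))^\perp$, which \emph{is} closed (it is an intersection of kernels of the bounded linear functionals $u\mapsto\langle u,\div^*_{g_b}X\rangle_{L^2}$ on $C^{0,\alpha}_{\tau+2}\subset L^2_{\frac{n}{2}+1-\epsilon}$), and then to check that on the image of $L_{g_b}$ restricted to genuinely divergence-free tensors this weak condition is automatic — which is the first inclusion above. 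I expect the verification that $F$ and $E$ satisfy the closedness hypotheses of Proposition \ref{Lojasiewicz ineq weighted}, and that the Fredholm indices and (co)kernels match up across the Hölder and Sobolev scales, to be the only genuinely delicate bookkeeping; the algebraic identities ($\div L_{g_b}=\tfrac12(\nabla^{g_b})^*\nabla^{g_b}\div$, orthogonality of $\mathbf{K}$ to exact tensors) are routine and already used earlier in the paper.
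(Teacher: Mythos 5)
Your first inclusion is correct, and in fact slightly more direct than the paper's: you push the divergence onto $L_{g_b}h$ and invoke $\div_{g_b}L_{g_b}=\tfrac12(\nabla^{g_b})^{\ast}\nabla^{g_b}\div_{g_b}$, whereas the paper moves $L_{g_b}$ onto $\div^*_{g_b}X$ via the symmetry of $L_{g_b}$ and the identity $L_{g_b}\div^*_{g_b}X=\div^*_{g_b}B_{g_b}(\div^*_{g_b}X)$. Both routes are legitimate and rely on the same integration by parts.

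The reverse inclusion, however, is circular as written. You justify surjectivity by asserting that $L_{g_b}:C^{2,\alpha}_{\tau}\cap E\to C^{0,\alpha}_{\tau+2}\cap F$ is Fredholm of index $0$ with $L^2$-cokernel exactly $\mathbf{K}$, citing items (\ref{item-4})--(\ref{item-5}) of Proposition \ref{Lojasiewicz ineq weighted}. But those are \emph{hypotheses} of the abstract \L{}ojasiewicz scheme, and the present lemma is precisely the step that verifies them for $\lambda_{\operatorname{ALE}}$: Proposition \ref{prop-lic-fred} only gives Fredholmness and the kernel/cokernel identification for $L_{g_b}$ on the \emph{full} spaces $C^{2,\alpha}_{\tau}\to C^{0,\alpha}_{\tau+2}$, and it is not automatic that the cokernel does not enlarge (nor the image shrink) when one restricts the domain to $E$ and the target to $F$. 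The missing step is: given $v\in C^{0,\alpha}_{\tau+2}\cap F$, first solve $(L_{g_b}+\Pi_{\mathbf{K}})h=v$ with $h$ in the full space $C^{2,\alpha}_{\tau}$ (this \emph{does} follow from Proposition \ref{prop-lic-fred}), then decompose $h=h_0+\div^*_{g_b}V$ with $h_0\in E$ and $V\in C^{3,\alpha}_{\tau-1}$ via Proposition \ref{prop-decomp-2-tensor}, and finally show $\div^*_{g_b}V=0$. That last point uses the hypothesis $v\perp\div^*_{g_b}(C^\infty_c(TN))$ (extended to the non-compactly-supported $V$ by density), the divergence-freeness of $\Pi_{\mathbf{K}}h$, and the identity $L_{g_b}\div^*_{g_b}V=\div^*_{g_b}B_{g_b}(\div^*_{g_b}V)$ together with the integration by parts of \eqref{IBP-Bianchi-lemma-loc-stab}; these force $B_{g_b}(\div^*_{g_b}V)=0$, hence $\nabla^{g_b}\div_{g_b}V=0$, hence $\Delta_{g_b}V=0$ by Bochner and $V=0$ by the maximum principle. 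Without an argument of this type your surjectivity claim onto $C^{0,\alpha}_{\tau+2}\cap F$ from the restricted domain $C^{2,\alpha}_{\tau}\cap E$ is unsupported.
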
	
\begin{proof}
Consider $h\in C^{2,\alpha}_{\tau}\cap \ker\div_{g_b}$ and a smooth compactly supported vector field $X$ and denote the $L^2$-adjoint of $\div_{g_b}$ by $\div^{\ast}_{g_b}$. Notice that for such a vector field $X$, we have $\div^*_{g_b}X=-\frac{1}{2}\Li_X(g_b)$. Now, similarly to the proof of Lemma \ref{lemma-equiv-def-stable}, observe that,
\begin{align}
    \langle L_{g_b}h,\div^*_{g_b}X \rangle_{g_b} &= \langle h,L_{g_b}(\div^*_{g_b}X) \rangle_{g_b}\\
    &=\langle h,\div^*_{g_b}B_{g_b}\div^*_{g_b}X \rangle_{g_b}\\
    &=\langle \div_{g_b}h,B_{g_b}\div^*_{g_b}X \rangle_{g_b} = 0
\end{align}
and we see that since the elements of $\mathbf{K}$ are divergence-free according to Proposition \ref{prop-lic-fred}, the image of $L_{g_b} + \Pi_{\mathbf{K}}$ is included in $C^{0,\alpha}_{\tau+2}\cap \div^*_{g_b}(C^\infty_c(TN))^\perp$.

Conversely, let $v\in C^{0,\alpha}_{\tau+2}\cap \div^*_{g_b}(C^\infty_c(TN))^\perp$ and let us show that there exists $h_0\in C^{2,\alpha}_{\tau}\cap \ker\div_{g_b}$ such that $v=L_{g_b}h_0+\Pi_{\mathbf{K}}h_0$. 

Thanks to the Fredholm properties of $L_{g_b}$, see Proposition \ref{prop-lic-fred}, one has $(L_{g_b} + \Pi_{\mathbf{K}})(C^{2,\alpha}_{\tau}) = C^{0,\alpha}_{\tau+2}$. 
In particular, there exists $h\in C^{2,\alpha}_{\tau}$ such that $v=L_{g_b}h + \Pi_{\mathbf{K}}h$.

Now, Proposition \ref{prop-decomp-2-tensor} ensures that we have a unique decomposition $h = h_0 + \div^*_{g_b}V$ for a vector field $V\in C^{3,\alpha}_{\tau-1}$ and a symmetric $2$-tensor $h_0\in C^{2,\alpha}_{\tau}\cap \ker\div_{g_b}$. Let us prove that $\div^*_{g_b}V=0$. This will prove the expected result.

Since $v\in  \div^*_{g_b}(C^\infty_c(TN))^\perp$ and by density,
$$\langle L_{g_b}h+\Pi_{\mathbf{K}}h,\div^{*}_{g_b}V\rangle_{L^2}=0.$$

Notice that $\langle \Pi_{\mathbf{K}}h,\div^{*}_{g_b}V\rangle_{L^2}=0$ independently since $\Pi_{\mathbf{K}}h\in \mathbf{K}$ is divergence-free.
 
On the other hand, by symmetry of $L_{g_b}$, 
\begin{equation}
\begin{split}
   0&=\langle L_{g_b}h,\div^*_{g_b}V\rangle_{L^2}\\
   &=\langle h,L_{g_b}\div^*_{g_b}V\rangle_{L^2}\\
   &=\langle h,\div^*_{g_b}B_{g_b}(\div^*_{g_b}V)\rangle_{L^2}\\
   &=\langle h_0,\div^*_{g_b}B_{g_b}(\div^*_{g_b}V)\rangle_{L^2}+\langle \div^*_{g_b}V,\div^*_{g_b}B_{g_b}(\div^*_{g_b}V)\rangle_{L^2}\\
   &=\langle \div^*_{g_b}V,\div^*_{g_b}B_{g_b}(\div^*_{g_b}V)\rangle_{L^2}.
   \end{split}
\end{equation}
Here, we have used integration by parts and the fact that $h_0$ is divergence-free in the last line. A similar computation to [\eqref{IBP-Bianchi-lemma-loc-stab}, Lemma \ref{lemma-equiv-def-stable}] tells us that $B_{g_b}(\div^*_{g_b}V)=0=\nabla^{g_b}\div_{g_b}V$. This in turn implies by Bochner formula for vector fields that $\Delta_{g_b}V=0$ since $g_b$ is Ricci-flat. The use of the maximum principle then shows that $V=0$. This fact ends the proof of the lemma.

\end{proof}

	We now check conditions [(\ref{item-0}), \eqref{lip-bd-nabla-G}, (\ref{item-0-bis})] from Proposition \ref{Lojasiewicz ineq weighted}.
	\begin{prop}[Energy estimates]\label{prop-energy-est}
		Let $(N^n,g_b)$ be an ALE Ricci-flat metric, asymptotic to $\RR^n\slash\Gamma$, for some finite subgroup $\Gamma$ of $SO(n)$ acting freely on $\mathbb{S}^{n-1}$ and let $\tau\in (\frac{n-2}{2},n-2)$. Then there exists a neighborhood $B_{C^{2,\alpha}_{\tau}}(g_b,\varepsilon)$ of $g_b$ such that the following energy estimate holds true:
		\begin{equation}
		\|\nabla\lambda_{\operatorname{ALE}}(g_2)-\nabla\lambda_{\operatorname{ALE}}(g_1)\|_{L^2_{\frac{n}{2}+1}}\leq C(n,g_b,\varepsilon)\|g_2-g_1\|_{H^2_{\frac{n}{2}-1}}, \label{energy-est}
		\end{equation}
		for any metric $g_2$, $g_1$ in $B_{C^{2,\alpha}_{\tau}}(g_b,\varepsilon)$.
		Moreover, the differential of the map $g\in B_{C^{2,\alpha}_{\tau}}(g_b,\varepsilon)\rightarrow \nabla\lambda_{\operatorname{ALE}}(g)\in B_{C^{0,\alpha}_{\tau}}(0,\varepsilon)$ satisfies:
		\begin{equation}
		\|D_{g_2}\nabla\lambda_{\operatorname{ALE}}-D_{g_1}\nabla\lambda_{\operatorname{ALE}}(h)\|_{L^2_{\frac{n}{2}+1}}\leq C(n,g_b,\varepsilon)\|g_2-g_1\|_{C^{2,\alpha}_{\tau}}\|h\|_{H^2_{\frac{n}{2}-1}}, \label{energy-est-bis}
		\end{equation}
		for any metric $g_2$, $g_1$ in $B_{C^{2,\alpha}_{\tau}}(g_b,\varepsilon)$ and $h\in H^2_{\frac{n}{2}-1}$.
	\end{prop}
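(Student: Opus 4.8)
The plan is to write the $L^2$-gradient of $\lambda_{\operatorname{ALE}}$ explicitly, reduce both estimates to uniform-in-$t$ bounds for the variation of the Bakry--\'Emery tensor along a segment of metrics, and control that variation by means of formula \eqref{form-var-bak-eme}, the energy estimates of Section~\ref{sec-ene-est-pot-fct}, and Hardy's inequality (Theorem~\ref{thm-min-har-inequ}). By Proposition~\ref{lambdaALE analytic}, $\nabla\lambda_{\operatorname{ALE}}(g)$ equals $-\Ric_{f_g}(g)=-(\Ric(g)+\nabla^{g,2}f_g)$ up to multiplication by the bundle endomorphism of $S^2T^*N$ relating the $L^2(e^{-f_g}d\mu_g)$- and $L^2(d\mu_{g_b})$-structures, which on $B_{C^{2,\alpha}_{\tau}}(g_b,\varepsilon)$ is uniformly close to the identity and tends to it at rate $\rho_{g_b}^{-\tau}$; since the Bakry--\'Emery tensor is itself $O(\rho_{g_b}^{-\tau-2})$, the contribution of this endomorphism and of its variation to all the differences below is $O(\|\rho_{g_b}^{-1}h\|_{L^2})$, hence harmless by Hardy, and I suppress it. Given $g_1,g_2\in B_{C^{2,\alpha}_{\tau}}(g_b,\varepsilon)$, set $h:=g_2-g_1\in C^{2,\alpha}_{\tau}$ and $g_t:=g_1+(t-1)h$, $t\in[1,2]$; the family stays in a fixed $C^{2,\alpha}_{\tau}$-ball about $g_b$, so the $g_t$ are uniformly equivalent to $g_b$, one has $\rho_{g_b}^2|\Rm(g_t)|_{g_t}\leq C$ (using that $g_b$ is ALE of order $n$ and $h\in C^2_\tau$), and by Proposition~\ref{prop-pot-fct} and \eqref{est-pot-fct-ene-est}, $\rho_{g_b}|\nabla^{g_t}f_{g_t}|_{g_t}+\rho_{g_b}^2(|\Ric(g_t)|_{g_t}+|\nabla^{g_t,2}f_{g_t}|_{g_t})\leq C$ with $C=C(n,g_b,\varepsilon)$. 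Since $g\mapsto f_g$, hence $g\mapsto\nabla^{g,2}f_g\in C^{0,\alpha}_{\tau+2}$, is analytic, the fundamental theorem of calculus together with \eqref{form-var-bak-eme} gives
\begin{equation*}
\nabla\lambda_{\operatorname{ALE}}(g_2)-\nabla\lambda_{\operatorname{ALE}}(g_1)=\frac12\int_1^2\Big(\Delta_{f_{g_t}}h+2\Rm(g_t)(h)-h\circ\Ric_{f_{g_t}}(g_t)-\Ric_{f_{g_t}}(g_t)\circ h-\Li_{B_{f_{g_t}}(h)}(g_t)\Big)\,dt,
\end{equation*}
so \eqref{energy-est} reduces to an $L^2_{\frac n2+1}$-bound, uniform in $t$, for the integrand by $C\|h\|_{H^2_{\frac n2-1}}$.

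Term by term: $\|\Delta_{f_{g_t}}h\|_{L^2_{\frac n2+1}}\leq\|\Delta_{g_t}h\|_{L^2_{\frac n2+1}}+\|\nabla^{g_t}_{\nabla^{g_t}f_{g_t}}h\|_{L^2_{\frac n2+1}}\lesssim\|\nabla^{g_t,2}h\|_{L^2_{\frac n2+1}}+\|\nabla^{g_t}h\|_{L^2}$, and comparing $\nabla^{g_t}$ with $\nabla^{g_b}$ (the Christoffel difference being $O(\rho_{g_b}^{-\tau-1})$) gives $\|\nabla^{g_t,2}h\|_{L^2_{\frac n2+1}}\leq\|\nabla^{g_b,2}h\|_{L^2_{\frac n2+1}}+C\|\nabla^{g_b}h\|_{L^2}\leq C\|h\|_{H^2_{\frac n2-1}}$; the curvature and zeroth-order terms satisfy $\|\Rm(g_t)(h)\|_{L^2_{\frac n2+1}}+\|\Ric_{f_{g_t}}(g_t)\circ h\|_{L^2_{\frac n2+1}}\lesssim\|\rho_{g_b}^{-1}h\|_{L^2}\lesssim\|\nabla^{g_b}h\|_{L^2}$ by Hardy, using the uniform quadratic decay of $\Rm(g_t)$ and $\Ric_{f_{g_t}}(g_t)$. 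The delicate term is $\Li_{B_{f_{g_t}}(h)}(g_t)$: since $\Li_Y(g_t)\sim\nabla^{g_t}Y$ and $B_{f_{g_t}}(h)=\div_{f_{g_t}}h-\nabla^{g_t}u$ with $u:=\delta_{g_t}f(h)-\tfrac12\tr_{g_t}h$, one gets $\|\Li_{B_{f_{g_t}}(h)}(g_t)\|_{L^2_{\frac n2+1}}\lesssim\|\nabla^{g_t,2}h\|_{L^2_{\frac n2+1}}+\|\nabla^{g_t}h\|_{L^2}+\|\rho_{g_b}^{-1}h\|_{L^2}+\|\nabla^{g_t,2}u\|_{L^2_{\frac n2+1}}$, leaving $\|\nabla^{g_t,2}u\|_{L^2_{\frac n2+1}}$ to control. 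Here I invoke Proposition~\ref{prop-ene-pot-fct}: by \eqref{first-var-pot-fct-ell-equ-gal-case-prop}, $\|\Delta_{g_t,f_{g_t}}u\|_{L^2_{\frac n2+1}}\leq C(\|\nabla^{g_t}h\|_{L^2}+\|\nabla^{g_t,2}h\|_{L^2_{\frac n2+1}})\leq C\|h\|_{H^2_{\frac n2-1}}$, and by \eqref{est-grad-first-der-pot-fct-ene}, $\|\nabla^{g_t}u\|_{L^2}\leq C\|\nabla^{g_t}h\|_{L^2}$; writing $\Delta_{g_t}u=\Delta_{g_t,f_{g_t}}u+\nabla^{g_t}_{\nabla^{g_t}f_{g_t}}u$ and using $\rho_{g_b}|\nabla^{g_t}f_{g_t}|\leq C$ yields $\|\Delta_{g_t}u\|_{L^2_{\frac n2+1}}\leq C\|h\|_{H^2_{\frac n2-1}}$. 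A weighted second-order $L^2$ elliptic estimate for $\Delta_{g_t}$ on the ALE manifold $(N,g_t)$ (see \cite{Bart-Mass}; the constant is uniform over our $C^{2,\alpha}_\tau$-ball), combined with $\|u\|_{L^2_{\frac n2-1}}=\|\rho_{g_b}^{-1}u\|_{L^2}\leq C\|\nabla^{g_t}u\|_{L^2}$ from Hardy, gives $\|\nabla^{g_t,2}u\|_{L^2_{\frac n2+1}}\leq C(\|\Delta_{g_t}u\|_{L^2_{\frac n2+1}}+\|u\|_{L^2_{\frac n2-1}})\leq C\|h\|_{H^2_{\frac n2-1}}$, which proves \eqref{energy-est}.

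For \eqref{energy-est-bis}, by density one may take $h\in C^\infty_c$; differentiating \eqref{form-var-bak-eme} once more and integrating along $g_t$, the difference $\big(D_{g_2}\nabla\lambda_{\operatorname{ALE}}-D_{g_1}\nabla\lambda_{\operatorname{ALE}}\big)(h)$ becomes an integral of second variations. The terms where a $t$-derivative falls on the coefficients $\Delta_{g_t}$, $\Rm(g_t)$, $\Ric_{f_{g_t}}(g_t)$, the Christoffel symbols, or the metric pairing produce factors $\Gamma(g_2)-\Gamma(g_1)=O(\|g_2-g_1\|_{C^{2,\alpha}_\tau}\rho_{g_b}^{-\tau-1})$ and $\Rm(g_2)-\Rm(g_1)=O(\|g_2-g_1\|_{C^{2,\alpha}_\tau}\rho_{g_b}^{-\tau-2})$, so each such term is pointwise $\leq\|g_2-g_1\|_{C^{2,\alpha}_\tau}\rho_{g_b}^{-\tau}$ times one of $|\nabla^{g_b,2}h|$, $\rho_{g_b}^{-1}|\nabla^{g_b}h|$, $\rho_{g_b}^{-2}|h|$; since $\tau>1$ (which follows from $\tau>\frac{n-2}{2}$) the weight $\rho_{g_b}$ is absorbed and Hardy yields the bound $C\|g_2-g_1\|_{C^{2,\alpha}_\tau}\|h\|_{H^2_{\frac n2-1}}$. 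The only place where the $t$-derivative reaches the potential function is the term behaving like $\nabla^{g_1,2}\big(\delta_{g_2}f(h)-\delta_{g_1}f(h)\big)$, which is treated exactly as $\nabla^{g_t,2}u$ above but with the quantitative bounds of Proposition~\ref{prop-ene-pot-fct-bis} on $\|\nabla^{g_1}(\delta_{g_2}f(h)-\delta_{g_1}f(h))\|_{L^2}$ and $\|\Delta_{g_1,f_{g_1}}(\delta_{g_2}f(h)-\delta_{g_1}f(h))\|_{L^2_{\frac n2+1}}$ in place of those of Proposition~\ref{prop-ene-pot-fct}, followed by the weighted elliptic estimate on $(N,g_1)$. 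This gives \eqref{energy-est-bis} and, together with \eqref{energy-est}, verifies conditions \eqref{lip-bd-nabla-G} and \eqref{item-0-bis} of Proposition~\ref{Lojasiewicz ineq weighted}.

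The main obstacle is exactly the upgrade from the $L^2_{\frac n2+1}$-control of the \emph{weighted} Laplacian $\Delta_{g_t,f_{g_t}}u$ provided by Section~\ref{sec-ene-est-pot-fct} to $L^2_{\frac n2+1}$-control of the \emph{full Hessian} $\nabla^{g_t,2}u$, uniformly over the family $g_t$: this requires the correct weighted second-order elliptic estimate for the Laplacian on ALE manifolds (in dimension $4$ one must be mindful of the exceptional weights) with a constant uniform over a $C^{2,\alpha}_\tau$-ball of metrics, together with a systematic use of Hardy's inequality to soak up every zeroth- and first-order term. It is also here that the hypothesis $\tau>\frac{n-2}{2}$ is indispensable, since it is what guarantees that every quadratic remainder term is dominated by the weaker $H^2_{\frac n2-1}$-norm (respectively by $\|g_2-g_1\|_{C^{2,\alpha}_\tau}\|h\|_{H^2_{\frac n2-1}}$) that must appear on the right-hand side rather than by a $C^{2,\alpha}_\tau$-norm of $h$.
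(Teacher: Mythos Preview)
Your proof is correct and follows the same overall strategy as the paper's: express $\nabla\lambda_{\operatorname{ALE}}(g_2)-\nabla\lambda_{\operatorname{ALE}}(g_1)$ as an integral along the segment $g_t$ of the first variation of the Bakry--\'Emery tensor, then bound each summand in $L^2_{\frac n2+1}$ by $\|h\|_{H^2_{\frac n2-1}}$ via Hardy's inequality and Proposition~\ref{prop-ene-pot-fct}. Two differences are worth noting. First, your bookkeeping is slightly cleaner: you integrate the full variation formula \eqref{form-var-bak-eme} directly, whereas the paper linearizes the Ricci part at $g_1$ separately (obtaining $L_{g_1}h-\Li_{B_{g_1}(h)}(g_2)+Q$) and integrates only the $\Li_{\nabla^{g_t}f_{g_t}}(g_t)$ piece; the paper also warms up with the special case $g_1=g_b$ before treating the general case. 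Second, and this is the one genuine methodological difference, you upgrade the control of $\|\Delta_{g_t,f_{g_t}}u\|_{L^2_{\frac n2+1}}$ and $\|\nabla^{g_t}u\|_{L^2}$ supplied by Proposition~\ref{prop-ene-pot-fct} to control of $\|\nabla^{g_t,2}u\|_{L^2_{\frac n2+1}}$ by invoking a black-box weighted second-order elliptic estimate \`a la Bartnik, whereas the paper derives this bound by hand via the weighted Bochner identity $\Delta_{g_t,f_{g_t}}|\nabla^{g_t}u|^2=2|\nabla^{g_t,2}u|^2+\ldots$, multiplied by $\rho_{g_b}^2$ and integrated by parts (see \eqref{IPP-Monster} and \eqref{est-hess-first-var-pot-fct-ene}). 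Your route is shorter and more modular but requires checking that the elliptic constant is uniform over the $C^{2,\alpha}_\tau$-ball (which you acknowledge); the paper's Bochner computation is self-contained and insensitive to the exceptional-weight issue you rightly flag in dimension~$4$. For \eqref{energy-est-bis} the two arguments coincide: the paper also only sketches it by differentiating \eqref{form-var-bak-eme} once more and appealing to Proposition~\ref{prop-ene-pot-fct-bis}.
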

	%\todo{la preuve n'utilise pas que $g_i\in E$, c'est grave ? - ça ne me choque pas pour l'inégalité dans ce sens : les variations tangentes à $E$ sont en gros des actions de difféos qui préservent $\lambda_{\operatorname{ALE}}^0$}

	\begin{proof}
		Let us fix a neighborhood $B_{C^{2,\alpha}_{\tau}}(g_b,\varepsilon)$ such that the properties on the potential function $f_g$ established in Proposition \ref{prop-pot-fct} hold true.
		
		Strictly speaking, the symbol $\nabla \lambda_{\operatorname{ALE}}(g)$ means the gradient of $\lambda_{\operatorname{ALE}}$ at $g$ with respect to the Hilbert space $L^2(d\mu_{g_b})$. Thanks to Proposition \ref{lambdaALE analytic}, $\nabla\lambda_{\operatorname{ALE}}(g)$ can be computed in coordinates from the $L^2(e^{-f_g}d\mu_g)$-gradient of $\lambda_{\operatorname{ALE}}$ as follows. Indeed, consider an orthonormal frame with respect to $g_b$ at a point such that $g_{ij}=(1+\lambda_i)\delta_{ij}$ and observe that:
		\begin{equation*}
\nabla\lambda_{\operatorname{ALE}}(g)_{ij}=-e^{-f_g}\frac{d\mu_g}{d\mu_{g_b}}(1+\lambda_i)^{-1}(1+\lambda_j)^{-1}\left(\Ric(g)+\nabla^{g,2}f_g\right)_{ij},
\end{equation*}
which can be schematically written as 
\begin{equation}
\nabla\lambda_{\operatorname{ALE}}(g)=-e^{-f_g}\frac{d\mu_g}{d\mu_{g_b}}g^{-1}\ast g^{-1}\ast g_b\ast g_b\ast (\Ric(g)+\nabla^{g,2}f_g).\label{l2-grad-lam-gb}
\end{equation}
The estimate (\ref{l2-grad-lam-gb}) then implies that for two metrics $g_i$, $i=1,2$ in a $C^{2,\alpha}_{\tau}$-neighborhood of $g_b$,
\begin{equation}
\begin{split}
|\nabla\lambda_{\operatorname{ALE}}(g_2)-\nabla\lambda_{\operatorname{ALE}}(g_1)|_{g_b}\leq& C |\Ric(g_2)+\nabla^{g_2,2}f_{g_2}-\Ric(g_1)-\nabla^{g_1,2}f_{g_1}|_{g_b}\\
&+C\left(|g_2-g_1|_{g_b}+|f_{g_2}-f_{g_1}|\right)\rho_{g_b}^{-2},
\label{red-case-l2-wei-grad}
\end{split}
\end{equation}
where $C=C(n,g_b,\varepsilon,\tau)$ are positive constants. Here we have used the weakened assumption on the decay of the Ricci curvatures $\Ric(g_i)=O(\rho_{g_b}^{-2-\tau})=O(\rho_{g_b}^{-2})$ together with the decay of the Hessians $\nabla^{g_i,2}f_{g_i}=O(\rho_{g_b}^{-2})$ from Proposition \ref{prop-pot-fct}. In particular, Hardy's inequality from Theorem \ref{thm-min-har-inequ} and Proposition \ref{prop-ene-pot-fct} show that:
\begin{equation}
\begin{split}
\|\nabla\lambda_{\operatorname{ALE}}(g_2)-\nabla\lambda_{\operatorname{ALE}}(g_1)\|_{L^2_{\frac{n}{2}+1}}&\leq C \|\Ric(g_2)+\nabla^{g_2,2}f_{g_2}-\Ric(g_1)-\nabla^{g_1,2}f_{g_1}\|_{L^2_{\frac{n}{2}+1}}\\
&\quad+C\left(\|g_2-g_1\|_{H^2_{\frac{n}{2}-1}}+\|\nabla^{g_b}(f_{g_2}-f_{g_1})\|_{L^2_{\frac{n}{2}}}\right)\\
&\leq C \|\Ric(g_2)+\nabla^{g_2,2}f_{g_2}-\Ric(g_1)-\nabla^{g_1,2}f_{g_1}\|_{L^2_{\frac{n}{2}+1}}\\
&\quad+C\|g_2-g_1\|_{H^2_{\frac{n}{2}-1}}.\label{prel-est-ener-lam}
\end{split}
\end{equation}

		We first do the proof of the energy estimates (\ref{energy-est}) in case one of the two metrics is $g_b$, which implies in particular that $\nabla\lambda_{\operatorname{ALE}}(g_b)=0$.
		Let us write $h:=g-g_b$ where $g\in B_{C^{2,\alpha}_{\tau}}(g_b,\varepsilon)$. 
				By Lemmata \ref{Ric-lin-lemma-app} and \ref{lem-lin-equ-Ric-first-var}, linearizing the Ricci curvature of $g$ at $g_b$ gives schematically:
		\begin{equation}
		\begin{split}
		-2\Ric(g)&=-2\Ric_{g_b}+L_{g_b}h-\Li_{B_{g_b}(h)}(g_b)+Q(h,\nabla^{g_b}h,\nabla^{g_b,2}h)\\
		&=L_{g_b}h-\Li_{B_{g_b}(h)}(g_b)+Q(h,\nabla^{g_b}h,\nabla^{g_b,2}h),\label{ric-lin-bianchi}
		\end{split}
		\end{equation}
		where $Q(h,\nabla^{g_b}h,\nabla^{g_b,2}h)$ satisfies pointwise 
		\begin{equation*}
		\left|Q(h,\nabla^{g_b}h,\nabla^{g_b,2}h)\right|_{g_b}\leq C(n,g_b,\varepsilon)\left(|\Rm(g_b)|_{g_b}|h|^2_{g_b}+|\nabla^{g_b}h|_{g_b}^2+|h|_{g_b}|\nabla^{g_b,2}h|_{g_b}\right).
		\end{equation*}
		Therefore, by using the fact that $g=g_b+h\in B_{C^{2,\alpha}_{\tau}}(g_b,\varepsilon)$,
		\begin{eqnarray}
		\|\Ric(g)\|_{L^2_{\frac{n}{2}+1}}\leq C(n,g_b,\varepsilon)\|h\|_{H^2_{\frac{n}{2}-1}},\label{L^2-a-priori-bound-Ric}
		\end{eqnarray}
		where we suppressed the dependence of the norm on $S^2T^*N$.\\

		Now, let us estimate $\|\nabla^{g,2}f_g\|_{L^2}$. According to the Bochner formula applied to the smooth metric measure space $\left(N,g,\nabla^gf_g\right)$:
		\begin{equation}
		\begin{split}
		\left(\Delta_g-\langle\nabla^gf_g,\nabla^g\cdot\rangle_g\right)|\nabla^g f_g|_g^2&=2|\nabla^{g,2}f_g|^2_g+2(\Ric(g)+\nabla^{g,2}f_g)(\nabla^gf_g,\nabla^gf_g)\\
		&+2\left\langle\nabla^g\left(\Delta_gf_g-\langle\nabla^gf_g,\nabla^gf_g\rangle_g\right),\nabla^gf_g\right\rangle_g.\label{bochner-for-hess-f}
		\end{split}
		\end{equation}
		Once (\ref{bochner-for-hess-f}) is multiplied by $\rho_{g_b}^2$, an integration by parts, legitimated by the decay of $f_g$ established in Proposition \ref{prop-pot-fct}, gives:
		\begin{equation}
		\begin{split}\label{IPP-Monster}
		\|\nabla^{g,2}f_g\|&_{L^2(\rho_{g_b}^2e^{-f_g}d\mu_g)}^2=\|\Delta_{g,f_g}f_g\|^2_{L^2(\rho_{g_b}^2e^{-f_g}d\mu_g)}-\int_N(\Ric(g)+\nabla^{g,2}f_g)(\nabla^gf_g,\nabla^gf_g)\,\rho_{g_b}^2e^{-f_g}d\mu_g\\
		&+2\int_N\Delta_{g,f_g}f_g\,\rho_{g_b}\,\langle\nabla^{g}\rho_{g_b},\nabla^gf_g\rangle_g\,e^{-f_g}d\mu_g-\int_N\nabla^{g,2}f_g(\nabla^gf_g,\nabla^g(\rho_{g_b}^2e^{-f_g}))\,d\mu_g\\
		&\leq c\||\nabla^gf_g|^2_g+\R_g\|_{L^2(\rho_{g_b}^2e^{-f_g}d\mu_g)}^2+\frac{1}{2}\|\nabla^{g,2}f_g\|_{L^2(\rho_{g_b}^2e^{-f_g}d\mu_g)}^2+c\|\nabla^gf_g\|_{L^2(e^{-f_g}d\mu_g)}^2\\
		&+c\left(\||\nabla^gf_g|^2_g\|^2_{L^2(\rho_{g_b}^2e^{-f_g}d\mu_g)}+\|\Ric(g)\|_{L^2(\rho_{g_b}^2e^{-f_g}d\mu_g)}^2\right)\\
		&\leq\frac{1}{2}\|\nabla^{g,2}f_g\|_{L^2(\rho_{g_b}^2e^{-f_g}d\mu_g)}^2\\
		&+c\left(\||\nabla^gf_g|^2_g\|^2_{L^2(\rho_{g_b}^2e^{-f_g}d\mu_g)}+\|\nabla^gf_g\|^2_{L^2(e^{-f_g}d\mu_g)}+\|\Ric(g)\|_{L^2(\rho_{g_b}^2e^{-f_g}d\mu_g)}^2\right),
		\end{split}
		\end{equation}
		where we use (\ref{equ-criti-lambda-pot}) together with Young's inequality in the third line and where $c=c(n)$ denotes a positive constant that may vary from line to line. By Proposition \ref{prop-pot-fct}, $\rho_{g_b}|\nabla^gf_g|_g\leq C(n,\varepsilon,g_b)$ pointwise. Consequently, we get:
		\begin{equation*}
		\|\nabla^{g,2}f_g\|_{L^2(\rho_{g_b}^2e^{-f_g}d\mu_g)}^2\leq C(n,\varepsilon,g_b)\left(\|\nabla^gf_g\|^2_{L^2(e^{-f_g}d\mu_g)}+\|\Ric(g)\|_{L^2(\rho_{g_b}^2e^{-f_g}d\mu_g)}^2\right),
		\end{equation*}
		which in turn implies:
		\begin{equation}
		\|\nabla^{g,2}f_g\|_{L^2(\rho_{g_b}^2e^{-f_g}d\mu_g)}^2\leq C(n,\varepsilon,g_b)\left(\|\nabla^gf_g\|^2_{L^2(e^{-f_g}d\mu_g)}+\|h\|_{H^2_{\frac{n}{2}-1}}^2\right),\label{intermediate-est-hessian}
		\end{equation}
		thanks to (\ref{L^2-a-priori-bound-Ric}). Indeed, by Proposition \ref{existence propriete-wg}, 
		\begin{eqnarray}
		|w_g-1|=|w_g-w_{g_b}|\leq \frac{1}{2},\quad g\in B_{C^{2,\alpha}_{\tau}}(g_b,\varepsilon),\label{c^0-est-pot-fct}
		\end{eqnarray}
		if $\varepsilon$ is chosen small enough, so that the metrics $w_g\cdot g$ and $g$ are uniformly bi-Lipschitz on $N$.

		Finally, it remains to estimate $\|\nabla^gf_g\|_{L^2(e^{-f_g}d\mu_g)}$ or equivalently $\|\nabla^gf_g\|_{L^2}$  from above. Concatenating (\ref{intermediate-est-hessian}) and [(\ref{grad-est-int-ene}), Proposition \ref{prop-ene-pot-fct}] ends the proof of (\ref{energy-est}) in case $g_1=g_b$ once (\ref{prel-est-ener-lam}) is invoked.\\
		
		Let us treat the general case, i.e. let $g_1$ and $g_2$ be two metrics in $B_{C^{2,\alpha}_{\tau}}(g_b,\varepsilon)$, let $h:=g_2-g_1$ and $g_t:=g_1+(t-1)h$ for $t\in[1,2]$, and let us estimate the difference of $\nabla\lambda_{\operatorname{ALE}}(g_2)$ and $\nabla\lambda_{\operatorname{ALE}}(g_1)$ with the help of Lemmata \ref{Ric-lin-lemma-app} and \ref{lem-lin-equ-Ric-first-var} as follows:
		\begin{equation}
		\begin{split}\label{first-diff-lambda}
		&-2\Ric(g_2)-\Li_{\nabla^{g_2}f_{g_2}}(g_2)+2\Ric(g_1)+\Li_{\nabla^{g_1}f_{g_1}}(g_1)\\
		=&L_{g_1}h-\Li_{B_{g_1}(h)}(g_2)+\int_1^2\frac{\partial}{\partial t}\Li_{\nabla^{g_t}f_{g_t}}(g_t)\,dt+Q(h,\nabla^{g_1}h,\nabla^{g_1,2}h).
		\end{split}
		\end{equation}
		Now, observe that:
		\begin{equation}
		\begin{split}\label{int-first-der-lie-f}
		\int_1^2\frac{\partial}{\partial t}\Li_{\nabla^{g_t}f_{g_t}}(g_t)\,dt&=\int_1^2-\Li_{h(\nabla^{g_t}f_{g_t})}(g_t)+\Li_{\nabla^{g_t}f_{g_t}}(h)+\Li_{\nabla^{g_t}(\delta_{g_t}f(h))}(g_t)\,dt.
		\end{split}
		\end{equation}
		On the one hand, one has
		\begin{equation}
		\begin{split}\label{one-first-term-lie}
		\int_1^2\left|\Li_{\nabla^{g_t}f_{g_t}}(h)\right|_{g_1}\,dt&\lesssim\int_1^2|\nabla^{g_t}h|_{g_1}|\nabla^{g_t}f_{g_t}|_{g_1}+|h|_{g_1}|\nabla^{g_t,2}f_{g_t}|_{g_1} \,dt\\
		&\lesssim \rho_{g_b}^{-1}|\nabla^{g_1}h|_{g_1}\|h\|_{C^{2,\alpha}_{\tau}}+\rho_{g_b}^{-2}|h|_{g_1}\|h\|_{C^{2,\alpha}_{\tau}}.
		\end{split}
		\end{equation}
		Here, the sign $\lesssim$ means less than or equal up to a positive constant uniform in $t\in[1,2]$ which might depend on $n$, $g_b$, $\varepsilon$.
		On the other hand, one has similarly,
		\begin{equation}
		\begin{split}\label{sec-first-term-lie}
		\left|\int_1^2\Li_{h(\nabla^{g_t}f_{g_t})}(g_t)\,dt\right|_{g_1}&\lesssim\int_1^2|\nabla^{g_t}(h(\nabla^{g_t}f_{g_t}))|_{g_1} \,dt\\
		&\leq \rho_{g_b}^{-1}|\nabla^{g_1}h|_{g_1}\|h\|_{C^{2,\alpha}_{\tau}}+\rho_{g_b}^{-2}|h|_{g_1}\|h\|_{C^{2,\alpha}_{\tau}}.
		\end{split}
		\end{equation}
		%Let $B_{g_1,f_{g_1}}(h)$ denote the weighted Bianchi operator with respect to the couple $(g_1,f_{g_1})$ defined by\begin{equation*}B_{g_1,f_{g_1}}(h):=B_{g_1}(h)-h(\nabla^{g_1}f_{g_1}).\end{equation*}
		Then by (\ref{first-diff-lambda}) together with (\ref{int-first-der-lie-f}), (\ref{one-first-term-lie}) and (\ref{sec-first-term-lie}), one has pointwise:
		\begin{equation*}
		\begin{split}
		|\Ric(g_2)+\nabla^{g_2,2}f_{g_2}-&\Ric(g_1)-\nabla^{g_1,2}f_{g_1}|_{g_1}
		\\
		&\lesssim|L_{g_1}h|_{g_1}+\left|\Li_{B_{g_1}(h)}(g_1)\right|_{g_1}+\int_1^2|\Li_{\nabla^{g_t}(\delta_{g_t}f(h))}(g_t)|_{g_1}\,dt\\
		&+|\Rm(g_1)|_{g_1}|h|^2_{g_1}+|\nabla^{g_1}h|_{g_1}^2+|h|_{g_1}|\nabla^{g_1,2}h|_{g_1}\\
		&+\rho_{g_b}^{-1}|\nabla^{g_1}h|_{g_1}\|h\|_{C^{2,\alpha}_{\tau}}+\rho_{g_b}^{-2}|h|_{g_1}\|h\|_{C^{2,\alpha}_{\tau}}\\
		&\lesssim |L_{g_1}h|_{g_1}+\left|\Li_{B_{g_1}(h)}(g_1)\right|_{g_1}+\int_1^2|\Li_{\nabla^{g_t}(\delta_{g_t}f(h))}(g_t)|_{g_1}\,dt\\
		&+\rho_{g_b}^{-1}|\nabla^{g_1}h|_{g_1}+\rho_{g_b}^{-2}|h|_{g_1},
		\end{split}
		\end{equation*}
		where we have used the fact that $g_i\in B_{C^{2,\alpha}_{\tau}}(g_b,\varepsilon)$, $i=1,2$ in the last line.

		Let us notice that by definition of the space $H^2_{\frac{n}{2}-1}$ and that of the linear Bianchi gauge given in \eqref{defn-bianchi-op},
		\begin{equation*}
		\begin{split}
		\|L_{g_1}h\|_{L^2_{\frac{n}{2}+1}}&\lesssim\|\Rm(g_1)\ast h\|_{L^2_{\frac{n}{2}+1}}+\|\nabla^{g_1}h\|_{L^2}+\|\nabla^{g_1,2}h\|_{L^2_{\frac{n}{2}+1}}\\
		&\lesssim \|h\|_{H^2_{\frac{n}{2}-1}},\\
		\|\Li_{B_{g_1}(h)}(g_1)\|_{L^2_{\frac{n}{2}+1}}&\lesssim\left\|\nabla^{g_1}\left(\div_{g_1}h-\frac{\nabla^{g_1}\tr_{g_1}h}{2}\right)\right\|_{L^2_{\frac{n}{2}+1}}\\
		&\lesssim \|\nabla^{g_1,2}h\|_{L^2_{\frac{n}{2}+1}} \lesssim \|h\|_{H^2_{\frac{n}{2}-1}},
		\end{split}
		\end{equation*}
		which implies that:
		\begin{equation}
		\begin{split}\label{last-but-not-least}
		\|\Ric(g_2)+\nabla^{g_2,2}f_{g_2}-&\Ric(g_1)-\nabla^{g_1,2}f_{g_1}\|_{L^2_{\frac{n}{2}+1}}\\
		&\lesssim \|h\|_{H^2_{\frac{n}{2}-1}}+\left\|\int_1^2\Li_{\nabla^{g_t}(\delta_{g_t}f(h))}(g_t)\,dt\right\|_{L^2_{\frac{n}{2}+1}}\\
		&\lesssim \|h\|_{H^2_{\frac{n}{2}-1}}+\int_1^2\|\Li_{\nabla^{g_t}(\delta_{g_t}f(h))}(g_t)\|_{L^2(\rho_{g_b}^2d\mu_{g_t})}\,dt.
		\end{split}
		\end{equation}
		Here, we have used Cauchy-Schwarz inequality together with the fact that the metrics $g_t$, $t\in[1,2]$ are uniformly equivalent in the last line.
		
		According to (\ref{prel-est-ener-lam}) and (\ref{last-but-not-least}), it remains to control $\|\Li_{\nabla^{g_t}(\delta_{g_t}f(h))}(g_t)\|_{L^2(e^{-f_{g_t}}d\mu_{g_t})}$ from above uniformly in $t\in[1,2]$. By using the Bochner formula for the smooth metric measure space $\left(N^n,g_t,\nabla^{g_t}f_{g_t}\right)$ endowed with the measure $\mu_t:=e^{-f_{g_t}}d\mu_{g_t}$: 
		\begin{equation*}
		\begin{split}
		\Delta_{g_t,f_{g_t}}|\nabla^{g_t} \left(\delta_{g_t}f(h)\right)|_{g_t}^2&:=\left(\Delta_{g_t}-\langle\nabla^{g_t}f_{g_t},\nabla^{g_t}\cdot\rangle_{g_t}\right)|\nabla^{g_t} \left(\delta_{g_t}f(h)\right)|_{g_t}^2\\
		&=2|\nabla^{g_t,2}\delta_{g_t}f(h)|^2_{g_t}+2\left\langle\nabla^{g_t}\Delta_{g_t,f_{g_t}}\left(\delta_{g_t}f(h)\right),\nabla^{g_t}\left(\delta_{g_t}f(h)\right)\right\rangle_{g_t}\\
		&+2(\Ric(g_t)+\nabla^{g_t,2}f_{g_t})(\nabla^{g_t}\left(\delta_{g_t}f(h)\right),\nabla^{g_t}\left(\delta_{g_t}f(h)\right)).
		\end{split}
		\end{equation*}
		Therefore we proceed in the same way as we did in (\ref{IPP-Monster}) to get:
		\begin{equation}
		\begin{split}\label{est-hess-first-var-pot-fct-ene}
		\|\nabla^{g_t,2}\left(\delta_{g_t}f(h)\right)\|_{L^2(\rho_{g_b}^2d\mu_{t})}&\lesssim\|\Delta_{g_t,f_{g_t}}\left(\delta_{g_t}f(h)\right)\|_{L^2(\rho_{g_b}^2d\mu_{t})}+\|\nabla^{g_t}\left(\delta_{g_t}f(h)\right)\|_{L^2(d\mu_{t})}.
		\end{split}
		\end{equation}
		In order to estimate the righthand side of the previous inequality in terms of $\|h\|_{H^2_{\frac{n}{2}-1}}$ only, we invoke [(\ref{est-grad-first-der-pot-fct-ene}), (\ref{first-var-pot-fct-ell-equ-gal-case-prop}), Proposition \ref{prop-ene-pot-fct}] together with the triangular inequality.

		Coming back to (\ref{est-hess-first-var-pot-fct-ene}), this implies that:
		\begin{equation}
		\begin{split}
		\|\nabla^{g_t,2}\left(\delta_{g_t}f(h)\right)\|_{L^2(\rho_{g_b}^2d\mu_{g_t})}&\lesssim\|\Delta_{g_t,f_{g_t}}\left(\delta_{g_t}f(h)\right)\|_{L^2(\rho_{g_b}^2d\mu_{g_t})}+\|\nabla^{g_t}h\|_{L^2}\\
		&\lesssim \|\nabla^{g_1}h\|_{L^2}+\|\nabla^{g_1,2}h\|_{L^2_{\frac{n}{2}+1}}\lesssim \|h\|_{H^2_{\frac{n}{2}-1}}.
		\end{split}
		\end{equation}
		%\todo[inline]{Attention, si on reste avec $\frac{n}{2}-2$, alors $\|\nabla^{g_1}h\|_{L^2}$ n'a pas de sens... de toutes façons, $\lambda_{\operatorname{ALE}}^0$ n'est alors a priori pas défini. Il faut revoir : si je comprends bien c'est surtout $ \|\Li_{\nabla^{g_t}(\delta_{g_t}f(h))}(g_t) \|_{L^2}\leq \|\nabla^{g_t}(\delta_{g_t}f(h))\|_{H^1_{\frac{n}{2}-1}}$ que l'on souhaite contrôler, non ?}
		Here we used the fact that $|\nabla^{g_t}h|_{g_t}\lesssim |\nabla^{g_1}h|_{g_1}+|\nabla^{g_1}(g_t-g_1)|_{g_1}|h|_{g_1}$  to get the second inequality since again, $g_i\in B_{C^{2,\alpha}_{\tau}}(g_b,\varepsilon)$, $i=1,2$. This ends the proof of estimate \eqref{energy-est}.\\
			
			The proof of \eqref{energy-est-bis} is along the same lines as those of the proof of \eqref{energy-est}. Therefore, we only sketch its main steps. According to 	Proposition \ref{snd-var-gal-lambda}, the differential of the $L^{2}(e^{-f_g}d\mu_g)$-gradient of $\lambda_{\operatorname{ALE}}$ denoted by $\nabla^{L^2(e^{-f_g}d\mu_g)}\lambda_{\operatorname{ALE}}$ at a metric $g\in B_{C^{2,\alpha}_{\tau}}(g_b,\varepsilon)$ along a variation $h\in H^2_{\frac{n}{2}-1}$ is:
			\begin{equation}
			\begin{split}\label{for-diff-nabla-lam-wei}
			2D_g\left(\nabla^{L^2(e^{-f_g}d\mu_g)}\lambda_{\operatorname{ALE}}\right)(h)&=\Delta_{f_g}h+2\Rm(g)(h)-\Li_{B_{f_g}(h)}(g)\\
			&\quad+h\circ\Ric_{f_g}(g)+\Ric_{f_g}(g)\circ h-2\left(\frac{\tr_gh}{2}-\delta_gf(h)\right)\Ric_{f_g}(g).
			\end{split}
			\end{equation}
			As explained in (\ref{l2-grad-lam-gb}) at the beginning of the proof of \eqref{energy-est}, dealing either with the $L^{2}(e^{-f_g}d\mu_g)$-gradient or the $L^{2}(d\mu_{g_b})$-gradient of $\lambda_{\operatorname{ALE}}$ lead to the same expected estimate. Linearizing (\ref{for-diff-nabla-lam-wei}) applied to $g_2\in B_{C^{2,\alpha}_{\tau}}(g_b,\varepsilon)$ at a metric $g_1\in B_{C^{2,\alpha}_{\tau}}(g_b,\varepsilon)$ leads to \eqref{energy-est-bis}: indeed, the only difficulty consists in estimating the terms involving $\delta_gf(h)$ or equivalently, the volume variation $\frac{\tr_gh}{2}-\delta_gf(h).$ This is done with the help of Proposition \ref{var-vol-var-ell-eqn-prop} by linearizing \eqref{var-vol-var-ell-eqn-for} applied to $g_2$ at $g_1$ together with the use of Proposition \ref{prop-ene-pot-fct-bis}.
	\end{proof}

	We are in a good position to prove the main result of this section:
	\begin{theo}[A \L{}ojasiewicz inequality for ALE metrics]\label{theo-loja-ALE}
		Let $(N^n,g_b)$, $n\geqslant 4$, be a Ricci-flat ALE metric. Let $\tau\in \left(\frac{n-2}{2},n-2\right)$ and $\alpha\in(0,1)$. Then the functional $\lambda_{\operatorname{ALE}}$ satisfies the following $L^2_{\frac{n}{2}+1}$-\L{}ojasiewicz inequality: there exists $\varepsilon>0$, a constant $C>0$ and $\theta\in(0,1)$ such that for all $g\in B_{C^{2,\alpha}_{\tau}}(g_b,\varepsilon)$:
		\begin{equation}
		   |\lambda_{\operatorname{ALE}}(g)|^{2-\theta}\leq C\|\Ric(g) + \nabla^{g,2}{f_g}\|^2_{L^2_{\frac{n}{2}+1}}. \label{Lojasiewicz lambda ALE l2n/2+1}
		\end{equation} 
		
	\end{theo}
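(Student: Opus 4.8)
The plan is to verify that the functional $G := \lambda_{\operatorname{ALE}}(g_b + \cdot)$, restricted to the divergence-free slice, satisfies all the hypotheses of the abstract \L{}ojasiewicz inequality of Proposition \ref{Lojasiewicz ineq weighted}, and then to remove the gauge-fixing restriction using diffeomorphism invariance. First I would fix the data for Proposition \ref{Lojasiewicz ineq weighted}: take $E := \ker\div_{g_b} \subset C^{2,\alpha}_\tau(S^2T^*N)$ and $F := \div^*_{g_b}(C^\infty_c(TN))^\perp \subset C^{0,\alpha}_{\tau+2}(S^2T^*N)$, both closed subspaces of the relevant weighted Hölder spaces, and take $G = \lambda_{\operatorname{ALE}}(g_b+\cdot)$, which is analytic by Proposition \ref{lambdaALE analytic}. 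The $L^2(e^{-f_g}d\mu_g)$-gradient of $G$ at $h$ is $-(\Ric(g_b+h) + \nabla^{g_b+h,2}f_{g_b+h})$, again by Proposition \ref{lambdaALE analytic}, and this differs from the $L^2(d\mu_{g_b})$-gradient only by a bounded, analytically varying factor, as displayed in the proof of Proposition \ref{prop-energy-est}. Its linearization at $0$ along $E$ is $\frac12 L_{g_b}$ by Proposition \ref{second-var-prop} (or Proposition \ref{lambdaALE analytic}).

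Next I would check the numbered hypotheses one by one. Conditions (\ref{item-0}) (including the Lipschitz bound \eqref{lip-bd-nabla-G}) and (\ref{item-0-bis}) are exactly the content of Proposition \ref{prop-energy-est}: the energy estimates \eqref{energy-est} and \eqref{energy-est-bis}. Conditions (\ref{item-1})--(\ref{item-5}) concerning the Fredholm theory of $L = \frac12 L_{g_b}$ between the weighted spaces follow from Proposition \ref{prop-lic-fred}: for $\tau \in (\frac{n-2}{2}, n-2)$ there are no exceptional values of the Laplacian between $\frac{n}{2}-1$ and $n-2$, so $L_{g_b}: H^2_{\frac{n}{2}-1} \to L^2_{\frac{n}{2}+1}$ and $L_{g_b}: C^{2,\alpha}_\tau \to C^{0,\alpha}_{\tau+2}$ are Fredholm of index $0$ with kernel and $L^2$-cokernel both equal to $\ker_{L^2}L_{g_b}$, and Lemma \ref{definition E et F pour loja} identifies the image of $L_{g_b}+\Pi_{\mathbf K}$ on $C^{2,\alpha}_\tau \cap E$ with $C^{0,\alpha}_{\tau+2} \cap F$, so the restrictions to $E$ and $F$ inherit the Fredholm property with the same (divergence-free) kernel. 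Here one also uses that elements of $\mathbf K$ are divergence-free and decay like $\rho_{g_b}^{-n}$, so the $L^2$-projection $\Pi_{\mathbf K}$ is well-behaved on all the spaces involved. Proposition \ref{Lojasiewicz ineq weighted} then yields $\theta \in (0,1]$ and $C>0$ with $|\lambda_{\operatorname{ALE}}(g_b+h) - \lambda_{\operatorname{ALE}}(g_b)|^{2-\theta} \leq C\|\nabla\lambda_{\operatorname{ALE}}(g_b+h)\|^2_{L^2_{\frac{n}{2}+1}}$ for $h \in C^{2,\alpha}_\tau \cap E$ small, and since $\lambda_{\operatorname{ALE}}(g_b)=0$ and the $L^2(d\mu_{g_b})$-gradient is comparable to $\Ric(g)+\nabla^{g,2}f_g$ pointwise up to a bounded factor (with the weight $\rho_{g_b}^2$ absorbed correctly), this gives \eqref{Lojasiewicz lambda ALE l2n/2+1} on the slice.

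Finally I would upgrade from the divergence-free slice to a full $C^{2,\alpha}_\tau$-neighborhood. Given an arbitrary $g \in B_{C^{2,\alpha}_\tau}(g_b,\varepsilon)$, the divergence-free gauge-fixing of Appendix \ref{app-B} (Proposition \ref{prop-gauge-div-free}) produces a diffeomorphism $\phi$ close to the identity in $C^{3,\alpha}_{\tau-1}$ with $\div_{g_b}(\phi^*g) = 0$ and $\|\phi^*g - g_b\|_{C^{2,\alpha}_\tau} \leq C\|g-g_b\|_{C^{2,\alpha}_\tau}$. By the diffeomorphism invariance of $\lambda_{\operatorname{ALE}}$ (Proposition \ref{scaling diffeo tildelambda}), $\lambda_{\operatorname{ALE}}(\phi^*g) = \lambda_{\operatorname{ALE}}(g)$, and since $w_{\phi^*g} = \phi^* w_g$ and hence $f_{\phi^*g} = \phi^* f_g$, the pointwise tensor $\Ric + \nabla^2 f$ transforms by pullback, so $\|\Ric(\phi^*g) + \nabla^{\phi^*g,2}f_{\phi^*g}\|_{L^2_{\frac{n}{2}+1}} \leq C\|\Ric(g)+\nabla^{g,2}f_g\|_{L^2_{\frac{n}{2}+1}}$ (the weight $\rho_{g_b}$ is comparable to $\phi^*\rho_{g_b}$). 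Applying the slice inequality to $h := \phi^*g - g_b$ then yields \eqref{Lojasiewicz lambda ALE l2n/2+1} for $g$. The main obstacle is not any single one of these steps individually but the bookkeeping of the weighted norms throughout — in particular making sure the factor $\rho_{g_b}^2$ appearing in $L^2_{\frac{n}{2}+1}$ is matched correctly when passing between the $L^2(d\mu_{g_b})$-gradient and the Bakry--Émery tensor, and checking that the gauge change does not lose a power of $\rho_{g_b}$; all the genuinely hard analysis (the energy estimates and the Fredholm theory) has already been isolated into Propositions \ref{prop-energy-est} and \ref{prop-lic-fred}.
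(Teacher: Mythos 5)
Your proposal is correct and follows essentially the same route as the paper: verify the hypotheses of Proposition \ref{Lojasiewicz ineq weighted} for $G=\lambda_{\operatorname{ALE}}(g_b+\cdot)$ on $E=\ker\div_{g_b}$ with target $F=\div^*_{g_b}(C^\infty_c(TN))^\perp$, using Proposition \ref{lambdaALE analytic} for analyticity and the identification of the gradient, Proposition \ref{prop-energy-est} for conditions (\ref{item-0}) and (\ref{item-0-bis}), and Proposition \ref{prop-lic-fred} together with Lemma \ref{definition E et F pour loja} for the Fredholm hypotheses, then remove the gauge restriction via Proposition \ref{prop-gauge-div-free} and the diffeomorphism invariance of Proposition \ref{scaling diffeo tildelambda}. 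The only (cosmetic) difference is the order of operations — you prove the slice inequality first and gauge-fix afterwards, whereas the paper gauge-fixes up front and notes that restriction to $E$ can only decrease the gradient norm — but both arguments are sound.
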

	\begin{proof}
		It suffices to prove that the functional $G:=\lambda_{\operatorname{ALE}}$ and its derivatives satisfy the assumptions of Proposition \ref{Lojasiewicz ineq weighted}.
		
		Let us first note that $\lambda_{\operatorname{ALE}}$ is analytic by Proposition \ref{lambdaALE analytic}.
		
		Now, the energy estimates (\ref{lip-bd-nabla-G}) hold true thanks to [\eqref{energy-est}, Proposition \ref{prop-energy-est}]. 
		Moreover, according to [(\ref{first-var-prop}), Proposition \ref{first-var-lambda}] and Proposition \ref{lambdaALE analytic}, the gradient of $\lambda_{\operatorname{ALE}}$ in $L^2(e^{-f_g}d\mu_g)$ is $-(\Ric(g) + \nabla^{g,2}f_g)$ for $g\in B_{C^{2,\alpha}_{\tau}}(g_b,\varepsilon)$. Condition [(\ref{item-0-bis}), Proposition \ref{Lojasiewicz ineq weighted}] is ensured by [\eqref{energy-est-bis}, Proposition \ref{prop-energy-est}].
		
		Next, observe that by Proposition \ref{scaling diffeo tildelambda}, the inequality \eqref{Lojasiewicz lambda ALE l2n/2+1} is invariant by diffeomorphisms of $N$ in the connected component of the identity such that their generating vector field lies in a neighborhood of $0_{TN}$ in $C^{3,\alpha}_{\tau-1}(TN)$. For this reason, we invoke Proposition \ref{prop-gauge-div-free} to restrict our space of metrics to the space $B_{C^{2,\alpha}_{\tau}}(g_b,\varepsilon)\,\cap\,E$ where $E:= \ker_{C^{2,\alpha}_{\tau}}(\div_{g_b})$ and we restrict the image to $C^{0,\alpha}_{\tau+2}\,\cap \,F$ with $F:=\div^*_{g_b}(C^\infty_c(TN))^\perp$ as in Lemma \ref{definition E et F pour loja}.  This space of metrics is the space of divergence-free metrics with respect to $g_b$. It is crucial to gauge the diffeomorphism invariance of $\lambda_{\operatorname{ALE}}$ away to expect the Fredholmness of its Hessian. 
		
		We will denote $\lambda_{\operatorname{ALE}}^E$ the restriction of $\lambda_{\operatorname{ALE}}$ to $E$ which is also analytic. 
		
		Since $\|\nabla \lambda_{\operatorname{ALE}}^E\|_{L^2_{\frac{n}{2}+1}}\leq \|\nabla \lambda_{\operatorname{ALE}}\|_{L^2_{\frac{n}{2}+1}}$, the inequalities go in the right direction, and it is enough to prove the corresponding \L ojasiewicz inequality for $\lambda_{\operatorname{ALE}}^E$.
		
		Notice that the linearization of $\nabla \lambda_{\operatorname{ALE}}^E$ at $0\in S^2T^*N$ is half the Lichnerowicz operator $\frac{1}{2}L_{g_b}$ by Proposition \ref{second-var-prop}. Proposition \ref{prop-lic-fred} ensures that $L_{g_b}$ is symmetric and is a bounded operator from $C^{2,\alpha}_{\tau}(S^2T^*N)$ to $C^{0,\alpha}_{\tau+2}(S^2T^*N)$. It is moreover bounded from $H^2_{\frac{n}{2}-1}(S^2T^*N)$ to $L^2_{\frac{n}{2}+1}(S^2T^*N)$. The Fredholmness of $L_{g_b}$ follows from Proposition \ref{prop-lic-fred}. Conditions (\ref{item-2}) and (\ref{item-5}) are met thanks to Proposition \ref{prop-lic-fred}. This ends the proof of Theorem \ref{theo-loja-ALE}.

	\end{proof}

	Let us finally prove that we obtain the optimal exponent in the integrable situation.
	
	Let $g_b$ be an integrable Ricci-flat ALE metric. In this situation, the idea is to replace the kernel $\ker_{L^2}L_{g_b}$ by the actual zero-set of $\lambda_{\operatorname{ALE}}$ among $C^{2,\alpha}_\tau$ divergence-free perturbations of $g_b$ which is an analytic manifold whose tangent space at $g_b$ is $\ker_{L^2}L_{g_b}$.
	
	\begin{theo}\label{theo-loja-int-opt}
		Let $(N^n,g_b)$, $n\geq 4$, be a Ricci-flat ALE metric whose infinitesimal Ricci-flat deformations are integrable. Let $\tau\in(\frac{n-2}{2},n-2)$ and $\alpha\in(0,1)$.
		
		Then there exists $\epsilon>0$ and a constant $C>0$ such that for all $g\in B_{C^{2,\alpha}_{\tau}}(g_b,\varepsilon)$, the following $L^2_{\frac{n}{2}+1}$-\L{}ojasiewicz inequality holds: 
		\begin{equation*}
		|\lambda_{\operatorname{ALE}}(g)|\leq C \|\Ric(g)+\nabla^{g,2}f_g\|_{L^2_{\frac{n}{2}+1}}^{2}. 
		\end{equation*}
		
		In particular, if $n\geq 5$ and $\tau\in(\frac{n}{2},n-2)$ then for any $0<\delta<\frac{2\tau-(n-2)}{2\tau-(n-4)}$, there exists $C>0$ such that for all $g\in B_{C^{2,\alpha}_\tau}(g_b,\epsilon)$, we have the following $L^2$-\L{}ojasiewicz inequality:
		$$ |\lambda_{\operatorname{ALE}}(g)|^{2-\theta}\leq C \|\nabla \lambda_{\operatorname{ALE}}(g)\|_{L^2}^{2}, \quad\theta:=2-\frac{1}{\delta}.$$
	\end{theo}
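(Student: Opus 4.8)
The plan is to revisit the Lyapunov--Schmidt reduction used in the proof of Theorem~\ref{theo-loja-ALE} (Section~\ref{sec-prop-loja-ineq}) and to show that, under the integrability hypothesis, the finite-dimensional reduced functional produced there is \emph{identically constant} near the origin; this upgrades the exponent in the conclusion of Proposition~\ref{Lojasiewicz ineq weighted} from an a priori $\theta\in(0,1]$ to $\theta=1$. As in the proof of Theorem~\ref{theo-loja-ALE}, after gauge-fixing (Proposition~\ref{prop-gauge-div-free} together with the diffeomorphism invariance of Proposition~\ref{scaling diffeo tildelambda}) we may assume $g\in B_{C^{2,\alpha}_\tau}(g_b,\varepsilon)\cap E$ with $E=\ker\div_{g_b}$, $F=\div_{g_b}^\ast(C^\infty_c(TN))^\perp$, and we set $\mathbf K:=\ker_{L^2}L_{g_b}$; we write $G:=\lambda_{\operatorname{ALE}}(g_b+\cdot)$ restricted to $E$, and keep the notation $\mathcal N=\nabla G+\Pi_{\mathbf K}$, $\Phi=\mathcal N^{-1}$, $f=G\circ\Phi$, $f_{\mathbf K}=f|_{\mathbf K}$ from Lemmas~\ref{reduction LS}--\ref{loja orth noyau 1}.

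First I would record two facts. (i) $\lambda_{\operatorname{ALE}}$ vanishes on Ricci-flat ALE metrics near $g_b$ in divergence-free gauge: if $\Ric(g)=0$ then $\R_g=0$, so $w_g\equiv1$, $f_g\equiv0$ and $\lambda_{\operatorname{ALE}}^0(g)=0$; moreover the metrics $\bar g_v$ of Definition~\ref{definition integrable} satisfy $\bar g_v-g_b\in C^{2,\alpha}_n$, hence $\div_{g_b}(\bar g_v-g_b)-\nabla^{g_b}\tr_{g_b}(\bar g_v-g_b)=O(\rho_{g_b}^{-n-1})$, so the boundary integrand defining $m_{\operatorname{ADM}}$ is $O(R^{-2})$ on $\{\rho_{g_b}=R\}$ and $m_{\operatorname{ADM}}(\bar g_v)=0$; thus $\lambda_{\operatorname{ALE}}(\bar g_v)=\lambda_{\operatorname{ALE}}^0(\bar g_v)-m_{\operatorname{ADM}}(\bar g_v)=0$. (ii) By the uniqueness clause in Definition~\ref{definition integrable}, the set $\mathcal Z:=\{\bar g_v:\ v\in\mathbf K\text{ small}\}$ is exactly the set of Ricci-flat ALE metrics near $g_b$ with $\div_{g_b}g=0$; by the analytic implicit function theorem argument recalled in the proof of Proposition~\ref{gauge fixing ALE integrable}, together with $\bar g_v-(g_b+v)\perp\mathbf K$, the map $v\mapsto\bar g_v$ is analytic with $\bar g_0=g_b$ and differential the identity on $\mathbf K$ at $v=0$, so $\mathcal Z$ is an analytic submanifold of $B_{C^{2,\alpha}_\tau}(g_b,\varepsilon)\cap E$ through $g_b$ with $T_{g_b}\mathcal Z=\mathbf K$.

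The heart of the matter is then the identification of $\Phi$ on $\mathbf K$. Each $\bar g_v\in\mathcal Z$ is a critical point of $\lambda_{\operatorname{ALE}}$ since its $L^2(e^{-f_g}d\mu_g)$-gradient $-(\Ric(\bar g_v)+\nabla^{\bar g_v,2}f_{\bar g_v})$ vanishes, hence $\nabla G(\bar g_v-g_b)=0$ and
\[
\mathcal N(\bar g_v-g_b)=\Pi_{\mathbf K}(\bar g_v-g_b)=v .
\]
Since $v\mapsto\bar g_v-g_b$ is analytic with differential the identity on $\mathbf K$, the map $\mathcal N$ carries $\mathcal Z-g_b$ onto a neighborhood of $0$ in $\mathbf K$, so $\Phi(v)=\bar g_v-g_b$ for all small $v\in\mathbf K$, and therefore $f_{\mathbf K}(v)=\lambda_{\operatorname{ALE}}(\bar g_v)=0$ near $0$. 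Feeding this into Lemma~\ref{loja orth noyau 1}, which gives for small $x\in C^{2,\alpha}_\tau\cap E$ the bound $|G(x)-f_{\mathbf K}(\Pi_{\mathbf K}(x))|\le C\|\nabla G(x)\|_{L^2_{\frac n2+1}}^2$, and using $f_{\mathbf K}(\Pi_{\mathbf K}(x))=0=G(0)=\lambda_{\operatorname{ALE}}(g_b)$ together with $\|\nabla G(x)\|_{L^2_{\frac n2+1}}\le\|\nabla\lambda_{\operatorname{ALE}}(g)\|_{L^2_{\frac n2+1}}$ and formula \eqref{l2-grad-lam-gb} (which shows $\nabla\lambda_{\operatorname{ALE}}(g)$ is, pointwise, $\Ric(g)+\nabla^{g,2}f_g$ up to uniformly bounded factors), one gets exactly $|\lambda_{\operatorname{ALE}}(g)|\le C\|\Ric(g)+\nabla^{g,2}f_g\|_{L^2_{\frac n2+1}}^2$, i.e. the $L^2_{\frac n2+1}$-\L{}ojasiewicz inequality with $\theta=1$.

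Finally, for the $L^2$-version one applies the interpolation inequality of Lemma~\ref{lemma-interpol-delig} to $T:=\Ric(g)+\nabla^{g,2}f_g$ (equivalently to $\nabla\lambda_{\operatorname{ALE}}(g)$), which lies in $C^0_{\tau+2}$ with small norm by Proposition~\ref{prop-pot-fct}: for $0<\delta<\frac{2\tau-(n-2)}{2\tau-(n-4)}$ one has $\|T\|_{L^2_{\frac n2+1}}^2\le\|T\|_{L^2}^{2\delta}$, so the previous inequality becomes $|\lambda_{\operatorname{ALE}}(g)|\le C\|\nabla\lambda_{\operatorname{ALE}}(g)\|_{L^2}^{2\delta}$, i.e. $|\lambda_{\operatorname{ALE}}(g)|^{1/\delta}\le C'\|\nabla\lambda_{\operatorname{ALE}}(g)\|_{L^2}^{2}$, which is the claim with $\theta:=2-\frac1\delta$; the requirement $\theta>0$, i.e. $\delta>\frac12$, forces $\tau>\frac n2$ and hence $n\ge5$, matching the statement. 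The main obstacle is the step of identifying $\Phi|_{\mathbf K}$ with $v\mapsto\bar g_v-g_b$: one must check carefully that the critical set of $\lambda_{\operatorname{ALE}}^E$ near $g_b$ is genuinely a $\dim\mathbf K$-dimensional analytic manifold with tangent space $\mathbf K$, and that the $L^2$-projections and the divergence-free gauge appearing in the reduction line up with the parametrization of $\mathcal Z$ by $\mathbf K$ — this is precisely where integrability is indispensable, since without it $f_{\mathbf K}$ is only analytic and one recovers the general, possibly non-optimal, exponent of Theorem~\ref{theo-loja-ALE}.
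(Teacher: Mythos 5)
Your proof is correct, and it reaches the optimal exponent by a mechanism that is genuinely (if mildly) different from the paper's. The paper handles the integrable case by \emph{re-centering}: given $g$, Proposition \ref{gauge fixing ALE integrable} produces a nearby Ricci-flat $\bar g_v$ and a gauge in which $\phi^*g-\bar g_v$ is divergence-free and $L^2(\bar g_v)$-orthogonal to $\ker_{L^2(\bar g_v)}L_{\bar g_v}$; then Lemma \ref{loja orth noyau 1} is applied at the base point $\bar g_v$, where $\Pi_{\mathbf K}(x)=0$ and $f(0)=\lambda_{\operatorname{ALE}}(\bar g_v)=0$, so the finite-dimensional term drops out. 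You instead keep the reduction of Lemma \ref{reduction LS} anchored at $g_b$ and prove that the reduced function $f_{\mathbf K}$ vanishes identically, via the clean observation that $\mathcal N(\bar g_v-g_b)=\Pi_{\mathbf K}(\bar g_v-g_b)=v$ (using $\nabla G=0$ at Ricci-flat metrics, $f_{\bar g_v}\equiv 0$, and the normalization $\bar g_v-(g_b+v)\perp\mathbf K$ of Definition \ref{definition integrable}), whence $\Phi(v)=\bar g_v-g_b$ and $f_{\mathbf K}(v)=\lambda_{\operatorname{ALE}}(\bar g_v)=0$. Both routes funnel through Lemma \ref{loja orth noyau 1} and the diffeomorphism/gauge invariance of Proposition \ref{scaling diffeo tildelambda}, and the interpolation step via Lemma \ref{lemma-interpol-delig} is identical. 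Your variant buys two things: it only requires the Lyapunov--Schmidt data (and hence the constants of Lemma \ref{loja orth noyau 1}) at the single base point $g_b$, sidestepping the implicit uniformity in $v$ of those constants that the paper's re-centering argument relies on; and it makes explicit that integrability forces the reduced functional to be constant, which is the conceptual reason $\theta=1$. The paper's version is shorter because it never needs to identify $\Phi|_{\mathbf K}$. The only points worth double-checking in your write-up — that $\mathbf K\subset F=\div_{g_b}^*(C^\infty_c(TN))^\perp$ so that $v$ lies in the domain of $\Phi$ (true, since elements of $\mathbf K$ are divergence-free by Proposition \ref{prop-lic-fred}), and that $\bar g_v-g_b\in E$ with small $C^{2,\alpha}_\tau$-norm (true by Definition \ref{definition integrable}) — both go through, so the identification $\Phi(v)=\bar g_v-g_b$ is legitimate.
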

	
	\begin{proof}
		Denote $W_{g_b}:= \{\bar{g}_v$, \text{ for small } $v\in\ker_{L^2}L_{g_b}\}$, where the $ \bar{g}_v $ are the metrics of Definition \ref{definition integrable}. The functional $\lambda_{\operatorname{ALE}}$ and its gradient vanish on $W_{g_b}$ since the  metrics $\bar{g}_v$ are Ricci-flat, and the \L{}ojasiewicz inequality trivially follows on this space.
		
		Let $g$ be $C^{2,\alpha}_\tau$-close enough to $g_b$. Then, by Proposition \ref{gauge fixing ALE integrable} there exists a unique $\bar{g}_v\in W_{g_b}$ and a diffeomorphism $\phi:N\to N$ in the connected component of the identity such that its infinitesimal generator belongs to $C^{3,\alpha}_{\tau-1}(TN)$ and such that 
		\begin{itemize}
			\item $ \phi^*g - \bar{g}_v \perp_{L^2(\bar{g}_v)}\ker_{L^2(\bar{g}_v)}L_{\bar{g}_v} $, and
			\item $\div_{\bar{g}_v}\phi^*g = 0$.
		\end{itemize}
		Therefore, up to changing the reference Ricci-flat metric by a metric in $W_{g_b}$, since $\lambda_{\operatorname{ALE}}$ and the $L^2$-norm of its gradient are invariant by the pull-back by $\phi$ thanks to Proposition \ref{scaling diffeo tildelambda}, the situation is reduced to proving a \L{}ojasiewicz inequality on the orthogonal of the kernel, which is exactly the statement of Lemma \ref{loja orth noyau 1}.
	\end{proof}

	\section{Deformation of Ricci-flat ALE metrics, scalar curvature and mass}\label{sec-covid-mass}
	
	Let us now mention some applications of the functional ${\lambda}_{ALE}$ that we introduced and its properties. 
	
	\subsection{Local positive mass theorems}~~\\
	
	Let us mention some direct applications of the functional $\lambda_{\operatorname{ALE}}$ and its properties for Ricci-flat ALE metrics.
	
	\begin{coro}\label{local positive mass}
		Let $(N^n,g_b)$ be a Ricci-flat ALE metric which is either \emph{integrable} and \emph{stable} or a local maximizer of ${\lambda}_{ALE}$ with respect to the $C^{2,\alpha}_\tau$-topology. Then any deformation of $g_b$ small enough in $C^{2,\alpha}_\tau$ which has nonnegative and integrable scalar curvature in $C^0_{\tau'}$, for some $\tau'>n$, satisfies
		$$m_{\operatorname{ADM}}(g)\geq 0,$$
		with equality on Ricci-flat metrics only.
	\end{coro}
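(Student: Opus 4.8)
The plan is to reduce the statement to two facts already in hand: that $g_b$ is a local maximizer of $\lambda_{\operatorname{ALE}}$ with $\lambda_{\operatorname{ALE}}(g_b)=0$, and that $\lambda_{\operatorname{ALE}}^0\geq 0$ on metrics with nonnegative integrable scalar curvature. In both hypotheses of the corollary, $g_b$ is a local maximizer of $\lambda_{\operatorname{ALE}}$ on a ball $B_{C^{2,\alpha}_{\tau}}(g_b,\varepsilon)$: this is the assumption itself in the second case, and it is Proposition \ref{local maximum stable integrable} in the integrable and stable case. Shrinking $\varepsilon$, we also arrange that Propositions \ref{existence propriete-wg}, \ref{prop-pot-fct}, \ref{lambdaALE analytic} and the \L{}ojasiewicz inequality of Theorem \ref{theo-loja-ALE} hold on this ball. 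Since $\R_{g_b}=0$ gives $\lambda_{\operatorname{ALE}}^0(g_b)=0$ and $f_{g_b}\equiv 0$, one has $\lambda_{\operatorname{ALE}}(g_b)=0$, hence $\lambda_{\operatorname{ALE}}(g)\leq 0$ for every $g$ in the ball.

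Now fix $g\in B_{C^{2,\alpha}_{\tau}}(g_b,\varepsilon)$ with $\R_g\geq 0$ and $\R_g\in C^0_{\tau'}(N)$ for some $\tau'>n$. Then $g\in\mathcal{M}^{2,\alpha}_{\tau}(g_b,\varepsilon)$, so by the Remark following Proposition \ref{lambdaALE analytic} the decomposition $\lambda_{\operatorname{ALE}}(g)=\lambda_{\operatorname{ALE}}^0(g)-m_{\operatorname{ADM}}(g)$ is literally valid. By Lemma \ref{scaling lambdaALE}, $\lambda_{\operatorname{ALE}}^0(g)\geq 0$. Combining, $m_{\operatorname{ADM}}(g)=\lambda_{\operatorname{ALE}}^0(g)-\lambda_{\operatorname{ALE}}(g)\geq \lambda_{\operatorname{ALE}}^0(g)\geq 0$, which is the asserted inequality.

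For the equality discussion, suppose first $m_{\operatorname{ADM}}(g)=0$. Then $0\leq \lambda_{\operatorname{ALE}}^0(g)=\lambda_{\operatorname{ALE}}(g)\leq 0$, so both vanish; in particular $g$ realizes the maximum of $\lambda_{\operatorname{ALE}}$ over the ball at an interior point, so it is a critical point of the analytic functional $\lambda_{\operatorname{ALE}}$, and Proposition \ref{lambdaALE analytic} gives $\Ric(g)+\nabla^{g,2}f_g=0$. Tracing with respect to $g$ yields $\R_g+\Delta_g f_g=0$, and substituting $\Delta_g f_g=-\R_g$ into the Euler--Lagrange equation $2\Delta_g f_g-|\nabla^g f_g|_g^2+\R_g=0$ of Proposition \ref{prop-pot-fct} gives $\R_g=-|\nabla^g f_g|_g^2\leq 0$. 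With $\R_g\geq 0$ this forces $\R_g\equiv 0$ and $\nabla^g f_g\equiv 0$, so $f_g$ is constant, and since $w_g=e^{-f_g/2}\to 1$ at infinity by Proposition \ref{existence propriete-wg}, $f_g\equiv 0$ and hence $\Ric(g)=-\nabla^{g,2}f_g=0$. Conversely, if $g$ is Ricci-flat then $\R_g=0$ is nonnegative and integrable so the above applies with $f_g\equiv 0$ and $\lambda_{\operatorname{ALE}}^0(g)=0$; moreover $\Ric(g)+\nabla^{g,2}f_g=0$, so the right-hand side of the \L{}ojasiewicz inequality $|\lambda_{\operatorname{ALE}}(g)|^{2-\theta}\leq C\|\Ric(g)+\nabla^{g,2}f_g\|^2_{L^2_{\frac{n}{2}+1}}$ of Theorem \ref{theo-loja-ALE} vanishes, whence $\lambda_{\operatorname{ALE}}(g)=0$ and $m_{\operatorname{ADM}}(g)=\lambda_{\operatorname{ALE}}^0(g)-\lambda_{\operatorname{ALE}}(g)=0$.

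The whole argument is a bookkeeping of results already established; the only point requiring care is the equality case, where one must use that $\lambda_{\operatorname{ALE}}$ is defined and analytic on a full $C^{2,\alpha}_{\tau}$-neighborhood of $g_b$ (Proposition \ref{lambdaALE analytic}), not merely on $\mathcal{M}^{2,\alpha}_{\tau}$, so that an interior maximum genuinely forces the $L^2$-gradient $\Ric(g)+\nabla^{g,2}f_g$ to vanish, after which the steady-soliton identity closes using the sign of $\R_g$ together with the decay of $f_g$ from Proposition \ref{prop-pot-fct}.
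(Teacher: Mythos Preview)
Your proof is correct and follows the same route as the paper: reduce to $\lambda_{\operatorname{ALE}}(g)\leq 0$ via local maximality (Proposition~\ref{local maximum stable integrable} in the integrable stable case) and $\lambda_{\operatorname{ALE}}^0(g)\geq 0$ from $\R_g\geq 0$ (Lemma~\ref{scaling lambdaALE}), then read off $m_{\operatorname{ADM}}(g)\geq 0$ from the decomposition. Your treatment of the equality case is in fact more explicit than the paper's: where the paper simply asserts that maximizers of $\lambda_{\operatorname{ALE}}$ are Ricci-flat, you derive this from $\Ric(g)+\nabla^{g,2}f_g=0$ combined with the Euler--Lagrange equation \eqref{equ-criti-lambda-pot} and the sign of $\R_g$. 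The converse direction you prove (Ricci-flat $\Rightarrow m_{\operatorname{ADM}}=0$) is not actually required by the statement as the paper reads it, and invoking the \L{}ojasiewicz inequality there is heavier machinery than needed---but it is valid.
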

	\begin{proof}
		Consider $(N^n,g_b)$ a stable Ricci-flat ALE. By Proposition \ref{local maximum stable integrable}, it is a local maximum for ${\lambda}_{\operatorname{ALE}}$ in the $C^{2,\alpha}_\tau$-topology and therefore, for any metric $g$ sufficiently $C^{2,\alpha}_\tau$-close to $g_b$, we have $\lambda_{\operatorname{ALE}}^0(g)\leq 0$ with equality only if the metric is Ricci-flat. Since $\R_g\geq 0$, if $\R_g\in L^1$ we have $\lambda_{\operatorname{ALE}}^0(g)\geq 0$. This implies that the mass is nonnegative : $$0\geq\lambda_{\operatorname{ALE}}(g) = \lambda_{\operatorname{ALE}}^0(g)-m_{\operatorname{ADM}}(g)\geq -m_{\operatorname{ADM}}(g).$$
		Moreover, if we have equality, we necessarily have $\lambda_{\operatorname{ALE}}(g)= 0$, $\lambda_{\operatorname{ALE}}^0(g)= 0$ and $m_{\operatorname{ADM}}(g)=0$ and since the only maximizers of $\lambda_{\operatorname{ALE}}$ are Ricci-flat, $g$ has to be Ricci-flat.
	\end{proof}
	
	\begin{rk}
		Corollary \ref{local positive mass} is not a consequence of a previously known positive mass theorem. Actually, the positive mass theorem is known to be false in the ALE context \cite{LeB-Counter-Mass}.
	\end{rk}
	
	Finally, notice as in \cite{Hal-Has-Sie} that there are counter-examples to the rigidity part of the positive mass theorem among self-similar solutions to the Ricci flow. Indeed, Feldman, Ilmanen and Knopf \cite{Fel-Ilm-Kno} have constructed complete expanding gradient K\"ahler-Ricci solitons on the total space of the tautological line bundles $L^{-k}$, $k>n$ over $\mathbb{CP}^{n-1}$. These solutions on $L^{-k}$ are $U(n)$-invariant and asymptotic to the cone $C(\mathbb{S}^{2n-1}/\mathbb{Z}_k)$ endowed with the Euclidean metric $\frac{1}{2}i\partial\overline{\partial}\, |\cdot|^2$, where $\mathbb{Z}_k$ acts on $\mathbb{C}^n$ diagonally. The curvature tensor of these solitons decay exponentially fast to $0$ at infinity, in particular these metrics are ALE and their mass vanish. On the other hand, the scalar curvature of these metrics is positive everywhere. 
	
	\begin{rk}
		We can define and control $\lambda_{\operatorname{ALE}}^0$ on the example of Feldman-Ilmanen-Knopf, denoted by $g_{\operatorname{FIK}}$. Since for any $s>0$, we have $\lambda_{\operatorname{ALE}}^0(sg) = s^{\frac{n}{2}- 1}\lambda_{\operatorname{ALE}}^0(g)$ by Lemma \ref{scaling lambdaALE}, and therefore since their example is an expanding soliton, we can prove that the Ricci flow starting at $g_{\operatorname{FIK}}$ satisfies $\lambda_{\operatorname{ALE}}^0(g_{\operatorname{FIK}}(t)) = (1+ct)^{\frac{n}{2}- 1}\lambda_{\operatorname{ALE}}^0(g_{\operatorname{FIK}})>0$ for some $c>0$. This is in contrast with the compact situation where a Ricci-flow starting at a metric with positive $\lambda$-functional necessarily develops a finite-time singularity.
	\end{rk}
	
	\subsection{Global properties on spin manifolds}~~\\
	
	On spin $4$-manifolds, the stability of Ricci-flat ALE metrics as maximizers of ${\lambda}_{ALE}$ is ensured globally.
	\begin{prop}\label{prop-spin-def-local}
		Let $(N^4,g)$ be an ALE metric of order $\tau>1 = \frac{4-2}{2}$ on a spin manifold asymptotic to $\mathbb{R}^4\slash\Gamma$ for $\Gamma\subset SU(2)$.
		Assume the scalar curvature $\R_g$ is integrable and non-negative. Then, we have $$\lambda_{\operatorname{ALE}}(g)\leq 0,$$
		with equality if and only if $(N^4,g)$ is a hyperk\"ahler (Ricci-flat) ALE metric.
		
		%In particular, whenever $g$ has a scalar curvature in $L^1$, we have the following inequality
		%$$m_{\operatorname{ADM}}(g)\geq \lambda_{\operatorname{ALE}}^0(g),$$
		%where again, the equality implies that $g$ is a gravitational instanton.
	\end{prop}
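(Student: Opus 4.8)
The plan is to combine the $\lambda_{\operatorname{ALE}}$-framework developed above with Nakajima's positive mass estimate for spin ALE $4$-manifolds. Recall that by Definition \ref{defn-ale-lambda} and the hypotheses (scalar curvature $\R_g$ integrable and $\tau>1=\tfrac{n-2}{2}$ in dimension $n=4$), the metric $g$ lies in the space $\mathcal{M}^{2,\alpha}_{\tau}$ on which $\lambda_{\operatorname{ALE}}^0(g)$ is well defined, and moreover $\lambda_{\operatorname{ALE}}(g)=\lambda_{\operatorname{ALE}}^0(g)-m_{\operatorname{ADM}}(g)$. Since $\R_g\geq 0$ and $\R_g\in L^1$, testing $w\equiv 1$ in the definition gives $0\leq \lambda_{\operatorname{ALE}}^0(g)\leq \int_N\R_g\,d\mu_g$ as in Lemma \ref{scaling lambdaALE}; in particular $\lambda_{\operatorname{ALE}}^0(g)\geq 0$. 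So the inequality $\lambda_{\operatorname{ALE}}(g)\leq 0$ reduces to the sharp bound $m_{\operatorname{ADM}}(g)\geq \lambda_{\operatorname{ALE}}^0(g)$.

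First I would set up Nakajima's argument (as in \cite{nak}) adapted exactly as Haslhofer does in \cite{Has-Per-Fct} in the asymptotically Euclidean case. Because $N^4$ is spin and asymptotic to $\mathbb{R}^4/\Gamma$ with $\Gamma\subset SU(2)$, the spin structure extends and one has a parallel spinor at infinity to use as boundary data. Using the conformal change $\tilde g:=w_g^{2}g$ (note $w_g^{\frac{4}{n-2}}=w_g^2$ when $n=4$) — where $w_g$ is the positive minimizer from Proposition \ref{existence propriete-wg} solving $-4\Delta_g w_g+\R_g w_g=0$ — the metric $\tilde g$ is scalar-flat and still ALE, and its mass computes, via the expansion \eqref{developpement w} of $w_g$, to $m_{\operatorname{ADM}}(\tilde g)=m_{\operatorname{ADM}}(g)-\lambda_{\operatorname{ALE}}^0(g)$ (up to the dimensional constant, exactly mirroring the identity $\lambda_{\operatorname{ALE}}(g)=\lambda_{\operatorname{ALE}}^0(g)-m_{\operatorname{ADM}}(g)$). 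Then the Lichnerowicz–Weitzenböck formula $D^2=\nabla^*\nabla+\tfrac{1}{4}\R_{\tilde g}=\nabla^*\nabla$ together with an integration by parts against a harmonic spinor asymptotic to a constant spinor (solving the Dirac equation with the prescribed asymptotics, which is possible since $\tau>1$ guarantees the relevant analysis on weighted spaces) yields $m_{\operatorname{ADM}}(\tilde g)\geq 0$, i.e. $m_{\operatorname{ADM}}(g)\geq \lambda_{\operatorname{ALE}}^0(g)$, hence $\lambda_{\operatorname{ALE}}(g)\leq 0$.

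For the equality case: $\lambda_{\operatorname{ALE}}(g)=0$ forces $m_{\operatorname{ADM}}(\tilde g)=0$, so the boundary term in the Lichnerowicz integration by parts vanishes; this forces the existence of a nonzero parallel spinor on $(N^4,\tilde g)$, whence $(N^4,\tilde g)$ is Ricci-flat and Kähler (in fact hyperkähler in real dimension $4$, since a parallel spinor on a spin $4$-manifold with $\Gamma\subset SU(2)$ at infinity gives a hyperkähler structure). Because $\tilde g$ is Ricci-flat, in particular $\R_{\tilde g}=0$, tracing back through the conformal change and using $\R_{\tilde g}=w_g^{-3}(-4\Delta_g w_g+\R_g w_g)\cdot(\text{const})=0$ automatically, one must instead argue that $\lambda_{\operatorname{ALE}}^0(g)=0$ and $\R_g\geq 0$, $\R_g\in L^1$ force $\R_g=0$ (since $0\leq\lambda^0_{\operatorname{ALE}}(g)$ and testing $w\equiv1$ shows $\lambda^0_{\operatorname{ALE}}(g)=0$ implies $w_g\equiv 1$ by uniqueness in Proposition \ref{existence propriete-wg}, hence $\R_g\equiv0$), so $\tilde g=g$ and $g$ itself is scalar-flat; the parallel spinor then makes $g$ hyperkähler ALE. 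Conversely any hyperkähler ALE metric is Ricci-flat, so $\lambda_{\operatorname{ALE}}^0(g)=0$ and $m_{\operatorname{ADM}}(g)$... one checks $\lambda_{\operatorname{ALE}}(g)=0$ directly from the definition since for Ricci-flat $g=g_b$ we have $w_g\equiv 1$, $f_g\equiv0$ and $\lambda_{\operatorname{ALE}}(g)=0$ by Proposition \ref{lambdaALE analytic}. The main obstacle I anticipate is the careful analytic setup of the Dirac operator on weighted spaces at the borderline rate $\tau>1$ — ensuring the harmonic spinor with constant asymptotics exists, that the boundary integral converges and equals the ADM mass, and that the quotient singularity at infinity (the group $\Gamma$) is correctly handled in the spin bundle — together with the identification of the boundary term with $m_{\operatorname{ADM}}(\tilde g)$; all of this is where "pushing Nakajima's estimate further similarly to \cite{Has-Per-Fct}" does the real work.
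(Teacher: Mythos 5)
There is a genuine gap, and it sits at the heart of your argument: the conformal change $\tilde g:=w_g^{4/(n-2)}g$ is \emph{not} scalar-flat. The conformal Laplacian in dimension $n$ is $-\tfrac{4(n-1)}{n-2}\Delta_g+\R_g$ (equal to $-6\Delta_g+\R_g$ when $n=4$), whereas $w_g$ solves $-4\Delta_gw_g+\R_gw_g=0$; these coefficients never coincide. Plugging the equation for $w_g$ into the conformal transformation law gives
\begin{equation*}
\R_{\tilde g}=w_g^{-\frac{n+2}{n-2}}\Bigl(-\tfrac{4(n-1)}{n-2}\Delta_gw_g+\R_gw_g\Bigr)=-\tfrac{4}{n-2}\,w_g^{-\frac{n+2}{n-2}}\Delta_gw_g=-\tfrac{1}{n-2}\,w_g^{-\frac{4}{n-2}}\R_g\leq 0,
\end{equation*}
so $\tilde g$ has \emph{nonpositive} scalar curvature and Witten's formula applied to $\tilde g$ no longer yields $m_{\operatorname{ADM}}(\tilde g)\geq 0$. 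The whole point of Haslhofer's observation, which the paper follows, is that no conformal change is needed: Witten--Nakajima gives a Dirac-harmonic spinor $\psi$ asymptotic to a unit constant spinor with $m_{\operatorname{ADM}}(g)=\int_N\bigl(4|\nabla^g\psi|_g^2+\R_g|\psi|_g^2\bigr)d\mu_g$, and since the coefficient $4$ here matches the coefficient $4$ in $\mathcal{F}_{\operatorname{ALE}}$, the pointwise Kato inequality $|\nabla^g\psi|_g\geq|\nabla^g|\psi|_g|_g$ immediately shows that $|\psi|_g$ is an admissible competitor with $m_{\operatorname{ADM}}(g)\geq\mathcal{F}_{\operatorname{ALE}}(|\psi|_g,g)\geq\lambda_{\operatorname{ALE}}^0(g)$. (A conformal route can be made to work, but only with the conformal minimizer $u$ of $-\tfrac{4(n-1)}{n-2}\Delta_g u+\R_gu=0$, using $4<\tfrac{4(n-1)}{n-2}$ to compare the two infima; that is not what you wrote.)

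Your equality case inherits the same problem and has an additional logical slip: from $\lambda_{\operatorname{ALE}}(g)=0$ you only get $m_{\operatorname{ADM}}(g)=\lambda_{\operatorname{ALE}}^0(g)$, not $\lambda_{\operatorname{ALE}}^0(g)=0$, yet your chain of deductions at one point assumes the latter. The paper's rigidity argument is cleaner and avoids this: equality $m_{\operatorname{ADM}}(g)=\lambda_{\operatorname{ALE}}^0(g)$ forces equality in Kato's inequality, but Dirac-harmonic spinors satisfy the \emph{improved} Kato inequality $\sqrt{1-\tfrac14}\,|\nabla^g\psi|_g\geq|\nabla^g|\psi|_g|_g$, so equality forces $\nabla^g\psi=0$; the parallel spinor then makes $(N^4,g)$ itself hyperk\"ahler by Nakajima, with no need to untangle a conformal factor.
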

	\begin{proof}
		First of all, Lemma \ref{scaling lambdaALE} ensures the finiteness of $\lambda_{\operatorname{ALE}}(g)$ under such assumptions on the scalar curvature. Witten's formula \cite{wit} for the mass on spin asymptotically Euclidean manifolds which was extended by Nakajima \cite{nak} to spin ALE metrics with group in $SU(2)$, states that there exists $\psi$, a Dirac spinor asymptotic to a constant spinor with unit-norm for which 
		\begin{equation}
		m_{\operatorname{ADM}}(g)=\int_N(4|\nabla^g \psi|_g^2+\R_g|\psi|_g^2)\,d\mu_g\label{witten formula}.
		\end{equation}
		Using $w=|\psi|_g$ as a test function and Kato's inequality $|\nabla^g \psi|_g\geq |\nabla^g |\psi|_g|_g$, we find a lower bound $$m_{\operatorname{ADM}}(g)\geq \lambda_{\operatorname{ALE}}^0(g)$$ 
		similarly to \cite{Has-Per-Fct}. 
		
		Now, according to \cite[(3.9)]{Cal-Gau-Her}, Dirac spinors satisfy a pointwise \emph{improved} Kato inequality, and we have $\sqrt{1-\frac{1}{4}}|\nabla^g \psi|_g\geqslant|\nabla^g |\psi|_g|_g$. The equality $m_{\operatorname{ADM}}(g)= \lambda_{\operatorname{ALE}}^0(g)$ therefore implies that the spinor is parallel. Then one conclude that the ALE metric $g$ under consideration is hyperk\"ahler: see \cite[Proof of Theorem 3.3]{nak} for a proof of this fact.
		
		%Now, given a metric $g$ wich is ALE of order $\tau>\frac{n-2}{2}$, and any $\tau>\tau'>\frac{n-2}{2}$, we $C^{2,\alpha}_{\tau'}$-approximate $g$ by metrics with $L^1$ (actually vanisihng outside a compact set) by cut-off. We therefore recover the fact that ${\lambda}_{\operatorname{ALE}}\leq 0$.  
	\end{proof}

	\begin{rk}
		%We therefore find that ${\lambda}_{\operatorname{ALE}}(g)\leq 0$ for nearby metrics. 
		We therefore recover that Ricci-flat ALE metrics on spin manifolds are \emph{stable} (they are actually hyperkähler by \cite{nak}). Recall that it is a folklore conjecture (see \cite[Section $1$, $3)$]{Ban-Kas-Nak} for instance) that all $4$-dimensional simply connected ALE Ricci-flat metrics are hyperk\"ahler. In light of the results of this paper, it is tempting to study the stability of $4$-dimensional Ricci-flat ALE metrics as a first step towards the previous conjecture.	
		\end{rk}
	
	\newpage
	\appendix

	\section{Real analytic maps between Banach spaces}\label{app-A}

	Let us recall some basic definitions and theorems about real analytic maps between Banach spaces and real analytic submanifolds.
	
	Let $ a_k : V^k\to W$, $k\in \mathbb{N}$ be a symmetric $k$-linear form on $V$ taking values in $W$. The \emph{power series} in $x$ denoted $x\in V \mapsto\sum_k a_k x^k\in W$ from the Banach space $V$ with values in the Banach space $W$ is defined as the sum of the $a_k(x,...,x)$. We will say that it is converging if the real sum $\sum_k \|a_k(x,...,x)\|_{W}$ converges.
	
	\begin{defn}[Real analytic map]\label{def-analytic}
		Let $V$ and $W$ be Banach spaces, and $U$ an open subset of $V$. A map $f: V\to W$ is \emph{real analytic} if for each point $x\in U$, $f$ is equal to a power series converging in a neighborhood of $x$.
	\end{defn}
	
	This definition is adapted to the techniques of analysis, and in particular, thanks to \cite[Implicit Function Theorem, p. 1081]{Whi-Ana-Fct}, the implicit function theorem holds for this regularity. 
	
	\begin{lemma}[{\cite[Implicit Function Theorem, p. 1081]{Whi-Ana-Fct}}]\label{th fcts implicites}
		Let $X$, $Y$ and $Z$ be Banach spaces, and $A$ an open subset of $X\times Y$. Assume that a real analytic map $f : A\to Z$ satisfies for $(x_0,y_0)\in A$,
		\begin{enumerate}
			\item $f(x_0,y_0)=0,$
			\item $d_{(x_0,y_0)}f(0,.) : Y\mapsto Z$ is a topological isomorphism.
		\end{enumerate}
		Then,
		\begin{enumerate}
			\item there exists $N(x_0)$, an open neighborhood of $x_0$ in $X$ and a unique continuous map $g : N(x_0)\to Y$ such that $g(x_0)= y_0$, $(x,g(x))\in A$ for $x\in N(x_0)$, and
			$$f(x,g(x))=0,$$
			\item $g$ is real analytic,
			\item $d_xg = -\big(d_{(x,g(x))}f(0,.)\big)^{-1}\circ \big(d_{(x,g(x))} f (.,0)\big)$.
		\end{enumerate}
	\end{lemma}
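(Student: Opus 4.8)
The plan is to deduce the statement from the classical (merely $C^{1}$) implicit function theorem in Banach spaces together with an analyticity-upgrade argument; only the latter carries genuine content, since a real analytic map is in particular Fr\'echet differentiable of every order. First I would translate so that $(x_0,y_0)=(0,0)$ and note that $f$, being real analytic, is $C^{1}$ on $A$, with partial derivative $d_{(0,0)}f(0,\cdot):Y\to Z$ a topological isomorphism by hypothesis $(2)$. The standard Banach-space implicit function theorem --- proved by rewriting $f(x,y)=0$ as the fixed-point equation $y=y-\big(d_{(0,0)}f(0,\cdot)\big)^{-1}f(x,y)$ and applying the contraction mapping principle with parameter $x$ --- produces a neighborhood $N(x_0)$ and a unique continuous $g:N(x_0)\to Y$ with $g(x_0)=y_0$, $(x,g(x))\in A$ and $f(x,g(x))=0$; uniqueness among continuous maps is exactly uniqueness of the fixed point, giving item $(1)$. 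Since invertible operators form an open set and $x\mapsto d_{(x,g(x))}f(0,\cdot)$ is continuous, this partial derivative stays invertible near $x_0$, so differentiating the identity $f(x,g(x))\equiv 0$ by the chain rule and solving yields item $(3)$, namely $d_xg=-\big(d_{(x,g(x))}f(0,\cdot)\big)^{-1}\circ\big(d_{(x,g(x))}f(\cdot,0)\big)$, and in particular $g$ is $C^{1}$.

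The remaining point, item $(2)$, is that $g$ is in fact real analytic, and I would obtain this by complexification. Let $X_{\mathbb{C}}=X\oplus iX$, and likewise $Y_{\mathbb{C}}$, $Z_{\mathbb{C}}$, be the complexified Banach spaces. Near $(0,0)$ the map $f$ is the sum of a power series $\sum_k a_k$ built from bounded symmetric multilinear maps $a_k$ on $(X\times Y)^k$; the same coefficients define $f_{\mathbb{C}}=\sum_k a_k^{\mathbb{C}}$ on a polydisc neighborhood of $(0,0)$ in $X_{\mathbb{C}}\times Y_{\mathbb{C}}$, where $a_k^{\mathbb{C}}$ is the unique $\mathbb{C}$-multilinear extension of $a_k$. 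Convergence of the real series on a ball forces, through the Cauchy estimates for the multilinear coefficients, convergence of the complex series on a possibly smaller polydisc, so $f_{\mathbb{C}}$ is holomorphic there, restricts to $f$ on real points, and has $d_{(0,0)}f_{\mathbb{C}}(0,\cdot)$ equal to the complexification of the isomorphism $d_{(0,0)}f(0,\cdot)$, hence an isomorphism of $Y_{\mathbb{C}}$.

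I would then apply the holomorphic implicit function theorem to $f_{\mathbb{C}}$ to obtain a holomorphic $g_{\mathbb{C}}$ on a neighborhood of $0$ in $X_{\mathbb{C}}$ with $f_{\mathbb{C}}(x,g_{\mathbb{C}}(x))=0$ and $g_{\mathbb{C}}(0)=0$. For real $x$ the pair $(x,g_{\mathbb{C}}(x))$ solves the real equation, so by the uniqueness already established, $g_{\mathbb{C}}$ restricts to $g$ on real arguments. A holomorphic map is given near each point by a convergent $\mathbb{C}$-power series whose multilinear coefficients, recovered from Cauchy integrals, take real values on real inputs; hence its restriction to the real subspace is real analytic. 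This establishes item $(2)$ and, combined with the first paragraph, completes the argument.

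The hard part will be the holomorphic implicit function theorem invoked in the third paragraph --- equivalently, showing that the parameter-dependent fixed point of the first step depends \emph{holomorphically}, not merely smoothly, on the parameter. I would secure this either by verifying that the contraction iterates are holomorphic and converge uniformly on compacta, so that the limit is holomorphic by a Banach-valued Morera argument, or, bypassing complexification entirely, by the method of majorants: posit $g(x)=\sum_k b_k x^{k}$, determine the symmetric multilinear coefficients $b_k$ from the triangular recursion produced by substituting into $f(x,g(x))=0$, and dominate their norms by the Taylor coefficients of the scalar analytic solution of the corresponding one-variable majorant equation, whose convergence is the classical finite-dimensional analytic implicit function theorem. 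Either route isolates this single nontrivial ingredient, the rest being the routine $C^{1}$ theory of the first step; this is precisely the content of \cite{Whi-Ana-Fct}.
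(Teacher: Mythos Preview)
The paper does not prove this lemma at all: it is simply stated with a citation to \cite{Whi-Ana-Fct} and used as a black box throughout. Your proposal is a correct and standard reconstruction of the proof --- the $C^1$ implicit function theorem for items $(1)$ and $(3)$, followed by either complexification plus the holomorphic implicit function theorem or the method of majorants for item $(2)$ --- and is precisely the content of the cited reference.
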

	
	It will be crucial for us to note that usual operations on tensors are real analytic between H\"older spaces whose regularity is higher than the number of derivatives involved. This can be proved by adapting the proof of \cite[Lemma 13.7]{Koi-Ein-Met} which deals with $H^s$-regularity for $s>0$ large enough to the H\"older context thanks to the theory developped in \cite{Pal-Found}. This also holds between weighted H\"older spaces as long as the weight makes the multiplication of two functions continuous.
	
	\begin{lemma}\label{opérations analytiques}
		The operations of derivation, multiplication, contraction and integration are analytic between Hölder or between weighted Hölder spaces in which the multiplication is continuous. That is, denoting $C_\tau^{k,\alpha}$ one of the spaces of the present article, as long as there exists $C>0$ such that for any two functions $f,g\in C^{k,\alpha}$, we have
		$$\|fg\|_{C_\tau^{k,\alpha}}\leq C\|f\|_{C_\tau^{k,\alpha}}\|g\|_{C_\tau^{k,\alpha}}.$$
		One can check that with the spaces of Definition \ref{def-weighted-sobolev-norms}, this is satisfied as long as $\tau>0$. By construction, the derivation is also continuous.
	\end{lemma}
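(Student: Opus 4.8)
The plan is to reduce everything to the following soft principle: a \emph{bounded multilinear} map between Banach spaces is real analytic in the sense of Definition \ref{def-analytic}, and sums and compositions of real analytic maps are again real analytic. Indeed, if $a_k:V^k\to W$ is a bounded symmetric $k$-linear form and $x_0\in V$, then multilinearity gives the identity
\begin{equation*}
a_k(x_0+h,\dots,x_0+h)=\sum_{j=0}^{k}\binom{k}{j}\,a_k(\underbrace{x_0,\dots,x_0}_{k-j},\underbrace{h,\dots,h}_{j}),
\end{equation*}
a \emph{terminating} power series in $h$ which converges on all of $V$; hence $x\mapsto a_k(x,\dots,x)$ is analytic, and for $k=1$ this says a bounded linear map is analytic (and continuous, since bounded). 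If moreover $u(x_0+h)=\sum_k a_k h^k$ and $v(y_0+h)=\sum_m b_m h^m$ are power series converging near $x_0$, resp. near $y_0=u(x_0)$, substituting the former into the latter and regrouping by homogeneity shows $v\circ u$ is again represented by a power series near $x_0$; the only input is that the norms involved are submultiplicative on the relevant products, which lets one majorise the tails by a geometric series. This is precisely the mechanism of \cite[Lemma 13.7]{Koi-Ein-Met}, written there in $H^s$-regularity, transposed to the (weighted) Hölder setting thanks to \cite{Pal-Found}; the bulk of the proof would be to check that each of the displayed operations is a finite combination of bounded multilinear maps.

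With this in hand I would treat the four operations in turn. Covariant \emph{derivation} $\nabla^{g_b}:C^{k,\alpha}_{\beta}\to C^{k-1,\alpha}_{\beta+1}$, and likewise $H^{k}_{\beta}\to H^{k-1}_{\beta+1}$, is bounded and linear directly from Definitions \ref{def-weighted-norms} and \ref{def-weighted-sobolev-norms}, hence analytic and (being linear and bounded) continuous. \emph{Contraction} against a fixed background tensor such as $g_b$ or $g_b^{-1}$ is again bounded linear; a contraction against a variable (inverse) metric factors as such a linear contraction composed with the inversion $g\mapsto g^{-1}$, and the latter is analytic because $(g_b+h)^{-1}=g_b^{-1}\sum_{j\ge 0}(-g_b^{-1}h)^{j}$ converges for $\|h\|$ small, each summand being a bounded $(j{+}1)$-linear map of $h$ by submultiplicativity. \emph{Integration}, $T\mapsto\int_N T\,d\mu_{g_b}$, is bounded linear into $\RR$ as soon as the weight is integrable against $d\mu_{g_b}$ --- which is the case for the range of weights used in the article --- hence analytic; when the measure itself varies one writes $d\mu_g=\bigl(d\mu_g/d\mu_{g_b}\bigr)\,d\mu_{g_b}$ and notes that $g\mapsto d\mu_g/d\mu_{g_b}=\sqrt{\det(g_b^{-1}g)}$ is analytic, being the composition of the polynomial $\det$ with the analytic scalar function $t\mapsto\sqrt{1+t}$ near $t=0$. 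Finally, \emph{multiplication} (pointwise product, tensor product, symmetrisation) is bilinear, and it is bounded by the standing hypothesis of the lemma: for the weighted Hölder spaces of Definition \ref{def-weighted-norms} one checks, via the Leibniz rule and $|\nabla^{g_b,i}\rho_{g_b}^{-\beta}|\lesssim\rho_{g_b}^{-\beta-i}$, that $C^{k,\alpha}_{\beta_1}\cdot C^{k,\alpha}_{\beta_2}\hookrightarrow C^{k,\alpha}_{\beta_1+\beta_2}\hookrightarrow C^{k,\alpha}_{\max(\beta_1,\beta_2)}$ whenever $\beta_1,\beta_2>0$, and similarly in the weighted Sobolev scale once $k$ is above the algebra threshold; being bounded bilinear, multiplication is analytic. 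Any finite combination of these --- which is what actually occurs when one expands $\Ric(g)$, $\R_g$, $\nabla^{g,2}f_g$, $d\mu_g$, etc. in $g$ --- is then analytic by the first paragraph.

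The only real work, and the point I expect to be the main obstacle, is the weighted book-keeping rather than the analyticity per se: one must make sure the submultiplicativity estimates are sharp enough that products of tensors land in a weighted space which embeds \emph{continuously} into the intended target (this is exactly where $\beta>0$, resp. the regularity threshold in the Sobolev case, is used), and one must verify that the interchange of summations and the geometric-series control of remainders in the Koiso--Palais composition argument survive the replacement of $H^s$-norms by (weighted) Hölder norms. Once these routine but slightly tedious estimates are recorded, the analyticity of derivation, multiplication, contraction, integration and of all their compositions follows formally, and the continuity of derivation is immediate from its boundedness.
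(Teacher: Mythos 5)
Your argument is correct and follows exactly the route the paper itself indicates: the paper gives no written proof of this lemma beyond citing the adaptation of \cite[Lemma 13.7]{Koi-Ein-Met} to the Hölder setting via \cite{Pal-Found}, and your reduction to bounded multilinear maps, submultiplicativity of the weighted norms (giving $C^{k,\alpha}_{\beta_1}\cdot C^{k,\alpha}_{\beta_2}\hookrightarrow C^{k,\alpha}_{\beta_1+\beta_2}$ for positive weights), and composition of convergent power series is precisely the content of that adaptation. The only caveats you flag --- integrability of the weight for the integration operator and the bookkeeping of weights under products --- are the right ones, and they are satisfied in the ranges used in the paper.
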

	
	\section{Divergence-free gauging of asymptotically conical spaces}\label{app-B}
	
	As usual in geometrically covariant problems, one needs to fix a gauge transverse to the action of the diffeomorphism group to use elliptic theory in order to study the equation $\Ric(g)=0$. In this particular situation, a natural gauge is the divergence-free gauge because of the simplification of the expression of the second variation of the $\lambda_{\operatorname{ALE}}^0$ functional, see Remark \ref{remark jauge div}. We will prove with the help of the inverse function theorem that in a suitable neighborhood of an asymptotically conical Ricci-flat metric, it is possible to construct divergence-free gauges. That is, given $(N^n,g_b)$ a fixed Ricci-flat asymptotically conical (AC) metric, for any metric $g$ close enough to $g_b$, there exists a diffeomorphism $\phi : N\to N$ such that 
	\begin{equation}
	\div_{g_b}\phi^*g =0.\label{eq jauge divergence}
	\end{equation}
	
	We will actually work in the larger class of asymptotically conical metrics in this appendix as most of the analysis is exactly the same.
	
	\begin{defn}[Asymptotically conical manifolds]
		We will call a Riemannian manifold $(N^n,g)$ \emph{asymptotically conical} (AC) of order $\tau>0$ if the following holds : there exists a compact set $K\subset N$, a radius $R>0$, $(S,g_S)$ a closed Riemannian manifold and a smooth diffeomorphism $\Phi : S\times [R,+\infty)\mapsto N\backslash K$ such that, denoting $g_{C(S)}:=dr^2+r^2g_S$, we have, for all $k\in \mathbb{N}$,
		$$ \rho^k\big|\nabla_{g_{C(S)}}^{k}(\Phi^*g-g_{C(S)})\big|_e = O(\rho^{-\tau}),$$
		on $S\times [R,+\infty)$, where $\rho =\max\{1, d_{C(S)}(.,0)\}$.
	\end{defn}
	
	The main result of this section is the following.
	\begin{prop}\label{prop-gauge-div-free}
		Let $(N^n,g_b)$ be a non-flat Ricci-flat manifold asymptotic to a smooth Ricci-flat cone $(C(S^{n-1}),g_S)$.
		
		Then, for all $k\in \mathbb{N}^*$, $\alpha\in(0,1)$ and for all $1<\beta\leq n-1$, there exists $\epsilon>0$ such that for any metric $g\in B_{C^{k,\alpha}_{\beta}}(g_b,\varepsilon)$ such that
		\begin{equation*}
		\|g-g_b\|_{C^{k,\alpha}_{\beta}}\leq \epsilon,
		\end{equation*}
		there exists a vector field $X\in C^{k+1,\alpha}_{\beta-1}(TN)$ satisfying, 
		\begin{equation*}
		\div_{g_b}((\exp^{g_b}_X)^*g)=0,
		\end{equation*}
		where $\exp_X$ is the diffeomorphism $\exp^{g_b}_X : x\in N \mapsto \exp_x^{g_b}(X(x))$. Moreover, there exists $C = C(g_b,k,\alpha,\beta)>0$ such that we have
		$$\|(\exp^{g_b}_X)^*g-g\|_{C^{k,\alpha}_{\beta}}\leq C\|X\|_{C^{k+1,\alpha}_{\beta-1}}\leq C^2 \|\div_{g_b}g\|_{C^{k-1,\alpha}_{\beta+1}},$$
		and we can choose $X$ depending analytically on $g$. 
		
		If $(N^n,g_b)$ is ALE, that is if $(C(S^{n-1}),g_S) = (\mathbb{R}^n\slash\Gamma,g_e)$ for $\Gamma\subset SO(n)$ acting freely on $\mathbb{S}^{n-1}$, then, the result holds for $1<\beta< n$. Moreover, the above vector field $X$ is unique and depends analytically on $g$.
	\end{prop}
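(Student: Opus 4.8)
The plan is to set up a map whose zero set encodes the gauge condition and to apply the analytic implicit function theorem (Lemma \ref{th fcts implicites}). Fix $k\in\mathbb N^*$, $\alpha\in(0,1)$, $\beta\in(1,n-1]$ (or $(1,n)$ in the ALE case). Define the map
\begin{equation*}
\Psi:\;B_{C^{k,\alpha}_{\beta}}(g_b,\varepsilon)\times C^{k+1,\alpha}_{\beta-1}(TN)\longrightarrow C^{k-1,\alpha}_{\beta+1}(TN),\qquad \Psi(g,X):=\div_{g_b}\big((\exp^{g_b}_X)^*g\big).
\end{equation*}
First I would check that $\Psi$ is well-defined and analytic: the operations $X\mapsto \exp^{g_b}_X$, $g\mapsto (\exp^{g_b}_X)^*g$, and then $\div_{g_b}$ are compositions of smooth pointwise tensor operations (pullback, multiplication, contraction) and one derivative, hence analytic between the relevant weighted Hölder spaces by Lemma \ref{opérations analytiques}, provided the weights make the relevant products continuous — which they do for $\beta>0$. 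Note $\Psi(g_b,0)=\div_{g_b}g_b=0$. The core computation is the partial differential $d_{(g_b,0)}\Psi(0,\cdot):C^{k+1,\alpha}_{\beta-1}(TN)\to C^{k-1,\alpha}_{\beta+1}(TN)$: differentiating $X\mapsto \div_{g_b}((\exp^{g_b}_X)^*g_b)=\div_{g_b}\mathcal L_X(g_b)+O(\|X\|^2)$ gives, up to a constant, the operator $X\mapsto \div_{g_b}\mathcal L_X(g_b)$. Using $\div_{g_b}\mathcal L_X(g_b)=\nabla^{g_b,*}\nabla^{g_b}X^\flat-\Ric(g_b)(X)-\nabla^{g_b}\div_{g_b}X^\flat$ (up to signs/normalization) and Ricci-flatness of $g_b$, this is a second-order elliptic operator asymptotic at infinity to the flat connection Laplacian on vector fields.

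The key step is then to show this operator is an \emph{isomorphism} $C^{k+1,\alpha}_{\beta-1}\to C^{k-1,\alpha}_{\beta+1}$ for the stated range of $\beta$. By the Fredholm theory for elliptic operators between weighted Hölder spaces on AC/ALE manifolds (the same circle of results invoked for Claim \ref{claim-iso} and Proposition \ref{prop-lic-fred}, e.g. Bartnik, Joyce), the operator is Fredholm of index $0$ as long as $\beta$ is not an exceptional/indicial value, so it suffices to prove injectivity. If $X\in C^{k+1,\alpha}_{\beta-1}(TN)$ lies in the kernel, then pairing with $X$ and integrating by parts (justified because $2(\beta-1)+1>n-2\Leftrightarrow \beta>\tfrac{n-1}{2}$, but more carefully one integrates the Bochner identity, using decay $X=O(\rho^{-\beta+1})$ and $\nabla X=O(\rho^{-\beta})$ so the boundary terms scale like $R^{n-1-2(\beta-1)-1}=R^{n-2\beta}\to0$ exactly when $\beta>\tfrac n2$; for the remaining sub-range one argues via the indicial-root analysis and the structure of harmonic vector fields on the cone) one gets $\nabla^{g_b}X=0$ and $\Ric(g_b)(X,X)=0$, hence $X$ is parallel; a nonzero parallel vector field on a non-flat Ricci-flat AC manifold forces a local splitting incompatible with the cone structure, so $X=0$. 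In the ALE case, the indicial roots of the flat vector Laplacian on $\mathbb R^n/\Gamma$ are the integers $k$ and $-n+2-k$, and the absence of a root in $(1,n)$ — together with the nonexistence of $\Gamma$-invariant linear vector fields and of decaying harmonic ones — extends injectivity to $\beta\in(1,n)$; this is exactly why the ALE range is larger and why uniqueness of $X$ holds there (the kernel being trivial even allowing the wider decay class). I expect the delicate point to be precisely this borderline injectivity/boundary-term bookkeeping near $\beta=\tfrac n2$ and at the endpoint $\beta=n-1$ in the general AC case, where one cannot simply integrate by parts naively and must instead combine a weighted a priori estimate with the classification of kernel elements.

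With the isomorphism in hand, Lemma \ref{th fcts implicites} yields, for $\varepsilon$ small, a unique continuous — in fact analytic — map $g\mapsto X(g)\in C^{k+1,\alpha}_{\beta-1}(TN)$ with $X(g_b)=0$ and $\Psi(g,X(g))=0$, i.e. $\div_{g_b}((\exp^{g_b}_{X(g)})^*g)=0$. Finally I would extract the quantitative bounds: the differential formula $d_g X=-\big(d_{(g,X(g))}\Psi(0,\cdot)\big)^{-1}\circ\big(d_{(g,X(g))}\Psi(\cdot,0)\big)$ from Lemma \ref{th fcts implicites}, together with continuity of the inverse near $(g_b,0)$ and $X(g_b)=0$, gives $\|X(g)\|_{C^{k+1,\alpha}_{\beta-1}}\le C\|\div_{g_b}g\|_{C^{k-1,\alpha}_{\beta+1}}$ by integrating along the segment from $g_b$ to $g$ (or from $g$ back to a nearby divergence-free representative); and the mean-value inequality in Banach spaces applied to $X\mapsto (\exp^{g_b}_X)^*g$ gives $\|(\exp^{g_b}_X)^*g-g\|_{C^{k,\alpha}_\beta}\le C\|X\|_{C^{k+1,\alpha}_{\beta-1}}$. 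Chaining these two estimates produces the stated double inequality, and uniqueness of $X$ in the ALE case is inherited from the triviality of the kernel plus the local uniqueness clause of the implicit function theorem. This completes the plan.
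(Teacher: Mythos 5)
Your overall strategy coincides with the paper's: set up a gauge map whose zero set is the divergence-free condition, identify its linearization in $X$ at $(g_b,0)$ with $\Box X:=\div_{g_b}\Li_X(g_b)=\Delta_{g_b}X+\nabla^{g_b}\div_{g_b}X$, invert it between weighted H\"older spaces, and conclude by the analytic implicit function theorem (Lemma \ref{th fcts implicites}), reading off the quantitative bounds from the formula for $d_gX$. (The paper works with $\Phi(g,X)=\div_{(\exp_X^{g_b})_*g_b}\,g$ rather than your $\div_{g_b}((\exp_X^{g_b})^*g)$, but these differ only by a pushforward and have the same zero set.)

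The genuine gap is the linear step, which is the real content of the appendix and which you defer to "indicial-root analysis." Two specific problems. First, your assertion that the operator is "Fredholm of index $0$ as long as $\beta$ is not an exceptional value" is not a correct general principle: for elliptic operators on AC manifolds the index jumps as the weight crosses exceptional values, and the cokernel of $\Box$ at decay rate $\beta-1$ is the kernel at the \emph{dual} rate $n-1-\beta$; these agree (giving index $0$) only because there is no exceptional value of $\Box$ in $[-(n-2),0]$ (resp.\ $(-(n-1),1)$ in the ALE case). Establishing that requires the harmonic decomposition of vector fields on the Ricci-flat cone, the explicit indicial roots $a^\pm_\mu,\,b^\pm_\lambda$, and the Lichnerowicz--Obata lower bounds $\lambda_1(g_S)\geq\lambda_1(g_{\mathbb S^{n-1}})$, $\mu_1(g_S)\geq\mu_1(g_{\mathbb S^{n-1}})$ on the link (Propositions \ref{bornes exception quotients} and \ref{inverse deltadelta}); none of this appears in your argument, yet it is exactly what extends the conclusion from the na\"ive integration-by-parts range $\beta<\tfrac n2$ to the full range $\beta\leq n-1$ (resp.\ $\beta<n$), via constancy of kernel and cokernel between exceptional values. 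Second, in the general AC case you claim unconditional injectivity of $\Box$ and hence uniqueness of $X$; the paper does not prove this. It only proves \emph{surjectivity} in general (by showing the cokernel, i.e.\ the kernel at the dual weight, consists of decaying Killing, hence parallel, hence zero fields — an argument whose boundary terms vanish only for $\beta<\tfrac n2$, then extended by constancy), and it applies the implicit function theorem after restricting to a complement $\mathcal S_b$ of the possibly nontrivial kernel; bijectivity and uniqueness are obtained only under the extra hypothesis $\lambda_1(g_S)\geq\lambda_2(g_{\mathbb S^{n-1}})$, which covers the ALE case with nontrivial $\Gamma$ because no $\Gamma$-invariant linear function or constant vector field survives. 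Your injectivity sketch for $\beta\leq\tfrac n2$ presupposes a decay improvement of kernel elements down past rate $-(n-2)$ that itself depends on the exceptional-value classification you have not carried out.
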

	\begin{rk}
		The result holds on Euclidean space $\RR^n$ for $1<\beta< n-1$.
	\end{rk}
	\begin{rk}
		A completely analogous proof would give the same result for the Bianchi gauge. However, we cannot do the same to cancel out $\partial_i h_{ij} - \partial_j h_{ii} = \div_{g_b}(g) - \nabla^{g_b} \tr_{g_b}(g)$ thanks to the action of a diffeomorphism since the linearized equation is $d^*d$ which is not elliptic. This is consistent with the fact that the mass and the integrability of the curvature are invariant by diffeomorphisms.
	\end{rk}
	\begin{rk}	
		The assumption $\beta>1$ is not linked to any Fredholm property of the operator we will look at. It is however important to ensure for instance that the product of two functions $$(u,v)\in C^{2,\alpha}_{\beta-1}\times C^{2,\alpha}_{\beta-1} \mapsto u.v\in C^{2,\alpha}_{\beta-1},$$ is continuous. This fact is necessary to prove that our operators are analytic.
	\end{rk}
	We state the following linear version of Proposition \ref{prop-gauge-div-free} whose proof is along the same lines of that of Proposition \ref{prop-gauge-div-free}:
	\begin{prop}\label{prop-decomp-2-tensor}
		Let $(N^n,g_b)$ be a non-flat Ricci-flat manifold asymptotic to a smooth Ricci-flat cone $(C(S^{n-1}),g_S)$. Let $\beta\in (1,n-1)$ and $\alpha\in(0,1)$. Then  the following decomposition holds true:
\begin{equation}
C^{2,\alpha}_{\beta}(S^2T^*N)=\ker_{C^{2,\alpha}_{\beta}}\div_{g_b}\oplus\ima \div^{\ast}_{g_b}|_{C^{3,\alpha}_{\beta-1}}.
\end{equation}
Equivalently, for any symmetric $2$-tensor $h$ in $C^{2,\alpha}_{\beta}$, there exist a unique divergence-free symmetric $2$-tensor $h'$ in $C^{2,\alpha}_{\beta}$ and a unique vector field $X$ on $N$ in $C^{3,\alpha}_{\beta-1}$ such that $h=h'+\Li_X(g)$. Moreover this decomposition is $L^2$-orthogonal if $\beta>\frac{n}{2}$.
\end{prop}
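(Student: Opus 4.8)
The plan is to reduce the decomposition to an isomorphism property of the second-order operator $P:=\div_{g_b}\circ\div^{\ast}_{g_b}$ acting on vector fields. Recall that $\div^{\ast}_{g_b}X=-\tfrac12\Li_X(g_b)$, so $\ima\div^{\ast}_{g_b}|_{C^{3,\alpha}_{\beta-1}}$ coincides with $\{\Li_X(g_b):X\in C^{3,\alpha}_{\beta-1}(TN)\}$. I claim that
$$P:C^{3,\alpha}_{\beta-1}(TN)\longrightarrow C^{1,\alpha}_{\beta+1}(TN)$$
is an isomorphism of Banach spaces for $\beta\in(1,n-1)$. Granting this, for $h\in C^{2,\alpha}_{\beta}(S^2T^*N)$ one has $\div_{g_b}h\in C^{1,\alpha}_{\beta+1}(TN)$, hence there is a unique $X\in C^{3,\alpha}_{\beta-1}(TN)$ with $PX=\div_{g_b}h$; setting $h':=h+2\div^{\ast}_{g_b}X\in C^{2,\alpha}_{\beta}$ gives $\div_{g_b}h'=\div_{g_b}h-PX=0$, which yields $h=h'+\Li_{-2X}(g_b)$. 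Uniqueness of the splitting follows from injectivity of $P$: if $\Li_X(g_b)$ is divergence-free then $PX=0$, so $X=0$. The linear dependence of $X=P^{-1}\div_{g_b}h$ on $h$ is in particular analytic. Finally, when $\beta>\tfrac n2$, for divergence-free $h'$ and any $X\in C^{3,\alpha}_{\beta-1}$ one has $\langle h',\div^{\ast}_{g_b}X\rangle_{L^2}=\langle\div_{g_b}h',X\rangle_{L^2}=0$, the boundary term over $\{\rho_{g_b}=R\}$ being $O(R^{\,n-2\beta})\to0$; this is the $L^2$-orthogonality.

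It remains to establish the isomorphism claim. First, $P$ is elliptic: its principal symbol at $\xi\neq0$ sends $X$ to $-\tfrac12\big(|\xi|^2 X+\langle\xi,X\rangle\,\xi^{\sharp}\big)$, which is invertible. Moreover $P$ is asymptotic at infinity to the corresponding homogeneous constant-coefficient operator on the model cone (the Euclidean $\div\div^{\ast}$ on $\RR^n/\Gamma$ in the ALE case). The standard Fredholm theory for such operators between weighted Hölder spaces — the same machinery used for the scalar Laplacian via [Theorem $8.3.6$, \cite{Joy-Book}] and for the Lichnerowicz operator in Proposition \ref{prop-lic-fred} — then shows that $P$ is Fredholm as soon as $\beta-1$ is not an indicial root of $P$ on the cross-section, which holds throughout $\beta-1\in(0,n-2)$ (as for the Laplacian, the exceptional rates of $\div\div^{\ast}$ lie, after imposing $\Gamma$-invariance, outside the interval $(0,n-2)$); the same holds for the dual weight $n-1-\beta$.

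The main obstacle is injectivity. Let $X\in C^{3,\alpha}_{\beta-1}(TN)$ with $PX=0$. Since $P$ has no indicial root in $(\beta-1,n-2)$, elliptic regularity and the asymptotic expansion at infinity improve the decay of $X$ to $X=O(\rho_{g_b}^{-\gamma})$ for every $\gamma<n-2$; choosing $\gamma>\tfrac{n-2}{2}$, integration by parts of $0=\langle PX,X\rangle_{L^2}$ over $\{\rho_{g_b}\leq R\}$ has vanishing boundary term (of size $O(R^{\,n-2-2\gamma})$) and yields $\|\div^{\ast}_{g_b}X\|_{L^2}^2=0$. Hence $\Li_X(g_b)=0$, i.e. $X$ is a Killing field of $(N^n,g_b)$; since $g_b$ is Ricci-flat, $\nabla^{\ast}\nabla X=0$, so $\Delta_{g_b}|X|^2=2|\nabla^{g_b}X|_{g_b}^2\geq0$, and as $|X|^2$ is subharmonic and tends to $0$ at infinity the maximum principle forces $X\equiv0$. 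For surjectivity, $P$ is formally $L^2$-self-adjoint, so its cokernel is identified via the $L^2$-duality recalled in Note \ref{note L2 cokernel} with the kernel of $P$ on $C^{3,\alpha}_{n-1-\beta}(TN)$; since $1<\beta<n-1$ gives $n-1-\beta\in(0,n-2)$, the very same argument shows this kernel vanishes. Therefore $P$ is an isomorphism and the proposition follows. (The decay-improvement step can alternatively be replaced by a cut-off integration by parts, but invoking the standard expansion at infinity is cleanest, and the whole scheme is exactly the linearization of the proof of Proposition \ref{prop-gauge-div-free}.)
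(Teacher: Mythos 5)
Your proof is correct and follows essentially the same route as the paper, which reduces the decomposition to the mapping properties of $\Box_{g_b}=\div_{g_b}\Li_{(\cdot)}(g_b)=-2\,\div_{g_b}\div^{\ast}_{g_b}$ established in Propositions \ref{bornes exception quotients} and \ref{inverse deltadelta} (the paper explicitly defers to the proof of Proposition \ref{prop-gauge-div-free}, whose linearization is exactly your operator $P$), and your injectivity/surjectivity arguments via decay improvement, integration by parts and the Bochner formula mirror the paper's. The only flaw is arithmetic: since $\div_{g_b}\bigl(2\div^{\ast}_{g_b}X\bigr)=+2PX$, you must solve $PX=-\tfrac12\div_{g_b}h$ (not $PX=\div_{g_b}h$) for $h':=h+2\div^{\ast}_{g_b}X=h-\Li_X(g_b)$ to be divergence-free; this does not affect the substance of the argument.
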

	
	\subsection{Invertibility of the linearization}\label{inv-lin-section}
	
	Our approach based on the implicit function theorem starts with the study of 
	\begin{equation}
	\div_{g_b}\Li_{X}(g_b) = - \div_{g_b} h,\label{eq linearisation divergence}
	\end{equation}
	where $X$ is a vector field, and $h$ is a symmetric $2$-tensor. Notice that equation (\ref{eq linearisation divergence}) is the linearization of equation \eqref{eq jauge divergence}. The operator $\Box$, defined as
	\begin{equation}
	\Box_{g_b}(X):=\div_{g_b}\Li_{X}(g_b)=\nabla^{g_b}(\div_{g_b}X)+\Delta_{g_b}X, 
	\end{equation}
	is elliptic and self-adjoint.\\
	
	Let us discuss about the kernel of $\Box$ on a Ricci-flat cone.\\
	
	Let $n\geq 4$, and $(C(S),g_{C(S)}):=(\mathbb{R}_{+}\times S,dr^2+r^2g_S)$ be an $n$-dimensional Ricci-flat cone with smooth link $(S,g_S)$, satisfying $\Ric(g_S)=(n-2)g_S$. Let us follow \cite[Section 2]{Che-Tian-Ric-Fla}, and \cite[Section 4.1]{Ache-Via} and consider the operator as acting on $1$-forms rather than on vector fields on $C(S)$. By harmonic decomposition, any such vector field can be decomposed as an infinite sum of terms of the following types :
	\begin{enumerate}
		\item $p(r)\psi$, where the $1$-form $\psi$ satisfies $d^*_S\psi = 0$, and $d^*_Sd_S\psi = \mu \psi$,
		\item $r^{-1}l(r)\phi dr + u(r)r d_S \phi$, where the function $\phi$ satisfies $d^*_S\phi = 0$, et $d^*_Sd_S\phi = \lambda \phi$.
	\end{enumerate}
	Moreover, the operator $\Box$ preserves these two types of $1$-forms. 
	
	For $\lambda$ and $\mu$ eigenvalues of $d^*_Sd_S + d_Sd_S^*$ on functions and $1$-forms respectively, we set
	\begin{equation}
	a_{\mu}^\pm:= \frac{4-n}{2}\pm\sqrt{\frac{(4-n)^2}{4}+\mu},\label{alpha mu}
	\end{equation}
	and
	\begin{equation}
	b_{\lambda}^\pm:=\frac{2-n}{2} \pm\sqrt{\frac{(2-n)^2}{4}+\lambda}.\label{beta lambda}
	\end{equation}
	\begin{rk}
		In dimension $n=4$, we get 
		$$ a_{\mu}^\pm:= \pm\sqrt{\mu}, $$
		and
		$$ b_{\lambda}^\pm:=-1 \pm\sqrt{1+\lambda}.$$
	\end{rk}
	
	Every $1$-form in the kernel of $\Box$ on $C(S)$ is a sum of the following types of homogeneous $1$-forms :
	\begin{enumerate}
		\item $r^{a^\pm_\mu}\psi$,
		\item \begin{itemize}
			\item $r^{b^{\pm}_\lambda}d_S\phi \;+\; b^\pm_\lambda r^{b^\pm_\lambda-1}\phi dr$,
			\item $2r^{b^{\pm}_\lambda+2}d_S\phi \;+\; b^\mp_\lambda r^{b^\pm_\lambda+1}\phi dr$.
		\end{itemize}
	\end{enumerate}
	We call \emph{exceptional values} for $\Box$ the possible homogeneity rates, that is $a^\pm_\mu-1$, $b^\pm_\lambda-1$ or $b^\pm_\lambda+1$ corresponding to eigenvalues of the Hodge Laplacian on functions or $1$-forms. We will need to estimate these exceptional values to use analysis in weighted Hölder spaces.
	
	\begin{rk}\label{rem poids crit sphere}
		For $(S,g_S) = (\mathbb{S}^{n-1},g_{\mathbb{S}^{n-1}})$, we have the following values for the first exceptional values associated to the $j$-th eigenvalues of the Hodge Laplacian on functions and $1$-forms respectively $\lambda_j(g_{\mathbb{S}^{n-1}})$ and $\mu_j(g_{\mathbb{S}^{n-1}})$.
		\begin{enumerate}
			\item $a_{\mu_j(g_{\mathbb{S}^{n-1}})}^+-1 = j$, $j\in \mathbb{N}^*$,
			\item $a_{\mu_j(g_{\mathbb{S}^{n-1}})}^--1 = -(n-2)-j$, $j\in \mathbb{N}^*$,
			\item $b_{\lambda_j(g_{\mathbb{S}^{n-1}})}^++1 =1+j$, $j\in \mathbb{N}$,
			\item $b_{\lambda_j(g_{\mathbb{S}^{n-1}})}^-+1 =-(n-3)-j$, $j\in \mathbb{N}$,
			\item $b_{\lambda_j(g_{\mathbb{S}^{n-1}})}^+-1 =-1+j$, $j\in \mathbb{N}$,
			\item $b_{\lambda_j(g_{\mathbb{S}^{n-1}})}^--1 =-(n-1)-j$, $j\in \mathbb{N}$.
		\end{enumerate}

	\end{rk}
	
	More generally, let us discuss the eigenvalues of Einstein manifolds with positive scalar curvature.\\
	
	For Einstein manifolds with positive scalar curvature, the celebrated Lichnerowicz-Obata theorem provides a lower bound for the first eigenvalue of the Hodge Laplacian on functions and $1$-forms as well as a rigidity statement. See \cite{Gal-Mey-Spec} for an exposition as well as a similar result for $p$-forms. The result is a consequence of Bochner formulas (and Weitzenbock formulas for $p$-forms).
	
	\begin{lemma}[Lower bounds on the first eigenvalues]\label{borne inf vp}
		If a Riemannian manifold $(M^n,g)$ satisfies the lower bound $\Ric(g)\geq (n-1)g$, then, $\lambda_1(g)$ and $\mu_1(g)$, which are respectively the first eigenvalue of the Hodge laplacian on functions, and on $1$-forms satisfy
		\begin{equation}
		\lambda_1(g)\geq \lambda_1(g_{\mathbb{S}^n}),
		\end{equation}
		with equality if and only if $(M,g) = (\mathbb{S}^n,g_{\mathbb{S}^n})$, and 
		\begin{equation}
		\mu_1(g)\geq \mu_1(g_{\mathbb{S}^n}),
		\end{equation}
		with equality if and only if $(M,g) = (\mathbb{S}^n,g_{\mathbb{S}^n})$.
	\end{lemma}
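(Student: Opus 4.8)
The statement to prove is Lemma~\ref{borne inf vp}, the Lichnerowicz--Obata theorem with its rigidity, for both functions and $1$-forms.

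\medskip

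The plan is to treat the two assertions separately, each by a Bochner/Weitzenb\"ock argument. \textbf{Functions.} First I would let $f$ be an eigenfunction, $\Delta_g f = -\lambda_1(g) f$, and integrate the Bochner formula
\begin{equation*}
\frac12\Delta_g|\nabla f|^2 = |\Hess f|^2 + \langle\nabla\Delta_g f,\nabla f\rangle + \Ric(\nabla f,\nabla f)
\end{equation*}
over $M$ (which is closed, being Einstein with positive Ricci hence compact by Bonnet--Myers). Using $\int_M|\Hess f|^2\geq \frac1n\int_M(\Delta_g f)^2$ from Cauchy--Schwarz on the trace and the hypothesis $\Ric\geq(n-1)g$, one gets
\begin{equation*}
0 \geq \frac1n\lambda_1^2\int_M f^2 - \lambda_1^2\int_M f^2 + (n-1)\lambda_1\int_M f^2,
\end{equation*}
which rearranges to $\lambda_1\geq n = \lambda_1(g_{\mathbb S^n})$. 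For the rigidity, equality forces $\Hess f = \frac{\Delta_g f}{n}g = -f\,g$ pointwise, i.e. $f$ is a nonconstant solution of $\Hess f + f g = 0$; by Obata's theorem this characterizes $(\mathbb S^n, g_{\mathbb S^n})$, so I would either cite Obata \cite{Gal-Mey-Spec} or reconstruct the short argument: along a unit-speed geodesic $f$ satisfies $f'' + f = 0$, so the level sets and the distance function from a max/min of $f$ realize the standard spherical suspension structure.

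\medskip

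\textbf{$1$-forms.} I would repeat the scheme with the Weitzenb\"ock formula on $1$-forms, $\Delta_{\mathrm{Hodge}}\omega = \nabla^*\nabla\omega + \Ric(\omega)$, where $\Ric$ acts on $\omega$ via the Ricci endomorphism. Pairing with an eigenform $\Delta_{\mathrm{Hodge}}\omega=\mu_1\omega$ and integrating,
\begin{equation*}
\mu_1\int_M|\omega|^2 = \int_M|\nabla\omega|^2 + \int_M\Ric(\omega,\omega) \geq \int_M|\nabla\omega|^2 + (n-1)\int_M|\omega|^2.
\end{equation*}
To close the gap I need a lower bound on $\int_M|\nabla\omega|^2$; here one uses that a coclosed eigenform on an Einstein manifold is related to functions/coexact data, or more directly the refined Bochner inequality $|\nabla\omega|^2 \geq \frac{1}{n-1}|d\omega|^2 + \frac1n(d^*\omega)^2$ type estimate (a Kato-type improvement). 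On the first $1$-form eigenvalue $\omega$ is coclosed, $d^*\omega=0$, and $\Delta_{\mathrm{Hodge}}\omega = dd^*\omega + d^*d\omega = d^*d\omega$, so $\int|\nabla\omega|^2 = \mu_1\int|\omega|^2 - (n-1)\int|\omega|^2$ and the refined inequality applied to $d\omega$ yields a bound forcing $\mu_1 \geq 2(n-1) = \mu_1(g_{\mathbb S^n})$ (the first nonzero Hodge eigenvalue on $1$-forms of the round sphere). Equality in the Kato-type inequality together with equality in $\Ric\geq(n-1)g$ on the support of $\omega$ forces the manifold to be round, again invoking Obata-type rigidity; I would cite \cite{Gal-Mey-Spec} for the precise rigidity on $1$-forms since the bookkeeping of the eigenvalue of the spherical Laplacian on $1$-forms is delicate.

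\medskip

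The main obstacle I anticipate is the $1$-form case: unlike functions, one does not have a single clean pointwise identity like $\Hess f = -fg$ to conclude rigidity, and the correct sharp constant for the first $1$-form eigenvalue ($\mu_1(g_{\mathbb S^n}) = 2(n-1)$, attained by $\omega = df$ with $f$ a first spherical harmonic, or rather by the dual vector fields generating isometries) requires carefully matching the Kato improvement constant with the Einstein normalization. I would handle this by reducing, via the Hodge decomposition on the closed Einstein manifold, to the already-established function case plus a transverse (primitive coclosed, non-exact) part, and show the transverse part contributes a strictly larger eigenvalue using $\Ric\geq(n-1)g$ and the Weitzenb\"ock positivity, so that the minimum is realized by the exact forms $df$ with $\Delta_g f = -\lambda_1 f$; this pins $\mu_1(g) \geq \lambda_1(g) + \text{(curvature term)}$ and the equality case follows from that of the function statement. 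The rest is routine integration by parts on a closed manifold.
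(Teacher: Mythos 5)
The paper does not actually prove this lemma: it simply cites \cite{Gal-Mey-Spec} and remarks that the statement is a consequence of the Bochner and Weitzenb\"ock formulas, so any complete argument here goes beyond what the authors wrote. Your function case is the standard Lichnerowicz--Obata argument and is correct as written.

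The $1$-form case, however, contains concrete errors. First, you misidentify the model eigenvalue: on the round sphere the first eigenvalue of the Hodge Laplacian on \emph{all} $1$-forms is $\mu_1(g_{\mathbb{S}^n})=n$, attained by the exact forms $df$ with $f$ a first spherical harmonic (for which $\Delta_{\mathrm{Hodge}}\,df=d\,\Delta_{\mathrm{Hodge}}f=n\,df$); the value $2(n-1)$ is the first eigenvalue on \emph{coexact} $1$-forms, attained by duals of Killing fields, not by $df$. Second, your assertion that the first eigenform is coclosed is unjustified (and false on the sphere itself under the all-forms reading), and your displayed identity $\int_M|\nabla\omega|^2=\bigl(\mu_1-(n-1)\bigr)\int_M|\omega|^2$ should be an inequality $\le$, since Weitzenb\"ock only gives $\Ric(\omega,\omega)\ge(n-1)|\omega|^2$. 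Third, even granting the coexact bound $\mu\ge 2(n-1)$ (which does follow by splitting $\nabla\omega$ into symmetric and antisymmetric parts in the Weitzenb\"ock identity, via $|\nabla\omega|^2\ge\tfrac12|d\omega|^2$; your constant $\tfrac{1}{n-1}$ is not the right one), its equality case does \emph{not} characterize the round sphere: nontrivial spherical space forms carry Killing $1$-forms realizing $2(n-1)$, so the rigidity can only enter through the exact part of the spectrum, i.e.\ through Obata. The correct route is the one you sketch only in your final paragraph: Hodge-decompose (there are no harmonic $1$-forms since $\Ric>0$), observe that exact eigenforms reproduce the nonzero function spectrum and are therefore $\ge n$ with Obata rigidity, while coexact eigenforms satisfy $\mu\ge 2(n-1)\ge n$; hence $\mu_1(g)\ge n=\mu_1(g_{\mathbb{S}^n})$ and equality forces $\lambda_1(g)=n$, whence the sphere. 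As written, that final paragraph contradicts the value $2(n-1)$ you assigned to $\mu_1(g_{\mathbb{S}^n})$ earlier, so the argument is internally inconsistent and needs to be rewritten along these lines (or, as the paper does, delegated entirely to \cite{Gal-Mey-Spec}).
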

	
	We therefore get an a priori bound on the eigenvalues of Einstein manifolds with positive scalar curvature which translates directly to a control for the exceptional values of our operator $\Box$.

	\begin{prop}\label{bornes exception quotients}
		Let $(S^{n-1},g_S)$ satisfy $\Ric(g_S)= (n-2)g_S$ but $(S^{n-1},g_S)\neq (\mathbb{S}^{n-1},g_{\mathbb{S}^{n-1}})$, then there is no exceptional value in $[-(n-2),0]$. 
		
		Moreover, if the following inequality holds
		$$\lambda_1(g_S)\geq \lambda_2(g_{\mathbb{S}^{n-1}}),$$
		where $\lambda_i$ is the $i$-th eigenvalue (without multiplicity) of the Hodge Laplacian on functions. Then, on the Ricci-flat cone $(C(S),g_{C(S)})$ with link $(S,g_S)$, there is no exceptional value for the operator $\Box$ in $(-(n-1),1)$. This is in particular the case for cones over nontrivial quotients of the sphere in all dimensions.\\
	\end{prop}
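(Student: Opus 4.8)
\textbf{Proof proposal for Proposition \ref{bornes exception quotients}.}

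The plan is to translate the two desired gaps for the exceptional values of $\Box$ into gaps for the eigenvalues $\mu$ on coclosed $1$-forms and $\lambda$ on functions on $(S^{n-1},g_S)$, and then invoke the Lichnerowicz--Obata-type lower bounds of Lemma \ref{borne inf vp} together with the rigidity statement excluding the round sphere. Recall from Section \ref{inv-lin-section} that the exceptional values are exactly the numbers $a_\mu^\pm-1$, $b_\lambda^\pm-1$ and $b_\lambda^\pm+1$, where $a_\mu^\pm$ and $b_\lambda^\pm$ are given by \eqref{alpha mu} and \eqref{beta lambda}, and where $\mu$ ranges over eigenvalues of the Hodge Laplacian on coclosed $1$-forms and $\lambda$ over eigenvalues on functions. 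Two eigenvalues are always present and must be treated separately: $\lambda_0=0$ (constant functions), which produces $b_0^\pm=2-n$ or $0$, hence exceptional values $-1$, $1-n$, $-n+1$, $3-n$ — none of which lies in the open intervals in question once $n\geq 4$ — and the eigenvalue $\mu$ coming from Killing fields / parallel forms, which I will need to locate as well. The pattern of Remark \ref{rem poids crit sphere} is the model: for the round sphere the smallest positive contribution to $(-(n-1),1)$ comes from $\lambda_1(g_{\mathbb S^{n-1}})=n-1$, giving $b_{\lambda_1}^+-1=0$, so on the round sphere $0$ is exceptional; the content of the proposition is that moving $\lambda_1$ up to $\lambda_2(g_{\mathbb S^{n-1}})=2n$ pushes this value out of the interval.

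First I would do the elementary monotonicity bookkeeping: each of the functions $\mu\mapsto a_\mu^\pm$ and $\lambda\mapsto b_\lambda^\pm$ is monotone (increasing for the $+$ branch, decreasing for the $-$ branch) in its argument, so it suffices to check the \emph{first} nonzero eigenvalue in each family. Concretely, for the first claim — no exceptional value in $[-(n-2),0]$ when $(S^{n-1},g_S)\neq(\mathbb S^{n-1},g_{\mathbb S^{n-1}})$ — I would: (i) use $\Ric(g_S)=(n-2)g_S$, i.e. $\Ric(g_S)\geq (n-2)g_S$, to apply Obata's theorem on functions, getting $\lambda_1(g_S)\geq \lambda_1(g_{\mathbb S^{n-1}})=n-1$ with equality only for the round sphere, hence strict inequality $\lambda_1(g_S)>n-1$; (ii) plug $\lambda\geq\lambda_1(g_S)>n-1$ into the formulas for $b_\lambda^\pm\pm 1$ and check that the nearest resulting value to the interval, namely $b_\lambda^+-1=\tfrac{-n}{2}+\sqrt{\tfrac{(n-2)^2}{4}+\lambda}-1$, is $>0$ for $\lambda>n-1$ (it equals $0$ exactly at $\lambda=n-1$), while $b_\lambda^+\!+\!1>0$ a fortiori, and all the $-$ branch values $b_\lambda^\pm-1$, $b_\lambda^-\!+\!1$ are $\leq -(n-1)<-(n-2)$; (iii) do the analogous computation for the $1$-form family, using the Lichnerowicz-type lower bound $\mu_1(g_S)\geq \mu_1(g_{\mathbb S^{n-1}})$ for coclosed $1$-forms from Lemma \ref{borne inf vp} (I should also recall here that coclosed $1$-forms with $\mu=n-2$ correspond to Killing fields and note that on a non-round Einstein $S^{n-1}$ one still has the quantitative bound one needs), and check $a_\mu^\pm-1$ avoids $[-(n-2),0]$; (iv) finally dispose of the $\lambda_0=0$ and any parallel/Killing contributions by the direct numerics above. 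For the second claim I would repeat step (ii) with the hypothesis $\lambda_1(g_S)\geq \lambda_2(g_{\mathbb S^{n-1}})=2n$: then $b_{\lambda_1}^+-1\geq \tfrac{-n}{2}+\sqrt{\tfrac{(n-2)^2}{4}+2n}-1=\tfrac{-n}{2}+\sqrt{\tfrac{(n+2)^2}{4}}-1=\tfrac{-n}{2}+\tfrac{n+2}{2}-1=0$, so one needs the inequality to be strict, which it is as soon as $\lambda_1(g_S)>2n$, or one argues that equality $b_{\lambda_1}^+-1=1$ is what is needed — I will have to be careful whether the target interval $(-(n-1),1)$ is open at $1$, in which case $b_\lambda^+-1=1$ corresponds to $\lambda=2n$ and is harmless, so $\lambda_1(g_S)\geq 2n$ suffices; combined with the parallel/function eigenvalue bookkeeping and the $1$-form bounds this gives the absence of exceptional values in $(-(n-1),1)$.

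The last sentence — "this is in particular the case for cones over nontrivial quotients of the sphere in all dimensions" — requires one extra input: if $S^{n-1}=\mathbb S^{n-1}/\Gamma$ with $\Gamma\neq\{\mathrm{Id}\}$ acting freely, then the eigenfunctions of $\Delta_{g_S}$ are precisely the $\Gamma$-invariant spherical harmonics, and since the first spherical harmonics (degree $1$, eigenvalue $n-1$) are restrictions of linear functions on $\mathbb R^n$, none of which is invariant under a nontrivial orthogonal group action free on the sphere, the eigenvalue $n-1$ does not occur; hence $\lambda_1(\mathbb S^{n-1}/\Gamma)\geq \lambda_2(g_{\mathbb S^{n-1}})=2n$, and the second claim applies. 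I expect the main obstacle to be not any single inequality but the careful enumeration: making sure that \emph{every} type of exceptional value ($a_\mu^\pm-1$, $b_\lambda^\pm-1$, $b_\lambda^\pm+1$) coming from \emph{every} relevant eigenvalue (including $\lambda_0=0$, the Killing $1$-form eigenvalue, and any possible low-lying $1$-form eigenvalue on a non-round Einstein link) is accounted for and lands outside the claimed interval, and in particular pinning down the correct Lichnerowicz--Obata-type lower bound for coclosed $1$-forms on an Einstein manifold with $\Ric=(n-2)g$ in the form actually needed here.
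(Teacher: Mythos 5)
Your overall strategy — monotonicity of $a_\mu^\pm$ and $b_\lambda^\pm$ in their arguments, the Lichnerowicz--Obata bounds of Lemma \ref{borne inf vp} with the rigidity statement, and the non-invariance of linear spherical harmonics under a nontrivial $\Gamma$ for the last claim — is the same as the paper's, and your steps (i)--(iii) and the quotient argument are essentially sound (modulo a harmless slip: $b_\lambda^+-1=\tfrac{-n}{2}+\sqrt{\tfrac{(n-2)^2}{4}+\lambda}$ with no extra $-1$; you do not propagate the error, and the value at $\lambda=2n$ is $1$, which is indeed excluded because the interval is open at $1$). There is, however, a genuine gap in your treatment of the constant-function eigenvalue $\lambda_0=0$. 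You assert that the resulting candidate exceptional values $-1$, $1-n$, $3-n$ lie outside the intervals in question; this is arithmetically false. Since $b_0^+=0$ and $b_0^-=2-n$, the rates $b_0^+-1=-1$ and $b_0^-+1=3-n$ both lie in $[-(n-2),0]$ and in $(-(n-1),1)$ for every $n\ge 4$ (for $n=4$ they both equal $-1$). Numerics alone therefore cannot close the argument, for either claim.

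The missing idea, which is exactly how the paper handles this (citing \cite[Lemma 3.2]{ozu2}), is that these two rates do not correspond to any nonzero homogeneous $1$-form in the kernel of $\Box$. For $\lambda=0$ the eigenfunction $\phi$ is constant, so $d_S\phi=0$, and the two families of type-$(2)$ solutions reduce to their $dr$-components: the form of rate $b_0^\pm-1$ reduces to $b_0^\pm\, r^{b_0^\pm-1}\phi\,dr$ and the form of rate $b_0^\pm+1$ to $b_0^\mp\, r^{b_0^\pm+1}\phi\,dr$. The form with rate $-1$ carries the coefficient $b_0^+=0$ and the form with rate $3-n$ also carries the coefficient $b_0^+=0$, so both vanish identically; the genuinely nonzero contributions from $\lambda_0=0$ have rates $b_0^--1=-(n-1)$ and $b_0^++1=1$, i.e.\ exactly the endpoints of the open interval (which is why that interval must be open). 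Without this verification your list of exceptional values still contains $-1$ and $3-n$ inside both target intervals and the proof does not go through. You should also carry out, rather than only announce, the coclosed $1$-form bookkeeping: $\mu_1(g_{\mathbb{S}^{n-1}})=2(n-2)$ gives $a_{\mu_1}^+-1=1$ and $a_{\mu_1}^--1=-(n-1)$, again only the endpoints, so by monotonicity and Lemma \ref{borne inf vp} all $a_\mu^\pm-1$ are excluded.
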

	\begin{rk}
		For $(\mathbb{S}^{n-1},g_{\mathbb{S}^{n-1}})$, there is no exceptional values in $(-(n-2),0)$.
	\end{rk}
	\begin{proof}
		By the description of the kernel of $\Box$ on $C(S)$ at the beginning of Section \ref{inv-lin-section}, we know that the exceptional values are of one of the forms $a^\pm_\mu-1$, $b^\pm_\lambda-1$, $b^\pm_\lambda+1$.
		For the sphere, the exceptional values corresponding to the lowest eigenvalues are detailed in Remark \ref{rem poids crit sphere}. Assuming that $\lambda_1(g_S)\geq \lambda_2(g_{\mathbb{S}^{n-1}}),$ we see from the expressions \eqref{alpha mu} and \eqref{beta lambda} that the only possible exceptional values in $\big(-(n-1),1\big)$ are $b^+_{\lambda_0(g_S)}-1= -1$ and $b^{-}_{\lambda_0(g_S)}+1 = -(n-3)$, but as noted in \cite[Lemma 3.2]{ozu2}, coming back to the expressions of the associated $1$-forms, we find that they vanish.
		\\
		
		If we do not have the bound $\lambda_1(g_S)\geq \lambda_2(g_{\mathbb{S}^{n-1}})$, we still have the estimates of Lemma \ref{borne inf vp} which plugged in the expressions \eqref{alpha mu} and \eqref{beta lambda} imply that there are exceptional values in $(0,1]$ and $[-(n-1),-(n-2))$, but none in $[-(n-2),0]$
		\\
		
		In dimension $3$, all Einstein manifolds with positive scalar curvature are of the form $\mathbb{S}^3\slash\Gamma$, where $\Gamma$ is a finite subgroup of $SO(4)$ acting freely on the sphere. If $\Gamma\neq \{e\}$, then the eigenfunctions and $1$-forms of $\mathbb{S}^3\slash\Gamma$ are the projection of the eigenfunctions and $1$-forms of $\mathbb{S}^3$ which are $\Gamma$-invariant. It turns out that the first eigenvalue on functions and $1$-forms of $\mathbb{S}^3$ are respectively associated to restrictions of non-zero linear functions and constant vector fields on $\mathbb{R}^4$, but since none of these are $\Gamma$-invariant if $\Gamma\neq \{e\}$. This implies the bound $\lambda_1(g_S)\geq \lambda_2(g_{\mathbb{S}^{n-1}})$.
		
		The same argument works for $\mathbb{S}^{n-1}\slash\Gamma$ for $\Gamma\subset SO(n)$, a finite subgroup acting freely on the sphere. In this case, the first exceptional value below $1$ is $-(n-1)$.
	\end{proof}
	
	%\paragraph{Invertibility of the linearization on Ricci-flat manifolds asymptotic to cones}
	
	We can now conclude that our operator is invertible in suitable H\" older spaces.
	\begin{prop}\label{inverse deltadelta}
		Let $k\in \mathbb{N}^*$, $\alpha\in(0,1)$, and $(N^n,g_b)$ be a Ricci-flat manifold asymptotic to a non-flat Ricci-flat cone $(C(S),g_{C(S)})$. Then, for all $1<\beta\leq n-1$, the operator $\Box : C^{k+2,\alpha}_{\beta-1}(TN)\to C^{k,\alpha}_{\beta+1}(TN)$ is surjective. 
		
		It is moreover bijective for $1<\beta<n$ if we assume that the link satisfies
		$$\lambda_1(g_S)\geq \lambda_2(g_{\mathbb{S}^{n-1}}).$$
	\end{prop}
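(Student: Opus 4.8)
The plan is to deduce both assertions from the Fredholm theory of elliptic operators on asymptotically conical manifolds (see \cite{Joy-Book,Bart-Mass}) applied to $\Box$, combined with the control on the exceptional weights established in Proposition \ref{bornes exception quotients} and a weighted integration by parts. Recall that $\Box X=\nabla^{g_b}(\div_{g_b}X)+\Delta_{g_b}X$ is a second order, elliptic, formally self-adjoint operator whose model at infinity is the operator $\Box_{g_{C(S)}}$ on the cone, whose homogeneous solutions — hence the set of \emph{exceptional rates} — were described above. By the general theory, whenever a weight $\delta$ is not an exceptional rate of $\Box_{g_{C(S)}}$, the map $\Box\colon C^{k+2,\alpha}_{\delta}(TN)\to C^{k,\alpha}_{\delta+2}(TN)$ is Fredholm, and, as in Note \ref{note L2 cokernel}, its $L^2$-cokernel is the space of (smooth) solutions of $\Box X=0$ lying in $C^{k+2,\alpha}_{n-2-\delta}(TN)$. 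Taking $\delta=\beta-1$: for $\beta\in(1,n-1]$ the rate $-(\beta-1)\in[-(n-2),0)$ is non-exceptional by Proposition \ref{bornes exception quotients}; under the extra hypothesis $\lambda_1(g_S)\geq\lambda_2(g_{\mathbb{S}^{n-1}})$ that Proposition rules out exceptional rates in the larger interval $(-(n-1),1)$, which contains $-(\beta-1)$ for all $\beta\in(1,n)$. In every case $\Box$ is therefore Fredholm between the stated spaces.

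The crux is the following vanishing statement: if $\Box X=0$ and $X=O(\rho^{\sigma})$ with $\sigma$ not exceeding any exceptional rate bigger than $-(n-2)$ — in particular if $X$ is bounded, and, under the eigenvalue hypothesis, if $X$ grows slower than $\rho$ — then $X\equiv 0$. Indeed, elliptic regularity on an asymptotically conical manifold gives an asymptotic expansion of $X$ into homogeneous solutions of $\Box_{g_{C(S)}}$; since there is no exceptional rate in $[-(n-2),0]$ (resp.\ in $(-(n-1),1)$), this expansion drives $X$ down, past $\sigma$, to the first exceptional rate $\gamma$ below $-(n-2)$, so that $X=O(\rho^{\gamma})$ with $\gamma<-(n-2)$ (or $X$ decays faster than every power of $\rho$). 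This decay makes $\nabla^{g_b}X$ and $\div_{g_b}X$ square-integrable and kills the boundary terms over $\{\rho=R\}$ as $R\to\infty$ in the integration by parts
\[
0=\langle\Box X,X\rangle_{L^2}=-\|\div_{g_b}X\|_{L^2}^2-\|\nabla^{g_b}X\|_{L^2}^2 .
\]
Hence $\nabla^{g_b}X\equiv 0$, i.e.\ $X$ is parallel; a parallel vector field has constant length, so a decaying one vanishes identically (this is where the non-flatness of $g_b$, used via Proposition \ref{bornes exception quotients}, enters).

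Surjectivity for $\beta\in(1,n-1]$ is now immediate: by the cokernel description, the $L^2$-cokernel of $\Box\colon C^{k+2,\alpha}_{\beta-1}\to C^{k,\alpha}_{\beta+1}$ consists of $\Box$-harmonic sections in $C^{k+2,\alpha}_{n-1-\beta}$, which are bounded (rate $-(n-1-\beta)\in(-(n-2),0]$) and hence vanish by the previous paragraph; so $\Box$ is onto. Assume now $\lambda_1(g_S)\geq\lambda_2(g_{\mathbb{S}^{n-1}})$ and let $\beta\in(1,n)$. The kernel of $\Box$ on $C^{k+2,\alpha}_{\beta-1}$ consists of $\Box$-harmonic sections of rate $1-\beta\in(-(n-1),0)$, and the $L^2$-cokernel of sections of rate $\beta-n+1\in(-(n-1),1)$; both ranges are exceptional-rate-free under the hypothesis, so the vanishing statement applies to each and shows that $\Box$ is injective and surjective, hence an isomorphism.

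The delicate point is precisely the decay-improvement step feeding the integration by parts: one must be sure that a solution of $\Box X=0$ which is only bounded — or, in the bijectivity case, allowed to grow mildly — is pushed past \emph{all} non-exceptional rates down to a rate fast enough for the boundary integrals in $\langle\Box X,X\rangle_{L^2}$ to disappear and for $\nabla^{g_b}X$ to be square-integrable. This is exactly what the sharp localisation of the exceptional values of $\Box_{g_{C(S)}}$ in Proposition \ref{bornes exception quotients} (ultimately the Lichnerowicz–Obata bounds of Lemma \ref{borne inf vp}) is there to guarantee, in the interval $[-(n-2),0]$ in general and in the wider interval $(-(n-1),1)$ under the eigenvalue hypothesis; everything else is standard weighted elliptic theory.
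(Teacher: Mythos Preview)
Your proof is correct and follows essentially the same approach as the paper: Fredholm theory on asymptotically conical spaces identifies the $L^2$-cokernel with the kernel of $\Box$ in the dual weight, Proposition \ref{bornes exception quotients} provides the decay improvement, and an integration by parts forces $X$ to be parallel and hence zero. The only cosmetic difference is that the paper first integrates by parts directly on the cokernel for $\beta<\frac{n}{2}$ (using the identity $\langle\Box Y,Y\rangle_{L^2}=\|\Li_Y g_b\|_{L^2}^2$ and then the Bochner relation $\Box=\tfrac{1}{2}\nabla^*\nabla$) and afterwards invokes constancy of the cokernel between exceptional values, whereas you apply the decay improvement uniformly to both kernel and cokernel before integrating by parts via $\langle\Box X,X\rangle_{L^2}=-\|\div_{g_b}X\|_{L^2}^2-\|\nabla^{g_b}X\|_{L^2}^2$; these are equivalent rearrangements of the same argument.
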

	\begin{rk}
		For $\mathbb{R}^n$, the results holds for $1<\beta<n-1$.
	\end{rk}
	\begin{proof}
		Consider the operator $\Box : C^{k+2,\alpha}_{\beta-1}(TN)\to C^{k,\alpha}_{\beta+1}(TN)$. By the theory of weighted Hölder spaces,  for $\beta$ nonexceptional, the cokernel of the self-adjoint operator $\Box: C^{k+2,\alpha}_{\beta-1}(TN)\to C^{k,\alpha}_{\beta+1}(TN)$ is the kernel of $\Box$ on $C^{k,\alpha}_{n-1-\beta}(TN)$ (see Note \ref{note L2 cokernel}). Let $Y\in C^{k,\alpha}_{n-1-\beta}(TN)$, and assume that $\Box Y=0$. We can consider the following integration by parts
		\begin{equation*}
		\begin{split}
		0=&\int_N \langle\Box Y,Y\rangle_{g_b}\,d\mu_{g_b} \\ =& \int_N |\Li_Yg_b|_{g_b}^2\,d\mu_{g_b}  + \lim_{R\to \infty} \int_{\{\rho_{g_b}= R\}} (\Li_Yg_b)(\nu,Y)\,d\sigma_{g_b} ,
		\end{split}
		\end{equation*}
		where $\nu$ is the outward  unit normal to the hypersurface $\{\rho_{g_b}= R\}$ and where $d\sigma_{g_b}$ is the $(n-1)$-dimensional Hausdorff measure on $\{\rho_{g_b}= R\}$ induced by $d\mu_{g_b}$. Since we assumed that $Y\in C^{k,\alpha}_{n-1-\beta}(TN)$, we have $|\Li_Yg_b|_{g_b} = O(\rho^{-n+\beta})$ and $|Y|_{g_b} =  O(\rho^{-n+1+\beta})$ therefore, the boundary term vanishes for $-n + 2\beta <0 $, that is $ \beta<\frac{n}{2}$. This implies that for $ \beta<\frac{n}{2}$, we actually have $\Li_Yg_b=0$ and $Y$ is a Killing vector field. By the formula $\Box = \frac{1}{2}(\nabla^*\nabla - \Ric(g_b))$, we have $\nabla^*\nabla Y = 0,$ since $\Ric(g_b)=0$ and by integrating by parts again, we have 
		\begin{equation*}
		0=\int_N \langle\nabla^*\nabla Y ,Y\rangle_{g_b}\,d\mu_{g_b} = \int_N |\nabla^{g_b} Y|^2_{g_b}\,d\mu_{g_b} ,
		\end{equation*}
		since the boundary terms vanish because of our decay assumption.
		
		Consequently, $\nabla^{g_b} Y = 0$, hence $Y = 0$ because of the decay at infinity. The operator $\Box : C^{k+2,\alpha}_{\beta-1}\to C^{k,\alpha}_{\beta+1}$ is therefore surjective for $0<\beta<\frac{n}{2}$, and since the kernel and cokernel are constant between exceptional values, that is for $\beta-1\in \big(0,n-2\big]$ (and even $\beta-1\in \big(-1,n-1\big)$ if $\lambda_1(g_S)\geq \lambda_2(g_{\mathbb{S}^{n-1}})$), the operator is surjective for $0<\beta\leq n-1$ (and $0<\beta<n$ if $\lambda_1(g_S)\geq \lambda_2(g_{\mathbb{S}^{n-1}})$).
		
		Let us now prove that it is moreover injective under the assumption $\lambda_1(g_S)\geq \lambda_2(g_{\mathbb{S}^{n-1}})$. Consider $X\in C^{k+2,\alpha}_{\beta-1}(TN)$ satisfying $ \Box X=0$. Since there is no exceptional value between $-(n-1)$ and $1$ by Proposition \ref{bornes exception quotients}, $X$ actually lies in $C^{k+2,\alpha}_{n-1}(TN)$ and,
		\begin{align*}
		0= \int_N \langle\Box X,X\rangle_{g_b}\,d\mu_{g_b} =& \int_N |\Li_Xg_b|_{g_b}^2\,d\mu_{g_b} + \lim_{R\to \infty} \int_{\{\rho_{g_b}= R\}} (\Li_Xg_b)(\nu,X)\,d\sigma_{g_b},
		\end{align*}
		where $\nu$ is the outward unit normal to the hypersurface $\{\rho_{g_b}= R\}$. Since we assumed that $X\in C^{k,\alpha}_{n-1}(TN)$, we have $|\Li_Xg_b|_{g_b} = O(\rho_{g_b}^{-n})$ and $|X|_{g_b} =  O(\rho_{g_b}^{-(n-1)})$. Therefore, the boundary term vanishes, and by the same argument as above, $X= 0$.
	\end{proof}
	
	\subsection{A divergence-free gauge}
	
	We can now apply the implicit function theorem to solve our gauging problem.
	
	\begin{proof}[Proof of Proposition \ref{prop-gauge-div-free}]
		Let $(N^n,g_b)$ be a Ricci-flat manifold asymptotic to a smooth Ricci-flat cone $(C(S^{n-1}),g_S)$, and denote by $\mathcal{S}_b$ a complement of the kernel of $\Box$ in $C^{k+2,\alpha}_{\beta-1}(TN)$ ($\mathcal{S}_b= C^{k+2,\alpha}_{\beta-1}(TN)$ if we have the bound $\lambda_1(g_S)\geq \lambda_2(g_{\mathbb{S}^{n-1}})$).
		
		Let us define $\Phi :  \mathcal{U}^{k+1,\alpha}_{\beta}(g_b,\varepsilon)\times C^{k+2,\alpha}_{\beta-1}(TN)\cap\mathcal{S}_b\to C^{k,\alpha}_{\beta+1}(TN)$ for a metric $g$ and vector field $X$,
		$$\Phi(g,X):= \div_{(\exp_X^{g_b})_*g_b}g.$$
		The conclusion of our proposition is that for any $g$, there exists $X(g)$ such that $\Phi(g,X(g))= 0$ and $g\mapsto X(g)$ is analytic. Indeed, for any diffeomorphism $\phi:N\to N$, $\phi_*(\div_{g}\phi^* g) = \div_{\phi_*g}g$.
		
		We want to apply the implicit function theorem, Lemma \ref{th fcts implicites} to $\Phi$. Let us first note that by Lemma \ref{opérations analytiques}, which is a local result, $\Phi$ is analytic for $k$ larger than or equal to $1$. We have $\Phi(g_b,0)= 0$, and $d_{(g_b,0)}\Phi(0,\cdot)= \Box$ is an isomorphism by Proposition \ref{inverse deltadelta} (once restricted to the orthogonal of its kernel). The Proposition follows by the implicit function Theorem, Lemma \ref{th fcts implicites}.
	\end{proof}
	
	\section{First and second variations of geometric quantities}\label{app-C}
	In this appendix, we collect first and second derivatives of various geometric quantities.
	We start with a qualitative formula for the linearization of the Ricci curvature with respect to a background metric:
	\begin{lemma}\label{Ric-lin-lemma-app}
		Let $(N^n,g_1)$ be a Riemannian manifold. Let $g_2$ be a second Riemannian metric on $N$ uniformly equivalent to $g_1$, i.e. $C^{-1}\leq g_2\leq Cg_1$ for some positive constant $C$. Then, if $h:=g_2-g_1$ and if $g_t:=g_1+(t-1)h$, $t\in[1,2]$,
		\begin{equation}
		-2\Ric(g_t)=-2\Ric(g_1)+g_t^{-1}\ast\nabla^{g_1,2}g_t+g_t^{-1}\ast h\ast\Rm(g_1)+g_t^{-1}\ast g_t^{-1}\ast\nabla^{g_1}g_t\ast\nabla^{g_1}g_t-\Li_{B}(g_t),\label{ric-shi-estimate}
		\end{equation}
		where $B$ is a vector field on $N$ defined as follows:
		\begin{equation}
		\begin{split}
		B=B(g_t,\nabla^{g_1},g_t)&:=\frac{1}{2}g_t^{kl}\left(\nabla^{g_1}_k(g_t)_{il}+\nabla^{g_1}_l(g_t)_{ik}-\nabla^{g_1}(g_t)_{kl}\right)\\
		&=\div_{g_1}(g_t-g_1)-\frac{1}{2}\nabla^{g_1}\tr_{g_1}(g_t-g_1)+g_t^{-1}\ast(g_t-g_1)\ast\nabla^{g_1}g_t.\label{lin-Bian-app}
		\end{split}
		\end{equation}
		Moreover,
		\begin{eqnarray}
		\left|\frac{\partial}{\partial t}\Ric(g_t)\right|_{g_1}&\lesssim&\left(1+|h|_{g_1}\right)|\nabla^{g_1,2}h|_{g_1}+|\Rm(g_1)|_{g_1}|h|_{g_1}+\left(1+|h|_{g_1}\right)|\nabla^{g_1}h|_{g_1}^2,\label{first-der-Ric-rough}\\
		\left|\frac{\partial^2}{\partial t^2}\Ric(g_t)\right|_{g_1}&\lesssim& |h|_{g_1}|\nabla^{g_1,2}h|_{g_1}+|\Rm(g_1)|_{g_1}|h|^2_{g_1}+|h|_{g_1}|\nabla^{g_1}h|_{g_1}^2,\label{sec-der-Ric-rough}
		\end{eqnarray}
		where the sign $\lesssim$ means less than or equal up to a positive constant that is uniform in $t\in[1,2]$.
	\end{lemma}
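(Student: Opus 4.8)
The plan is to obtain \eqref{ric-shi-estimate} by the usual DeTurck-type comparison of the Levi-Civita connection of $g_t$ with the fixed background connection $\nabla^{g_1}$, and then to deduce the pointwise bounds \eqref{first-der-Ric-rough}--\eqref{sec-der-Ric-rough} by differentiating this identity in $t$, exploiting that $t\mapsto g_t$ is affine. I would begin by introducing the difference-of-Christoffel-symbols tensor $A(t):=\Gamma(g_t)-\Gamma(g_1)$, which in coordinates is $A(t)^k_{ij}=\tfrac12 g_t^{kl}(\nabla^{g_1}_i(g_t)_{jl}+\nabla^{g_1}_j(g_t)_{il}-\nabla^{g_1}_l(g_t)_{ij})$. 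Since $g_1$ is $\nabla^{g_1}$-parallel one has $\nabla^{g_1}g_t=(t-1)\nabla^{g_1}h$ and $\nabla^{g_1,2}g_t=(t-1)\nabla^{g_1,2}h$, so $A(t)$ is schematically $g_t^{-1}\ast\nabla^{g_1}g_t$; and writing $g_t^{kl}=g_1^{kl}+(g_t^{kl}-g_1^{kl})$ with $g_t^{kl}-g_1^{kl}=g_t^{-1}\ast(g_t-g_1)\ast g_1^{-1}$ identifies the two expressions for $B$ given in \eqref{lin-Bian-app}.

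Next I would substitute $A(t)$ into the universal identity
\[
\Ric(g_t)_{ij}-\Ric(g_1)_{ij}=\nabla^{g_1}_k A(t)^k_{ij}-\tfrac12\bigl(\nabla^{g_1}_i A(t)^k_{kj}+\nabla^{g_1}_j A(t)^k_{ki}\bigr)+A(t)\ast A(t),
\]
and expand $\nabla^{g_1}_k A(t)^k_{ij}$. The derivative hitting $g_t^{-1}$ produces the quadratic term $g_t^{-1}\ast g_t^{-1}\ast\nabla^{g_1}g_t\ast\nabla^{g_1}g_t$; the derivative hitting $\nabla^{g_1}g_t$ produces three second-order terms, one being the rough Laplacian $g_t^{-1}\ast\nabla^{g_1,2}g_t$, the other two being $g_t^{kl}\nabla^{g_1}_{(i}\nabla^{g_1}_{|k|}(g_t)_{j)l}$ together with curvature commutators $[\nabla^{g_1}_k,\nabla^{g_1}_i](g_t)_{jl}$; the latter are contractions of $\Rm(g_1)$ against $g_t=g_1+(t-1)h$, whose $g_1$-part recombines with the ambient $\Ric(g_1)$ on the left and whose $h$-part is exactly $g_t^{-1}\ast h\ast\Rm(g_1)$. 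One then checks that the surviving first-order pieces, namely $g_t^{kl}\nabla^{g_1}_{(i}\nabla^{g_1}_{|k|}(g_t)_{j)l}$ together with the $-\nabla^{g_1}_{(i}A(t)^k_{|k|j)}$ term, assemble exactly into $-(\nabla^{g_1}_i B_j+\nabla^{g_1}_j B_i)$ modulo further quadratic-gradient remainders; and since $\Li_B(g_t)_{ij}=\nabla^{g_1}_i B_j+\nabla^{g_1}_j B_i-2A(t)^k_{ij}B_k$, with the $A(t)\ast B$ discrepancy again of type $g_t^{-1}\ast g_t^{-1}\ast\nabla^{g_1}g_t\ast\nabla^{g_1}g_t$, these pieces combine into $-\Li_B(g_t)$ up to terms already present in \eqref{ric-shi-estimate}. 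This yields \eqref{ric-shi-estimate}; writing the gauge correction as $\Li_B(g_t)$ rather than $\Li_B(g_1)$ is precisely what absorbs the $A(t)\ast B$ contribution.

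For the bounds I would differentiate \eqref{ric-shi-estimate} in $t$, using $\partial_t g_t=h$, $\partial_t g_t^{-1}=-g_t^{-1}\ast h\ast g_t^{-1}$, $\partial_t\nabla^{g_1}g_t=\nabla^{g_1}h$, $\partial_t\nabla^{g_1,2}g_t=\nabla^{g_1,2}h$, the affine identities $\nabla^{g_1}g_t=(t-1)\nabla^{g_1}h$ and $\nabla^{g_1,2}g_t=(t-1)\nabla^{g_1,2}h$, and the uniform equivalence of the $g_t$ (so $|h|_{g_1}\lesssim 1$ and $\Rm(g_1)=\Rm(g_1)$ is fixed). Each $t$-derivative either lands on an explicit factor $h=\partial_t g_t$, converts a factor $g_t^{-1}$ into $-g_t^{-1}\ast h\ast g_t^{-1}$, or lowers a power of $(t-1)$; reading off the worst terms gives at once that $|\partial_t\Ric(g_t)|_{g_1}$ is bounded by $(1+|h|)|\nabla^{g_1,2}h|+|\Rm(g_1)||h|+(1+|h|)|\nabla^{g_1}h|^2$, i.e.\ \eqref{first-der-Ric-rough}. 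Differentiating once more and again tracking the factors of $h$ so produced yields \eqref{sec-der-Ric-rough}. This second step is the only delicate point and the main obstacle: one must carefully match each of the many resulting terms against the claimed right-hand side --- in particular the terms quadratic in $\nabla^{g_1}h$, which arise both from the explicit term $g_t^{-1}\ast g_t^{-1}\ast\nabla^{g_1}g_t\ast\nabla^{g_1}g_t$ and from $\Li_B(g_t)$ --- and verify the finitely many algebraic identities among the relevant contractions that organize them into the stated form; the computation is lengthy but routine, and several of the needed identities coincide with the first-variation formulas for the Ricci and scalar curvatures recorded elsewhere in this appendix and can be quoted from there.
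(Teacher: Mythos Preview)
Your proposal is correct and follows essentially the same approach as the paper. The paper's own proof of \eqref{ric-shi-estimate} and \eqref{lin-Bian-app} is simply a reference to Shi \cite{Shi-Def} (Lemma~2.1, Chapter~2), and for \eqref{first-der-Ric-rough}--\eqref{sec-der-Ric-rough} it merely says to differentiate those identities once and twice in $t$; you have written out the standard change-of-connection computation underlying Shi's formula and then performed the differentiation exactly as the paper indicates.
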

	\begin{proof}
		For a proof of (\ref{ric-shi-estimate}) and (\ref{lin-Bian-app}), see for instance [Lemma $2.1$, Chapter $2$, \cite{Shi-Def}]. To get (\ref{first-der-Ric-rough}) and (\ref{sec-der-Ric-rough}) respectively, it suffices to differentiate (\ref{ric-shi-estimate}) and (\ref{lin-Bian-app}) once and twice respectively.
	\end{proof}
	Now, we recall the variations of the Ricci and scalar curvatures:
	\begin{lemma}\label{lem-lin-equ-Ric-first-var}
		Let $(N^n,g)$ be a Riemannian manifold. Let $h$ be a smooth symmetric $2$-tensor on $N$. Then,
		\begin{equation}
		\begin{split}
		\delta_{g}(-2\Ric)(h)&=\Delta_{g}h+2\Rm(g)(h)-\Ric(g)\circ h-h\circ \Ric(g)-\Li_{B_{g}(h)}\\
		&=L_gh-\Li_{B_{g}(h)},
		\end{split}
		\end{equation}
		where $L_g$ denotes the Lichnerowicz operator as defined in introduced in Definition \ref{defn-Lic-Op} and where $B_{g}(h)$ denotes the linearized Bianchi gauge defined by:
		\begin{equation}
		B_{g}(h):=\div_{g}h-\frac{1}{2}\nabla^{g}\tr_{g}h.\label{defn-bianchi-op}
		\end{equation}
		In particular, the first variation of the scalar curvature along a variation $h\in S^2T^*N$ is:
		\begin{equation}
		\begin{split}
		\delta_{g}\R(h)&=\div_{g}\div_{g}h-\Delta_{g}\tr_gh-\left<h,\Ric(g)\right>_g.\label{lem-lin-equ-scal-first-var}
		\end{split}
		\end{equation}
		
	\end{lemma}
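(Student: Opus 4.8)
The plan is to run the classical variation-of-curvature computation, grouping terms so that they assemble into the Lichnerowicz operator plus a Lie derivative along the linearized Bianchi field.

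First I would compute the first variation of the Levi-Civita connection: writing $g_s:=g+sh$, one has
\[
\delta_g\Gamma^k_{ij}(h)=\tfrac12 g^{kl}\big(\nabla^g_i h_{jl}+\nabla^g_j h_{il}-\nabla^g_l h_{ij}\big),
\]
which is a genuine $(1,2)$-tensor. Next I would substitute this into the Palatini-type identity for the variation of the Ricci tensor,
\[
\delta_g\Ric_{ij}(h)=\nabla^g_k\big(\delta_g\Gamma^k_{ij}(h)\big)-\nabla^g_i\big(\delta_g\Gamma^k_{kj}(h)\big),
\]
and simplify. The key manoeuvre is to commute the two covariant derivatives hitting $h$ in the term $\nabla^g_k\nabla^g_l h_{ij}$ so as to reveal the rough Laplacian $\Delta_g h$; by the Ricci identity the commutators produce exactly the curvature contribution $2\Rm(g)(h)-\Ric(g)\circ h-h\circ\Ric(g)$ in the convention fixed in Definition~\ref{defn-Lic-Op}. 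The remaining first-order terms, together with $\nabla^g_i(\delta_g\Gamma^k_{kj}(h))=\tfrac12\nabla^g_i\nabla^g_j\tr_g h$, are precisely $-\tfrac12\Li_{B_g(h)}(g)_{ij}$ for $B_g(h)=\div_g h-\tfrac12\nabla^g\tr_g h$; this is checked by expanding $\Li_{B_g(h)}(g)_{ij}=\nabla^g_iB_g(h)_j+\nabla^g_jB_g(h)_i$ and matching term by term. Multiplying by $-2$ gives the first displayed identity, and the second line is just the definition of $L_g$ recalled in Definition~\ref{defn-Lic-Op}.

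For the scalar curvature I would trace with respect to $g_s$ and differentiate at $s=0$, keeping the contribution of the varying inverse metric $\delta_g(g^{ij})(h)=-h^{ij}$, which yields
\[
\delta_g\R(h)=-\langle h,\Ric(g)\rangle_g+g^{ij}\delta_g\Ric_{ij}(h).
\]
Then I would trace the formula for $\delta_g(-2\Ric)$ just obtained: the trace of $L_g h$ reduces to $\Delta_g\tr_g h$, since $\tr_g\Rm(g)(h)$, $\tr_g(\Ric(g)\circ h)$ and $\tr_g(h\circ\Ric(g))$ each equal $\langle h,\Ric(g)\rangle_g$ and the combination $2-1-1$ cancels, while $\tr_g\Li_{B_g(h)}(g)=2\div_g B_g(h)=2\div_g\div_g h-\Delta_g\tr_g h$. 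Combining these gives $g^{ij}\delta_g\Ric_{ij}(h)=\div_g\div_g h-\Delta_g\tr_g h$, and adding the $-\langle h,\Ric(g)\rangle_g$ from the varying inverse metric yields \eqref{lem-lin-equ-scal-first-var}. Alternatively one can invoke directly the contracted second Bianchi identity $g^{ij}\delta_g\Ric_{ij}(h)=\div_g\div_g h-\Delta_g\tr_g h$.

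There is no conceptual obstacle here; the computation is entirely standard (see e.g.\ Besse or Topping). The only genuinely delicate point is the commutation of covariant derivatives in the Ricci-tensor variation: one must apply the Ricci identity to the second derivatives of $h$ carefully, with consistent sign conventions, so as to recover the \emph{full} curvature operator $2\Rm(g)(h)-\Ric(g)\circ h-h\circ\Ric(g)$ rather than a partial version, and to confirm that the trace in the scalar case recombines exactly as claimed. I would therefore record the key intermediate identities and leave the remaining index bookkeeping to the reader.
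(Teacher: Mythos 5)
Your computation is correct and is precisely the standard variation-of-curvature argument that the paper does not reproduce but delegates to \cite[Chapter $2$]{Cho-Boo}: Palatini identity, commutation of derivatives to reveal $L_g$, identification of the first-order remainder with the Lie derivative along $B_g(h)$, and tracing (with the varying inverse metric) for the scalar curvature. The one blemish is a sign slip in your intermediate claim: the first-order terms of $\delta_g\Ric(h)$ assemble to $+\tfrac12\Li_{B_g(h)}(g)$, not $-\tfrac12\Li_{B_g(h)}(g)$ — this is what makes multiplication by $-2$ yield the $-\Li_{B_g(h)}(g)$ in the displayed formula, and indeed your subsequent trace computation, which outputs $\div_g\div_g h-\Delta_g\tr_g h$, implicitly uses the correct sign.
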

	A proof of this lemma can be found for instance in \cite[Chapter $2$]{Cho-Boo}.
	
	We recall the following qualitative estimates whose proofs are essentially based on the definition of the Hessian of a function on a Riemannian manifold:
	\begin{lemma}\label{lemma-app-lie-der-lin}
		Let $(N^n,g_1)$ be a Riemannian manifold. Let $g_2$ be a second Riemannian metric on $N$ uniformly equivalent to $g_1$, i.e. $C^{-1}\leq g_2\leq Cg_1$ for some positive constant $C$. Denote $g_t:=g_1+(t-1)g_2$, $t\in[1,2]$ and assume there exists a one-parameter family $(f_{g_t})_{t\in[1,2]}$ of smooth functions on $N$ that varies smoothly in $t$. Then,
		\begin{equation}
		\frac{\partial}{\partial t}\nabla^{g_t,2}f_{g_t}=\nabla^{g_t,2}(\delta_{g_t}f(h))+\frac{1}{2}\Li_{\nabla^{g_t}f_{g_t}}(h)-\frac{1}{2}\Li_{h(\nabla^{g_t}f_{g_t})}(g_t),\label{first-var-lie-der-app}
		\end{equation}
		and,
		\begin{equation}
		\begin{split}
		\frac{\partial^2}{\partial t^2}\nabla^{g_t,2}f_{g_t}&=\nabla^{g_t,2}(\delta^2_{g_t}f(h,h))+g_t^{-1}\ast \nabla^{g_t}h\ast\nabla^{g_t}(\delta_{g_t}f(h))\\
		&+g_t^{-1}\ast g_t^{-1}\ast\nabla^{g_t}h\ast\nabla^{g_t}h\ast \nabla^{g_t}f_{g_t}+g_t^{-1}\ast g_t^{-1}\ast h\ast \nabla^{g_t}h\ast\nabla^{g_t}f_{g_t}.\label{sec-var-lie-der-app}
		\end{split}
		\end{equation}
	\end{lemma}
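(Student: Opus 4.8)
The plan is to argue pointwise in local coordinates, starting from the definition of the Hessian of a function $u$, $(\nabla^{g,2}u)_{ij} = \partial_i\partial_j u - \Gamma^k_{ij}(g)\,\partial_k u$, together with the classical identity for the $t$-derivative of the Christoffel symbols of a smooth family $(g_t)$: writing $h := \partial_t g_t$, one has
\begin{equation*}
\dot\Gamma^k_{ij} := \partial_t\Gamma^k_{ij}(g_t) = \tfrac12\, g_t^{kl}\big(\nabla^{g_t}_i h_{jl} + \nabla^{g_t}_j h_{il} - \nabla^{g_t}_l h_{ij}\big).
\end{equation*}
Here $g_t = g_1+(t-1)h$ with $h = g_2-g_1$, so $\partial_t g_t = h$ and $\partial_t h = 0$, while by the chain rule $\partial_t f_{g_t} = \delta_{g_t}f(h)$ and $\partial_t\big(\delta_{g_t}f(h)\big) = \delta^2_{g_t}f(h,h)$ (the extra term $\delta_{g_t}f(\partial_t h)$ vanishing). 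Differentiating the Hessian formula once gives
\begin{equation*}
\partial_t(\nabla^{g_t,2}f_{g_t})_{ij} = \big(\nabla^{g_t,2}(\delta_{g_t}f(h))\big)_{ij} - \dot\Gamma^k_{ij}\,\partial_k f_{g_t}.
\end{equation*}

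To deduce \eqref{first-var-lie-der-app} it remains to identify $-\dot\Gamma^k_{ij}\partial_k f_{g_t}$ with $\tfrac12\Li_{\nabla^{g_t}f_{g_t}}(h)_{ij} - \tfrac12\Li_{h(\nabla^{g_t}f_{g_t})}(g_t)_{ij}$. Setting $X := \nabla^{g_t}f_{g_t}$, so that $\partial_k f_{g_t} = X_k$, and expanding both Lie derivatives using the connection $\nabla^{g_t}$ — namely $(\Li_X h)_{ij} = X^l\nabla^{g_t}_l h_{ij} + h_{lj}\nabla^{g_t}_i X^l + h_{il}\nabla^{g_t}_j X^l$ and $(\Li_{h(X)}g_t)_{ij} = \nabla^{g_t}_i(h_{jl}X^l) + \nabla^{g_t}_j(h_{il}X^l)$ — the terms carrying a derivative of $X$ cancel in the difference, leaving exactly $-\tfrac12\big(\nabla^{g_t}_i h_{jl} + \nabla^{g_t}_j h_{il} - \nabla^{g_t}_l h_{ij}\big)X^l = -\dot\Gamma^k_{ij}\,g_{t,kl}X^l = -\dot\Gamma^k_{ij}\partial_k f_{g_t}$. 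This proves \eqref{first-var-lie-der-app}.

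For \eqref{sec-var-lie-der-app} I would differentiate the above first-variation identity once more. Using $\partial_t f_{g_t} = \delta_{g_t}f(h)$, $\partial_t(\delta_{g_t}f(h)) = \delta^2_{g_t}f(h,h)$ and $\partial_t h = 0$, one obtains
\begin{equation*}
\partial_t^2(\nabla^{g_t,2}f_{g_t})_{ij} = \big(\nabla^{g_t,2}(\delta^2_{g_t}f(h,h))\big)_{ij} - 2\,\dot\Gamma^k_{ij}\,\partial_k\big(\delta_{g_t}f(h)\big) - \ddot\Gamma^k_{ij}\,\partial_k f_{g_t},
\end{equation*}
where $\ddot\Gamma^k_{ij} := \partial_t\dot\Gamma^k_{ij}$. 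It then suffices to read off the schematic structure. Since $\dot\Gamma = g_t^{-1}\ast\nabla^{g_t}h$ and $\partial_k(\delta_{g_t}f(h))$ is $\nabla^{g_t}(\delta_{g_t}f(h))$ lowered by $g_t$, the middle term has the form $g_t^{-1}\ast\nabla^{g_t}h\ast\nabla^{g_t}(\delta_{g_t}f(h))$. For $\ddot\Gamma$, one uses $\partial_t g_t^{-1} = g_t^{-1}\ast g_t^{-1}\ast h$ together with $\partial_t(\nabla^{g_t}_i h_{jl}) = -\dot\Gamma^m_{ij}h_{ml} - \dot\Gamma^m_{il}h_{jm} = g_t^{-1}\ast\nabla^{g_t}h\ast h$ (again because $\partial_t h \equiv 0$ kills every term involving $\nabla^{g_t}(\partial_t h)$), which gives $\ddot\Gamma = g_t^{-1}\ast g_t^{-1}\ast h\ast\nabla^{g_t}h$, so that $-\ddot\Gamma^k_{ij}\partial_k f_{g_t}$ has the form $g_t^{-1}\ast g_t^{-1}\ast h\ast\nabla^{g_t}h\ast\nabla^{g_t}f_{g_t}$. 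Collecting these three contributions produces a sum of terms each of one of the (four) types listed in \eqref{sec-var-lie-der-app}. The computation is entirely elementary; the only point requiring care is the bookkeeping of the second variation, where one must systematically use $\partial_t h\equiv 0$ both to drop the terms involving $\nabla^{g_t}(\partial_t h)$ and to reduce $\ddot\Gamma$ to the single schematic expression above — no analytic input whatsoever is needed.
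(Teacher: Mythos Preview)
Your proof is correct and follows exactly the approach the paper indicates (the paper does not give a proof, only the remark that the formulas ``are essentially based on the definition of the Hessian of a function on a Riemannian manifold''). One very minor observation: your computation of $\ddot\Gamma$ only produces the schematic type $g_t^{-1}\ast g_t^{-1}\ast h\ast\nabla^{g_t}h$, so the term $g_t^{-1}\ast g_t^{-1}\ast\nabla^{g_t}h\ast\nabla^{g_t}h\ast\nabla^{g_t}f_{g_t}$ in \eqref{sec-var-lie-der-app} is in fact absent (hence vacuously accounted for) --- this is harmless since the identity is schematic, but your phrase ``each of one of the (four) types'' slightly undersells that you have actually verified more, namely that only three of the four listed types occur.
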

	%Finally, we end this section by recalling the evolution equations satisfied by the Ricci curvature together with covariant derivatives along the Ricci flow:
	%\begin{lemma}\label{lemma-evo-Ric-Cur}
	%Let $(N^n,g(t))_{t\in[0,T)}$ be a solution to the Ricci flow. Then, the Ricci curvature satisfies the following evolution equation:
	%\begin{equation}
	%\frac{\partial}{\partial t}\Ric(g(t))=\Delta_{L,g(t)}\Ric(g(t)),\label{evo-eqn-Ric}
	%\end{equation}
	%where $\Delta_{L,g(t)}$ denotes the Lichnerowicz operator associated to the metric $g(t)$ defined in Definition \ref{defn-Lic-Op}.
	
	%Moreover, for any integer $k\geq 1$,
	%\begin{equation}
	%frac{\partial}{\partial t}\nabla^{g(t),k}\Ric(g(t))=\Delta_{g(t)}\nabla^{g(t),k}\Ric(g(t))+\sum_{i=0}^k\nabla^{g(t),i}\Rm(g(t))\ast \nabla^{g(t),k-i}\Ric(g(t)).\label{evo-eqn-cov-der-ric}
	%\end{equation}
	%\end{lemma}

	\newpage

	\bibliographystyle{alpha.bst}
	\bibliography{bib-4d-RF}
	
\end{document}